\theoremstyle{remark}
\newtheorem{obs}{Remark}
\theoremstyle{plain}
\newtheorem{prop}{Proposition}[section]
\newtheorem{teo}[prop]{Theorem}
\newtheorem{lema}[prop]{Lemma}
\newtheorem{corol}[prop]{Corollary}
\DeclareMathOperator{\Real}{Re}
\DeclareMathOperator{\sgn}{sgn}
\newcommand{\supp}{\operatorname{supp}}
\newcommand{\re}{\mathbb{R}}
\newcommand{\na}{\mathbb{N}}
\newcommand{\normp}[2]{\Vert#1\Vert_{#2}}
\newcommand{\ns}[2]{ \left\| #1 \right\|_{#2}}
\newcommand{\nsc}[2]{ \left\| #1 \right\|^2_{#2}}
\title[On the Cauchy problems associated to a ZK-KP-type family...]{On
  the Cauchy problems associated to a ZK-KP-type family equations with
  a transversal fractional dispersion}
\author{Jorge Morales P.}
\email{jemoralesp@unal.edu.co}
\address{Departamento de Matemáticas, Universidad Nacional de Colombia, Sede--Bogotá}
\author{Félix H. Soriano M.}
\email{fhsorianom@unal.edu.co}
\address{Departamento de Matemáticas,
  Universidad Nacional de Colombia, Sede--Bogotá}
\subjclass[2010]{35Q53, 35Q35, 35A01}
\keywords{Cauchy Problem, Dispersive
  equations, Kadomtsev-Petviashvili equation, Zakharov-Kuznetsov
  equation, Local well-posedness, Ill-posedness, Anisotropic Sobolev
  spaces, Kato theory}
\begin{document}
\maketitle
\begin{abstract}
  In this paper we examine the well-posedness and ill-posedeness of
  the Cauchy problems associated with a family of equations of ZK-KP-type
  \[
    \begin{cases} 
 u_{t}=u_{xxx}-\mathscr{H}D_{x}^{\alpha}u_{yy}+uu_{x}, \cr
 u(0)=\psi \in Z
\end{cases}
\]
in anisotropic Sobolev spaces, where $1\le \alpha \le 1$,
$\mathscr{H}$ is the Hilbert transform and $D_{x}^{\alpha}$ is the
fractional derivative, both with respect to $x$.
\end{abstract}

\section*{Introduction}
Nonlinear evolution equations play an important role in different
areas of science and engineering. Some of them are worth mentioning:
fluid mechanics, plasma physics, fiber optics, solid state physics,
chemical kinetics, chemical physics and geochemistry, among
others. From the study of their solutions, an attempt is made to
understand the effects of dispersion, diffusion, reaction and
convection associated with the models described by them. For example,
the Korteweg-de Vries (KdV) equation
\begin{equation}\label{KdV}
u_{t}=u_{xxx}+uu_{x} \qquad(x,t)\in \mathbb{R}^2,
\end{equation}
which models the behavior of water waves in shallow channels, has
solitary waves as solutions that behave like particles, which is why
Kruskal and Zabusky called them \emph{solitons} in their 1965 work
(\cite{zbkru}). These solitons are stable, in the sense that if a
solution of the KdV equation (equation \eqref {KdV}) that 
differs very little in shape from soliton-type solutions, at the
beginning, its shape will maintain an aspect that will differ very little from
the shape of a soliton-type solution through time (see \cite{Ben72} and
\cite{Bona75}); in fact, these solutions eventually take the form of 
solitons (see \cite{pewe}). From a practical point of view, the
notion of soliton stability guarantees that, with meticulous care, in
the laboratory we will be able to reproduce these phenomena, first
observed by J. Scott Russell in 1834.\par The Benjamin-Bona-Mahony (BBM)
equation 
\begin{equation}\label{BBM}
u_{t}+u_{x}+uu_{x}-u_{xxt}=0,
\end{equation}
was introduced in \cite{BBM} with the intention to modeling the
propagation of long waves of small amplitude, where the dispersion 
effect is purely nonlinear. The way in which this was obtained, it was
pursued to arrive at an equation equivalent to the equation KdV
\eqref{KdV}. It is interesting to note that despite this intention,
from a purely mathematical point of view, these equations present
significant and interesting differences. \par Other one-dimensional
equations are, one, introduced independently by Benjamin in
\cite{Benjamin} and Ono in \cite{ono},
\begin{equation}\label{BO}
u_{t}+\mathscr{H}u_{xx}+uu_{x}=0.
\end{equation}
which models the internal waves into deep stratified fluids, where
$\mathscr{H}$ is the Hilbert transform.
The another, is the regularized Benjamin-Ono (rBO)
\begin{equation}\label{rBO}
u_{t}+u_{x}+uu_{x}+\mathscr{H}u_{xt}=0
\end{equation}
where $u=u(x, t)$ is a real function, with $x, t\in \mathbb{R}$. This
equation is a model for wave time evolution with large ridges at the
interface between two immissible fluids. \par
There are two-dimensional versions that extend the equations
previously mentioned. In the case of the KdV equation we have the
Kadomtsev-Petviashvilli (KP) equation, see \cite{KP}, 
\begin{equation}\label{KP}
(u_{t}+auu_{x}+u_{xxx})_{x}\pm u_{yy}=0,
\end{equation}
that describes waves in thin films of high surface tension. Another one is the
Zakharov-Kuznetsov (ZK) equation, see \cite{ZakKuz}, 
\begin{equation}\label{ZK}
u_{t}=(u_{xx}+u_{yy})_{x}+uu_{x},
\end{equation}
which arises in the study of the dynamics of geophysical fluids in
isotropic sets (means in which the characteristics of the bodies do
not depend on direction) and ionic acoustic waves in magnetic
plasmas. 
 
As a two-dimensional extension of the Benjamin-Ono equation we
consider the following family of equations 
\begin{equation}\label{BO2pre}
 \left(u_{t}+u^pu_{x} + \mathscr{H}(u_{xx}+\alpha u_{yy})\right)_x -\gamma u_{yy}=0 \quad p\in\na.
\end{equation}
This is the model of weakly nonlinear dispersive long wave motion in a
two-fluid system, where the interface is capillarized and the bottom
fluid is infinitely deep (see \cite{ablow}, \cite{abloseg} and \cite{kim}). 
Another one version that has received some attention in recent literature
is the ZK-BO equation, 
\begin{equation}\label{ZK-BO}
 u_{t}=(\mathscr Hu_{x}+u_{yy})_{x}+ uu_{x}.
\end{equation}
\par To finish this introduction we will mention the equations that
present a more general type of dispersion that include as particular
cases those mentioned above. A case of these is the integro
differential equation of Whitham
\begin{equation}\label{eq:whitham}
u_t +\alpha uu_x+ k*u_x=0.
\end{equation}
This was introduced by Whitman in \cite{whitham} to model the
breakage of dispersive waves in water. It is clear that if $ k =
\delta + \delta '' $ or $ k = \mathrm {v.p.} \dfrac1x $ we have the 
aforementioned KdV and BO equations. There is also what Lannes and
Saut called the fractional dispersion equation KdV, fKdV (see
\cite{lannessaut}) 
\begin{equation}\label{eq:fKdV}
u_t +\alpha uu_x+ \partial_x D^\alpha u=0,
\end{equation}
where $D=\sqrt{-\partial_x^2}$. In the papers \cite{castro},
\cite{SautKlein}, \cite{linpilsaut1} and \cite{linpilsaut2}
it is established the relationship between the parameter $\alpha $ and the
arising of singularities or the global existence of solutions in
their energy space, as well as how this behavior is related to the
existence and stability of the solitary waves associated with these
equations. It should be said that in this direction there are
interesting open problems, although they can really be very
difficult. \par At this time we also think it is important to mention
the work of Kenig, Martel and Robbiano (\cite{KeMaRo}) that
demonstrates the blow-up of solutions in the energy space of the
generalized scattering equation BO (a slightly different version of
the fKdV)
\begin{equation}\label{eq:fKdVBO}
u_t +|u|^{2\alpha} u_x+ \partial_x D^\alpha u=0,
\end{equation}
when the initial condition is ``larger'' than the ``shape'' of the
solitary wave associated with this equation. \par In the
two-dimensional case we have the equation introduced by Lannes in
\cite{lannesbook} to model long waves of small amplitude in weakly
transverse regimes 
\begin{equation}\label{eq:FDKP}
u_t +\mu \frac32 u u_x+  c_{ww}(\sqrt{\mu} |D^\mu|) \left( 1+\frac
  {D_x^2}{D_y^2} \right)^{\frac12} u=0,
\end{equation} 
where 
$$ c_{ww}( k)=\left((1+\beta k^2) \frac {\tanh(k)}k
\right)^\frac12\ \text{ and} \ |D^\mu|=\sqrt{D_x^2+\mu D_y^2}. 
$$ 
In \cite{lannessaut} the relation between the equations KP and FDKP is
established. In fact, they highlight the difference in the dispersive
character with respect to the parameter $\beta $. Furthermore, by
limiting the parameter $\mu$, the equations KPII or KPI are obtained
when $\beta=0$ or $\beta>0$ respectively. An interesting conjecture
here is to regard the existence of solitary waves for values $\beta $ greater
than $ 1/3 $. \par 
In this paper we consider the ZK-KP type Cauchy problem
\begin{equation}
  \label{eq:principal}
\begin{cases} 
 u_{t}=u_{xxx}-\mathscr{H}D_{x}^{\alpha}u_{yy}+uu_{x}, \cr
 u(0)=\psi \in Z
\end{cases}
\end{equation} 
for $-1\le \alpha \le 1$, where $\mathscr{H}$ denotes the Hilbert
transform in the $x$ variable defined by
\[\mathscr{H}(f)=\mathrm{p. v. }\frac1\pi\int \frac{f(y)}{x-y}\, dy
  \quad f \in H^{s}(\mathbb{R}),\] for each $f\in \mathcal S$,
$D_{x}^\alpha$ is the homogeneous fractional derivative in
$x$ variable defined by
\[ \widehat {D_{x}^{\alpha}f } (\xi,\eta)=
  |\xi|^{\alpha}\hat{f}(\xi,\eta),\] and $Z$ is one of the
Sobolev spaces $X^{s_1, s_2}$, $\widehat X^{s_1, s_2}$, $Y^{s_1, s_2}$ and
$\widehat Y^{s_1, s_2}$, that  we will specify in the notations.
\par The cases $\alpha=1$ and $\alpha=-1$ are the Cauchy problems
corresponding to the very popular ZK and KPI equations, respectively,
that were mentioned earlier. If $u(x,y,t)$ is a solution to
\eqref{eq:principal}, then $u_\lambda$ given by
$$u_\lambda (x,y,t)=\lambda^2 u(\lambda x,\lambda^{\frac{3-\alpha}{2}}
y ,\lambda^3 t)$$ is also a solution to (\ref{eq:principal}) and
\begin{equation*}
	\left\|u_\lambda(t)\right\|_{ \dot{H}^ {s_1,s_2}
          (\re^2)} =\lambda^{s_1+\frac{3+\alpha}{4}} \left \| u(t)
        \right \|_{ \dot{H}^ {s_1, s_2}( \re^2)}
\end{equation*}
with $s_2=\frac{2s_1}{3-\alpha}$. This suggests that the local
well-posedness could be guaranteed in
$H^{s_1,\frac{2s_1}{3-\alpha}}(\re^2)$ for
$s_1\geq -\frac{3+\alpha}{4}$.\par
In this work we propose to show the local well-posedness of the Cauchy
problem \eqref{eq:principal} in the Sobolev spaces $Z$ mentioned
above. For this purpose, we use Kato's theory for quasilinear
equations and the ideas introduced by Kenig \cite{kenig2004} for the
KP-I equation and developed by Linares, Pilod and Saut in
\cite{linpilsaut3} for the f-KPI and f-KPII equations. Specifically,
it is making the use of the Strichartz estimate provided by the group
generated by the homogeneous linear equation associated with
\eqref{eq:principal}, making use of energy estimates. We will also
make some ill-posedness observations of this equation for
$-1 \le\alpha <0 $. For this we will use the ideas developed by
Molinet, Saut and Tzvetkov in \cite{MST}. More precisely, we will show
that the flow associated with the solutions of \eqref{eq:principal} is
not of class $C^2$. This, in particular, implies that it cannot be
applied the Picard iteration method to get a solution to the integral
equation obtained by the Duhamel principle applied to \eqref{eq:principal}.



\section*{Notation}

\begin{enumerate}
\item $\mathcal{S}(\re^2)= \mathcal{S}$ denotes  the Schwartz space and
  $\mathcal{S}'(\re^2)=\mathcal{S}'$ denotes its topological dual vector
  space, the tempered distributions.
  \item $H^s(\re^2)=H^s$ is the $s^{th}$-order Sobolev space.
  \item For a variable or an operator $u$, we denote by $\langle u
    \rangle$ the expression $(1+u^2)^{\frac12}$. 
\item For $s_1,s_2\in\re $, the anisotropic Sobolev space 
  $H^{s_1,s_2}(\re^2)$ is defined by
 $$H^{s_1,s_2}(\re^2)=\left\{f\in \mathcal{S}' \;\left |
     \; \int_{\re^2}{(\langle \xi \rangle^{2s_1}+ \langle \eta\rangle^{2s_2})
       |\widehat{f}(\xi,\eta)|^2 d\xi d\eta } <\infty
   \right. \right\}.$$ The norm in this space is given by
 $$ \|f\|_{H^{s_1,s_2}(\re^2)} =\sqrt{
   \int_{\re^2}(\langle \xi \rangle^{2s_1}+ \langle
   \eta\rangle^{2s_2}) |\widehat{f}(\xi,\eta)|^2 d\xi d\eta}, $$ for
 all $f$ in this space. When there is no risk of confusion, we denote
 this space by $H^{s_1,s_2}$. Observe that $H^{s,s}=H^s$, and the
 immediately above norm is equivalent to the usually given in the
 literature.
   \item $D_{x}^s$, $D_{y}^s$, $J_{x}^s$, $J_{y}^s$ and
     $J^s$ denotes the operators defined, via Fourier transform,
     by
\begin{equation*}
\begin{split}
\widehat{D_{x}^sf}&=|\xi|^s \widehat{f},\\
\widehat{D_{y}^sf}&=|\eta|^s \widehat{f},\\
\widehat{J_{x}^sf}&=(1+|\xi|^2)^{\frac{s}{2}} \widehat{f},\\
\widehat{J_{y}^sf}&=(1+|\eta|^2)^{\frac{s}{2}} \widehat{f} \quad \text{and}\\
\widehat{J^sf}&=(1+|\xi|^2+|\eta|^2)^{\frac{s}{2}} \widehat{f},
\end{split}
\end{equation*}
for any $f\in \mathcal S'(\re^2)$. 
\item For $s_1,s_2\geq 0$, we denote by $X^{s_1,s_2}(\re^2)=
  X^{s_1,s_2}$ the space $$X^{s_1,s_2}(\re^2)=\{ f\in H^{s_1,s_2}\;
  |\; \partial_x^{-1} f \in H^{s_1,s_2} \}.$$ The norm in
  this space is given by 
  \begin{equation*}
\|f\|_{X^{s_1,s_2}}^2= \|f\|_{H^{s_1,s_2}}^2 +\| \partial^{-1}_x  f\|^2_{H^{s_1,s_2}}.
\end{equation*}
\item For $s_1,s_2\geq 0$,  we denote by $\widehat X^{s_1,s_2}(\re^2)=
  \widehat X^{s_1,s_2}$ the  space $$\widehat X^{s_1,s_2}(\re^2)=\{ f\in H^{s_1,s_2}\;
  |\; \partial_x^{-1} f \in L^2 \}.$$ The norm in
  this space is
  \begin{equation*}
\|f\|^2_{\widehat X^{s_1,s_2}}= \|f\|^2_{H^{s_1,s_2}} +\| \partial^{-1}_x  f\|^2_{L^2}.
\end{equation*}
\item For $s_1,s_2\geq 0$,  we denote by $X_\alpha ^{s_1,s_2}(\re^2)=
  X_\alpha^{s_1,s_2}$ the  space $$X_\alpha^{s_1,s_2}(\re^2)=\{ f\in H^{s_1,s_2}\;
  |\; \partial_x^{\frac{ \alpha -1} 2} f \in L^{2} \}.$$ The norm in
  this space is given by
  \begin{equation*}
\|f\|_{X^{s_1,s_2}}^2= \|f\|_{H^{s_1,s_2}}^2 +\| \partial^{-1}_x  f\|^2_{H^{s_1,s_2}}.
\end{equation*}
\item For $s_1,s_2\geq 0$,  we denote by $Y^{s_1,s_2}(\re^2)=
  Y^{s_1,s_2}$ the  space $$Y^{s_1,s_2}(\re^2)=\{ f\in H^{s_1,s_2}\;
  |\; \partial_x^{-1} \partial_y f \in H^{s_1,s_2} \}.$$ The norm in
  this space is 
  \begin{equation*}
\|f\|^2_{Y^{s_1,s_2}}= \|f\|^2_{H^{s_1,s_2}} +\| \partial^{-1}_x \partial_y f\|^2_{H^{s_1,s_2}}
\end{equation*}
\item For $s_1,s_2\geq 0$,  we denote by $\widehat Y^{s_1,s_2}(\re^2)=
  \widehat Y^{s_1,s_2}$ the  space $$\widehat Y^{s_1,s_2}(\re^2)=\{ f\in H^{s_1,s_2}\;
  |\; \partial_x^{-1} \partial_y f \in L^2 \}.$$ The norm in
  this space is given by 
  \begin{equation*}
\|f\|_{\widehat Y^{s_1,s_2}}^2= \|f\|_{H^{s_1,s_2}}^2 +\| \partial^{-1}_x \partial_y  f\|^2_{L^2}.
\end{equation*}
\item  We define $H^\infty(\re^2)=\bigcap_{s_1,s_2\geq
    0}H^{s_1,s_2}(\re^2)$. Analogously $X^\infty$, $\widehat X^\infty$,
  $Y^\infty$ and $\widehat Y^\infty$. 
\end{enumerate}

\section{Preliminaries}
We start this section of preliminaries by observing that 
the spaces $X^{s_1, s_2}$, $\widehat X^{s_1, s_2}$, $X_\alpha^{s_1, s_2}$,
$Y^{s_1, s_2}$ and $\widehat Y^{s_1, s_2}$ are Hilbert spaces. Thanks to
the next lemma, each of these spaces is dense in $L^2$
\begin{lema}
  The space $\partial_x \mathcal S$ is dense in $L^2$. In general, it
  is dense also in $H^{s_1, s_2}$, $X^{s_1, s_2}$, $\widehat X^{s_1,
    s_2}$, $X_\alpha^{s_1, s_2}$, $Y^{s_1, s_2}$ and $\widehat Y^{s_1, s_2}$.
\end{lema}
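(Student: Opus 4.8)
The plan is to pass to the Fourier side, where every one of the listed spaces becomes a weighted $L^2$ space and $\partial_x\mathcal S$ becomes the space of Schwartz functions vanishing on the line $\{\xi=0\}$; the density then collapses to a single, uniform statement about weighted $L^2$ spaces. First I would record that, by Plancherel, for each admissible $Z$ the squared norm is $\int_{\re^2} w(\xi,\eta)\,|\widehat f(\xi,\eta)|^2\,d\xi\,d\eta$ for an explicit weight $w$:
\begin{equation*}
w=\langle\xi\rangle^{2s_1}+\langle\eta\rangle^{2s_2}\ \ (H^{s_1,s_2}),\qquad
w=(\langle\xi\rangle^{2s_1}+\langle\eta\rangle^{2s_2})(1+\xi^{-2})\ \ (X^{s_1,s_2}),
\end{equation*}
\begin{equation*}
w=(\langle\xi\rangle^{2s_1}+\langle\eta\rangle^{2s_2})(1+\eta^2\xi^{-2})\ \ (Y^{s_1,s_2}),
\end{equation*}
and $w=(\langle\xi\rangle^{2s_1}+\langle\eta\rangle^{2s_2})+\xi^{-2}$, $+\,\eta^2\xi^{-2}$, $+\,|\xi|^{\alpha-1}$ for $\widehat X^{s_1,s_2}$, $\widehat Y^{s_1,s_2}$, $X_\alpha^{s_1,s_2}$ respectively. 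In every case $w$ is continuous, strictly positive and locally integrable on the open set $\Omega:=\re^2\setminus\{\xi=0\}$, its only singularity sitting on the null line $\{\xi=0\}$; hence $L^2(\re^2,w\,d\xi\,d\eta)=L^2(\Omega,w\,d\xi\,d\eta)$ and the Fourier transform carries $Z$ isometrically onto this weighted space. Under it, $\partial_x\mathcal S$ becomes $\{\,i\xi\,\psi:\psi\in\mathcal S\,\}$, the Schwartz functions vanishing on $\{\xi=0\}$.

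The observation I would exploit is that $C_c^\infty(\Omega)\subset\{\,i\xi\,\psi:\psi\in\mathcal S\,\}$: if $\phi\in C_c^\infty(\Omega)$ then $\phi$ is supported in $\{|\xi|\ge c\}$ for some $c>0$, so $\phi/\xi$ is again in $C_c^\infty(\Omega)\subset\mathcal S$ and $\phi=\xi\,(\phi/\xi)$. Conversely, the factor $\xi$ appearing in $\widehat{\partial_x\mathcal S}$ exactly cancels the worst singularity of $w$ (near $\xi=0$ one has $\xi^{-2}\cdot\xi^2=1$, $\eta^2\xi^{-2}\cdot\xi^2=\eta^2$, $|\xi|^{\alpha-1}\cdot\xi^2=|\xi|^{\alpha+1}$, all harmless for $\alpha\ge-1$), which confirms $\partial_x\mathcal S\subset Z$ so that the statement is meaningful. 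Thus it suffices to prove, uniformly in all cases, that $C_c^\infty(\Omega)$ is dense in $L^2(\Omega,w\,d\xi\,d\eta)$, and then undo the Fourier transform; the plain $L^2$ assertion is the special case $w\equiv1$.

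The one step I would carry out in detail is this weighted density. Given $F\in L^2(\Omega,w\,d\xi\,d\eta)$, exhaust $\Omega$ by compact sets $K_n\nearrow\Omega$; dominated convergence gives $F\mathbf 1_{K_n}\to F$ in $L^2(w)$. On each fixed $K_n$ the weight $w$ is bounded above and below by positive constants, being continuous and positive on a compact set, so the weighted and ordinary $L^2$ norms are equivalent for functions supported there; mollifying $F\mathbf 1_{K_n}$ inside $\Omega$ (the support being compactly contained in $\Omega$, small mollifications stay in a fixed compact subset) approximates it by $C_c^\infty(\Omega)$ functions in the unweighted, hence the weighted, norm. Equivalently one may simply invoke the standard density of $C_c^\infty(\Omega)$ in $L^2$ of any Radon measure on the open set $\Omega$, the measure here being $w\,d\xi\,d\eta$.

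The point to watch — and the reason the naive argument ``$\mathcal S$ is dense and $\partial_x\mathcal S\subset\mathcal S$'' fails — is the non-integrable singularity of $w$ along $\{\xi=0\}$: for $X^{s_1,s_2}$, $Y^{s_1,s_2}$ and $X_\alpha^{s_1,s_2}$ with $\alpha<0$ a generic Schwartz function does not even lie in $Z$, since $\widehat f$ need not vanish fast enough at $\xi=0$. This is precisely why one must build approximants whose Fourier transforms vanish near $\{\xi=0\}$, and it is this vanishing that simultaneously places them in $\partial_x\mathcal S$ and keeps the weighted integrals finite. Confining the whole argument to $\Omega$ dissolves the apparent obstacle: there $w$ is a harmless locally bounded weight and the density of $C_c^\infty(\Omega)$ is routine.
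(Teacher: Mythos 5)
Your proposal is correct, and it reaches the conclusion by a genuinely different route from the paper. The paper works with a single frequency cutoff: it fixes $\phi\in C^\infty(\re)$ vanishing on $[-1/2,1/2]$ and equal to $1$ off $[-1,1]$, sets $\widehat{\psi_\lambda}(\xi,\eta)=\phi(\lambda\xi)\widehat\psi(\xi,\eta)$ for $\psi\in\mathcal S$, and lets Plancherel plus dominated convergence give $\psi_\lambda\to\psi$ as $\lambda\to\infty$ in each norm (the shrinking dead zone around $\{\xi=0\}$ plays the role your factor $i\xi$ plays). You instead recast every space as a weighted $L^2(\Omega, w\,d\xi\,d\eta)$ on $\Omega=\re^2\setminus\{\xi=0\}$ and reduce everything to the density of $C_c^\infty(\Omega)$ there, proved by truncation to compacts and mollification. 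What your version buys is uniformity and self-containedness: it treats all six spaces by one statement and does not presuppose that $\mathcal S\cap Z$ is dense in $Z$ (which the paper's argument implicitly leans on when it passes from $\psi\in\mathcal S$ to a general element of $X^{s_1,s_2}$ or $Y^{s_1,s_2}$); your compactly supported approximants also make the membership $f_\lambda\in\partial_x\mathcal S$ completely transparent. What the paper's version buys is brevity and the fact that the approximants retain the full frequency content of $\psi$ away from $\xi=0$. One small imprecision on your side: you describe $\widehat{\partial_x\mathcal S}$ as \emph{the} Schwartz functions vanishing on $\{\xi=0\}$, whereas only the inclusion $C_c^\infty(\Omega)\subset i\xi\,\mathcal S$ is actually needed (and is the part you verify); the reverse identification would require a separate division argument and is irrelevant to the proof, so this does not affect correctness.
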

\begin{proof}
  Take a non negative function $\phi\in C^\infty$ defined on the real
  numbers, identically zero on the interval $[-1/2, 1/2]$ and
  identically $1$ out of $[-1, 1]$. For any $\psi\in \mathcal S$ we
  define $\psi_\lambda$ using the equation
  $$\widehat  \psi_\lambda(\xi, \eta) = \phi(\lambda \xi)\widehat \psi(\xi,
  \eta),$$
  for all $(\xi, \eta)\in \re^2$. The Plancherel theorem allows us show
  that $\psi_\lambda$ converges to $\psi$ in $L^2$, as $\lambda \to
  \infty$. The same argument allows us to show that $\partial_x \mathcal
  S$ is dense in any of the spaces mentioned in the lemma statement.
\end{proof}
By a duality argument, we can conclude that $L^2$ is densely contained
in the dual spaces of any of the spaces mentioned in the lemma.\par
As consequence of the previously discussed, we can extend the operator
$\partial^3_{x}-\mathscr{H}D_{x}^{\alpha}\partial^2_{y}$ to the entire
$L^2$, with image in the $X^3$ dual, $(X^3)^*$. Indeed,
$-\partial^3_{x}+\mathscr{H}D_{x}^{\alpha}\partial^2_{y}$ is bounded
from $X^3$ to $L^2$. So its adjoint operator is bounded from $L^2$ to
$(X^3)^*$. Thanks to the Fourier transform, it can be seen that this
adjoint operator is an extension of
$\partial^3_{x}-\mathscr{H}D_{x}^{\alpha}\partial^2_{y}$ to all
$L^2$.\par  As a corollary of above, we have the following lemma.
\begin{lema}
  Let $W_\alpha$ be the unitary group of operators generated by the operator
  $\partial^3_{x}-\mathscr{H}D_{x}^{\alpha}\partial^2_{yy}$ and let $f$
  and $u$ be  continuous functions from an open interval $I$ to
  $L^2$. Then $$ u=W_\alpha(t) \psi +\int_0^{t} 
  W_\alpha(t -t') f(t') \, dt'$$   if, and only if $u$ has continuous derivative
  on $I$ with values in $(X^3)^*$, the dual space $X^3$, and
  \begin{equation}
    \label{eq:1a}
  \partial_t u=
  \partial^3_{x}u-\mathscr{H}D_{x}^{\alpha}\partial^2_{y}u +f.
  \end{equation}
\end{lema}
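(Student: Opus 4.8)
The plan is to prove the two implications separately, in each case moving the computation into the space $(X^3)^*$. Two structural facts, both noted just before the statement, drive everything: first, $L:=\partial_x^3-\mathscr H D_x^\alpha\partial_y^2$ extends to a \emph{bounded} operator from $L^2$ into $(X^3)^*$; second, since $W_\alpha(t)$ is the Fourier multiplier with symbol $e^{-itp(\xi,\eta)}$, where $p(\xi,\eta)=\xi^3+\sgn(\xi)|\xi|^\alpha\eta^2$ is real, it is unitary on $L^2$ and extends to a strongly continuous group of isometries on $(X^3)^*$ commuting with $L$ (both are multipliers). These are exactly what legitimize differentiating $L^2$-valued expressions with values in $(X^3)^*$, even though $\psi$, $u(t)$ and $f(t)$ need not lie in any $L^2$-domain of $L$.

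The key preliminary step is the identity $\tfrac{d}{dt}W_\alpha(t)g=LW_\alpha(t)g$ in $(X^3)^*$, valid for every $g\in L^2$ and continuous in $t$. Since $L$ is multiplication by $-ip$, I would establish this on the Fourier side by a dominated-convergence argument on the difference quotient: the Taylor bound $\bigl|\tfrac{e^{-ihp}-1}{h}+ip\bigr|\le\tfrac{|h|}{2}\,p^2$ gives pointwise convergence to $0$, while $\bigl|\tfrac{e^{-ihp}-1}{h}+ip\bigr|\le 2|p|$ furnishes a dominating function. Here the boundedness $L\colon X^3\to L^2$ is precisely the assertion that $p^2$ is controlled by the weight $w$ defining $X^3$; as $(X^3)^*$ carries the reciprocal weight $1/w$, the error integrand is bounded by $4\,(p^2/w)\,|\widehat g|^2\lesssim|\widehat g|^2\in L^1$, so the difference quotient converges to $LW_\alpha(t)g$ in $(X^3)^*$, and continuity in $t$ follows from strong continuity of the group.

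Granting this, the implication from the integral equation to \eqref{eq:1a} follows by differentiating the Duhamel formula term by term in $(X^3)^*$. The homogeneous part $W_\alpha(t)\psi$ is covered by the preliminary step. For $w(t)=\int_0^tW_\alpha(t-t')f(t')\,dt'$, splitting the increment $w(t+h)-w(t)$ into the contribution of the moving endpoint and that of the integrand, then letting $h\to 0$ and moving the bounded operator $L$ inside the Bochner integral, yields $w'(t)=f(t)+Lw(t)$ in $(X^3)^*$; the right-hand side is continuous because $f$ is continuous into $L^2\hookrightarrow(X^3)^*$ and $L$ is bounded into $(X^3)^*$. Summing gives $\partial_t u=Lu+f$. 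For the converse, assuming $u\in C^1(I;(X^3)^*)$ solves \eqref{eq:1a}, I would apply the product rule in $(X^3)^*$ to $s\mapsto W_\alpha(t-s)u(s)$: the preliminary step gives $\tfrac{d}{ds}W_\alpha(t-s)u(s)=W_\alpha(t-s)\bigl(u'(s)-Lu(s)\bigr)=W_\alpha(t-s)f(s)$, and integrating from $0$ to $t$ by the fundamental theorem of calculus for $(X^3)^*$-valued $C^1$ maps recovers $u(t)=W_\alpha(t)u(0)+\int_0^tW_\alpha(t-t')f(t')\,dt'$, i.e.\ the Duhamel formula once $\psi$ is identified with $u(0)$ (the value forced by setting $t=0$).

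The main obstacle is the preliminary step: differentiating $t\mapsto W_\alpha(t)g$ for a general $g\in L^2$ lying in no $L^2$-domain of $L$. In semigroup language this is the statement that $L^2$ embeds continuously into the domain of the generator of the extended group on $(X^3)^*$, so that the mild solution produced by Duhamel is automatically a strong solution there; everything else is bookkeeping with the product and Leibniz rules carried out consistently in $(X^3)^*$, where $L$ is a genuine bounded operator. One should also confirm that the $L^2$-valued Bochner integral defining $u$ agrees with its reading as an $(X^3)^*$-valued integral, which is immediate from $L^2\hookrightarrow(X^3)^*$ and the boundedness of $W_\alpha(t)$ on both spaces.
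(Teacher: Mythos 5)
Your proposal is correct and follows the same route the paper indicates: the paper states this lemma as a corollary of the preceding extension of $\partial_x^3-\mathscr H D_x^\alpha\partial_y^2$ to a bounded operator from $L^2$ into $(X^3)^*$ (via the adjoint of its restriction to $X^3$ and the Fourier transform), and your argument simply supplies the details of that plan — the dominated-convergence differentiation of $t\mapsto W_\alpha(t)g$ in $(X^3)^*$, the term-by-term differentiation of the Duhamel formula, and the standard $W_\alpha(t-s)u(s)$ trick for the converse. The verification that $|p(\xi,\eta)|^2$ is controlled by the $X^3$ weight (which is what makes $L\colon X^3\to L^2$ bounded for all $-1\le\alpha\le1$) is exactly the point the paper relies on, and your use of it is sound.
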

This lemma gives sense to the well-posedness results in
$H^{s_1, s_2}$, that we shall enunciate later, when $\alpha$ is
negative. \par The next result is a technical lemma that we use later.
\begin{lema}\label{justpin}
  Let us assume that $u$ and $f$ are continuous functions in the $L^2$
  space and satisfy \eqref{eq:1a}  in the sense described there.
  Then,
  \begin{equation}
    \label{eq:1b}
\frac12 \frac d {dt}\|u \|^2 = (u,f).
  \end{equation}
\end{lema}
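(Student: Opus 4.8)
The plan is to exploit that the linear part of \eqref{eq:1a} is generated by the \emph{unitary} group $W_\alpha$, which lets me sidestep the regularity mismatch that is the real difficulty here: $u$ is only continuous into $L^2$, while $\partial_t u$ lives in $(X^3)^*$. Since $u$ need not belong to $X^3$, the naive pairing $(u,\partial_t u)$ is not an $L^2$ inner product, and one cannot simply differentiate $\|u\|^2$ by testing the equation against $u$. (A direct argument via the skew-adjointness of $\partial_x^3-\mathscr{H}D_x^\alpha\partial_y^2$ would run into exactly this gap and would require mollification and the control of commutators.) The clean route is to conjugate $u$ by the group, after which the resulting object turns out to be genuinely differentiable in $L^2$.

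Concretely, fix a base point $t_0\in I$. By the preceding lemma (the Duhamel characterization), the hypothesis that $u,f$ are continuous into $L^2$ and satisfy \eqref{eq:1a} in the stated sense is equivalent to the integral identity
\[
u(t) = W_\alpha(t-t_0)\,u(t_0) + \int_{t_0}^{t} W_\alpha(t-t')\,f(t')\,dt', \qquad t\in I.
\]
Setting $v(t):=W_\alpha(-t)\,u(t)$ and using the group property $W_\alpha(a)W_\alpha(b)=W_\alpha(a+b)$ (so that $W_\alpha(-t)W_\alpha(t-t_0)=W_\alpha(-t_0)$ and $W_\alpha(-t)W_\alpha(t-t')=W_\alpha(-t')$), applying $W_\alpha(-t)$ to the identity above yields
\[
v(t) = W_\alpha(-t_0)\,u(t_0) + \int_{t_0}^{t} W_\alpha(-t')\,f(t')\,dt'.
\]
Because $t'\mapsto W_\alpha(-t')\,f(t')$ is continuous from $I$ into $L^2$ — by the strong continuity of the group together with the continuity of $f$ — the right-hand side is a genuine $L^2$-valued $C^1$ function, and $v'(t)=W_\alpha(-t)\,f(t)$ in $L^2$.

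It remains to differentiate the norm. Since $W_\alpha(-t)$ is unitary, $\|u(t)\|=\|v(t)\|$, and since $v$ is $C^1$ into $L^2$ I may differentiate the inner product directly, obtaining
\[
\frac12\,\frac{d}{dt}\|u(t)\|^2 = \frac12\,\frac{d}{dt}\|v(t)\|^2 = \Real\big(v(t),v'(t)\big) = \Real\big(W_\alpha(-t)\,u(t),\,W_\alpha(-t)\,f(t)\big) = \Real\big(u(t),f(t)\big),
\]
the last equality again by unitarity, which is exactly \eqref{eq:1b} in the real-valued setting where $(u,f)=\Real(u,f)$. The one point I would be most careful about is the first step, namely invoking the preceding lemma at a base point $t_0$ that need not equal $0$; this is legitimate because both \eqref{eq:1a} and the group $W_\alpha$ are invariant under translation in $t$, so one applies that lemma to $\tilde u(t)=u(t+t_0)$, $\tilde f(t)=f(t+t_0)$ on $I-t_0$ and substitutes back. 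Everything else is forced by the unitarity and strong continuity of $W_\alpha$.
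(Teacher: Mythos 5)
Your proof is correct, but it takes a genuinely different route from the paper's. The paper argues by regularization: it sets $u_\lambda = e^{\lambda(\bigtriangleup - D_x^{-1})}u$ and $f_\lambda$ likewise, observes that these lie in $X^\infty$, still satisfy \eqref{eq:1a} (the mollifier is a Fourier multiplier and hence commutes with the linear part), and satisfy \eqref{eq:1b} by the antisymmetry of $\partial_x^3-\mathscr{H}D_x^\alpha\partial_y^2$; it then lets $\lambda\to 0$ using the uniform convergence of $u_\lambda, f_\lambda$ to $u, f$ on closed intervals. You instead invoke the preceding Duhamel-characterization lemma, conjugate by the unitary group, and differentiate $\|W_\alpha(-t)u(t)\|^2$ directly. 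Your route buys a cleaner and more self-justifying argument: you never have to pass to a limit in a differentiated identity (the paper's step really requires first integrating \eqref{eq:1b} in $t$ for $u_\lambda$, taking $\lambda\to 0$ in the integrated form, and then differentiating back, which is left implicit there), and the unitarity and strong continuity of $W_\alpha$ do all the work. The paper's route buys generality and independence: the mollification argument survives when the linear part is merely skew-adjoint or dissipative with no convenient explicit group to conjugate by, and it does not lean on the previous lemma. Your care with the base point $t_0$ — translating in $t$ so that the preceding lemma, stated with base point $0$, applies on an arbitrary open interval — is warranted and correctly handled, as is the reduction of $\Real(u,f)$ to $(u,f)$ in the real-valued setting.
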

\begin{proof}
  Assume that $u_\lambda = e^{\lambda(\bigtriangleup - D_x^{-1})}(u)$ 
  and $f_\lambda$ is defined in the same way. $u_\lambda$ and $f_\lambda$
  are in $X^\infty$ and they are uniformly convergent, as $\lambda$ tends to
   0, on closed intervals to $u$ and $f$ in $L^2$. Also, they satisfy
  the equation \eqref{eq:1a}. From the antisimmetry of the operator
  $\partial^3_{x}-\mathscr{H}D_{x}^{\alpha}\partial^2_{y} $, it is
  easy to see that $u_\lambda$ and $f_\lambda$ satisfy
  \eqref{eq:1b}. When we make $\lambda $ tend to 0, we get the lemma.
\end{proof}
Next we shall state a set of results about the properties of the
spaces with we work in this paper. Maybe one the most known of these
results is the Sobolev lemma. Here we present a version for the
$H^{s_1,s_2}(\re^2)$ spaces.
\begin{lema}[Sobolev]\label{tfabi}
Let $s_1$ and $s_2$ be  positive real numbers such that 
$\frac{1}{s_1} + \frac{1}{s_2} < 2.$ Then, 
$H^{s_1,s_2}(\re^2) \subset C_{\infty}(\re^2)$ (the set of 
continuous functions on $\re^2$ vanishing at infinity), with 
continuous embedding.
\end{lema}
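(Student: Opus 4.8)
The plan is to prove the anisotropic Sobolev embedding by reducing it to the classical isotropic Sobolev lemma through a Fourier-analytic estimate on the $L^1$-norm of $\widehat f$, which controls the sup-norm and continuity via Riemann--Lebesgue. The key structural fact is that the anisotropic weight $\langle\xi\rangle^{2s_1}+\langle\eta\rangle^{2s_2}$ dominates (up to constants) a product weight, and that the condition $\frac1{s_1}+\frac1{s_2}<2$ is exactly what makes the inverse weight integrable after an application of Hölder's inequality in each variable separately.

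First I would show that $H^{s_1,s_2}\subset L^1(\widehat{\phantom{f}})$, i.e. that $\widehat f\in L^1(\re^2)$ for $f\in H^{s_1,s_2}$. Writing
\begin{equation*}
\int_{\re^2}|\widehat f(\xi,\eta)|\,d\xi\,d\eta
=\int_{\re^2}\frac{(\langle\xi\rangle^{2s_1}+\langle\eta\rangle^{2s_2})^{1/2}|\widehat f|}{(\langle\xi\rangle^{2s_1}+\langle\eta\rangle^{2s_2})^{1/2}}\,d\xi\,d\eta,
\end{equation*}
the Cauchy--Schwarz inequality bounds this by $\|f\|_{H^{s_1,s_2}}$ times $\bigl(\int_{\re^2}(\langle\xi\rangle^{2s_1}+\langle\eta\rangle^{2s_2})^{-1}\,d\xi\,d\eta\bigr)^{1/2}$, so the whole argument hinges on the finiteness of this last integral. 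The hard part, and the main obstacle, is verifying this integrability under the stated hypothesis: the difficulty is that the weight is a \emph{sum}, not a product, so one cannot integrate in $\xi$ and $\eta$ independently in a naive way.

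To overcome this I would exploit the elementary inequality $\langle\xi\rangle^{2s_1}+\langle\eta\rangle^{2s_2}\ge c\,\langle\xi\rangle^{2s_1\theta}\langle\eta\rangle^{2s_2(1-\theta)}$ for any $\theta\in[0,1]$ (by the weighted AM--GM / Young inequality $a+b\ge a^\theta b^{1-\theta}$ applied to $a=\langle\xi\rangle^{2s_1}$, $b=\langle\eta\rangle^{2s_2}$). Then the inverse weight factors as a product, and the double integral separates into $\int_\re\langle\xi\rangle^{-2s_1\theta}\,d\xi\cdot\int_\re\langle\eta\rangle^{-2s_2(1-\theta)}\,d\eta$, each a one-dimensional integral that converges precisely when $2s_1\theta>1$ and $2s_2(1-\theta)>1$. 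Adding these two requirements and eliminating $\theta$, such a splitting point exists if and only if $\frac1{2s_1}+\frac1{2s_2}<1$, which is exactly the hypothesis $\frac1{s_1}+\frac1{s_2}<2$. Choosing, for instance, $\theta=\tfrac12\bigl(1+\tfrac1{2s_1}-\tfrac1{2s_2}\bigr)$ makes both one-dimensional integrals convergent.

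Finally, once $\widehat f\in L^1$, the Fourier inversion formula gives $f(x,y)=\frac1{2\pi}\int_{\re^2}e^{i(x\xi+y\eta)}\widehat f(\xi,\eta)\,d\xi\,d\eta$ as an absolutely convergent integral, so $f$ agrees almost everywhere with a continuous function and $\|f\|_{L^\infty}\le C\|f\|_{H^{s_1,s_2}}$ by the computation above; the Riemann--Lebesgue lemma then yields that this continuous representative vanishes at infinity, placing $f\in C_\infty(\re^2)$ with the embedding constant controlled by $\bigl(\int(\langle\xi\rangle^{2s_1}+\langle\eta\rangle^{2s_2})^{-1}\bigr)^{1/2}$, which establishes continuity of the embedding. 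I expect the only genuinely delicate point to be the bookkeeping in the choice of $\theta$ and the strictness of the inequalities at the endpoints $2s_i\theta=1$; everything else is a routine consequence of Cauchy--Schwarz and Fourier inversion.
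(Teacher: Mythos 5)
Your argument is correct and complete. The paper itself does not prove this lemma; it only points to an external reference (a thesis), so there is no in-paper technique to compare against. Your route is the standard one and it works: Cauchy--Schwarz against the weight reduces everything to the finiteness of $\int_{\re^2}\bigl(\langle\xi\rangle^{2s_1}+\langle\eta\rangle^{2s_2}\bigr)^{-1}\,d\xi\,d\eta$; the bound $a+b\ge a^{\theta}b^{1-\theta}$ (valid for $a,b\ge 0$ and $\theta\in[0,1]$ via weighted AM--GM) factorizes the weight; and the two one-dimensional integrals converge exactly when $\theta\in\bigl(\tfrac1{2s_1},\,1-\tfrac1{2s_2}\bigr)$, an interval that is nonempty precisely under the hypothesis $\tfrac1{s_1}+\tfrac1{s_2}<2$. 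Your choice of $\theta$ is the midpoint of that interval and lies in $(0,1)$, so the AM--GM step is legitimate. The conclusion via Fourier inversion and Riemann--Lebesgue, with the embedding constant $\bigl(\int(\langle\xi\rangle^{2s_1}+\langle\eta\rangle^{2s_2})^{-1}\bigr)^{1/2}$, is exactly right. The only cosmetic remark is that the normalization constant in the inversion formula should be $(2\pi)^{-2}$ in two dimensions, which of course does not affect the result.
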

\begin{proof}
See \cite{tddfabian}
\end{proof}
\begin{lema}{\label{interenlp}}
Let $1\leq p<q\leq \infty$ and $f\in L^p\cap L^q$. Then $f\in L^r$ for
$r=\theta p +(1-\theta)q$, where $\theta\in (0,1)$, and we have 
\begin{equation*}
\left\|f\right\|_{L^{r}}^{r}\leq \left\|f\right\|_{L^{p}}^{\theta p}
\left\|f\right\|_{L^{q}}^{(1-\theta) q} 
\end{equation*}  
\begin{proof}
The proof is immediate consequence of the Hölder inequality. 
\end{proof}
\end{lema}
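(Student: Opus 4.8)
The plan is to derive the inequality directly from Hölder's inequality, the only input being the algebraic decomposition $r=\theta p+(1-\theta)q$. First I would record the pointwise factorization that this identity forces, namely
$$|f(x)|^{r}=|f(x)|^{\theta p}\,|f(x)|^{(1-\theta)q}=\bigl(|f(x)|^{p}\bigr)^{\theta}\bigl(|f(x)|^{q}\bigr)^{1-\theta}$$
for almost every $x$, so that $\|f\|_{L^{r}}^{r}=\int_{\re^2}\bigl(|f|^{p}\bigr)^{\theta}\bigl(|f|^{q}\bigr)^{1-\theta}$. The point of writing it this way is that each factor is now a pure power of an integrable function: since $f\in L^p\cap L^q$, one has $\bigl(|f|^{p}\bigr)^{\theta}\in L^{1/\theta}$ and $\bigl(|f|^{q}\bigr)^{1-\theta}\in L^{1/(1-\theta)}$.

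Next I would apply Hölder's inequality to that integral with the conjugate pair $a=1/\theta$ and $a'=1/(1-\theta)$, which are legitimately conjugate because $\theta+(1-\theta)=1$ and both exceed $1$ as $\theta\in(0,1)$. This gives
$$\int_{\re^2}\bigl(|f|^{p}\bigr)^{\theta}\bigl(|f|^{q}\bigr)^{1-\theta}\le\Bigl(\int_{\re^2}|f|^{p}\Bigr)^{\theta}\Bigl(\int_{\re^2}|f|^{q}\Bigr)^{1-\theta}=\|f\|_{L^{p}}^{\theta p}\,\|f\|_{L^{q}}^{(1-\theta)q},$$
where the exponents collapse cleanly because $(|f|^{p})^{\theta\cdot(1/\theta)}=|f|^{p}$ and $(|f|^{q})^{(1-\theta)\cdot 1/(1-\theta)}=|f|^{q}$. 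Combining this with the previous display yields exactly $\|f\|_{L^{r}}^{r}\le\|f\|_{L^{p}}^{\theta p}\|f\|_{L^{q}}^{(1-\theta)q}$, and the finiteness of the right-hand side shows $f\in L^{r}$.

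The computation above is written for $q<\infty$; the formal endpoint $q=\infty$ is degenerate, since then $r=\theta p+(1-\theta)q=\infty$ and the assertion reduces to the trivial membership $f\in L^{\infty}$, so I would simply note this case and set it aside. There is no real obstacle in this lemma: it is a bookkeeping exercise in matching a Hölder pair to the convex decomposition of $r$. The only step that genuinely demands care is verifying that $1/\theta$ and $1/(1-\theta)$ are the correct conjugate exponents, so that the powers telescope to $\|f\|_{L^{p}}^{\theta p}$ and $\|f\|_{L^{q}}^{(1-\theta)q}$ and not to some mismatched powers; once that is checked the result is immediate, as the author indicates.
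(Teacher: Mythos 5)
Your proof is correct and is exactly the argument the paper alludes to: the paper's proof consists of the single remark that the lemma is an immediate consequence of H\"older's inequality, and your application of H\"older with the conjugate pair $1/\theta$, $1/(1-\theta)$ to the factorization $|f|^{r}=(|f|^{p})^{\theta}(|f|^{q})^{1-\theta}$ is the standard way to carry that out. Your side remark on the degenerate endpoint $q=\infty$ is a reasonable reading of the statement and does not affect the cases in which the lemma is actually used later in the paper.
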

\begin{lema}\label{lpenhs}
  If $s \in (0,n/2)$, then $H^s(\re^n)$ is a continuous embedding in
  $L^p(\re^n)$, with $p = \frac{2n}{n-2s}$, i.e.
  $s=n(\frac{1}{2}-\frac{1}{p})$. Furthermore, for $f\in H^s(\re^n)$,
  $s\in(0,n/2)$
\begin{equation*}\label{eclpenhs}
\left\|f\right\|_{L^p(\re^n)} \leq c_{n,s}\left\|D^s f\right\|_{L^2(\re^n)} \leq c\left\|f\right\|_{H^s(\re^n)}
\end{equation*}
where $$D^l f = (-\Delta)^{l/2}=((|\xi|)^l \hat f)^{\vee}$$
\end{lema}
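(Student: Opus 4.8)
The plan is to reduce everything to the mapping properties of the Riesz potential and to establish those by a real-variable splitting controlled by the Hardy--Littlewood maximal function. The second inequality is immediate: since $|\xi|^{2s}\le(1+|\xi|^2)^{s}=\langle\xi\rangle^{2s}$, Plancherel's theorem yields $\|D^s f\|_{L^2}^2=\int|\xi|^{2s}|\widehat f|^2\,d\xi\le\int\langle\xi\rangle^{2s}|\widehat f|^2\,d\xi=\|f\|_{H^s}^2$, so the entire content lies in the first, homogeneous inequality.

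First I would set $g:=D^s f\in L^2$, so that $\widehat f(\xi)=|\xi|^{-s}\widehat g(\xi)$. For $0<s<n$ (and in particular for $0<s<n/2$) the operator $g\mapsto(|\xi|^{-s}\widehat g)^{\vee}$ is the Riesz potential, realized as convolution against the kernel $c_{n,s}|x|^{s-n}$; hence $f=I_s g=c_{n,s}\,|x|^{s-n}*g$, and it suffices to prove $\|I_s g\|_{L^p}\le c_{n,s}\|g\|_{L^2}$ with $p=\frac{2n}{n-2s}$.

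The core is a pointwise estimate obtained by splitting the convolution at a radius $R>0$. The near part $\int_{|y|<R}|y|^{s-n}|g(x-y)|\,dy$ I would decompose into dyadic annuli and compare each average to $Mg(x)$, the sum over scales converging because $s>0$, to obtain a bound $CR^{s}Mg(x)$. The far part $\int_{|y|\ge R}|y|^{s-n}|g(x-y)|\,dy$ I would bound by Cauchy--Schwarz as $C\|g\|_{L^2}\big(\int_{|y|\ge R}|y|^{2(s-n)}\,dy\big)^{1/2}=CR^{\,s-n/2}\|g\|_{L^2}$. \textbf{Here the hypothesis $s<n/2$ is essential}, since the tail integral converges exactly when $2(s-n)<-n$. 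Optimizing $|I_s g(x)|\le C\big(R^{s}Mg(x)+R^{\,s-n/2}\|g\|_{L^2}\big)$ in $R$ gives $|I_s g(x)|\le C\,\|g\|_{L^2}^{2s/n}\,(Mg(x))^{1-2s/n}$.

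Finally I would raise this pointwise inequality to the power $p$ and integrate. The exponents are arranged precisely so that $(1-2s/n)p=2$, whence $\int(Mg)^{(1-2s/n)p}\,dx=\|Mg\|_{L^2}^{2}$, which the Hardy--Littlewood maximal theorem controls by $C\|g\|_{L^2}^{2}$; collecting powers (using $2sp/n+2=p$) yields $\|I_s g\|_{L^p}^p\le C\|g\|_{L^2}^p$, which is the claim. The only nontrivial external ingredient is the $L^2$-boundedness of the maximal operator, so the main obstacle is not conceptual but bookkeeping: making the dyadic sums, the tail integral, and the final optimization conspire to reproduce the sharp exponent $p=2n/(n-2s)$.
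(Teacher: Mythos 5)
Your argument is correct and complete, but it is worth noting that the paper does not actually prove this lemma: its ``proof'' is a one-line citation to page 48 of the Linares--Ponce book, where the result appears as the classical Sobolev embedding derived from the Hardy--Littlewood--Sobolev theorem for Riesz potentials. Your proposal supplies a genuine, self-contained proof of exactly that underlying fact via Hedberg's trick: the reduction $f=I_sg$ with $g=D^sf\in L^2$, the split of the convolution at radius $R$ with the near part controlled by $CR^sMg(x)$ (dyadic annuli, convergent since $s>0$) and the far part by Cauchy--Schwarz giving $CR^{s-n/2}\|g\|_{L^2}$ (convergent precisely because $s<n/2$), the optimization in $R$ yielding $|I_sg(x)|\le C\|g\|_{L^2}^{2s/n}(Mg(x))^{1-2s/n}$, and the final integration using $(1-2s/n)p=2$ together with the $L^2$-boundedness of the maximal operator. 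All the exponent bookkeeping checks out ($2sp/n+2=p$), and the treatment of the endpoint hypotheses is exactly right. What your route buys over the paper's citation is self-containment and a proof that avoids interpolation machinery entirely, resting only on the maximal theorem; what the citation buys is brevity, since the result is entirely standard. The only point you pass over lightly is the identification $f=c_{n,s}|x|^{s-n}*g$ for $g\in L^2$, which requires a word of justification (the Riesz kernel is the inverse Fourier transform of $|\xi|^{-s}$ only in the distributional sense, and one should check the convolution converges absolutely a.e.\ for $g\in L^2$ with $0<s<n/2$ --- which it does, by the very splitting you perform), but this is a standard technicality rather than a gap.
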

\begin{proof}
See the Linares and Ponce book \cite{linaresponce} page 48.
\end{proof}

\begin{lema}\label{interdsenlplq}
  Let $s_1,s_2\in\re$ and assume that $D^{s_1}f \in L^p(\re)$ and
  $D^{s_2}f \in L^q(\re)$. Then, for all $\theta\in[0,1]$,
  $D^sf\in L^r$, and
  $$\|D^{s}f\|_{L^r(\re)} \leq C_s
  \|D^{s_1}f \|_{ L^p(\re)}^{ \theta} \| D^{s_2}f\|_{L^q(\re)}^{1-\theta} $$ 
  where $\theta = \dfrac{s_2-s}{s_2-s_1}$ and
  $\dfrac{1}{r} = \dfrac{\theta}{p}+\dfrac{1-\theta}{q}$.
\end{lema}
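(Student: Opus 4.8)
The plan is to prove this \emph{Gagliardo--Nirenberg-type} inequality by complex (Stein) interpolation, since the hypotheses $s=\theta s_1+(1-\theta)s_2$ and $\tfrac1r=\tfrac\theta p+\tfrac{1-\theta}q$ say precisely that the triple $(s,r)$ is the convex combination, with weight $\theta$, of the endpoints $(s_1,p)$ and $(s_2,q)$. First I would reduce, by a density argument together with the fact that the right-hand side is finite by hypothesis, to establishing the bound for $f$ in a convenient dense class --- say Schwartz functions whose Fourier transform vanishes in a neighbourhood of the origin --- so that $D^{\sigma}f$ is a well-defined Schwartz function for every $\sigma\in\co$ and all the integrals below converge absolutely; the negative values of $s_1,s_2$ (allowed since $s_1,s_2\in\re$) force this low-frequency precaution. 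By duality it then suffices to bound $\bigl|\int_\re (D^sf)\,g\,dx\bigr|$ over $g$ in the same dense class with $\|g\|_{L^{r'}}=1$.

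Second, I would set up the analytic family. On the strip $\{z\in\co:0\le\Real z\le1\}$ define $s(z)=zs_1+(1-z)s_2$ and $\beta(z)=r'\bigl(\tfrac{z}{p'}+\tfrac{1-z}{q'}\bigr)$, and put
\[
F(z)=\int_\re \bigl(D^{s(z)}f\bigr)(x)\,|g(x)|^{\beta(z)}\,\frac{g(x)}{|g(x)|}\,dx .
\]
One checks that $F$ is analytic in the interior and continuous and bounded on the closed strip. The exponents are rigged so that at the interior point $z=\theta$ one has $s(\theta)=s$ and $\beta(\theta)=1$ (using $\tfrac1{r'}=\tfrac\theta{p'}+\tfrac{1-\theta}{q'}$), hence $F(\theta)=\int_\re(D^sf)\,g\,dx$, exactly the quantity we want to control.

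Third, I would estimate $F$ on the two boundary lines. On $\Real z=1$, writing $z=1+it$ gives $s(z)=s_1+it(s_1-s_2)$, so $D^{s(z)}f=D^{it(s_1-s_2)}\bigl(D^{s_1}f\bigr)$; Hölder with exponents $p,p'$ yields $|F(1+it)|\le \|D^{it(s_1-s_2)}\|_{L^p\to L^p}\,\|D^{s_1}f\|_{L^p}\,\|\,|g|^{\beta(1+it)}\|_{L^{p'}}$, and since $\Real\beta(1+it)=r'/p'$ the last factor equals $\|g\|_{L^{r'}}^{r'/p'}=1$. The purely imaginary power $D^{i\tau}$ has Fourier symbol $|\xi|^{i\tau}$, whose Mikhlin multiplier norm grows only polynomially in $\tau$, so $|F(1+it)|\le C(1+|t|)^N\|D^{s_1}f\|_{L^p}$. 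Symmetrically, on $\Real z=0$ one obtains $|F(it)|\le C(1+|t|)^N\|D^{s_2}f\|_{L^q}$.

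Finally, I would apply the version of the Hadamard three-lines theorem that tolerates admissible (sub-exponential) growth --- equivalently Stein's interpolation theorem --- to conclude $|F(\theta)|\le C\,\|D^{s_1}f\|_{L^p}^{\theta}\,\|D^{s_2}f\|_{L^q}^{1-\theta}$. Taking the supremum over $g$ and invoking density then gives the stated inequality. The main obstacle is the third step: controlling the imaginary-power multipliers $D^{i\tau}$ uniformly enough (polynomial growth in $\tau$) for the three-lines argument to apply, together with justifying analyticity and boundedness of $F$ on the whole strip when $s_1,s_2$ may be negative; once the dense class with frequencies bounded away from the origin is fixed, these reduce to routine multiplier estimates. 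An alternative, should one prefer to quote rather than build the machinery, is to identify $\|D^s f\|_{L^r}$ with the homogeneous Riesz-potential space norm and cite the complex interpolation identity for such spaces.
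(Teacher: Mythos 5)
The paper does not actually prove this lemma; it simply cites Lizarazo's thesis \cite{tesjul}, so there is no in-paper argument to compare against. Your Stein-interpolation proof is the standard self-contained route and the architecture is sound: the reduction to a dense class with Fourier support away from the origin, the duality setup, the analytic family with $s(z)=zs_1+(1-z)s_2$ and $\beta(z)=r'\bigl(\tfrac{z}{p'}+\tfrac{1-z}{q'}\bigr)$, the boundary computations (including $\Real\beta(1+it)=r'/p'$ and $\beta(\theta)=1$), and the polynomial-in-$\tau$ Mikhlin bound for $D^{i\tau}$ are all correct, and the three-lines theorem with admissible growth delivers exactly $|F(\theta)|\le M_1^{\theta}M_0^{1-\theta}$ with the endpoints matched to the right exponents. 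Two caveats are worth recording. First, your argument genuinely requires $1<p,q<\infty$ (for the imaginary-power multiplier) and $r<\infty$ (for the duality step); the lemma as stated imposes no such restrictions, so you are proving a slightly narrower statement than the letter of the claim --- though this covers every use the paper makes of it. Second, the only place the lemma is invoked (inequality \eqref{inmerp1} in Lemma \ref{xenlp}) has $p=q=r=2$, where the whole machinery collapses: by Plancherel the claim reduces to $\bigl\||\xi|^{s}\hat f\bigr\|_{L^2}\le\bigl\||\xi|^{s_1}\hat f\bigr\|_{L^2}^{\theta}\bigl\||\xi|^{s_2}\hat f\bigr\|_{L^2}^{1-\theta}$, which is H\"older's inequality with exponents $1/\theta$ and $1/(1-\theta)$ applied to $|\xi|^{2\theta s_1}|\hat f|^{2\theta}\cdot|\xi|^{2(1-\theta)s_2}|\hat f|^{2(1-\theta)}$, with constant $C_s=1$. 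If you only need the lemma as the paper uses it, that one-line argument is preferable; your interpolation proof is what you would cite to justify the general statement.
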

\begin{proof}
  A proof can be found in \cite{tesjul} 
\end{proof}
The next estimate was proved by Kato and Ponce in
\cite{katoponce} and it will be very useful later in this work.
\begin{lema}\label{katoponce}
For $s>0$ and $1<p<\infty$, we have 
\begin{equation}
  \|[J^s,f ]g \|_{L^p(\re^n)}\lesssim \|\partial f\|_{L^{\infty}(\re^n
    )}\|J^{s-1}g\|_{L^{p}(\re^n )}+\|J^sf\|_{L^{p}(\re^n
    )}\|g\|_{L^{\infty}(\re ^n)},
\end{equation}
for all $f$ and $g\in \mathcal S(\re^n )$ 
\end{lema}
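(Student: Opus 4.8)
The plan is to establish this as the classical Kato--Ponce commutator estimate through a Littlewood--Paley / paraproduct decomposition, the guiding idea being to exploit the cancellation built into $[J^s,f]g = J^s(fg)-fJ^s g$. Fix a smooth dyadic partition of unity and let $\Delta_j$ ($j\ge 0$) denote the Littlewood--Paley projection onto the frequency annulus $|\xi|\sim 2^j$, with $\Delta_{-1}$ the projection onto $|\xi|\lesssim 1$ and $S_j=\sum_{k<j}\Delta_k$ the associated low-frequency truncation. First I would insert Bony's decomposition
\begin{equation*}
fg = \sum_j S_{j-2}f\,\Delta_j g \;+\; \sum_j \Delta_j f\, S_{j-2}g \;+ \sum_{|j-k|\le 1} \Delta_j f\,\Delta_k g
\end{equation*}
into $J^s(fg)$ and, after projecting $g=\sum_k\Delta_k g$, also into $fJ^s g$, so that the commutator splits into a \emph{low-high}, a \emph{high-low}, and a \emph{resonant} contribution which I treat separately.

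The decisive term is the low-high part $\sum_j\bigl(J^s(S_{j-2}f\,\Delta_j g)-S_{j-2}f\,J^s\Delta_j g\bigr)$. On the $j$-th summand the factor $S_{j-2}f$ has frequency $\lesssim 2^{j-2}$ while $\Delta_j g$ has frequency $\sim 2^j$, so $J^s$ acts on a product whose spectrum concentrates near $|\xi|\sim 2^j$. Writing the symbol as $(1+|\xi+\zeta|^2)^{s/2}$ with $\xi$ the large frequency of $g$ and $\zeta$ the small frequency of $f$, the first-order Taylor expansion $(1+|\xi+\zeta|^2)^{s/2}=(1+|\xi|^2)^{s/2}+\zeta\cdot\nabla_\xi(1+|\xi|^2)^{s/2}+\cdots$ exhibits the commutator as a zeroth-order operator applied to $(\partial f)\,(J^{s-1}\Delta_j g)$ up to a rapidly summable remainder. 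By the Coifman--Meyer multiplier theorem (equivalently, a kernel estimate for the associated bilinear symbol) each summand is bounded in $L^p$ by $C\,\|\partial f\|_{L^\infty}\,\|J^{s-1}\Delta_j g\|_{L^p}$, and square-summing in $j$ via the Littlewood--Paley characterisation of $L^p$ (valid precisely for $1<p<\infty$) produces the term $\|\partial f\|_{L^\infty}\|J^{s-1}g\|_{L^p}$.

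For the high-low and resonant terms the derivative instead falls on the high-frequency factor $f$. There $J^s$ essentially commutes onto $J^s f$ while the remaining low-frequency factor stays bounded in $L^\infty$, so after the analogous symbol expansion and Coifman--Meyer bound these pieces are controlled by $\|J^s f\|_{L^p}\|g\|_{L^\infty}$; the subtracted piece $fJ^s g$ in these regimes is absorbed by matching the Littlewood--Paley frequencies and again invoking the square-function estimate together with the boundedness of the Hardy--Littlewood maximal function. Summing the three contributions gives the claimed inequality, and the density of $\mathcal S$ removes any a priori smoothness assumption once the constants are seen to be independent of $f$ and $g$.

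The main obstacle is the low-high estimate: one must verify quantitatively that transplanting $J^s$ across the low-frequency factor costs exactly one derivative of $f$ and no loss in $g$ beyond $J^{s-1}$. This is precisely where the structure of the inhomogeneous symbol $(1+|\xi|^2)^{s/2}$ and the frequency separation $|\zeta|\ll|\xi|$ must be used, and where the Coifman--Meyer theorem (or, as in Kato and Ponce's original argument, the symbolic calculus for pseudodifferential operators of order $s$) does the real work; the remaining bookkeeping---reassembling the paraproducts, handling the finitely many near-diagonal frequencies, and covering the endpoint-free range $1<p<\infty$---is routine.
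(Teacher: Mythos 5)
Your outline is essentially correct, but note that the paper does not prove this lemma at all: it is stated as a known result with a pointer to the original article of Kato and Ponce \cite{katoponce}, so there is no in-paper argument to compare against. What you propose is the standard modern proof via Bony's paraproduct decomposition: split $[J^s,f]g$ into low-high, high-low and resonant pieces, Taylor-expand the symbol $(1+|\xi+\zeta|^2)^{s/2}$ in the low-high regime to extract exactly one derivative of $f$ at the cost of $J^{s-1}$ on $g$, control each bilinear piece by a Coifman--Meyer multiplier bound, and resum with the Littlewood--Paley square-function characterisation of $L^p$ (this is where $1<p<\infty$ enters). This is a genuinely different route from Kato and Ponce's original argument, which ran through the symbolic calculus for pseudodifferential operators and a Leibniz-type expansion of $J^s$; the paraproduct route is more modular and generalises more readily (e.g.\ to the fractional Leibniz rule of Lemma \ref{reglaleibniz}), while the citation route is all the paper needs. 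Two small points you should make explicit if you flesh this out: the hypothesis $s>0$ is used precisely in summing the resonant (high-high) contribution, where the output frequency of $\Delta_jf\,\Delta_kg$ ranges over all dyadic blocks $\lesssim 2^j$ and the geometric factor $2^{(l-j)s}$ must be summable; and the Taylor remainder in the low-high piece must be shown to contribute a symbol of order $s-2$ with the corresponding gain, which is where the frequency separation $|\zeta|\ll|\xi|$ is quantitatively consumed. As written, both of these are deferred to ``routine bookkeeping,'' which is acceptable for a sketch of a classical result but is exactly where the real estimates live.
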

\begin{corol}\label{coro-katopo}
For $s>0$ and $p\in(1\, \infty)$, $L_s^p \cap L^{\infty}$ is an 
algebra, also
\begin{equation}
\|fg\|_{L_s^p} \leq c(\|f\|_{\infty}\|g\|_{L_s^p}+ \|f \|_{L_s^p} \|g \|_{\infty})
\end{equation}
\end{corol}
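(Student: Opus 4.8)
The plan is to first observe that the algebra property is an immediate consequence of the displayed product estimate: if $f,g\in L^p_s\cap L^\infty$, the right-hand side is finite, so $fg\in L^p_s$, while $fg\in L^\infty$ is clear. Thus the whole content is the inequality $\|fg\|_{L^p_s}\lesssim \|f\|_\infty\|g\|_{L^p_s}+\|f\|_{L^p_s}\|g\|_\infty$, where $\|h\|_{L^p_s}=\|J^sh\|_{L^p}$. By density of $\mathcal S(\re^n)$ in $L^p_s$ together with a standard limiting argument, it suffices to prove the estimate for $f,g\in\mathcal S(\re^n)$, which is precisely the setting of Lemma \ref{katoponce}.

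The starting point is the operator identity $J^s(fg)=[J^s,f]g+fJ^sg$, valid on $\mathcal S$. Taking $L^p$ norms and using the triangle inequality,
\[
\|fg\|_{L^p_s}=\|J^s(fg)\|_{L^p}\le\|[J^s,f]g\|_{L^p}+\|fJ^sg\|_{L^p}.
\]
The second term is handled at once by Hölder: $\|fJ^sg\|_{L^p}\le\|f\|_\infty\|J^sg\|_{L^p}=\|f\|_\infty\|g\|_{L^p_s}$, which is one of the two terms we want. For the commutator term I would invoke Lemma \ref{katoponce} directly, obtaining
\[
\|[J^s,f]g\|_{L^p}\lesssim\|\partial f\|_\infty\|J^{s-1}g\|_{L^p}+\|J^sf\|_{L^p}\|g\|_\infty.
\]
Here $\|J^sf\|_{L^p}\|g\|_\infty=\|f\|_{L^p_s}\|g\|_\infty$ is the second desired term, and the factor $\|J^{s-1}g\|_{L^p}$ is harmless: since $J^{-1}=(1-\Delta)^{-1/2}$ has an integrable Bessel kernel, it is bounded on $L^p$, so $\|J^{s-1}g\|_{L^p}=\|J^{-1}J^sg\|_{L^p}\lesssim\|J^sg\|_{L^p}=\|g\|_{L^p_s}$.

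The main obstacle is the leftover factor $\|\partial f\|_\infty$: Lemma \ref{katoponce} produces $\|\partial f\|_\infty$, whereas the statement asks only for $\|f\|_\infty$, and $\|\partial f\|_\infty$ is not controlled by $\|f\|_\infty+\|f\|_{L^p_s}$ in general. I expect bridging this gap to be the crux. The clean resolution is not to run the commutator estimate with all the smoothness placed on $f$, but to establish the \emph{symmetric} Kato--Ponce product estimate
\[
\|J^s(fg)\|_{L^p}\lesssim\|J^sf\|_{L^{p_1}}\|g\|_{L^{p_2}}+\|f\|_{L^{p_3}}\|J^sg\|_{L^{p_4}},\qquad \tfrac1p=\tfrac1{p_1}+\tfrac1{p_2}=\tfrac1{p_3}+\tfrac1{p_4},
\]
and then specialize to $p_2=p_3=\infty$, $p_1=p_4=p$. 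This symmetric form is proved via a Littlewood--Paley/Bony paraproduct decomposition $fg=T_fg+T_gf+R(f,g)$: one bounds $\|J^s(T_fg)\|_{L^p}\lesssim\|f\|_\infty\|g\|_{L^p_s}$ and $\|J^s(T_gf)\|_{L^p}\lesssim\|f\|_{L^p_s}\|g\|_\infty$ using the Fefferman--Stein and Littlewood--Paley square-function estimates together with the disjointness of the relevant frequency supports, and treats the resummed high--high term $R(f,g)$ by the same square-function bound. This yields $\|f\|_\infty$ in place of $\|\partial f\|_\infty$ and hence the corollary; equivalently, one may simply cite the symmetric Kato--Ponce product rule, of which the commutator bound of Lemma \ref{katoponce} is the companion.
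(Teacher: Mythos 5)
The paper states this result as a bare corollary of Lemma \ref{katoponce} and supplies no proof, so there is nothing to match your argument against line by line; what you have written is essentially the honest version of what the paper sweeps under the rug. Your diagnosis of the gap is exactly right: running $J^s(fg)=[J^s,f]g+fJ^sg$ through the commutator bound of Lemma \ref{katoponce} produces the term $\|\partial f\|_{L^\infty}\|J^{s-1}g\|_{L^p}$, and $\|\partial f\|_{L^\infty}$ is not controlled by $\|f\|_{L^\infty}+\|f\|_{L^p_s}$ for general $s>0$ and $1<p<\infty$, so the stated estimate is not literally a corollary of the commutator lemma. The inequality is in fact the companion product estimate from the same reference \cite{katoponce} (their Lemma X1, proved there alongside, not from, the commutator bound), and your proposed route --- reduce to Schwartz functions by density, then prove the symmetric fractional Leibniz rule via the Bony decomposition $fg=T_fg+T_gf+R(f,g)$, bounding the two paraproducts by Fefferman--Stein and Littlewood--Paley square-function estimates and resumming the high--high term using $s>0$ --- is the standard correct proof and is sound as sketched. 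The only caveat is that the paraproduct portion is given in outline rather than in full detail, but every step you indicate is a standard estimate and the hypotheses $s>0$, $1<p<\infty$ are used exactly where they are needed (the geometric resummation of $R(f,g)$ and the vector-valued square-function bounds, respectively). In short: your proof is correct, and your observation that the ``corollary'' labeling is a misnomer is itself a worthwhile correction to the paper; the authors would do better to cite the product estimate of \cite{katoponce} directly.
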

The  following estimate for the commutator operator can be seen in
\cite{linaresponce} page 51.
\begin{lema}\label{lemaconmkatoponce2}
For $s>0$, we have
\begin{equation}\label{katoponce2}
\left\|\left[\partial_x^s,g\right]f\right\|_{L^2(\re)}\lesssim \left\|\partial_xg\right\|_{L^{\infty}(\re)}\left\|\partial_x^{s-1}f\right\|_{L^{2}(\re)}+\left\|\partial_x^sg\right\|_{L^{2}(\re)}\left\|f\right\|_{L^{\infty}(\re)}
\end{equation}
\end{lema}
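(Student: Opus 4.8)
The plan is to prove the homogeneous commutator estimate by a Littlewood--Paley / Bony paraproduct decomposition, reducing matters to frequency--localized pieces on which the commutator structure can be exploited. Write $D^{s}=\partial_{x}^{s}$ for the operator with symbol $|\xi|^{s}$, let $\{P_{N}\}_{N\in 2^{\mathbb Z}}$ be the standard dyadic projections on $\re$, and $P_{<N}=\sum_{M<N}P_{M}$. Using Bony's decomposition $gf=\pi_{lh}(g,f)+\pi_{hl}(g,f)+\pi_{hh}(g,f)$, split the product according to whether the frequency of $g$ is much smaller than, much larger than, or comparable to that of $f$. Writing
\[
[D^{s},g]f = \sum_{N}\big(D^{s}(g\,P_{N}f)-g\,D^{s}P_{N}f\big),
\]
and splitting $g=P_{<N/8}g+P_{\ge N/8}g$ for each $N$, the decisive contribution is the \emph{low--high} piece $\sum_{N}[D^{s},P_{<N/8}g]\,P_{N}f$, where both $g\,P_{N}f$ and $D^{s}P_{N}f$ are localized at frequency $\sim N$; here the naive bound loses a full derivative and one must use cancellation.

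For that main term, the key computation is on the symbol side. On the Fourier transform one has the multiplier $|\xi|^{s}-|\eta|^{s}$ with $|\xi-\eta|\ll|\eta|\sim|\xi|\sim N$, where $\xi-\eta$ is the frequency of $g$ and $\eta$ that of $f$. By the mean value theorem $\big||\xi|^{s}-|\eta|^{s}\big|\lesssim|\xi-\eta|\,N^{s-1}$, so one derivative is transferred from $f$ onto $g$: schematically $[D^{s},P_{<N/8}g]P_{N}f$ behaves like $(\partial_{x}P_{<N/8}g)\,(D^{s-1}P_{N}f)$ up to a Coifman--Meyer error. This yields $\big\|[D^{s},P_{<N/8}g]P_{N}f\big\|_{L^{2}}\lesssim \|\partial_{x}g\|_{L^{\infty}}\,\|D^{s-1}P_{N}f\|_{L^{2}}$, and summing in $N$ via almost orthogonality (the outputs are frequency--localized at $\sim N$, so Plancherel applies for the sum, together with the square function for $f$) produces the first term $\|\partial_{x}g\|_{L^{\infty}}\|D^{s-1}f\|_{L^{2}}$.

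For the remaining \emph{high--low} and \emph{high--high} pieces, where $g$ sits at frequency $\gtrsim N$, no cancellation is needed and I would estimate $D^{s}(gf)$ and $g\,D^{s}f$ separately. Putting $D^{s}$ on the high--frequency factor $g$ and using Bernstein's inequality to pass $\|P_{N}f\|$ or $\|P_{<N}f\|$ from $L^{2}$ to $L^{\infty}$, then summing by $\ell^{2}$ duality and the square function, bounds these contributions by $\|D^{s}g\|_{L^{2}}\|f\|_{L^{\infty}}$, the second term on the right.

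I expect the main obstacle to be the rigorous treatment of the low--high term: turning the heuristic ``$|\xi|^{s}-|\eta|^{s}\approx(\partial_{x}g)\,|\eta|^{s-1}$'' into a genuine bound requires controlling the smoothness of the bilinear symbol on the region $|\xi-\eta|\ll|\eta|$ (a Coifman--Meyer type argument, or an explicit kernel estimate), and then summing the dyadic pieces with the correct almost orthogonality. A secondary nuisance is the range $0<s<1$, where $D^{s-1}$ is of negative order and one must restrict to $f\in\mathcal S$ (as in the hypotheses of Lemma~\ref{katoponce}) for $\|D^{s-1}f\|_{L^{2}}$ to be meaningful; the case $s\ge 1$ needed in the sequel is cleaner. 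Alternatively, one may deduce the homogeneous estimate directly from the inhomogeneous Kato--Ponce bound of Lemma~\ref{katoponce} after separately controlling the lower--order commutator of the operator $J^{s}-D^{s}$.
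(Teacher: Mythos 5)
The paper does not actually prove this lemma: it is quoted verbatim from the Linares--Ponce book \cite{linaresponce} (page 51), which in turn rests on the Kato--Ponce/Coifman--Meyer commutator machinery. So there is no in-paper argument to compare against; what your proposal does is reconstruct, in outline, the standard Littlewood--Paley proof of the cited estimate, and that outline is sound. The decisive point, as you say, is the low--high paraproduct, where the mean-value bound $\left| |\xi|^{s}-|\eta|^{s}\right|\lesssim |\xi-\eta|\,N^{s-1}$ on the region $|\xi-\eta|\ll|\eta|\sim N$ trades one derivative from $f$ to $g$; the rigorous version is to verify Coifman--Meyer (or elementary kernel) bounds for the rescaled bilinear symbol $(|\xi|^{s}-|\eta|^{s})/\bigl((\xi-\eta)|\eta|^{s-1}\bigr)$ uniformly in $N$, after which the frequency localization of the outputs at $\sim N$ justifies the square-function summation. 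For the pieces with $g$ at high frequency, your geometric summation via $\|P_{M}g\|_{L^{2}}\lesssim M^{-s}\|D^{s}P_{M}g\|_{L^{2}}$ and Bernstein is exactly where $s>0$ is used; note only that in the high--high piece the output is not frequency-localized, so one must first sum geometrically over output frequencies $K\lesssim M$ before applying Cauchy--Schwarz in $M$. You are also right to flag $0<s<1$: there $\|\partial_{x}^{s-1}f\|_{L^{2}}$ involves a negative-order operator and the statement must be read on $\mathcal S$; in the paper the lemma is only ever invoked with exponent $\ge 1$ (in \eqref{eqestnorstr12}), so this causes no harm downstream. Your alternative of deducing the homogeneous bound from Lemma \ref{katoponce} by controlling $[J^{s}-D^{s},g]$ is more delicate than it looks, because $J^{s}-D^{s}$ is not smoothing near frequency zero and the resulting error terms involve $\|f\|_{L^{2}}$, which is controlled by neither side of \eqref{katoponce2}; the direct paraproduct route is the right one.
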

The next result is the Leibniz rule for fractional derivatives and it was
proved by Kenig, Ponce and Vega in \cite{KPV1993}.
\begin{lema}\label{reglaleibniz}
For $\alpha \in(0,1)$, we have
\begin{equation}\label{eqreglaleibniz}
\left\|D_x^\alpha (fg)\right\|_{L^p(\re)}\leq
\left\|D_x^\alpha(f)\right\|_{L^{p_1}(\re)}\left\|g\right\|_{L^{q_1}(\re)}+\left\|
  D_x^\alpha g\right\|_{L^{p_2}(\re)}\left\|f\right\|_{L^{q_2}(\re)} 
\end{equation}
where $1<p_1, p_2, q_1,q_2\leq \infty$ and satisfy $\frac{1}{p_1 }+\frac{
  1}{q_1}=\frac{ 1}{ p_2}+ \frac{1}{q_2 }=\frac{1}{p}$.
\end{lema}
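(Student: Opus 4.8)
The plan is to establish \eqref{eqreglaleibniz} by a Littlewood-Paley (Bony paraproduct) decomposition of the product together with the Coifman-Meyer theory of bilinear multipliers. Fixing smooth frequency projections $\{\Delta_j\}_{j\in\mathbb Z}$ I would write
\[
fg=\sum_{j<k-2}\Delta_j f\,\Delta_k g+\sum_{k<j-2}\Delta_j f\,\Delta_k g+\sum_{|j-k|\le 2}\Delta_j f\,\Delta_k g=:\Pi_{lh}+\Pi_{hl}+\Pi_{hh},
\]
splitting into the low-high, high-low and high-high (diagonal) pieces. The guiding principle is that on the first two pieces the output frequency of $D_x^\alpha(fg)$ is comparable to the larger of the two input frequencies, so the symbol $|\xi_1+\xi_2|^\alpha$ may be transferred onto the high-frequency factor, and the transfer operator has a homogeneous symbol of order zero.

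For the high-low term the output frequency is $\approx 2^j$, the frequency of $\Delta_j f$; writing $|\xi_1+\xi_2|^\alpha=\big(|\xi_1+\xi_2|^\alpha/|\xi_1|^\alpha\big)\,|\xi_1|^\alpha$ in the region $|\xi_2|\ll|\xi_1|$ exhibits a Coifman-Meyer symbol, whence $\|\Pi_{hl}(f,g)\|_{L^p}\lesssim \|D_x^\alpha f\|_{L^{p_1}}\|g\|_{L^{q_1}}$ by Hölder with $\frac1{p_1}+\frac1{q_1}=\frac1p$ together with the boundedness of the Littlewood-Paley square and maximal functions. The low-high term is symmetric and yields $\|D_x^\alpha g\|_{L^{p_2}}\|f\|_{L^{q_2}}$, producing exactly the two terms on the right of \eqref{eqreglaleibniz}.

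The main obstacle is the high-high term, where the two input frequencies satisfy $|\xi_1|\approx|\xi_2|\approx 2^j$ but the output frequency $|\xi_1+\xi_2|$ may be far smaller, so the derivative cannot be read off the output. Here I would use that $D_x^\alpha$ acting on a function of frequency $\lesssim 2^j$ costs at most $2^{\alpha j}$, and then place this factor on one of the two functions: since $2^{\alpha j}\|\Delta_j f\|_{L^{p_1}}\approx\|D_x^\alpha\Delta_j f\|_{L^{p_1}}$, Hölder in $x$ followed by Cauchy-Schwarz in $j$ and the square-function estimate (valid for $1<p_1<\infty$) gives again a bound of the form $\|D_x^\alpha f\|_{L^{p_1}}\|g\|_{L^{q_1}}$. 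The fact that $0<\alpha<1$ enters through the elementary subadditivity $|\xi_1+\xi_2|^\alpha\le|\xi_1|^\alpha+|\xi_2|^\alpha$, which both makes the diagonal distribution legitimate and keeps all symbols genuinely of order $\alpha$; the delicate points are the endpoint exponents $q_i=\infty$ (and the corresponding $p_i$), where the naive square-function argument fails and one must invoke the full Coifman-Meyer bilinear multiplier theorem to absorb the low-frequency factor in $L^\infty$.

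Finally, as an alternative that avoids Littlewood-Paley theory, I would record that for $0<\alpha<1$ the operator admits the pointwise representation $D_x^\alpha h(x)=c_\alpha\int_{\re}\big(h(x)-h(x-y)\big)|y|^{-1-\alpha}\,dy$; inserting $f(x)g(x)-f(x-y)g(x-y)=f(x)\big(g(x)-g(x-y)\big)+g(x-y)\big(f(x)-f(x-y)\big)$ isolates $f\,D_x^\alpha g$ plus a bilinear difference operator that can be estimated by the same Hölder and maximal-function machinery, with the off-diagonal term again being the delicate one.
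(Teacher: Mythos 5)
You should first be aware that the paper does not prove this lemma at all: it simply cites Kenig, Ponce and Vega \cite{KPV1993}, whose original argument for $\alpha\in(0,1)$ runs through the pointwise square function $\mathcal D_\alpha h(x)=\bigl(\int|h(x+y)-h(x)|^2|y|^{-1-2\alpha}\,dy\bigr)^{1/2}$ and the Stein--Strichartz equivalence $\|D^\alpha h\|_{L^p}\sim\|\mathcal D_\alpha h\|_{L^p}$ for $1<p<\infty$; this is exactly the circle of ideas behind your ``alternative'' pointwise representation, and it is where the hypothesis $0<\alpha<1$ genuinely enters their proof. Your main route --- Bony paraproduct decomposition plus Coifman--Meyer multiplier bounds --- is the standard modern proof (Grafakos--Oh, Muscalu--Schlag) and is a legitimately different, and in fact stronger, argument: it yields the estimate for every $\alpha>0$, and it locates the hypotheses correctly (what the diagonal term needs is $\alpha>0$ so that a geometric series over output frequencies converges, not the subadditivity $|\xi_1+\xi_2|^\alpha\le|\xi_1|^\alpha+|\xi_2|^\alpha$ you invoke; the restriction $p_i>1$ is needed on the factor carrying the derivative). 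Note also that the inequality as printed is missing a multiplicative constant $C=C(\alpha,p_1,p_2,q_1,q_2)$; without it the statement is false.

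The one place your sketch is too loose is the high--high term. If you apply Cauchy--Schwarz in $j$ at the level of norms you are left with $\bigl(\sum_j\|\tilde\Delta_jg\|_{L^{q_1}}^2\bigr)^{1/2}$, which is not controlled by $\|g\|_{L^{q_1}}$ once $q_1>2$ --- and $q_1=\infty$ is precisely the case this paper uses. The correct order of operations is: dominate $\|D_x^\alpha\Pi_{hh}(f,g)\|_{L^p}$ by the Littlewood--Paley square function of the output; use that $\Delta_jf\,\tilde\Delta_jg$ has Fourier support in a ball of radius $\sim2^j$, so the output at frequency $2^m$ only sees $j\ge m-C$ at cost $2^{\alpha(m-j)}$, summable because $\alpha>0$; then perform Cauchy--Schwarz in $j$ \emph{pointwise in $x$}, placing $\sup_j|\tilde\Delta_jg(x)|\lesssim Mg(x)$ (Hardy--Littlewood maximal function) on the underived factor and $\bigl(\sum_j2^{2\alpha j}|\Delta_jf(x)|^2\bigr)^{1/2}$ on the other; and only then apply H\"older together with Fefferman--Stein and the square-function characterization $\bigl\|\bigl(\sum_j2^{2\alpha j}|\Delta_jf|^2\bigr)^{1/2}\bigr\|_{L^{p_1}}\lesssim\|D_x^\alpha f\|_{L^{p_1}}$, valid for $1<p_1<\infty$. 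With that correction, and the analogous use of the vector-valued maximal inequality at the $L^\infty$ endpoint of the low-frequency factor in $\Pi_{hl}$ and $\Pi_{lh}$, your argument closes.
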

\begin{lema}\label{xenlp}
Let $-1\le \alpha\le 1$ and $0\leq p \leq \frac{8}{1-\alpha}$ 
 \begin{equation*}
\|f\|_{L^{p+2}(\re^2)}^{p+2} \lesssim
\|f\|_{L^{2}(\re^2)}^{2-\frac{p(1-\alpha)}{4}}
\|\partial_xf\|_{L^{2}(\re^2)}^{\frac{p (3-\alpha)
  }{4}}\|D^{\frac{\alpha-1}{2}}_x\partial_yf\|_{L^{2}(\re^2)}^{\frac p2}
\end{equation*}
\begin{proof}
  First, let us prove the lemma for $p=p*=\frac{8}{1-\alpha}$. From
  Lemmas \ref{lpenhs} and \ref{interdsenlplq} we have
\begin{equation}\label{inmerp1}
\begin{split}
	\left\|f\right\|_{L^{p^*+2}(\re^2)}^{p^*+2}&=\int_{\re^2}{\left|f(x,y)\right|^{p^*+2}}dxdy\\
	&=\int_{\re}{\left\|f(\cdot,y)\right\|_{L^{p^*+2}_x}^{p^*+2}}dy\\ 
	&\leq C \int_{\re}{\left\|D^{\frac{p^*}{p^*+2}}_xf(\cdot,y)\right\|_{L^{2}_x}^{p^*+2}}dy\\ 
	&\leq C\int_{\re}{\left\|\partial_xf(\cdot,y)\right\|_{L^{2}_x}^{2}\left\|D^{\frac{p^*-4}{2p^*}}_xf(\cdot,y)\right\|_{L^{2}_x}^{p^*}}dy\\
	&\leq C \left\|\partial_xf\right\|_{L^{2}(\re^2)}^{2} \sup_{y\in \re}{\left\|D^{\frac{p^*-4}{2p^*}}_xf(x,y)\right\|_{L^{2}_x}^{p^*}}.
\end{split}
\end{equation}
On the other hand, for all $y\in \re$,
 \begin{equation}\label{inmerp2}
 \begin{split}
   \left\|D^{\frac{p^*-4}{2p^*}}_xf(\cdot,y)\right\|_{L^{2}_x}^{2}	&=\int_{\re}{\left|D_x^{\frac{p^*-4}{2p^*}}f(x,y)\right|^{2}}dx\\
   &= 2\int_{\re}{\int_{-\infty}^y{D_x^{\frac{1+\alpha}{4}}f(x,\eta)D_x^{\frac{1+\alpha}{4}}\partial_yf(x,\eta)}}d\eta dx\\
   &= 2\int_{-\infty}^y{\int_{\re}{D_x
       f(x,\eta)D_x^{\frac{\alpha -1}2}}\partial_yf(x,\eta)}dxd\eta\\
   &\leq 2\int_{-\infty}^y{\left\|\partial_xf(\cdot,\eta)\right\|_{L^{2}_x}\left\|D^{\frac{\alpha -1}2}_x\partial_yf(\cdot,\eta)\right\|_{L^{2}_x}}d\eta\\
   &\leq 2
   \left\|\partial_xf\right\|_{L^{2}(\re^2)}\left\|D^{\frac{\alpha
         -1}2}_x\partial_yf\right\|_{L^{2}(\re^2)}.
 \end{split}
\end{equation}
Therefore,
\begin{equation}\label{inmerp3}
\|f\|_{L^{p^* +2}(\re^2)}^{p^* +2} \leq c
\|\partial_xf\|_{L^{2}(\re^2)}^{\frac{p^*(3-\alpha)}4}
\|D^{\frac{\alpha -1} 2}_x\partial_yf\|_{L^{2}(\re^2)}^{\frac
{p^*}2}	
\end{equation}
The inequality for $0<p<p^*$ follows immediately from Lemma
\ref{interenlp} and the last inequality.
\end{proof}
\end{lema}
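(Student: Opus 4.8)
The plan is to prove the inequality first at the endpoint $p=p^{*}=\frac{8}{1-\alpha}$, where the exponent $2-\frac{p(1-\alpha)}{4}$ on $\|f\|_{L^{2}}$ vanishes, and then to recover every intermediate value $0<p<p^{*}$ by a single application of the convexity (Hölder) interpolation in Lemma \ref{interenlp}. At $p=p^{*}$ the claim collapses to the scale-invariant core
\[
\|f\|_{L^{p^{*}+2}(\re^{2})}^{p^{*}+2}\lesssim \|\partial_{x}f\|_{L^{2}(\re^{2})}^{\frac{p^{*}(3-\alpha)}{4}}\,\|D_{x}^{\frac{\alpha-1}{2}}\partial_{y}f\|_{L^{2}(\re^{2})}^{\frac{p^{*}}{2}},
\]
and one can sanity-check with the anisotropic scaling $x\mapsto\lambda x$, $y\mapsto\lambda^{(3-\alpha)/2}y$ that both sides carry the same power of $\lambda$, which is what singles out $p^{*}$ as the correct endpoint.

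To treat the endpoint I would slice in the $y$ variable and use one-dimensional tools on each horizontal line. Writing $\|f\|_{L^{p^{*}+2}}^{p^{*}+2}=\int_{\re}\|f(\cdot,y)\|_{L_{x}^{p^{*}+2}}^{p^{*}+2}\,dy$, the one-dimensional Sobolev embedding of Lemma \ref{lpenhs} gives $\|f(\cdot,y)\|_{L_{x}^{p^{*}+2}}\lesssim\|D_{x}^{s}f(\cdot,y)\|_{L_{x}^{2}}$ with $s=\frac12-\frac{1}{p^{*}+2}=\frac{2}{5-\alpha}$. I would then split this derivative order by the Gagliardo--Nirenberg interpolation of Lemma \ref{interdsenlplq}, interpolating $D_{x}^{s}$ between $\partial_{x}=D_{x}^{1}$ and $D_{x}^{(1+\alpha)/4}$, which yields
\[
\|D_{x}^{s}f(\cdot,y)\|_{L_{x}^{2}}^{p^{*}+2}\lesssim\|\partial_{x}f(\cdot,y)\|_{L_{x}^{2}}^{2}\,\|D_{x}^{\frac{1+\alpha}{4}}f(\cdot,y)\|_{L_{x}^{2}}^{p^{*}}.
\]
Integrating in $y$ pulls the factor $\|\partial_{x}f\|_{L^{2}(\re^{2})}^{2}$ outside, and leaves $\sup_{y}\|D_{x}^{(1+\alpha)/4}f(\cdot,y)\|_{L_{x}^{2}}^{p^{*}}$ to be controlled.

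This supremum is the crux, and the only step I expect to be genuinely delicate. I would estimate it by the fundamental theorem of calculus in $y$: for fixed $x$, $|D_{x}^{(1+\alpha)/4}f(x,y)|^{2}=2\int_{-\infty}^{y}D_{x}^{(1+\alpha)/4}f\,\partial_{\eta}D_{x}^{(1+\alpha)/4}f\,d\eta$, after which I integrate in $x$ and apply Fubini. The key algebraic observation is that $2\cdot\frac{1+\alpha}{4}=1+\frac{\alpha-1}{2}$, so Parseval lets me redistribute the two copies of $D_{x}^{(1+\alpha)/4}$ as exactly one copy of $\partial_{x}$ and one copy of $D_{x}^{(\alpha-1)/2}$; these are precisely the two gradient quantities appearing on the right-hand side. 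A Cauchy--Schwarz in $x$ followed by a Cauchy--Schwarz in $\eta$ then gives the uniform bound $\sup_{y}\|D_{x}^{(1+\alpha)/4}f(\cdot,y)\|_{L_{x}^{2}}^{2}\lesssim\|\partial_{x}f\|_{L^{2}(\re^{2})}\,\|D_{x}^{(\alpha-1)/2}\partial_{y}f\|_{L^{2}(\re^{2})}$. Substituting this back and verifying that the powers combine to $\frac{p^{*}(3-\alpha)}{4}$ on $\|\partial_{x}f\|$ and $\frac{p^{*}}{2}$ on $\|D_{x}^{(\alpha-1)/2}\partial_{y}f\|$ completes the endpoint.

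For the remaining range $0<p<p^{*}$ I would interpolate $L^{p+2}$ between $L^{2}$ and $L^{p^{*}+2}$ via Lemma \ref{interenlp} with $\theta=1-\frac{p}{p^{*}}$, chosen so that $p+2=2\theta+(1-\theta)(p^{*}+2)$, and feed the endpoint estimate into the $L^{p^{*}+2}$ factor. A short bookkeeping of exponents then returns the stated inequality, with the $\|f\|_{L^{2}}$ power $2\theta=2-\frac{p(1-\alpha)}{4}$ emerging automatically. The only point requiring care is that $p+2$ must genuinely lie between $2$ and $p^{*}+2$, which is exactly guaranteed by the hypothesis $0\le p\le\frac{8}{1-\alpha}$.
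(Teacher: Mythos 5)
Your proposal is correct and follows essentially the same route as the paper's own proof: slice in $y$, apply the one-dimensional Sobolev embedding and the Gagliardo--Nirenberg interpolation between $D_x^1$ and $D_x^{(1+\alpha)/4}$, control $\sup_y\|D_x^{(1+\alpha)/4}f(\cdot,y)\|_{L^2_x}^2$ by the fundamental theorem of calculus in $y$ together with the identity $2\cdot\frac{1+\alpha}{4}=1+\frac{\alpha-1}{2}$ and two Cauchy--Schwarz applications, and then recover $0<p<p^*$ from Lemma \ref{interenlp}. Your Sobolev exponent $s=\frac{2}{5-\alpha}=\frac{p^*}{2(p^*+2)}$ is in fact the correct one (the paper's displayed $\frac{p^*}{p^*+2}$ is a typo, silently fixed by its own interpolation step), and your exponent bookkeeping at both the endpoint and the interpolation stage checks out.
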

\subsection{Kato's theory}
We will make a brief presentation of Kato's theory described in
\cite{katoLN448}.  With this it can be showed the well-posedness of
the Cauchy problems associated to linear and quasilinear evolution equations.

\subsubsection{Linear case}

Suppose that  $X$ and $Y$ are reflexives Banach spaces with
$Y\subseteq{X}$ in a dense and continuous way, and let
$\{A(t)\}_{t\in[0,T]}$ be a operators family such that
\begin{enumerate}
\item $A(t)\in{G}(X,1,\beta)$. In other words, $-A(t)$ generates a
$C_0$-semigroup such that
\[\|e^{-sA(t)}\|\leq{e}^{\beta{s}},\]  for all \(s\in[0,\infty).\)

\item There exists an isomorphism $S:Y\to {X}$ such that
$SA(t)S^{-1}=A(t)+B(t)$, where $B(t)\in{B}(X),$ for
$0\leq{t}\leq{T},$ $t\to {B}(t)x$ is strongly measurable, for
each $x\in{X}$, and $t\to \|B(t)\|_X$ is integrable in $[0,T]$.

\item $Y\subseteq{D}(A(t)),$ for $0\leq{t}\leq{T}$, and
  $t\rightarrow{A}(t)$ is strongly continuous from $[0,T]$ to $B(Y,X)$.

\end{enumerate}

\begin{teo}\label{tkatolin}
Under the above conditions, there exists a operators family
$\{U(t,s)\}_{0\leq{s}\leq{t}\leq{T}}$ such that:

\begin{enumerate}
\item $U$ is strongly continuous from $\Delta \to {B}(X)$,
where $\Delta=\{(t,s):0\leq{s}\leq{t}\leq{T}\}$.

\item $U(t,s)U(s,r)=U(t,r)$ for $(t,s)$ and $(s,r)\in\Delta$, and
$U(s,s)=I$.

\item $U(t,s)Y\subset{Y}$ and  $U$ is strongly continuous from
$\Delta\to {B}(Y)$.

\item $\dfrac{dU(t,s)}{dt}=-A(t)U(t,s),$
  $\dfrac{dU(t,s)}{ds}=U(t,s)A(s),$ in the strong sense in
  $B(X,Y)$ space and are strongly continuous from $\Delta\to
  {B}(X,Y)$.
\end{enumerate}
\end{teo}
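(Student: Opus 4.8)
The plan is to construct the evolution family $\{U(t,s)\}$ as a strong limit of approximate propagators built from the semigroups generated by the frozen operators $A(t)$, following Kato's classical step-function approximation. First I would partition $[0,T]$ into $n$ equal pieces $[t_{k-1},t_k]$, $t_k=kT/n$, and set $A_n(\tau)=A(t_{k-1})$ for $\tau\in[t_{k-1},t_k)$. On each piece the generator is time independent, so hypothesis (1), that $A(t)\in G(X,1,\beta)$, furnishes the semigroup $e^{-\tau A(t_{k-1})}$ with $\|e^{-\tau A(t_{k-1})}\|\le e^{\beta\tau}$. Concatenating these across the partition yields a propagator $U_n(t,s)$ that is strongly continuous, satisfies $U_n(t,s)U_n(s,r)=U_n(t,r)$, $U_n(s,s)=I$, and inherits the bound $\|U_n(t,s)\|_{B(X)}\le e^{\beta(t-s)}$ uniformly in $n$. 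This uniform bound is exactly what the stability hypothesis (1) is for, since it prevents the composition of many short-time semigroups from blowing up.

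To control $U_n$ on the smaller space $Y$ I would use the intertwining hypothesis (2). Conjugating, $S A_n(\tau) S^{-1}=A_n(\tau)+B_n(\tau)$ generates the propagator $S U_n(t,s) S^{-1}$, which solves a bounded perturbation of the frozen equation on $X$. Since $B(t)\in B(X)$ with $t\mapsto\|B(t)\|_X$ integrable, a Duhamel expansion and Gronwall's inequality give $\|S U_n(t,s) S^{-1}\|_{B(X)}\le e^{\beta(t-s)}\exp\bigl(\int_s^t\|B(\tau)\|\,d\tau\bigr)$, again uniform in $n$; since $S:Y\to X$ is an isomorphism, this transfers to a uniform bound $\|U_n(t,s)\|_{B(Y)}\le C$, so the approximants are equibounded on $Y$ as well.

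For convergence I would use, for $x\in Y$, the telescoping identity
\[
U_n(t,s)x - U_m(t,s)x = \int_s^t U_n(t,\tau)\bigl(A_m(\tau)-A_n(\tau)\bigr)U_m(\tau,s)x\, d\tau,
\]
obtained by integrating $\frac{d}{d\tau}\bigl[U_n(t,\tau)U_m(\tau,s)x\bigr]$ over $[s,t]$. Estimating the integrand by the uniform $B(X)$-bound on $U_n(t,\tau)$, the uniform $B(Y)$-bound on $U_m(\tau,s)$, and the strong continuity of $t\mapsto A(t)$ from $Y$ to $X$ from hypothesis (3), which forces $\|(A_m(\tau)-A_n(\tau))U_m(\tau,s)x\|_X\to 0$ as the mesh shrinks, shows $\{U_n(t,s)x\}$ is Cauchy in $X$ for each $x\in Y$. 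The uniform $B(X)$-bound and the density of $Y$ in $X$ extend this to all of $X$, and I would define $U(t,s)$ as the strong limit.

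It remains to verify conclusions (1)--(4). Passing to the limit in the relations for $U_n$ gives the strong continuity on $X$ and the cocycle relation of conclusions (1)--(2), while the equiboundedness in $B(Y)$ together with the reflexivity of $Y$ yields, by weak compactness, that $U(t,s)Y\subseteq Y$ with a bound, and conclusion (3) follows by approximation. Conclusion (4), the strong derivatives $\partial_t U(t,s)=-A(t)U(t,s)$ and $\partial_s U(t,s)=U(t,s)A(s)$ in $B(Y,X)$, is obtained by passing to the limit in the piecewise differential relations for $U_n$, using hypothesis (3) for the continuity of the right-hand sides. The main obstacle I expect is conclusion (3): strong convergence in $X$ does not by itself place $U(t,s)x$ in $Y$, so establishing the invariance $U(t,s)Y\subseteq Y$ with the correct continuity requires combining the reflexivity of $Y$ with the intertwining identity (2) with care, exactly as in Kato's original argument.
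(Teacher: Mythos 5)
The paper gives no proof of this theorem at all --- it is quoted as part of a ``brief presentation of Kato's theory'' and referred to \cite{katoLN448} --- and your construction (step-function approximants $A_n$, uniform $B(X)$ bounds from the stability hypothesis, uniform $B(Y)$ bounds via the intertwining $SA(t)S^{-1}=A(t)+B(t)$ and Gronwall, the telescoping Duhamel identity for Cauchy-ness, and passage to the strong limit) is precisely Kato's argument from that reference, so your proposal is correct in approach and matches the cited source. The two places your sketch is thinner than Kato's proof are exactly the ones you flag: in the convergence step, strong continuity of $t\mapsto A(t)$ in $B(Y,X)$ must be combined with a compactness/equicontinuity argument because the vector $U_m(\tau,s)x$ to which $A_m(\tau)-A_n(\tau)$ is applied varies with $m$ and $\tau$; and the strong $B(Y)$-continuity in conclusion (3) is not obtained from reflexivity and weak compactness alone (which only give invariance and boundedness) but by identifying $SU(t,s)S^{-1}$ with the evolution family on $X$ generated by the bounded perturbation $A(t)+B(t)$.
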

The operators family $\{U(t,s)\}_{0\leq{s}\leq{t}\leq{T}}$ in the
previous theorem is called \emph{the evolution operators} associated
to ${A(t)}$. An immediate consequence from the last theorem is, for
$\varphi \in{Y}$, $u(t)=U(t,s)\varphi $ is solution to the Cauchy problem
\begin{align*}
&\frac{du}{dt}+A(t)u=0 \qquad \text{ for } \quad s\leq{t}\leq{T},\\
&u(s)=\varphi.
\end{align*}
Moreover, if $f\in{C}([0,T];X)\cap{L^1}([0,T];Y)$, then
\[u(t)=U(t,0)\varphi+\int_0^tU(t,s)f(s)ds\]
if and only if $u\in{C}([0,T];Y)\cap{C}^1((0,T);X)$ and
\begin{align*}
\frac{du}{dt}+A(t)u&=f(t) \quad\text{ for } \quad 0\leq{t}\leq{T},\\
u(0)&=\varphi.
\end{align*}

\subsubsection{Quasilinear Case}\label{secttkato}
Let $X$ and $Y$ be reflexives Banach spaces, $Y\subseteq X$, with
dense and continuous embedding. Let us consider the following  problem
\begin{equation}\label{Q}
\begin{array}{ll}
&\partial_t u +A(t,u)u=f(t,u)\in X, \ 0< t , \\
&u(0)=u_0\in Y,
\end{array}
\end{equation}
where, for each $t$, $A(t,u)$ is a linear operator from $Y$
to $X$ and $f(t,u)$ is a function from $\re \times Y$ in $X$.
Let us also consider the next conditions:
\par
$(X)$ There exists an isometric isomorphism $S$ from $Y$ to $X$. \\
There exist $T_0>0$ and $W$ an open ball with $w_0$ as center such that: \par
$(A_1)$ For each $(t,y)\in [0,T_0]\times W$, the linear operator
$A(t,y)$ belongs to $G(X,1,\beta)$, where $\beta$ is a positive
real number. In the other words, $-A(t,y)$ generate a $C_0$
semigroup such that
\[
\normp{e^{-sA(t,y)}}{\mathcal B(X)}\le e^{\beta s}, \ \text{for}\
s\in[0,\infty).
\]
Note that if $X$ is a Hilbert space, $A \in G(X,1,\beta)$ if, and only if,
\begin{enumerate}
\item[a)] $\langle Ay, y\rangle_{X}\ge -\beta\normp{y}{X}^2$ for all
$y\in D(A)$,
\item[b)] $(A+\lambda)$ is onto for all $\lambda >\beta.$
\end{enumerate} (See \cite{katoPT} or \cite{reed}) \par
$(A_2)$ For each $(t,y)\in[0,T_0]\times W$ the operator
$B(t,y)=[S,A(t,y)]S^{-1}\in \mathcal B(X)$ and is uniformly bounded,
i.e., there exists $\lambda_1>0$ such that
\begin{align*}
& \normp{B(t,y)}{\mathcal B(X)}\le\lambda_1\ \ \text{for all}\
(t,y)\in[0,T_0]\times W, \\ \intertext{Furthermore, for some $\mu_1>0$,
    we have that, for all $y$ and $z\in W$,}
&\normp{B(t,y)-B(t,z)}{\mathcal B(X)}\le\mu_1\normp{y-z}{Y}.
\end{align*}\par
$(A_3)$ $Y\subseteq D(A(t,y))$, for each $(t,y)\in[0,T_0]\times W,$
(the restriction of $A(t,y)$ to $Y$ belongs to $\mathcal B(Y,X)$)
and, for each $y \in W$ fix, $t\to A(t,y)$ is strongly
continuous. Besides, for all $t\in[0,T_0]$ fix, it is satisfied the
following Lipschitz condition,
\[
\normp{A(t,y)-A(t,z)}{\mathcal B(Y,X)}\le\mu_2\normp{y-z}{X},
\]
where $\mu_2\ge 0$ is constant.

$(A_4)$ $A(t,y)w_0\in Y$ for all $(t,y)\in[0,T]\times
W$. Also, there exists a constant $\lambda_2$ such that
\[
\normp{A(t,y)w_0}{Y}\le\lambda_2, \ \text{for all}\
(t,y)\in[0,T_0]\times W.
\]\par
$(f_1)$ $f$ is a bounded function  in $[0,T_0]\times W$ to $Y$, i.e., there exists $\lambda_3$ such that
\[
\normp{f(t,y)}{Y} \le\lambda_3, \ \text{for all}\ (t,y)\in[0,T_0] \times W,
\] Also, the function $t\in [0,T_0] \mapsto f(t,y)\in Y$ is continuous
with respect to the topology of $X$ and for all $y$ and $z\in Y$ we
have that 
\begin{align*}
&\normp{f(t,y)-f(t,z)}{X}\le\mu_3\normp{y-z}{X}, \end{align*} where
$\mu_3\ge 0$ is a constant.
\begin{teo}[Kato]\label{tkato}
Assume that the conditions $(X),$ $(A_1)-(A_4)$ and $(f_1)$ are
satisfied. Given $u_0\in Y$, there exist $0<T<T_0$ and a unique $u\in
C([0,T];Y)\cap C^1((0,T);X)$ solution to \eqref{Q}.  Furthermore, the
map $u_0\to u$ is continuous in the following sense:
consider the sequence of Cauchy problems,
\begin{equation}
\begin{aligned}
\label{dependencia}
&\partial_t u_n+A_n(t,u_n)u_n=f_n(t,u_n)\ \ t>0 \\
&u_n(0)=u_{n_0} \ n\in \mathbb N.
\end{aligned}
\end{equation}
Suppose that the conditions $(X)$, $(A_1)$--$(A_4)$ and $(f_1)$ 
are also satisfied for all $n\ge 0$ in \eqref{dependencia}, with the same
$X, \ Y$ and $S$, and the correspondents $\beta$,
$\lambda_1$--$\lambda_3$, $\mu_2$--$\mu_3$ can be chosen
independent of $n$. Also, let us suppose that
\begin{align*}
\mathop{s\text{-}\lim}_{n\to\infty} A_n(t,w)&=A(t,w) \ \text{in } \ B(X,Y),\\
\mathop{s\text{-}\lim}_{n\to\infty} B_n(t,w)&=B(t,w) \ \text{in } \ B(X),\\
\lim_{n\to\infty} f_n(t,w)&=f(t,w) \ \text{in } \ Y,\\
\lim_{n\to\infty} u_{n_0}&=u_0  \ \text{in } \ Y,
\end{align*}
where $s$-$\lim$ denotes the strong limit. Then, $T$ can be taken
in such a way that $u_n\in C([0,T],Y)\cap C^1((0,T),X)$ and
\[
\lim_{n\to\infty} \sup_{[0,T]}\normp{u_n(t)-u(t)}{Y}=0.
\]
\end{teo}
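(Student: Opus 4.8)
The plan is to linearize by a successive-approximation scheme and to invoke the linear theory of Theorem \ref{tkatolin} at each step. Fix the datum $u_0\in W$, set $u^{(0)}(t)\equiv u_0$, and for $k\ge 0$ define $u^{(k+1)}$ as the solution of the linear nonautonomous problem
\begin{equation*}
\partial_t u^{(k+1)}+A(t,u^{(k)})u^{(k+1)}=f(t,u^{(k)}),\qquad u^{(k+1)}(0)=u_0.
\end{equation*}
To know this is solvable I would check that the frozen family $\{A(t,u^{(k)}(t))\}_{t\in[0,T]}$ meets hypotheses (1)--(3) of Theorem \ref{tkatolin}: membership in $G(X,1,\beta)$ is $(A_1)$; the stability condition with $B(t)=[S,A(t,u^{(k)})]S^{-1}$ is guaranteed by the uniform bound of $(A_2)$; and strong continuity of $t\mapsto A(t,u^{(k)}(t))$ in $\mathcal B(Y,X)$ follows from $(A_3)$ and the continuity of $u^{(k)}$. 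Theorem \ref{tkatolin} then produces an evolution family $U_k(t,s)$ and a solution $u^{(k+1)}\in C([0,T];Y)\cap C^1((0,T);X)$, represented by the Duhamel formula $u^{(k+1)}(t)=U_k(t,0)u_0+\int_0^t U_k(t,s)f(s,u^{(k)}(s))\,ds$.

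Next I would derive uniform bounds and a contraction. For the $Y$-bound, applying the isometric isomorphism $S$ shows that $Su^{(k+1)}$ solves an equation with generator $A(t,u^{(k)})+B(t,u^{(k)})$; since $\normp{e^{-sA(t,y)}}{\mathcal B(X)}\le e^{\beta s}$ by $(A_1)$, $\normp{B(t,y)}{\mathcal B(X)}\le\lambda_1$ by $(A_2)$, $f$ is bounded into $Y$ by $(f_1)$, and $(A_4)$ controls the reference term $A(t,y)w_0$, a bounded-perturbation estimate followed by Gronwall bounds $\normp{u^{(k+1)}(t)}{Y}=\normp{Su^{(k+1)}(t)}{X}$ uniformly in $k$. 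Choosing $0<T\le T_0$ small, an induction then shows every iterate stays in $W$ on $[0,T]$ with a uniform $C([0,T];Y)$ bound. For the contraction, I subtract consecutive equations and test the difference against itself in $X$: the dissipativity $\langle A(t,y)v,v\rangle_X\ge-\beta\normp{v}{X}^2$ of $(A_1)$, the Lipschitz bound $\normp{A(t,y)-A(t,z)}{\mathcal B(Y,X)}\le\mu_2\normp{y-z}{X}$ of $(A_3)$ (absorbed using the uniform $Y$-bound), and $\normp{f(t,y)-f(t,z)}{X}\le\mu_3\normp{y-z}{X}$ of $(f_1)$ give, via Gronwall, an estimate $\sup_{[0,T]}\normp{u^{(k+1)}-u^{(k)}}{X}\le C(T)\sup_{[0,T]}\normp{u^{(k)}-u^{(k-1)}}{X}$ with $C(T)<1$ for $T$ small. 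Thus $\{u^{(k)}\}$ converges in $C([0,T];X)$ to some limit $u$.

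To identify the limit and its regularity I would exploit the uniform $Y$-bound: a subsequence satisfies $u^{(k)}\rightharpoonup u$ weak-$*$ in $L^\infty([0,T];Y)$, so $u(t)\in Y$, and passing to the limit in Duhamel's formula—legitimate because of the strong $X$-convergence together with the Lipschitz continuity of $A$ and $f$—shows that $u$ solves \eqref{Q}. The step I expect to be the main obstacle is to promote this to strong continuity $u\in C([0,T];Y)$ rather than mere weak continuity: this is the well-known delicate point of Kato's theory, handled by establishing that $t\mapsto\normp{u(t)}{Y}$ is continuous through an energy identity for $Su$ (in the spirit of Lemma \ref{justpin}, now using the uniform bound on $B$ from $(A_2)$) and then combining norm continuity with weak continuity in the reflexive space $Y$; the $C^1((0,T);X)$ regularity follows by reading off $\partial_t u$ from the equation. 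Uniqueness is an energy estimate of the same type: for two solutions $u,v$, the dissipativity of $(A_1)$ and the Lipschitz bounds of $(A_3)$ and $(f_1)$ give $\tfrac12\frac{d}{dt}\normp{u-v}{X}^2\le C\normp{u-v}{X}^2$, whence $u\equiv v$ by Gronwall since $u(0)=v(0)$.

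Finally, for the continuous-dependence assertion I would rerun the scheme for the perturbed problems \eqref{dependencia}. The crucial observation is that all constants entering the uniform bounds and the contraction depend only on the common data $\beta,\lambda_1$--$\lambda_3,\mu_2,\mu_3$, so the existence time $T$ and the $C([0,T];Y)$ bounds can be chosen independent of $n$. The assumed strong convergences $A_n\to A$ in $\mathcal B(X,Y)$, $B_n\to B$ in $\mathcal B(X)$, $f_n\to f$ in $Y$, and $u_{n_0}\to u_0$ in $Y$ then let me pass to the limit in these uniform estimates; an energy/Gronwall comparison between $u_n$ and $u$ at the $Y$-level, again using the uniform bound on the commutators, yields $\sup_{[0,T]}\normp{u_n(t)-u(t)}{Y}\to 0$, completing the proof.
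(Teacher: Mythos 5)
The paper does not actually prove Theorem \ref{tkato}: it is quoted as a black box, with the proof deferred to \cite{katoLN448} and \cite{kobayasi}. So the comparison is really with the argument in those references, and your sketch is a faithful outline of it: your Picard iteration of the linearized problems is exactly Kato's contraction mapping $\Phi(v)=U^{v}(\cdot,0)u_0+\int_0^{\cdot}U^{v}(\cdot,s)f(s,v(s))\,ds$ on a ball of $Y$-bounded, $X$-Lipschitz curves, contracted in the $C([0,T];X)$ metric, with the linear Theorem \ref{tkatolin} supplying the evolution families. Two soft spots are worth flagging. First, your contraction and uniqueness steps ``test the difference against itself in $X$'' and invoke the dissipativity inequality $\langle A(t,y)v,v\rangle_X\ge-\beta\normp{v}{X}^2$; that inner-product formulation is only the Hilbert-space characterization of $(A_1)$ noted in Section \ref{secttkato}, whereas the theorem is stated for reflexive Banach spaces. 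In that generality one instead estimates $\normp{U^{v}(t,s)-U^{w}(t,s)}{\mathcal B(Y,X)}$ through the identity $U^{v}(t,s)y-U^{w}(t,s)y=\int_s^t U^{v}(t,r)\bigl[A(r,w(r))-A(r,v(r))\bigr]U^{w}(r,s)y\,dr$ together with $(A_3)$ and the bounds from the linear theory. Second, your route to strong $Y$-continuity (norm continuity of $t\mapsto\normp{u(t)}{Y}$ plus weak continuity in a reflexive space) needs the Kadec--Klee property, which a general reflexive space need not have in its given norm; Kato gets $u\in C([0,T];Y)$ for free from the fixed-point identity $u=\Phi(u)$, since $\Phi(v)\in C([0,T];Y)$ for every admissible $v$ by conclusion (3) of Theorem \ref{tkatolin}. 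Both issues are harmless in this paper's applications, where $X=L^2$ and $Y=H^{s}$ are Hilbert spaces, but the evolution-operator route is what makes the theorem hold at the stated level of generality.
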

A proof of this theorem can be found in \cite{katoLN448} and
\cite{kobayasi}.
\subsection{Other results}
\begin{prop}[Kato's inequality]\label{descato}
Let $f\in H^s$, $s>2$, $\Lambda=(1-\Delta)^{1/2}$ and $M_f$ be the
multiplication operator by $f$. Then, for $|\tilde{t}|, |\tilde{s}|\leq s-1$, 
$\Lambda^{-\tilde{s}}[\Lambda^{\tilde{s}+\tilde{t}+1},M_f]\Lambda^{-\tilde{t}} 
\in B(L^2(\re^2 ) ) $ and 
\begin{equation}
 \left\| \Lambda^{-\tilde{s}} [\Lambda^{\tilde{s}+\tilde{t}+1},M_f] 
\Lambda^{-\tilde{t}} \right\|_{{B} \left(L^2\left(\re^2\right) 
\right)} \leq c\left\| \nabla f \right\|_{H^{s-1}}.
\end{equation}
\end{prop}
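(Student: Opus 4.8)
The plan is to pass to the Fourier side, realize the operator as an integral operator, and split the frequency interactions into a near--diagonal part, where a mean value estimate produces the factor $\nabla f$, and an off--diagonal part, where the frequency gap forces the high frequency onto $f$ and the problem reduces to the Kato--Ponce estimate (Lemma~\ref{katoponce}). Write $\sigma=\tilde s+\tilde t+1$, so that for $g\in\mathcal S$
\begin{equation*}
\widehat{\Lambda^{-\tilde s}[\Lambda^{\sigma},M_f]\Lambda^{-\tilde t}g}(\xi)=\int_{\re^2} m(\xi,\eta)\,\widehat f(\xi-\eta)\,\widehat g(\eta)\,d\eta,
\end{equation*}
where $m(\xi,\eta)=\langle\xi\rangle^{-\tilde s}\big(\langle\xi\rangle^{\sigma}-\langle\eta\rangle^{\sigma}\big)\langle\eta\rangle^{-\tilde t}$. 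Since $f$ is real, the operator satisfies $\big(\Lambda^{-\tilde s}[\Lambda^{\sigma},M_f]\Lambda^{-\tilde t}\big)^*=-\Lambda^{-\tilde t}[\Lambda^{\sigma},M_f]\Lambda^{-\tilde s}$, which exchanges the roles of $\tilde s$ and $\tilde t$; hence it suffices to treat $\tilde t\le\tilde s$, and by density we may take $f,g\in\mathcal S$. The general complex case follows by splitting $f$ into real and imaginary parts.

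For the near--diagonal region $\{|\xi-\eta|\le\tfrac12(\langle\xi\rangle+\langle\eta\rangle)\}$ we have $\langle\xi\rangle\sim\langle\eta\rangle$, and the fundamental theorem of calculus applied to $\zeta\mapsto\langle\zeta\rangle^{\sigma}$ gives $|\langle\xi\rangle^{\sigma}-\langle\eta\rangle^{\sigma}|\lesssim|\xi-\eta|\,\langle\eta\rangle^{\sigma-1}$. Because $-\tilde s+(\sigma-1)-\tilde t=0$, the three weights cancel and $|m(\xi,\eta)|\lesssim|\xi-\eta|$ on this region. Thus the near--diagonal part is dominated by convolution with $|\xi-\eta|\,|\widehat f(\xi-\eta)|\sim|\widehat{\nabla f}(\xi-\eta)|$, and Young's inequality bounds its $L^2$--operator norm by $\|\widehat{\nabla f}\|_{L^1}$. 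By Cauchy--Schwarz, $\|\widehat{\nabla f}\|_{L^1}\le\|\langle\cdot\rangle^{-(s-1)}\|_{L^2(\re^2)}\,\|\nabla f\|_{H^{s-1}}$, and $\langle\cdot\rangle^{-(s-1)}\in L^2(\re^2)$ precisely because $s>2$; this already yields the desired bound on the near--diagonal piece.

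On the off--diagonal region $\{|\xi-\eta|>\tfrac12(\langle\xi\rangle+\langle\eta\rangle)\}$ one has $\langle\xi\rangle,\langle\eta\rangle\lesssim\langle\xi-\eta\rangle$ and $|\xi-\eta|>1$, so the frequency of $f$ dominates. Here I estimate the two commutator terms separately; in $\langle\xi\rangle^{-\tilde s}\langle\xi\rangle^{\sigma}\langle\eta\rangle^{-\tilde t}=\langle\xi\rangle^{\tilde t+1}\langle\eta\rangle^{-\tilde t}$ the operator $\Lambda^{\sigma-\tilde s}=\Lambda^{\tilde t+1}$ effectively falls on the high--frequency factor $f$, so this piece is controlled, via a Littlewood--Paley decomposition together with the fractional Leibniz rule (Lemma~\ref{reglaleibniz}) and the commutator estimate (Lemma~\ref{katoponce}), by products in which the derivatives of $f$ are measured in $L^\infty$ and $g$ stays in $L^2$. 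The negative weight $\Lambda^{-\tilde t}$ (resp.\ $\Lambda^{-\tilde s}$) is what keeps $g$ in $L^2$, while the Sobolev embedding $H^{s-1}(\re^2)\hookrightarrow L^\infty(\re^2)$ (Lemma~\ref{tfabi} with $s_1=s_2=s-1>1$) places those derivatives of $f$ in $L^\infty$ with norm $\lesssim\|\nabla f\|_{H^{s-1}}$; the remaining term is handled identically after invoking the adjoint symmetry above.

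I expect the main obstacle to be the off--diagonal bookkeeping at the extreme indices $|\tilde t|=|\tilde s|=s-1$, where there is no smoothing to spare: one must distribute the derivative $\Lambda^{\sigma}$ and the weights $\langle\xi\rangle^{-\tilde s},\langle\eta\rangle^{-\tilde t}$ across the dyadic scales so that both $g$ and the test function remain in $L^2$ and only $\|\nabla f\|_{H^{s-1}}$ survives on the right. The summation over scales converges exactly because $s-1>n/2=1$, which is the same hypothesis $s>2$ already used in the near--diagonal estimate, so this single condition drives the whole argument.
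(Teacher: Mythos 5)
The paper itself offers no proof of Proposition~\ref{descato}: it is stated as a known result (Kato's classical commutator inequality, from his work on the KdV equation), so there is no argument of the authors' to compare yours against; your proposal has to stand on its own. Its first half does. The near--diagonal estimate is correct and complete: on $\{|\xi-\eta|\le\tfrac12(\langle\xi\rangle+\langle\eta\rangle)\}$ one indeed has $\langle\xi\rangle\sim\langle\eta\rangle$, the mean value theorem gives $|\langle\xi\rangle^{\sigma}-\langle\eta\rangle^{\sigma}|\lesssim|\xi-\eta|\langle\eta\rangle^{\sigma-1}$, the exponents $-\tilde s+(\sigma-1)-\tilde t=0$ cancel, and Young plus Cauchy--Schwarz with the weight $\langle\cdot\rangle^{-(s-1)}\in L^2(\re^2)$ (this is exactly $s>2$) yields the bound $c\|\nabla f\|_{H^{s-1}}$. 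This is the standard core of Kato's own argument.

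The off--diagonal half, however, is a plan rather than a proof, and the gap you flag yourself is the genuine one. Two concrete problems. First, the tools you propose to close it cannot do so as described: Lemma~\ref{katoponce} applied to $[\Lambda^{\sigma},M_f]$ produces the term $\|J^{\sigma}f\|_{L^2}\|g\|_{L^{\infty}}$ with $\sigma=\tilde s+\tilde t+1$ possibly as large as $2s-1$, which is not controlled by $\|f\|_{H^{s}}$; the whole content of the proposition is that the outer operators $\Lambda^{-\tilde s}$ and $\Lambda^{-\tilde t}$ redistribute those derivatives, and a black-box commutator or Leibniz estimate applied to the inner bracket alone never sees them. Second, the naive kernel bounds on the off--diagonal region really do fail at the extremes: splitting $m$ into its two terms and using $\langle\xi\rangle,\langle\eta\rangle\lesssim\langle\xi-\eta\rangle$ leads (via Young, or via a Schur test with a symmetric weight $\langle\cdot\rangle^{-a}$) to quantities like $\|\langle\cdot\rangle^{2\tilde t+1}\widehat f\|_{L^1}$, which for $|\tilde t|$ near $s-1$ requires roughly $f\in H^{2s-1}$. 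So the ``bookkeeping'' you defer is not routine; it needs either Kato's sharper pointwise symbol estimates combined with an asymmetric weighted Schur test distinguishing which of $\langle\xi\rangle,\langle\eta\rangle$ is the smaller, or an interpolation argument in the parameters $(\tilde s,\tilde t)$ from the corner cases $|\tilde s|=|\tilde t|=s-1$ treated by hand. Until one of these is carried out, the proposal does not establish the proposition. (Minor points that are fine: the adjoint symmetry exchanging $\tilde s$ and $\tilde t$ is correct for real $f$, and on the off--diagonal region $|\xi-\eta|>1$ so any surviving $\widehat f(\xi-\eta)$ can legitimately be converted into $\widehat{\nabla f}(\xi-\eta)$.)
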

\begin{prop}\label{gengrup}
Let $f:\re^2\to \re$ be a bounded continuous function such that $\partial_x f$ 
exists and  is continuous and bounded. Then, if $A=f\partial_x$, 
\begin{equation}\label{desigualdad120}
\langle A(u),u\rangle_{L^2}\geq 
-\frac{1}{2}||\partial_xf||_{L^{\infty}}||u||_{L^2}^2,
\end{equation}
for all $u\in D(A)$, $A+\lambda$ is onto, for all $\lambda > \frac 12 
||f||_{L^{\infty}}$. In particular, $A \in 
G\left(L^2\left(\re^2\right),1,\frac12||f||_{L^{\infty}}\right)$. 
\end{prop}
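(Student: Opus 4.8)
The plan is to verify directly the two conditions a) and b) that characterize membership in $G(L^2(\re^2),1,\beta)$, recorded in the excerpt right after hypothesis $(A_1)$, taking $\beta=\frac12\|\partial_x f\|_{L^\infty}$ (this is the constant the computation actually produces; note that the natural threshold for the range condition below is $\frac12\|\partial_x f\|_{L^\infty}$ rather than $\frac12\|f\|_{L^\infty}$).

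First I would establish the quadratic–form inequality \eqref{desigualdad120}. Since $f$ and $\partial_x f$ are bounded, for $u$ in the dense core $\mathcal S(\re^2)$ an integration by parts in the $x$ variable gives
\[
\langle A u,u\rangle_{L^2}=\int_{\re^2} f\,(\partial_x u)\,u\,dx\,dy
=\tfrac12\int_{\re^2} f\,\partial_x(u^2)\,dx\,dy
=-\tfrac12\int_{\re^2}(\partial_x f)\,u^2\,dx\,dy,
\]
whence $\langle Au,u\rangle_{L^2}\ge -\tfrac12\|\partial_x f\|_{L^\infty}\|u\|_{L^2}^2$, the boundary terms vanishing because $u\in\mathcal S$. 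One then passes from the core to all of $D(A)$ by approximation, using that $A=f\partial_x$ with its maximal domain $D(A)=\{u\in L^2:\ f\partial_x u\in L^2\}$ is closed and that $\mathcal S$ is a core; this yields condition a). The same inequality immediately furnishes an a priori bound for the range condition: if $(A+\lambda)u=g$ with $\lambda>\frac12\|\partial_x f\|_{L^\infty}$, then
\[
\|g\|_{L^2}\,\|u\|_{L^2}\ge \langle (A+\lambda)u,u\rangle_{L^2}
=\langle Au,u\rangle_{L^2}+\lambda\|u\|_{L^2}^2
\ge\Big(\lambda-\tfrac12\|\partial_x f\|_{L^\infty}\Big)\|u\|_{L^2}^2,
\]
so $\|(A+\lambda)u\|_{L^2}\ge(\lambda-\frac12\|\partial_x f\|_{L^\infty})\|u\|_{L^2}$. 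Because $A$ is closed, this lower bound shows $A+\lambda$ is injective with closed range.

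It remains to prove surjectivity, which I regard as the main obstacle: one cannot simply solve the transport equation $f\partial_x u+\lambda u=g$ by an integrating factor, since $f$ may vanish. Instead I would argue by duality. A formal integration by parts identifies the adjoint as $A^{*}=-\partial_x(f\,\cdot\,)=-f\partial_x-(\partial_x f)$, and the very same computation as above gives $\langle A^{*}v,v\rangle_{L^2}\ge-\frac12\|\partial_x f\|_{L^\infty}\|v\|_{L^2}^2$. Hence, for $\lambda>\frac12\|\partial_x f\|_{L^\infty}$, any $v\in\ker\big((A+\lambda)^{*}\big)=\ker(A^{*}+\lambda)$ satisfies $0=\langle(A^{*}+\lambda)v,v\rangle_{L^2}\ge(\lambda-\frac12\|\partial_x f\|_{L^\infty})\|v\|_{L^2}^2$, forcing $v=0$. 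Since $A+\lambda$ has closed range, $\operatorname{Range}(A+\lambda)=\ker\big((A+\lambda)^{*}\big)^{\perp}=\{0\}^{\perp}=L^2(\re^2)$, so $A+\lambda$ is onto.

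Conditions a) and b) together then give $A\in G\big(L^2(\re^2),1,\tfrac12\|\partial_x f\|_{L^\infty}\big)$ by the Hilbert–space characterization quoted after $(A_1)$. The delicate points to watch are the closedness of $A$ and the density/core facts needed to transport the form estimates from $\mathcal S$ to the maximal domains of $A$ and $A^{*}$, together with the rigorous identification of the adjoint (equivalently, that $\mathcal S$ is a core for both operators); these are what make the surjectivity step, rather than the elementary inequality, the crux of the argument.
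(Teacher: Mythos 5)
Your proof is correct and follows essentially the same route as the paper's: integration by parts yields \eqref{desigualdad120}, and surjectivity of $A+\lambda$ is obtained by a duality argument showing that anything annihilating the range (equivalently, lying in $\ker\big((A+\lambda)^{*}\big)$) must vanish, using the same coercivity estimate for the adjoint. You are more explicit than the paper about the closedness of $A$ and the closed-range step (the paper only shows the range has trivial orthogonal complement), and your observation that the threshold should be $\tfrac12\|\partial_x f\|_{L^{\infty}}$ rather than $\tfrac12\|f\|_{L^{\infty}}$ correctly identifies what appears to be a typo in the statement.
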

\begin{proof}
The inequality \eqref{desigualdad120} is obtained immediately after using 
the integration by parts. Let us see that $A+\lambda$ is onto, if $\lambda > 
\frac12||f||_{L^{\infty}}$. Suppose that $\psi$ is such that $\langle 
(A+\lambda)(u),u\rangle_{L^2}=0$, for all $u\in D(A)$. Then $\psi 
\in D(A^*)\subseteq D(A)$. From  \eqref{desigualdad120}, it follows that 
$$0\geq \langle (a+\lambda)(u),u\rangle_{L^2}\geq (\lambda 
-\frac12||f||_{L^{\infty}})||\psi||_{L^2}^2.$$ Hence, $\psi=0$ and, therefore, 
$A+\lambda$ is onto.
\end{proof}


\section{Local well-posedness in Sobolev spaces of $s^{th}$ order with
$s> 2$}
In this section we examine the local well-posedness of the problem
\eqref{eq:principal} in the  Sobolev spaces $H^{s}$,
$X^{s}$, $\widehat X_\alpha^{s}$, $Y^{s}$ and $\widehat Y^{s}$, for $s>2$.
\subsection{ Local well-posedness in $H^{s}(\re^2)$}
In this section, we will make use of Kato's theory to show the local
well-posedness of \eqref{eq:principal} in the $H^{s}$ spaces. More
precisely we have the following theorem.
\begin{teo}\label{principal}
  Let $s$ and $\alpha$ be real numbers such that $s>2$ and
  $-1\le \alpha\le 1$.  For $\psi \in H^{s}(\re^2)$, there exist
  $T>0$, that depends only on $\|\psi\|_{H^{s }}$, and a unique
  $u\in C( [0,T], H^{s}(\re^2) )\cap$
  $ C^1( [0,T], H^{s-3}(\re^2 )\cap  (X^3)^* )$ solution to the
  Cauchy problem \eqref{eq:principal}\par
  Moreover, the map $\psi \to u$ from $H^{s}$ to
  $C( [0,T], H^{s}(\re^2 ) )$ is continuous.
\end{teo}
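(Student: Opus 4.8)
The plan is to apply Kato's theorem for quasilinear equations (Theorem~\ref{tkato}) directly, with the natural choices $X=L^2(\re^2)$ and $Y=H^s(\re^2)$ for $s>2$, and the isometric isomorphism $S=J^s$ (so that $\|f\|_Y=\|J^s f\|_X$). Writing the equation \eqref{eq:principal} as $\partial_t u + A(u)u = 0$ with
\[
A(u) = -\partial_x^3 + \mathscr H D_x^\alpha \partial_y^2 - u\,\partial_x,
\]
the skew-adjoint dispersive part $L=-\partial_x^3+\mathscr H D_x^\alpha\partial_y^2$ contributes nothing to the energy inner product, and the first-order transport term $-u\,\partial_x$ is handled by Proposition~\ref{gengrup}. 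Since $A(u)$ does not depend explicitly on $t$, I first reduce the hypotheses $(A_1)$--$(A_4)$, $(f_1)$ to statements about the $u$-dependence alone, with $f\equiv 0$ (so $(f_1)$ is trivial). The ball $W$ will be a ball in $H^s$ centered at the initial datum $\psi$.

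The verification then proceeds condition by condition. For $(A_1)$, I would split $A(u)=L-u\partial_x$: $L$ generates the unitary group $W_\alpha$ of the second Lemma of the Preliminaries (hence is in $G(L^2,1,0)$), and Proposition~\ref{gengrup} gives $u\partial_x\in G(L^2,1,\tfrac12\|\partial_x u\|_{L^\infty})$; since $s>2$, Lemma~\ref{tfabi} (Sobolev) controls $\|\partial_x u\|_{L^\infty}\lesssim\|u\|_{H^s}$, so one obtains a uniform $\beta$ over $W$, and the skew-adjoint $L$ does not spoil the accretivity estimate. For $(A_2)$ I must estimate the commutator $B(u)=[J^s,A(u)]J^{-s}$ in $\mathcal B(L^2)$: the constant-coefficient operator $L$ commutes with $J^s$ (both are Fourier multipliers), so $B(u)=-[J^s,u\partial_x]J^{-s}$, which is exactly the object controlled by Proposition~\ref{descato} (Kato's inequality) and the Kato--Ponce commutator estimate (Lemma~\ref{katoponce}); the Lipschitz dependence $\|B(u)-B(v)\|_{\mathcal B(L^2)}\lesssim\|u-v\|_{H^s}$ follows because $B$ is linear in the coefficient $u$. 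Conditions $(A_3)$ and $(A_4)$ are then routine: $A(u)$ maps $H^s\to H^{s-3}\hookrightarrow L^2$ boundedly and is Lipschitz in $u$ in the $\mathcal B(Y,X)$ operator norm, and $A(u)w_0\in Y$ with a uniform bound for any fixed smooth $w_0$.

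I expect the commutator estimate in $(A_2)$ to be the main obstacle, since it is where the fractional transversal dispersion could in principle interfere. The key point to emphasize is that $\mathscr H D_x^\alpha\partial_y^2$ is a constant-coefficient Fourier multiplier and therefore commutes exactly with $S=J^s$, so it drops out of $B(u)$ entirely and only the transport commutator $[J^s,u\partial_x]$ survives; this reduces $(A_2)$ to the standard Kato--Ponce theory in two dimensions (Proposition~\ref{descato}), independent of $\alpha$. Once $(X)$, $(A_1)$--$(A_4)$ and $(f_1)$ are established with constants uniform over the ball $W$, Theorem~\ref{tkato} yields $T>0$ depending only on $\|\psi\|_{H^s}$ and a unique solution $u\in C([0,T];H^s)\cap C^1((0,T);L^2)$. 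The stated regularity $u\in C^1([0,T];H^{s-3}\cap(X^3)^*)$ then comes from reading the equation itself: $\partial_t u = \partial_x^3 u - \mathscr H D_x^\alpha\partial_y^2 u + uu_x$ lies in $H^{s-3}\cap(X^3)^*$ by the mapping properties of $L$ (here using the $(X^3)^*$-extension of $L$ constructed in the Preliminaries for negative $\alpha$) together with the algebra property of $H^s$ for the nonlinear term. Finally, the continuous dependence $\psi\mapsto u$ is exactly the continuity assertion built into Theorem~\ref{tkato}, applied with $A_n=A$, $f_n=f$ and $u_{n_0}\to\psi$ in $H^s$.
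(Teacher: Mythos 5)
Your plan --- apply Kato's quasilinear theorem directly to $\partial_t u + A(u)u=0$ with $A(u)=-\partial_x^3+\mathscr H D_x^\alpha\partial_y^2-u\partial_x$, $X=L^2$, $Y=H^s$, $S=J^s$ --- is the classical route (it is how Kato treated KdV), and it does go through for $0\le\alpha\le 1$. But it has a genuine gap on the range $-1\le\alpha<0$, which the theorem also claims. The symbol of the transversal dispersion is $\sgn(\xi)\lvert\xi\rvert^{\alpha}\eta^{2}$, which is singular at $\xi=0$ when $\alpha<0$; it is \emph{not} dominated by $\langle(\xi,\eta)\rangle^{3}$ near $\xi=0$, so $\mathscr H D_x^\alpha\partial_y^2$ does not map $H^{s}$ into $H^{s-3}$, and in fact $H^{s}\not\subseteq D(\mathscr H D_x^\alpha\partial_y^2)$ as an operator in $L^2$ (one can take $\widehat f(\xi,\eta)=g(\xi)h(\eta)$ with $\lvert g\rvert^2\sim\lvert\xi\rvert^{-1+\epsilon}$ near $0$ and $\epsilon<-2\alpha$). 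Hence your verification of $(A_3)$ --- ``$A(u)$ maps $H^s\to H^{s-3}\hookrightarrow L^2$ boundedly'' --- is false for negative $\alpha$, and $(A_1)$ also becomes problematic since $Y\not\subseteq D(A(u))$. You partly sense this when you invoke the $(X^3)^*$-extension at the very end, but that extension takes values outside $L^2$, so it cannot be reconciled with the choice $X=L^2$ in Kato's framework. A secondary, more minor point: even for $\alpha\ge 0$, the maximality half of $(A_1)$ (surjectivity of $A(u)+\lambda$) for the sum $L-u\partial_x$ is not automatic from skew-adjointness of $L$ plus Proposition~\ref{gengrup}; it needs a relative-boundedness perturbation argument ($\|\partial_x f\|_{L^2}\le\epsilon\|Lf\|_{L^2}+C_\epsilon\|f\|_{L^2}$, valid since $\lvert\xi\rvert^3+\lvert\xi\rvert^{\alpha}\eta^2\ge\lvert\xi\rvert^3$), which you assert but do not supply.

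The paper's proof is designed precisely to sidestep the domain problem: it conjugates by the unitary group, setting $v=W_\alpha(t)u$, so that the operator entering Kato's theorem is $A(t,v)=W_\alpha(t)\,(W_\alpha(-t)v)\partial_x\,W_\alpha(-t)$ --- a unitarily conjugated first-order transport operator with no dispersive part at all. This manifestly maps $H^s$ to $L^2$ for every $\alpha\in[-1,1]$, and $(A_1)$ follows from Proposition~\ref{gengrup} by unitary equivalence, at the cost of making the problem non-autonomous (which Theorem~\ref{tkato} tolerates). The dispersive operator then only ever appears through the bounded unitary multiplier $W_\alpha(t)$ and, when recovering $\partial_t u$, through its $(X^3)^*$-valued extension from the Preliminaries --- which is exactly why the $C^1$ regularity in the statement is phrased in $H^{s-3}\cap(X^3)^*$. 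To repair your argument you would either need to restrict to $\alpha\ge 0$ or adopt this conjugation (or an equivalent device) for the negative range.
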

\begin{proof}
  Let $W_\alpha(t)$ be the operators group defined by
  $$W_\alpha(t)\psi = e^{t(\partial_x^3-\mathscr{H}
    \partial_y^2)}\psi= \left(e^{-it (\xi^3  +
    \sgn(\xi)|\xi |^\alpha \eta^2)}\widehat{ \psi
  }\right)^{\vee}, $$ for all
$\psi\in H^{s}$. $u$ is solution to the 
problem \eqref{eq:principal} if and only if $v=W_\alpha(t)u$ is
solution to the problem
\begin{equation}\label{problemaeq}
\begin{cases}
 v_t+A(t,v)v=0\\
 v(0)=\psi,
\end{cases} 
\end{equation}
where $A(t,v)= W_\alpha(t)(W_\alpha(-t)v)\partial_x W_\alpha(-t)$. Let
us see that this last problem satisfies each condition of Kato's
theorem (Theorem \ref{tkato}).

Let $X=L^2(\re^2)$, $Y=H^{s}(\re^2)$ and $S=\Lambda_{s}=J^{s}$. From
Plancherel's theorem, it is evident that $S$ is an isomorphism between
$X$ and $Y$.

With the following lemmas we show that the conditions $(A_1)$-$(A_4)$
are satisfied. 
\begin{lema}
 $A(t,v)\in G(X,1,\beta(v))$, where $\beta(v)= \frac12 
\sup\limits_t\|\partial_xW_\alpha(t)v \|_{ L^ { 
\infty } } $
\end{lema}
\begin{proof}
Since $\{W_\alpha(-t)\}$ is an strongly continuous unitary operators  
group and $u \in H^{s}(\re^2)$, from Proposition  \ref{gengrup},
we obtain the result.
\end{proof}

\begin{lema}
  For $S$ given as above, $$SA(t,v)S^{-1}=A(t,v)+B(t,v),$$ where
  $B(t,v)$ is a bounded operator in $L^2$, for all  $t\in \re$ and
  all $v\in H^{s}$, and  satisfies the inequalities
\begin{align}
||B(t,v)||_{\mathcal{B}(L^2)}&\leq \lambda(v) \label{2.3}\\
||B(t,v)-B(t,v')||_{\mathcal{B}(L^2)}&\leq \mu||v-v'||_{H^{s}}\label{2.4} 
\end{align}
for $t\in \re$ and all $v$ and $v'\in H^{s}( \re^2)$. Where
$\mu$ is a  positive real number and 
\begin{equation*}
  \lambda(v)= \sup\limits_{t}C_s\| v\|_{H^{s}}.
\end{equation*}
\end{lema}
\begin{proof}
From Lemma \ref{descato}, it follows that $[S,W_\alpha(-t)v]S^{-1} \in 
\mathcal B(L^2)$ y $$\|[S,W_\alpha(-t)v]S^{-1}\|_{
\mathcal{B}(L^{2})} \leq C_s 
\|v\|_{H^{s}}.$$ Therefore $B(t,v)\in 
\mathcal{B}(L^2)$ and satisfies \eqref{2.3}

Proceeding as before \eqref{2.4} can be shown.
\end{proof}

\begin{lema}
$H^{s}(\re^2) \subset D(A(t,v))$ and $A(t,v)$ is a bounded operator from  
$Y=H^{s}( \re^2)$ to $X=L^2( \re^2)$ with 
$$\|A(t,v)\|_{\mathcal{B}(Y,X)} \leq \|v\|_{H^{s}},$$
for all $v\in Y$.  Furthermore, the function $t\mapsto A(t,v)$ is
strongly continuous from $\re$ to $\mathcal B(Y,X)$, for all
$v\in H^{s}$. On the other hand, the function $v\mapsto A(t,v)$
satisfies the following Lipschitz condition
$$\|A(t,v)-A(t,v')\|_{\mathcal{B}(Y,X)} \leq \|v-v'\|_X,$$
where $\mu$ is as the lemma above.
\end{lema}
\begin{proof}
  Inasmuch as $\{W_\alpha(t)\}$ is a unitary group in $L^2$, from the
  definition of $A(t,v)$, it follows that
  $H^{s}\left(\re^2\right) \subset D(A(t,v))$. In fact,
$$\begin{aligned}
\|A(t,v)f\|_{L^2} &= \|W_\alpha(-t)v\partial_x W_\alpha(-t)f\|_{L^2} 
\\ &\leq  C_{s}\|v\|_{H^{s}}\|\partial_xf\|_{L^2}\\ &\leq 
\|v\|_{H^{s}}\|f\|_{H^{s}},
\end{aligned}
$$ for all $f\in H^{s}$.

Now, for all $t,t'\in \re$ and $f,v \in H^{s}$, we have
\begin{align*}
\|A(t,v)f-A(t',v)f\|_{L^2} &\leq \|\left(W_\alpha(t)- W_\alpha(t')\right) 
W_\alpha(-t)v\partial_xW_\alpha(-t)f\|_{L^2}+\\
&+ \|(W_\alpha(-t)-W_\alpha(-t'))v\partial_x 
W_\alpha(-t)f\|_{L^2}+\\
&+\|W_\alpha(-t')v\partial_x (W_\alpha(-t)-W_\alpha(-t'))f\|_{L^2}.
\end{align*}
Since the group $\{W_\alpha(t)\}_{t\in\re}$ is strongly continuous,
$t\mapsto A(t,v)$ is strongly continuous from $\re$ to
$\mathcal{B}(H^{s},L^2)$. Finally, for any $t\in\re$, we have 
\begin{align*}
\|A(t,v)f-A(t,v')f\|_{L^2} &\leq \mu\|W_\alpha(-t)v 
-W_\alpha(-t)v'\|_{L^2}\|\partial_x W_\alpha(-t)f\|_{L^{\infty}}\\
&\leq \mu\|v-v'\|\, \|f\|_{H^{s}},
\end{align*}
this ends the lemma proof.
\end{proof}
If we take the open ball $W$ of $v\in H^{s}$ such that
$\|v\|_{H^s(\re^2)}<R$, the preceding lemmas show that the Cauchy
problem \eqref{eq:principal} satisfies the conditions of Theorem
\ref{tkato}. Therefore, for each $\psi \in H^{s}( \re^2)$, with $s>2$,
there exists $T>0$, that depends on $\|\psi\|_{H^{s}}$, and a unique
$v\in C([0,T], H^{s}(\re^2) ) \cap C^1([0,T], H^{s-1}(\re^2) )$
solution to the problem \eqref{problemaeq}. Moreover, the map
$\psi \to v$ is continuous from $H^{s}(\re^2)$ to
$C([0,T], H^{s}(\re^2) ) $. Now, from the group $W_\alpha(t)$ 
properties it can be verified that $u(t)=W_\alpha(-t)v$ is solution
to Cauchy problem \eqref{eq:principal} and satisfies all 
properties stated in the theorem.
\end{proof}
\begin{teo}
  The existence time of the solution of the Cauchy problem
  \eqref{eq:principal} can be chosen independently of $s$ in the
  following sense: if $u\in C([0,T], H^{s}( \re^2) )$ is the
  solution to \eqref{eq:principal} with $\psi\in H^{r}(\re^2) $, for
  some $r>s$,  then
  $u\in C([0,T], H^{r}(\re^2) )$.\par In particular, if
  $\psi \in H^{\infty}(\re^2)$, $u\in C([0,T], H^{\infty}(\re^2) )$
\end{teo}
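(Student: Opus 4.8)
The plan is to read this as a \emph{persistence of regularity} statement and prove it through an a priori $H^r$ energy estimate together with the continuation mechanism implicit in Theorem \ref{principal}. The structural fact that drives everything is that the linear operator $\partial_x^3-\mathscr{H}D_x^\alpha\partial_y^2$ has a purely imaginary Fourier symbol (namely $-i\xi^3-i\,\sgn(\xi)|\xi|^\alpha\eta^2$), so it is skew-adjoint on $L^2$ and commutes with $J^r$; it therefore makes no contribution to $\tfrac{d}{dt}\|u\|_{H^r}^2$, and only the nonlinearity $uu_x$ needs to be controlled. Since $s>2$, the gradient $\nabla u$ will be bounded in $L^\infty$ by $\|u\|_{H^s}$, and this is exactly the quantity that governs the growth of the $H^r$-norm.

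First I would apply Theorem \ref{principal} at regularity $r$ to obtain a solution $\tilde u\in C([0,T_r],H^r)$ on some interval $[0,T_r]$, and use the uniqueness part of that theorem at regularity $s$ (valid because $H^r\subset H^s$) to identify $\tilde u=u$ on $[0,\min(T,T_r)]$. It then suffices to produce a uniform bound on $\|u(t)\|_{H^r}$ over $[0,T]$: since the local existence time at level $r$ depends only on $\|\cdot\|_{H^r}$, such a bound lets one restart the solution with a fixed time-step and forces $T_r\ge T$, whence $u\in C([0,T],H^r)$. To get the bound I apply $J^r$ to the equation and pair with $J^r u$ in $L^2$; invoking Lemma \ref{justpin} applied to $J^r u$, which solves the linear equation with forcing $J^r(uu_x)$ because $J^r$ commutes with the linear part, gives
\[
\tfrac12\frac{d}{dt}\|u\|_{H^r}^2=\bigl(J^r(uu_x),J^ru\bigr)_{L^2}.
\]
Writing $J^r(uu_x)=u\,J^ru_x+[J^r,u]u_x$, the first term becomes $-\tfrac12\bigl(u_x\,J^ru,J^ru\bigr)_{L^2}$ after integration by parts, bounded by $\tfrac12\|u_x\|_{L^\infty}\|u\|_{H^r}^2$, while the Kato--Ponce estimate (Lemma \ref{katoponce}) bounds the commutator by $\|[J^r,u]u_x\|_{L^2}\lesssim\|\nabla u\|_{L^\infty}\|u\|_{H^r}$, so that
\[
\frac{d}{dt}\|u\|_{H^r}^2\le C\,\|\nabla u(t)\|_{L^\infty}\,\|u\|_{H^r}^2.
\]

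Because $s>2$, the index $s-1$ exceeds $1$ and hence lies in the admissible range of the Sobolev embedding of Lemma \ref{tfabi}; applied to $\nabla u\in H^{s-1}$ this yields $\|\nabla u(t)\|_{L^\infty}\lesssim\|u(t)\|_{H^s}$. Since $u\in C([0,T],H^s)$, the quantity $M:=\sup_{[0,T]}\|u(t)\|_{H^s}$ is finite, so $\|\nabla u(t)\|_{L^\infty}\le CM$ on $[0,T]$, and Gronwall's inequality gives $\|u(t)\|_{H^r}^2\le\|\psi\|_{H^r}^2 e^{CMT}$ for every $t\in[0,T]$. This is the required uniform bound; combined with the energy identity it also upgrades the resulting $L^\infty([0,T],H^r)$ control to continuity $u\in C([0,T],H^r)$ in the usual way. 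The $H^\infty$ statement then follows by applying the result for every $r>s$ and intersecting over $r$.

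The step I expect to be the main obstacle is making the energy identity rigorous at this borderline regularity: Lemma \ref{katoponce} and the integration by parts are stated for Schwartz functions, whereas $J^r(uu_x)$ need not even lie in $L^2$ when $u$ is only in $H^r$, so the pairing above must be read through the commutator cancellation. I would handle this exactly as in the proof of Lemma \ref{justpin}, regularizing by $u_\lambda=e^{\lambda(\Delta-D_x^{-1})}u\in H^\infty$, which converges to $u$ uniformly on compact time intervals. For each $\lambda$ all the manipulations are legitimate and produce the differential inequality together with an extra term coming from the commutator $[\,e^{\lambda(\Delta-D_x^{-1})},\,u\,]u_x$; the remaining work is to show, via a Lax-type commutator estimate, that this term tends to $0$ and that the Kato--Ponce bound is stable as $\lambda\to0$, its right-hand side being controlled uniformly by $\|\nabla u\|_{L^\infty}\|u\|_{H^r}$. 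Passing to the limit then delivers the inequality for $u$ itself.
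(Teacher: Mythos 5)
Your proof is correct, but it follows a genuinely different route from the paper's. The paper does not run an $H^r$ energy estimate at all: it passes to $v=W_\alpha(-t)u$, applies $\partial_x^2$, $\partial_x\partial_y$ and $\partial_y^2$ to the transformed equation, and regards the resulting equations for $w=\partial_x^2 v$, $w_1=\partial_x\partial_y v$, $w_2=\partial_y^2 v$ as \emph{linear} evolution equations whose coefficients $A(t)=\partial_x W_\alpha(t)u(t)W_\alpha(-t)$ and $B(t)$ involve only the already-known solution $u\in C([0,T],H^s)$. The linear part of Kato's theory (the evolution operators $U(t,\tau)$ of Lemma \ref{lema2.6}) then propagates the extra regularity of the data $\psi_{xx},\psi_{xy},\psi_{yy}\in H^{r-2}$ through the Duhamel formula, gaining one unit of regularity at a time ($s<r\le s+1$, then iterating). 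Your argument instead combines (i) the quantitative energy inequality $\frac{d}{dt}\|u\|_{H^r}^2\le C\|\nabla u\|_{L^\infty}\|u\|_{H^r}^2$ obtained from Kato--Ponce plus the skew-adjointness of $\partial_x^3-\mathscr{H}D_x^\alpha\partial_y^2$, (ii) the bound $\|\nabla u\|_{L^\infty}\lesssim\|u\|_{H^s}$ from Sobolev embedding ($s>2$), and (iii) the continuation/restart mechanism coming from the fact that the local existence time in Theorem \ref{principal} depends only on the $H^r$ norm of the data. This handles all $r>s$ in one stroke and is the more standard persistence-of-regularity argument; its price is exactly the rigor issue you flag, namely justifying the energy identity when $u$ is only in $H^r$, which you correctly propose to resolve by regularization in the spirit of Lemma \ref{justpin} (or, equivalently, by running the estimate on the smooth approximating solutions and passing to the limit). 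The paper's route avoids that limiting argument entirely by never differentiating the $H^r$ norm of a rough solution, at the cost of invoking the heavier machinery of linear evolution operators and of an iteration in $r$. Both arguments are sound; yours is arguably more self-contained given that the energy estimate of Lemma \ref{lemaestenergia} is developed later in the paper anyway.
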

\begin{proof}
  Let $r>s$, $u\in C([0,T],H^{r}(\re^2))$ solution to
  \eqref{eq:principal} and $v=W_\alpha(-t)u$. Suppose that
  $r\leq s+1$. If we apply $\partial_x^2$ on both sides of the
  differential equation \eqref{problemaeq}, we arrive to the following
  linear evolution equation for $w(t)=\partial_x^2v(t)$
\begin{equation}\label{ec3.1}
 \frac{dw}{dt}+A(t)w+B(t)w=0.
\end{equation}
where
\begin{align}
  A(t)&= \partial_x W_\alpha(t)u(t)W_\alpha(-t)\label{ec3.2}\\
  \intertext{and}
B(t)&= 2W_\alpha(t) u_x(t) W_\alpha(-t)\label{ec3.3}.
\end{align}

Since $v\in C\left([0,T];H^s\left( \re^2\right) \right)$, then
$w\in C\left([0,T];H^{s-2}\left( \re^2\right) \right)$. Besides
$w(0) = \psi_{xx}\in H^{r -2}\left(\re^2\right)$, because
$\psi \in H^r\left(\re^2\right)$. It is needed to see that
$w\in C\left([0,T];H^{r-2}\left( \re^2\right) \right)$. For this we
shall prove that the Cauchy problem associated to the linear equation
(\ref{ec3.1}) is locally well-posed for $1-s\leq k \leq s-1$, for
which we have the following lemma whose proof is similar to that of
Lemma 3.1 in \cite{katoMM1979}.
\begin{lema}\label{lema2.6}
  The family $\{A(t)\}_{0\leq t\leq T}$ has a unique family of
  evolution operators associated,
  $\{U(t,\tau)\}_{0\leq t\leq \tau \leq T}$, in the spaces $X= H^h$,
  $Y=H^k$, where
\begin{equation}
-s\leq h\leq s-2 \quad 1-s\leq k \leq s-1 \quad k+1 \leq h.
\end{equation}
In particular, $U(t,\tau): H^r \to H^r$ for $-s\leq s\leq s-1.$
\end{lema}
Then, $w$ satisfies the equation
\begin{equation}\label{ec2.10}
w(t)=U(t,0)\psi_{xx}+\displaystyle \int_0^t 
U(t,\tau)[-B(\tau)w(\tau)+f(\tau)]\, 
d\tau.
\end{equation}
Since $\psi_{xx}\in H^{r-2},$ $B(t)$, given by \eqref{ec3.3}, is an 
operators family in $ H^{r-2}$ that is strongly continuous for $t$ in
the interval $[0,T]$. From Lemma \ref{lema2.6}, the solution to
(\ref{ec2.10}) belongs to
$C\left([0,T];H^{r-2}\left( \re^2\right) \right)$. In other words,
$\partial_{x}^2 u\in C\left([0,T];H^{r-2}\left( \re^2\right) \right)$.

If $w_1(t)=\partial_{x}\partial_{y}v(t)$, we have 
\begin{equation}
\frac{dw_1}{dt}+A(t)w_1+B_1(t)w_1=f_1(t),
\end{equation}
where
\begin{align}
  B_1(t)&=\mathcal{W}(t)u_x(t)\mathcal{W}(-t)=\frac12 B(t),\\
  \intertext{and}
f_1(t)&=-\mathcal{W}(t)\left(u_{xx}(t)u_y(t)\right).
\end{align}
As before, we have
\begin{equation}
w_1(t)=U(t,0)\psi_{xy}+\displaystyle \int_0^{t} U(t,\tau)(-B_1(\tau)w_1( 
\tau )+f_1(\tau))\, d\tau.
\end{equation}
Since $u_{xx}\in C\left([0,T];H^{r-2}\left( \re^2\right) \right)$, $f_1 \in 
C\left([0,T];H^{r-2}\left( \re^2\right) \right)$. Inasmuch as, also, $B_1(t)\in 
\mathbf(H^{r-2}\left( \re^2\right))$ is strongly continuous on the interval 
$[0, T]$. Arguing as before we have that $w_1 \in 
C\left([0,T];H^{r-2}\left( \re^2\right) \right)$, or equivalently 
$u_{xy} \in C\left([0,T];H^{r-2}\left( \re^2\right) \right)$.\par
Analogously, if $w_2(t)=\partial_y^2v(t)$, we have that 
\begin{equation}
\frac{dw_2}{dt}+A(t)w_2=f_2(t),
\end{equation}
where
\begin{equation}
f_2(t)=-2\mathcal{W}(t)( 
u_{xy}u_y(t).
\end{equation}
Therefore,
\begin{align}
w_2(t)=U(t,0)\psi_{yy}+\int_0^t U(t,\tau)f_2(\tau)\, d\tau. 
\end{align}
Since $u_{xy} \in C\left([0,T];H^{r-2}\left( \re^2\right) \right)$, $f_2 \in 
C\left([0,T];H^{r-2}\left( \re^2\right) \right)$. Repeating the argument
above, we can conclude that $w_2 \in C\left([0,T];H^{r-2}\left( \re^2\right) 
\right)$, or equivalently, $\partial_y^2u \in C\left([0,T];H^{r-2}\left( 
\re^2\right) \right)$.

Hence, we have showed that if $s< r \leq s+1$ and $\psi \in H^r$,  $u 
\in C\left([0,T];H^{r}\left( \re^2\right) \right)$. To see the case 
$r> s+1$, since $\psi \in H^{s'}$, for $s' < r$, we can use over and
over again what we have proved so far, to obtain that $u\in 
C\left([0,T];H^{r}\left( \re^2\right) 
\right)$
\end{proof}
\subsection{Local well-posedness in $X^{s}$, $\widehat X_\alpha^{s}$ and $Y^s$}
Let us finish this section showing the local well-posedness of
\eqref{eq:principal} in the spaces $X^{s}$, $\widehat X_\alpha^{s}$ and $Y^s$.
\begin{teo}\label{teo:2.7}
  Let $s$ and $\alpha$ be as in Theorem \ref{principal}. Let 
  $Z$ also be any of the spaces $X^{s}$, $\widehat X^{s}$, $\widehat
  X_\alpha^{s}$,  $Y^s$ and $\widehat Y^{s}$. Then, if $\psi\in Z$ and
  $u\in C([0,T], H^{s})$ is 
  solution to \eqref{eq:principal} with $u(0)=\psi$, 
  $u\in C([0,T], Z)$. Moreover, $\psi \mapsto u$ is continuous from
  $Z$ to $C([0,T], Z)$
\end{teo}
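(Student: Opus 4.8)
The plan is to show that each of the auxiliary quantities defining membership in $Z$ propagates along the already-constructed $H^s$-flow from Theorem~\ref{principal}. Fix $\psi\in Z$ and let $u\in C([0,T],H^s)$ be the solution granted by that theorem. Writing $uu_x=\tfrac12\partial_x(u^2)$ and applying $\partial_x^{-1}$, $\partial_x^{(\alpha-1)/2}$ and $\partial_x^{-1}\partial_y$ to \eqref{eq:principal}, one finds that the relevant auxiliary functions satisfy
\begin{align*}
\partial_t(\partial_x^{-1}u) &= \partial_x^3(\partial_x^{-1}u)-\mathscr H D_x^\alpha\partial_y^2(\partial_x^{-1}u)+\tfrac12 u^2,\\
\partial_t(\partial_x^{(\alpha-1)/2}u)&=\partial_x^3(\partial_x^{(\alpha-1)/2}u)-\mathscr H D_x^\alpha\partial_y^2(\partial_x^{(\alpha-1)/2}u)+\tfrac12\partial_x^{(\alpha+1)/2}(u^2),\\
\partial_t(\partial_x^{-1}\partial_y u)&=\partial_x^3(\partial_x^{-1}\partial_y u)-\mathscr H D_x^\alpha\partial_y^2(\partial_x^{-1}\partial_y u)+u u_y.
\end{align*}
In every case the auxiliary function solves \eqref{eq:1a} with the same antisymmetric linear part and an explicit forcing. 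Since $s>2$, $H^s$ is an algebra (Corollary~\ref{coro-katopo}) and $H^{s-1}\hookrightarrow L^\infty$ (Lemma~\ref{tfabi}), so $u^2\in H^s$ and $uu_y\in H^{s-1}$, with norms controlled by $\|u\|_{H^s}^2$.

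For the spaces in which only $L^2$-control of the auxiliary function is required ($\widehat X^s$, $\widehat X_\alpha^s$, $\widehat Y^s$) and for $X^s$, the forcing already lies in the required target space: $\tfrac12u^2\in H^s\subset L^2$, $\tfrac12\partial_x^{(\alpha+1)/2}(u^2)\in L^2$ because $(\alpha+1)/2\le1\le s$, and $uu_y\in L^2$. In these cases I would use the Duhamel characterization attached to \eqref{eq:1a} together with the unitarity of $W_\alpha$ on every $H^{s_1,s_2}$: for $\mathcal O$ the operator defining the space and $g$ the corresponding forcing, $v(t)=W_\alpha(t)(\mathcal O\psi)+\int_0^tW_\alpha(t-t')g(t')\,dt'$ defines a function in $C([0,T],\text{target})$. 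Applying the appropriate $x$-derivative to $v$ reproduces the Duhamel formula obeyed by $u$ (for $X^s$, $\widehat X^s$, $\widehat X_\alpha^s$) or by $\partial_y u$ (for $\widehat Y^s$), whence the uniqueness part of Theorem~\ref{principal} identifies $v$ with $\mathcal O u$; therefore $u\in C([0,T],Z)$.

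The only case in which the forcing fails to land in the target is $Y^s$: there $w:=\partial_x^{-1}\partial_y u$ is required in $H^s$, but $uu_y$ only belongs to $H^{s-1}$, so the Duhamel argument loses one derivative. This is the main obstacle, and I would resolve it by a top-order energy estimate rather than by the propagator. Applying $J^s$ and Lemma~\ref{justpin} gives $\tfrac12\tfrac{d}{dt}\|J^s w\|_{L^2}^2=(J^s w,J^s(uu_y))$. I split $J^s(uu_y)=[J^s,u]u_y+uJ^s u_y$; the commutator is handled by the Kato--Ponce estimate (Lemma~\ref{katoponce}), $\|[J^s,u]u_y\|_{L^2}\lesssim\|\partial u\|_{L^\infty}\|u_y\|_{H^{s-1}}+\|u\|_{H^s}\|u_y\|_{L^\infty}\lesssim\|u\|_{H^s}^2$. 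For the remaining term the key is a cancellation: since $\partial_x(J^s w)=\partial_y J^s u=J^s u_y$, integrating by parts in $x$ gives
\[
(J^s w,uJ^s u_y)=\int (J^s w)\,u\,\partial_x(J^s w)\,dx\,dy=-\tfrac12\int u_x\,(J^s w)^2\,dx\,dy,
\]
which is bounded by $\tfrac12\|u_x\|_{L^\infty}\|J^s w\|_{L^2}^2\lesssim\|u\|_{H^s}\|w\|_{H^s}^2$. Combining, $\tfrac{d}{dt}\|w\|_{H^s}^2\lesssim\|u\|_{H^s}\|w\|_{H^s}^2+\|u\|_{H^s}^2\|w\|_{H^s}$, and since $\|u\|_{H^s}$ is bounded on $[0,T]$, Gronwall's inequality keeps $\|w(t)\|_{H^s}$ finite; continuity in time is then upgraded from this a priori bound together with the equation, by the standard combination of weak continuity and continuity of the norm.

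Finally, continuous dependence $\psi\mapsto u$ from $Z$ to $C([0,T],Z)$ follows by applying the same estimates to the difference $\mathcal O u-\mathcal O\tilde u$ of the auxiliary functions of two solutions. This difference solves \eqref{eq:1a} with a forcing built from $u^2-\tilde u^2$ (resp. $uu_y-\tilde u\tilde u_y$), which the algebra property and---in the $Y^s$ case---the same commutator and integration-by-parts cancellation control in terms of $\|u-\tilde u\|_{C([0,T],H^s)}$ and the auxiliary difference itself. Since $\psi\mapsto u$ is continuous into $C([0,T],H^s)$ by Theorem~\ref{principal} and $\psi\mapsto\mathcal O\psi$ is continuous from $Z$ into its target by the very definition of the $Z$-norms, the Duhamel estimate (for the $L^2$ and $X^s$ cases) and the Gronwall energy estimate (for $Y^s$) transfer this continuity to the $Z$-norm, completing the proof.
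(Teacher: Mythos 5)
Your proposal is correct in substance, and for four of the five spaces ($X^{s}$, $\widehat X^{s}$, $\widehat X_\alpha^{s}$, $\widehat Y^{s}$) it coincides with the paper's argument: apply $\partial_x^{-1}$, $\partial_x^{(\alpha-1)/2}$ or $\partial_x^{-1}\partial_y$ to the Duhamel formula, observe that the resulting forcing ($\tfrac12 u^2$, $\tfrac12\partial_x^{(\alpha+1)/2}(u^2)$, $uu_y$) lands in the target space, and conclude from the unitarity of $W_\alpha$. Where you genuinely diverge is the one case you correctly single out as delicate, $Y^{s}$. You attack it with a top-order energy estimate for $w=\partial_x^{-1}\partial_y u$, splitting $J^s(uu_y)=[J^s,u]u_y+uJ^su_y$ and exploiting the cancellation $\left(J^sw,\,u\,\partial_xJ^sw\right)=-\tfrac12\int u_x(J^sw)^2$, which is valid since $\partial_xJ^sw=J^su_y$. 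The paper instead notes that $w=\partial_x^{-1}\partial_y v$ (with $v$ as in \eqref{problemaeq}) satisfies the \emph{same homogeneous} linearized equation $w_t+A(t,v)w=0$ with no forcing at all, because $\partial_x^{-1}\partial_y(uu_x)=uu_y=u\,\partial_x(\partial_x^{-1}\partial_y u)$; it then invokes the linear Kato theory already set up in the proof of Theorem \ref{principal}, so that $w(t)=U(t,0)\partial_x^{-1}\partial_y\psi$ lies in $C([0,T];H^s)$ and depends continuously on the data with no further work. The two arguments rest on the same cancellation (quasi-skew-adjointness of $u\partial_x$), but the paper's packaging buys persistence and continuity in $H^s$ for free from Theorem \ref{tkatolin}(3), whereas your route still owes a regularization step: to differentiate $\|J^sw\|_{L^2}^2$ in time you must already know $w(t)\in H^s$, so the a priori bound has to be run on mollified data (or frequency truncations) and passed to the limit, and the upgrade from weak to strong continuity must then be carried out as you indicate. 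That is standard and fixable, but it is the price of bypassing the linear evolution operators; your sketch of continuous dependence via difference estimates is likewise more explicit than anything the paper actually writes down.
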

\begin{proof}
  Suppose that $\psi$ and $u$ are as in the hypothesis of the
  theorem. Then, from the fundamental calculus theorem, we have that
  $$ u(t)=W_\alpha(-t)\psi +\int_0^t W_\alpha(-t+\tau
  )\partial_x\left( \frac 
  {u^{2} (\tau)} {2}\right) \,
  d\tau .$$ So that, $$ \partial_x^{-1}
  u(t)=W_\alpha(-t)\partial_x^{-1}\psi +\int_0^t 
  W_\alpha(-t+\tau ) \left( \frac {u^{2} (\tau)} {2}\right) \,
  d\tau .$$ From here it follows that \eqref{eq:principal} is locally
  well-posed in the spaces $\widehat X^s$ and $X^s$.\par On
  the other hand,
  $$ \partial_x^{-1} \partial_y
  u(t)=W_\alpha(-t)\partial_x^{-1}\partial_y \psi +\int_0^t
  W_\alpha(-t+\tau ) ( u\partial_y u) (\tau) \, d\tau .$$ Then,
  \eqref{eq:principal} is locally well-posed in the space
  $\widehat Y^s$.\par  To see the local well-posedness in $Y^s$ is
  slightly more complicated. Let $v$ be as in
  \eqref{problemaeq}. Hence, $w=\partial_x^{-1} \partial_y v$
  satisfies
  \begin{equation}\label{problemaeqn}
    \begin{cases}
      w_t+A(t,v)w=0\\
      w(0)=\partial_x^{-1} \partial_y\psi,
    \end{cases} 
  \end{equation}
  where $A(t,v)$ is as in \eqref{problemaeq}. From linear case of
  Kato's theory, more specifically from Theorem \ref{tkato}, when
  taking $X=L^2$ and $Y=H^s$, we have that, if
  $\partial_x^{-1} \partial_y \psi\in H^s$, $w$ is continuous on $t$
  and depends continuously on this data. This shows the local
  well-posedness of the problem \eqref{eq:principal} in $Y^s$.\par
  Analogously we can show that \eqref{eq:principal} is local
  well-posedness in space $X_\alpha^s$.
\end{proof}


\section{Local well-posedness in low regularity spaces}
In this section we examine the local well-posedness of equation
\eqref{eq:principal} in $H^{s_1,s_2}(\re^2)$, for
$-1\le \alpha \le 1$, $s_1>\frac{17}{12}-\frac{\alpha}{4}$ and 
$1<s_2\leq s_1$. We will use the dispersive properties of the group
generated by the linear equation associated to
\eqref{eq:principal}. This is the same strategy used by Kenig in
\cite{kenig2004} for the KP-I equation and by Linares, Pilod and Saut
in \cite{linpilsaut3} for the f-KPI and f-KPII equations.
\subsection{Linear estimates}
The linear Cauchy problem associated to the problem \eqref{eq:principal} is  
\begin{equation}\label{eq:principallineal}
\begin{cases} 
 u_{t}=u_{xxx}-\mathscr{H}D_{x}^{\alpha}u_{yy}, \cr
 u(0)=\psi. 
\end{cases}
\end{equation} 
where $-1\le \alpha \le 1$. The unitary group generated by this
problem is 
\begin{equation}\label{grupo}
	W_\alpha(t)\psi(x,y)=\left(e^{-it(\xi^3+\sgn(\xi)\left| \xi
            \right|^\alpha \eta^2)}\widehat{\psi}\right)^\vee
        = S_t^\alpha*\psi (x,y),
\end{equation}
where
\begin{equation}
	S_t^\alpha(x,y)=\int_{\re^2 } {e^{-it ( \xi^3 +\sgn( \xi)
            \left| \xi\right|^\alpha\eta^2)+ix\xi+iy\eta}}d\xi 
        d\eta 
\end{equation}
We shall examine this group properties.
\begin{lema}\label{intosc}
  Let $-1\leq \alpha\leq 1$ and  $\frac{\alpha }{2}- 1< \Real \beta< \frac{
    \alpha }{2}$. Then,
\begin{equation}
	\left\|D_x^{\beta}W_\alpha (t) \psi\right\|_{L^ \infty(\re^2)}
        \lesssim \left| t\right|^{-\frac{5+2 \beta-
            \alpha}{6}}\left\| \psi\right\|_{L^1(\re^2)}.
\end{equation}
\end{lema}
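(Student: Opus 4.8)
The plan is to prove a dispersive (decay) estimate for the linear group $W_\alpha(t)$ with a fractional derivative $D_x^\beta$ applied, in the $L^1\to L^\infty$ setting. Since $W_\alpha(t)\psi = S_t^\alpha * \psi$, Young's inequality gives
\[
\|D_x^\beta W_\alpha(t)\psi\|_{L^\infty} \le \|D_x^\beta S_t^\alpha\|_{L^\infty}\,\|\psi\|_{L^1},
\]
so the whole problem reduces to bounding the oscillatory integral defining $D_x^\beta S_t^\alpha$ pointwise, uniformly in $(x,y)$, by $|t|^{-\frac{5+2\beta-\alpha}{6}}$. Thus the task is to estimate
\[
D_x^\beta S_t^\alpha(x,y) = \int_{\re^2} |\xi|^\beta\, e^{-it(\xi^3 + \sgn(\xi)|\xi|^\alpha \eta^2) + ix\xi + iy\eta}\, d\xi\, d\eta .
\]

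First I would integrate in $\eta$, which is a Gaussian-type (quadratic-phase) oscillatory integral. The $\eta$-phase is $-t\,\sgn(\xi)|\xi|^\alpha \eta^2 + iy\eta$; a stationary-phase / Fresnel computation yields a factor of order $|t|^{-1/2}|\xi|^{-\alpha/2}$ together with a purely oscillatory term in $y$ of modulus one. This collapses the integral to a one-dimensional $\xi$-integral with an extra weight $|\xi|^{\beta - \alpha/2}$, and a cubic-type phase in $\xi$. The hypothesis $\frac{\alpha}{2}-1 < \Real\beta < \frac{\alpha}{2}$ is precisely what guarantees that this weight $|\xi|^{\beta-\alpha/2}$ is integrable near $\xi=0$ (since $\Real\beta - \alpha/2 > -1$) while still being controllable at infinity after the oscillatory gain; I would track exactly where these two endpoint conditions are used.

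Next I would rescale. Exploiting the homogeneity of the cubic phase, set $\xi = |t|^{-1/3}\zeta$ (roughly), which converts the remaining $\xi$-integral into $|t|$ raised to an explicit power times a universal oscillatory integral in $\zeta$ whose phase is essentially $\zeta^3$ plus a linear term, decorated by the weight $|\zeta|^{\beta-\alpha/2}$. Collecting the $|t|^{-1/2}$ from the $\eta$-integration with the powers of $|t|$ produced by the scaling of the $\xi$-integral should reproduce the claimed exponent $-\frac{5+2\beta-\alpha}{6}$; verifying that the bookkeeping of exponents closes is the main arithmetic check. The universal $\zeta$-integral is then bounded by a van der Corput estimate for phases with nonvanishing second or third derivative, giving a constant independent of $(x,y,t)$.

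The hard part will be the uniform control of the $\zeta$-integral, for two reasons. At $\zeta=0$ the amplitude $|\zeta|^{\beta-\alpha/2}$ is singular and the cubic phase has a degenerate critical point (vanishing second derivative), so near the origin I cannot use a plain second-derivative van der Corput bound; I would instead split into a region near $0$, where integrability of the singular weight (guaranteed by $\Real\beta>\frac{\alpha}{2}-1$) absorbs the lack of oscillation, and a region away from $0$, where the third derivative of the cubic phase is bounded below and van der Corput applies. The second delicate point is that the $\eta$-integration produced the factor $|\xi|^{-\alpha/2}$, which for $\alpha>0$ is itself singular at $\xi=0$; the combined exponent $\beta-\alpha/2$ is what must stay above $-1$, so the two integrability requirements are genuinely coupled and the endpoint hypotheses on $\beta$ cannot be relaxed. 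Once the near-origin piece and the oscillatory tail are each bounded by a constant times the correct power of $|t|$, summing them and reinserting into Young's inequality completes the estimate.
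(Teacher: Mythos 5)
Your proposal follows essentially the same route as the paper's proof: reduce via Young's inequality to an $L^\infty$ bound on the kernel $D_x^\beta S_t^\alpha$, carry out the quadratic $\eta$-integration to produce the factor $|t|^{-1/2}|\xi|^{-\alpha/2}$, rescale $\xi\sim|t|^{-1/3}\theta$ to extract the exponent $-\tfrac{5+2\beta-\alpha}{6}$, and bound the remaining one-dimensional oscillatory integral by splitting with a cutoff near the origin (where the integrability condition $\Real\beta-\tfrac{\alpha}{2}>-1$ is used) and applying the Van der Corput lemma away from it, using the second derivative of the phase when $-1\le\alpha\le 0$ and the third when $0\le\alpha\le 1$. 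The approach and the location where each hypothesis on $\beta$ enters match the paper's argument.
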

\begin{proof}
Effectively,
\begin{equation}\label{eq:2.5a}
\begin{split}
	D_x^\beta S_t^\alpha(x,y)&=\int_{\re^2}{\left|\xi\right|^\beta
          e^{-it(\xi^3+\sgn(\xi)\left|\xi\right|^\alpha\eta^2)+ix\xi+iy\eta}}d\xi
        d\eta \\
   &=\int_{\re_\xi}{\left|\xi\right|^\beta e^{-it\xi^3+ix\xi
     }\int_{\re_\eta }{e^{-it\sgn(\xi)\left|\xi \right|^\alpha
         \eta^2+iy \eta}}} d\eta d\xi \\
	&=\int_{\re_\xi}{ \left|\xi\right|^\beta e^{-it\xi^3
            +ix\xi+\frac{i \sgn(\xi)y^2}{4t\left|\xi\right|^\alpha}}
          \int_{\re_\eta}{e^{
              -i\left|t\right|\sgn(t\xi)\left|\xi\right
              |^\alpha\left[\eta-\frac{ y}{2t\sgn( \xi) \left| \xi
                  \right |^\alpha}\right]^2}}}
        d\eta d\xi\\
	&=\frac{\pi^\frac{1}{2}}{\left|t\right |^\frac{1}{2}}
        \int_{\re}{ e^{-it\xi^3+ix\xi+ \frac{i\sgn( \xi)y^2}{4t\left|
                \xi \right |^\alpha }-i \sgn(t\xi)\frac{ \pi}{4}}}
        \left | \xi\right|^{\beta-\frac{\alpha}{2}} d\xi\\
        & =\frac{\pi^\frac{1}{2}}{|t |^\frac{5+2\beta- \alpha}{6}}\int_\re
        {e^{i\left( \theta^3-x\theta t^{-\frac{1}{3}} -\frac
            {\sgn(\theta)y^2}{4| \theta
              |^\alpha}|t |^{ \frac{\beta}{3}-1} \right)}|\theta
        |^{\beta-\frac{\alpha}{2}} }d\theta .
\end{split}
\end{equation}
Let us see that the last integral is bounded. For this let us take a
function $\chi$ defined on all $\re$, infinitely differentiable, with
support in the interval $[-2,2]$ such that $\chi\equiv 1$ in the
interval $[-1,1]$. So,
\begin{multline}\label{eq:2.5}
     \int_\re {e^{i\left( \theta^3-x\theta t^{-\frac{1}{3}} -\frac
            {\sgn(\theta)y^2}{4| \theta
              |^\alpha}|t |^{ \frac{\beta}{3}-1} \right)}|\theta
        |^{\beta-\frac{\alpha}{2}} }d\theta =\\ =\int_\re{e^{i\left(
            \theta^3-x\theta t^{-\frac{1}{3}} -\frac 
            {\sgn(\theta)y^2}{4| \theta
              |^\alpha}|t |^{ \frac{\beta}{3}-1} \right)}|\theta
        |^{\beta-\frac{\alpha}{2}} } \chi(\theta )d\theta+ \\
	+\int_\re{e^{i\left( \theta^3-x\theta t^{-\frac{1}{3}} -\frac
            {\sgn(\theta)y^2}{4| \theta
              |^\alpha}|t |^{ \frac{\beta}{3}-1} \right)}|\theta
        |^{\beta-\frac{\alpha}{2}} }(1- \chi(\theta ))d\theta.
\end{multline}
Clearly the first integral on the right hand side is bounded. To see
that the second integral on the right hand side is bounded, we will
make use the Van der Corput lemma (see \cite{linaresponce}, Corollary
1.1). The second and third derivatives of phase function
$\vartheta_\alpha (\theta) =\theta^3-x\theta t^{-\frac{1}{3}} -\frac
{\sgn(\theta)y^2}{4| \theta |^\alpha}|t |^{ \frac{\beta}{3}-1} $ in
the integral are
\begin{align*}
\vartheta_\alpha ''(\theta)=6\theta+ \sgn( \theta) \alpha( \alpha+ 1)
\frac{ y^2\left|t\right |^{\frac{\beta}{3}-1}}{4}\left |\theta \right
|^{ -\alpha-2} \\\intertext{and} \vartheta_\alpha '''(\theta)=6 +\alpha( \alpha+ 1)(
\alpha+ 2)\frac{y^2 \left|t\right|^{\frac{ \beta}{3}-1}}{4} \left|
  \theta \right|^{-\alpha-3}.
\end{align*}
It is easily verified that, for $|\theta|\ge 1$,
$|\vartheta_\alpha'' (\theta)|\geq 6$ when $-1\leq\alpha\leq 0$ and
that $ |\vartheta_\alpha'''(\theta)|\geq 6$ when $0\leq\alpha\leq
1$. Since the function
$\theta \mapsto |\theta|^{\beta-\frac{\alpha}{2}}(1-\chi(\theta))$ is
uniformly bounded and integrable on the set $|\theta|\ge 1$, it is
verified that the second integral on the right side of \eqref{eq:2.5}
is bounded. Which verifies that the last integral in
\eqref{eq:2.5a} is uniformly bounded. Therefore,
\begin{equation*}
	\|D_x^\beta S_t^\alpha(x,y)\|_{L^\infty(\re^2) } \lesssim
        | t|^{\frac{5+2\beta-\alpha}{6}}
\end{equation*}
The theorem follows immediately from Young's inequality for
convolution.
\end{proof}
\begin{obs}
  The last result coincides with that proved by Linares and Pastor in
  \cite{linares4} for the ZK equation (case $\alpha=1$). The
  results of Saut in \cite{saut1993remarks}, for the KPI equation 
  (case $\alpha=-1$), and of Lizarazo in his PhD thesis (see \cite{tesjul})
  (case $\alpha=0$) are better estimates.
\end{obs}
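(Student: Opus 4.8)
The plan is to substantiate the three comparison claims by inspecting the Hessian of the full symbol $\Phi_\alpha(\xi,\eta)=\xi^3+\sgn(\xi)|\xi|^{\alpha}\eta^2$ governing the oscillatory integral that defines $S_t^\alpha$, since the decay available through stationary phase is dictated by the (non)degeneracy of this Hessian, and by contrasting the iterated (first $\eta$, then $\xi$) estimate that underlies Lemma \ref{intosc} with a genuine two-dimensional stationary-phase argument. First I would record, by the same computation used in the proof of Lemma \ref{intosc}, that the rate there arises from integrating $\eta$ as a Gaussian (gaining $|t|^{-1/2}$ together with a factor $|\xi|^{-\alpha/2}$) and then treating the resulting $\xi$-integral as an Airy-type integral (gaining $|t|^{-1/3}$ after rescaling), whence the exponent $\tfrac{5+2\beta-\alpha}{6}$. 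This iterated scheme is tuned to the singularity at $\xi=0$ and is, in general, lossy compared with the full planar estimate.

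For the case $\alpha=1$, substitution gives the rate $\tfrac{5+2\beta-1}{6}=\tfrac{2+\beta}{3}$, so at $\beta=0$ (admissible, since $-\tfrac12<\beta<\tfrac12$ when $\alpha=1$) one gets $\|W_1(t)\psi\|_{L^\infty}\lesssim|t|^{-2/3}\|\psi\|_{L^1}$. To explain the coincidence with Linares and Pastor \cite{linares4} I would compute the Hessian determinant of $\Phi_\alpha$, namely $12|\xi|^{\alpha+1}-2\alpha(\alpha+1)|\xi|^{2\alpha-2}\eta^2$, which for $\alpha=1$ equals $12\xi^2-4\eta^2$ and therefore vanishes on the cone $\eta^2=3\xi^2$. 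Because the phase is genuinely degenerate there, the planar stationary-phase argument cannot reach the non-degenerate rate $|t|^{-1}$, and the surviving rate is exactly the $|t|^{-2/3}$ recorded in \cite{linares4}; this is why the two estimates agree rather than one improving the other.

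For $\alpha=-1$ and $\alpha=0$ the same Hessian formula reduces to the constant $12$ and to $12|\xi|$, respectively, both nonvanishing for $\xi\neq0$. Hence a non-degenerate two-dimensional stationary-phase argument yields the full $|t|^{-1}$ decay, which is strictly faster than anything Lemma \ref{intosc} provides: for each fixed $\alpha$ the admissible range forces $\tfrac{5+2\beta-\alpha}{6}\le\tfrac56$, with equality only approached as $\beta\to(\alpha/2)^{-}$. This is precisely the sharpening obtained by Saut \cite{saut1993remarks} for KPI $(\alpha=-1)$ and by Lizarazo \cite{tesjul} for $\alpha=0$. The reason Lemma \ref{intosc} does not detect it is that it is one estimate valid uniformly in $\alpha\in[-1,1]$, proved via the Van der Corput lemma from a lower bound on $\vartheta_\alpha''$ or $\vartheta_\alpha'''$; such an argument is calibrated to the worst, degenerate case $\alpha=1$ and discards the favourable non-degeneracy present at the endpoints.

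The main obstacle in making these comparisons fully rigorous is the behaviour at the singular frequency $\xi=0$: although the Hessian determinant is nonzero off $\{\xi=0\}$ when $\alpha\in\{-1,0\}$, it either blows up or vanishes as $\xi\to0$, and $\Phi_\alpha$ itself is singular there, so the naive non-degenerate bound is not uniform. Extracting the clean $|t|^{-1}$ rate therefore requires a dyadic decomposition in frequency together with the homogeneity of $\Phi_\alpha$ on each piece, which is the technical heart of \cite{saut1993remarks} and \cite{tesjul}; I would simply invoke those results for the endpoint improvements. For the $\alpha=1$ comparison the remaining subtlety is only to match the admissible range of $\beta$ and the derivative normalization in the statement of \cite{linares4}, but there the degeneracy of the Hessian already pins the rate, so the verification reduces to the substitution carried out above rather than to a new oscillatory-integral estimate.
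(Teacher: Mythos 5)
The statement you are addressing is a \emph{remark}, and the paper offers no proof of it at all: it simply records the coincidence with \cite{linares4} at $\alpha=1$ and cites \cite{saut1993remarks} and \cite{tesjul} for the better endpoint estimates. Your proposal is correct and goes well beyond the paper by supplying the mechanism behind these comparisons. Your computations check out: the iterated scheme in the proof of Lemma \ref{intosc} indeed yields $|t|^{-1/2}|\xi|^{-\alpha/2}$ from the Gaussian $\eta$-integral and $|t|^{-1/3-\frac{1}{3}(\beta-\frac{\alpha}{2})}$ from the rescaled Airy-type $\xi$-integral, reproducing the exponent $\tfrac{5+2\beta-\alpha}{6}$; the Hessian determinant of the phase is $12|\xi|^{\alpha+1}-2\alpha(\alpha+1)|\xi|^{2\alpha-2}\eta^2$, which vanishes on $\eta^2=3\xi^2$ for $\alpha=1$ (consistent with the sharp $|t|^{-2/3}$ for ZK, visible also after rotating the ZK symbol to a sum of cubes), equals the constant $12$ for $\alpha=-1$ (the classical nondegeneracy behind Saut's $|t|^{-1}$ for KP-I), and equals $12|\xi|$ for $\alpha=0$; and the admissible window $\tfrac{\alpha}{2}-1<\beta<\tfrac{\alpha}{2}$ does cap the lemma's rate strictly below $\tfrac56$, so the nondegenerate cases genuinely beat it. Two small caveats: at $\alpha=-1$ the value $\beta=0$ is inadmissible in Lemma \ref{intosc}, so the comparison with \cite{saut1993remarks} is between estimates with different derivative normalizations (your rate-cap argument handles this, but it is worth saying explicitly), and your phrase that the Van der Corput step is ``calibrated to the degenerate case'' slightly misattributes the loss --- in the paper's proof the Van der Corput lemma only yields uniform boundedness of the $\theta$-integral, the decay coming entirely from the prefactor, so the loss is structural to the iterated one-dimensional scheme rather than to the choice of derivative bounds. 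What your approach buys is an explanation of \emph{why} the coincidence at $\alpha=1$ is forced (fold-type degeneracy pins the rate at $|t|^{-2/3}$, so no planar argument could improve it) and why improvements must exist at $\alpha\in\{-1,0\}$; what the paper's bare citation buys is brevity and avoidance of the genuine technical issue you correctly flag, namely the non-uniformity of the stationary-phase bound near $\xi=0$, which you rightly delegate to the dyadic arguments of \cite{saut1993remarks} and \cite{tesjul}.
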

\begin{corol}\label{corointosc}
Let $-1\leq\alpha\leq 1$, $0<\epsilon<1$ and $0\leq\theta\leq 1$. Then,
\begin{equation}
\left\|D_x^{\theta\left( \frac{\alpha}{2} -\epsilon \right) }W_\alpha (t) \psi
\right\|_{L^p(\re^2 )}\leq
\left|t\right|^{-\frac{\theta (5 -2\epsilon)} {6}}\left\|\psi\right \|_{L^{p'}(\re^2 )}
\end{equation}
where $\frac{1}{p}+\frac{1}{p'}=1$ and $p=\frac{2}{1-\theta}$.
\begin{proof}
  Let $\psi\in L^1(\re^2)\cap L^2(\re^2)$. The corollary follows from
  Stein's interpolation theorem for analytical operator families (see
  \cite{steinweiss1971}). Let us set
  $z=\theta +i \gamma\in \mathbb{C}$. For each $z$ we define the
  operators $T^z$ by
$$T_z\psi=D_x^{z(\frac{\alpha}{2}-\epsilon)}W_\alpha(t)\psi.$$ 
This family $\left\{T_z\right\}$ is an admissible family of
operators. By Lemma \ref{intosc}, we have  
\begin{equation}
  \left\|T_{1+i\gamma}\psi\right\|_{L^\infty(\re^2
    )}=\left\|D_x^{(\frac{\alpha}{2}-\epsilon)(1+i\gamma)}W_\alpha(t)\psi
  \right\|_{L^\infty(\re^2
    )}\leq c  \left|t\right|^{-(\frac{5-2\epsilon}{6})}\left\|\psi
  \right \|_{L^1(\re^2 )}.
\end{equation}
Also, since $\left\{W_\alpha(t)\right\}_{t\in\re}$ is an strongly continuous 
group on the parameter $t$ in $L^2(\re^2)$, we have
\begin{equation}
	\left\|T_{i\gamma}\psi \right \|_{L^2(\re^2 )}=\left \|D_x^{
            (\frac{ \alpha}{2}-\epsilon)( i\gamma) }W_\alpha(t) \psi
        \right \|_{L^2(\re^2 )}=  \left\|\psi\right\|_{L^2(\re^2 )}.
\end{equation}
From Stein's interpolation theorem we obtain 
\begin{equation}
	\left\|T_{\theta}\psi\right\|_{L^p(\re^2 )}\leq c \left|t\right|^{-\theta(\frac{5-2\epsilon}{6})}\left\|\psi\right\|_{L^{p'}(\re^2 )},
\end{equation}
what was we wanted to prove.
\end{proof}
\end{corol}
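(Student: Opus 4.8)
The plan is to obtain the stated bound by complex interpolation, applying Stein's interpolation theorem for analytic families of operators to the family $\{T_z\}$ defined on the strip $0\le \Real z\le 1$ by
\[
T_z\psi = D_x^{z\left(\frac{\alpha}{2}-\epsilon\right)}W_\alpha(t)\psi.
\]
The two edges of the strip are chosen so that the geometric interpolation lands exactly on the exponent $p=\frac{2}{1-\theta}$ and the derivative order $\theta\left(\frac{\alpha}{2}-\epsilon\right)$. First I would check that $\{T_z\}$ is an admissible analytic family: for fixed $t$ and $\psi$ in a suitable dense class, $z\mapsto T_z\psi$ is analytic in the interior and continuous up to the boundary with at most admissible growth as $|\Imag z|\to\infty$, which is clear since on the Fourier side $T_z$ is multiplication by $|\xi|^{z(\frac{\alpha}{2}-\epsilon)}e^{-it(\xi^3+\sgn(\xi)|\xi|^\alpha\eta^2)}$, whose $z$-dependence is entire.

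For the edge $\Real z = 1$ I would invoke Lemma \ref{intosc} with $\beta = \left(\frac{\alpha}{2}-\epsilon\right)(1+i\gamma)$. Since $|\xi|^{\beta}=|\xi|^{\Real\beta}\,|\xi|^{i\Imag\beta}$ and the second factor has modulus one, the $L^1\to L^\infty$ bound depends only on $\Real\beta = \frac{\alpha}{2}-\epsilon$. The hypothesis of that lemma, $\frac{\alpha}{2}-1<\Real\beta<\frac{\alpha}{2}$, holds precisely because $0<\epsilon<1$, and substituting $\Real\beta=\frac{\alpha}{2}-\epsilon$ into the exponent $\frac{5+2\Real\beta-\alpha}{6}$ collapses it to the clean rate $\frac{5-2\epsilon}{6}$. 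Thus
\[
\|T_{1+i\gamma}\psi\|_{L^\infty(\re^2)}\lesssim |t|^{-\frac{5-2\epsilon}{6}}\|\psi\|_{L^1(\re^2)}
\]
uniformly in $\gamma\in\re$. For the edge $\Real z = 0$ the multiplier $|\xi|^{i\gamma(\frac{\alpha}{2}-\epsilon)}$ again has modulus one and $W_\alpha(t)$ is unitary on $L^2$, so $T_{i\gamma}$ is an $L^2$-isometry and $\|T_{i\gamma}\psi\|_{L^2(\re^2)}=\|\psi\|_{L^2(\re^2)}$.

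With the two boundary bounds in hand, Stein's theorem yields the estimate at $z=\theta$. Interpolating the target exponents gives $\frac{1}{p}=\frac{1-\theta}{2}$, that is $p=\frac{2}{1-\theta}$, together with the dual exponent $\frac{1}{p'}=\frac{1+\theta}{2}$ so that $\frac1p+\frac1{p'}=1$; the geometric mean of the two operator norms produces the decay $|t|^{-\theta\frac{5-2\epsilon}{6}}$, and at $z=\theta$ the operator $T_\theta$ is exactly $D_x^{\theta(\frac{\alpha}{2}-\epsilon)}W_\alpha(t)$, which is the asserted inequality. I expect the only delicate point to be the verification of admissibility together with the uniformity in $\gamma$ of the two boundary estimates; both reduce to the single observation that a purely imaginary power of $|\xi|$ is a unit-modulus Fourier multiplier, so it neither affects the $L^1\to L^\infty$ time decay coming from Lemma \ref{intosc} nor the $L^2$ isometry. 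Everything else is the routine bookkeeping of the interpolation exponents.
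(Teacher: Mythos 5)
Your proposal is correct and follows essentially the same route as the paper: the same analytic family $T_z\psi=D_x^{z(\frac{\alpha}{2}-\epsilon)}W_\alpha(t)\psi$, the $L^1\to L^\infty$ endpoint from Lemma \ref{intosc} on $\Real z=1$, the $L^2$ isometry on $\Real z=0$, and Stein interpolation at $z=\theta$. Your added remarks on admissibility and on the unit-modulus imaginary powers only make explicit what the paper leaves implicit.
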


\begin{corol}\label{corostrichartznolineal}
Let $-1\leq\alpha\leq 1$, $0<\epsilon<1$ and $0<\theta\leq 1$. Then,
\begin{equation}\label{eqstrichartznolineal}
	\left\|D_x^{\frac{12}{q\left(5-2\epsilon\right)}\left(\frac{\alpha}{2}-
              \epsilon\right)}\int_{-\infty}^{\infty}{W_\alpha (t-t') F(t')} dt'
        \right \|_{L_T^qL_{xy}^p}\lesssim \left\|F\right\|_{L_T^{q'}L^{p'}_{xy}},
\end{equation}
where $\frac{1}{q}+ \frac{1}{q'}=\frac{1}{p}+ \frac{1}{p'}=1$,
$p=\frac{2} {1-\theta}$, $\frac{2}{q}= \frac{ \theta(5-2\epsilon)}{6}$
and $\frac{1}{p }+\frac{1}{q}=\frac{1}{2}-\frac{\theta(1+2\epsilon)}{12}$.
\begin{proof}
From Minkowski's inequality, Corollary \ref{corointosc} and from the
Hardy-Littlewood-Sobolev theorem, we have
\begin{multline*}
\left\|D_x^{\theta\left(\frac{\alpha}{2}-\epsilon\right)}\int_{-\infty}^\infty{W_\alpha
    (t-t') F(t')}dt'\right\|_{
  L_t^qL_{xy}^p}\\=\left\|\left\|\int_{-\infty}^\infty D_x
    ^{\theta\left(\frac{\alpha}{2}-
        \epsilon\right)}W_\alpha(t-t')F(\cdot, \cdot,t') dt'
  \right\|_{L_{xy}^p} \right\|_{L_t^q}\\
\leq \left\|\int_{-\infty}^\infty \left\|D_x^{\theta
      \left(\frac{\alpha}{2}-\epsilon \right)}W_\alpha(t-t')F( \cdot,
    \cdot,t') \right\|_{L_{xy}^p} dt'\right\|_{L_t^q}\\
\lesssim \left\|\int_{-\infty}^\infty{\left| t-t'\right|^{-\frac{
        \theta(5-2\epsilon )}{6}}\left\|F( \cdot, \cdot,t') \right
    \|_{L_{ xy}^{p'}}}dt'\right\|_{L_{t^q}}\\
\lesssim \left\|\left\|F(\cdot,\cdot,t')\right\|_{L_{xy}^{p'}}
\right\|_{L_{ t^{q'}}}=\left\|F\right\|_{L_t^{q'}L_{xy}^{p'}}.
\end{multline*}
\end{proof}
\end{corol}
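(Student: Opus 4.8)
The plan is to reduce the inhomogeneous estimate \eqref{eqstrichartznolineal} to a scalar fractional-integration inequality in the time variable, which is the standard route to retarded Strichartz bounds. The first thing I would observe is that the fractional derivative appearing in the statement is precisely the one delivered by Corollary \ref{corointosc}: the standing hypothesis $\frac{2}{q}=\frac{\theta(5-2\epsilon)}{6}$ gives $\theta=\frac{12}{q(5-2\epsilon)}$, so the operator $D_x^{\frac{12}{q(5-2\epsilon)}(\frac{\alpha}{2}-\epsilon)}$ is exactly $D_x^{\theta(\frac{\alpha}{2}-\epsilon)}$. Since this is a Fourier multiplier in $x$, it commutes with the group $W_\alpha(t-t')$ and may be pushed inside the $t'$-integral, so throughout I work with $D_x^{\theta(\frac{\alpha}{2}-\epsilon)}W_\alpha(t-t')F(t')$ under the integral sign.

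First I would apply Minkowski's integral inequality to move the spatial $L_{xy}^p$ norm inside the $t'$-integral, giving
\begin{equation*}
\left\|D_x^{\theta(\frac{\alpha}{2}-\epsilon)}\int_{-\infty}^\infty W_\alpha(t-t')F(t')\,dt'\right\|_{L_{xy}^p}\le \int_{-\infty}^\infty\left\|D_x^{\theta(\frac{\alpha}{2}-\epsilon)}W_\alpha(t-t')F(t')\right\|_{L_{xy}^p}\,dt'.
\end{equation*}
Then I would insert the pointwise-in-time dispersive decay of Corollary \ref{corointosc}, which controls the integrand by $|t-t'|^{-\frac{\theta(5-2\epsilon)}{6}}\|F(t')\|_{L_{xy}^{p'}}$; this is legitimate precisely because $p=\frac{2}{1-\theta}$ is the Lebesgue exponent for which that corollary holds. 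Setting $g(t')=\|F(t')\|_{L_{xy}^{p'}}$, the whole problem collapses to bounding the $L_t^q$ norm of the one-dimensional convolution $g*|t|^{-\gamma}$ with $\gamma=\frac{\theta(5-2\epsilon)}{6}$.

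The last step is the Hardy--Littlewood--Sobolev inequality for this fractional integral. With input $g\in L_t^{q'}$ and kernel exponent $\gamma$, HLS yields an $L_t^q$ bound provided the scaling identity $\frac{1}{q}=\frac{1}{q'}+\gamma-1$ holds; substituting $\frac{1}{q'}=1-\frac{1}{q}$ this reduces exactly to $\frac{2}{q}=\gamma=\frac{\theta(5-2\epsilon)}{6}$, which is one of the hypotheses. Hence $\|g*|t|^{-\gamma}\|_{L_t^q}\lesssim\|g\|_{L_t^{q'}}=\|F\|_{L_t^{q'}L_{xy}^{p'}}$, and the claim follows, the restriction from the global $L_t^q$ estimate to $L_T^q$ on $[0,T]$ being trivial. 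The admissibility hypotheses of HLS are automatic: for $\theta\in(0,1]$ and $\epsilon\in(0,1)$ one has $\gamma\in(0,\tfrac{5}{6}]\subset(0,1)$, which forces $q>2$ and therefore $1<q'<q<\infty$.

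I expect the only genuine point needing care to be the exponent bookkeeping, namely verifying that the single relation $\frac{2}{q}=\frac{\theta(5-2\epsilon)}{6}$ simultaneously matches the decay rate produced by Corollary \ref{corointosc} and the HLS scaling condition; the auxiliary identities $p=\frac{2}{1-\theta}$ and $\frac{1}{p}+\frac{1}{q}=\frac{1}{2}-\frac{\theta(1+2\epsilon)}{12}$ are simply the consistent definitions of the space-time exponents and serve only to guarantee that Corollary \ref{corointosc} applies. Everything else is the mechanical chain Minkowski $\to$ dispersive decay $\to$ fractional integration.
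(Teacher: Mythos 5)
Your proof is correct and follows essentially the same route as the paper's: Minkowski's integral inequality, the $L^{p'}\to L^p$ dispersive decay of Corollary \ref{corointosc}, and the Hardy--Littlewood--Sobolev inequality in the time variable, with the relation $\frac{2}{q}=\frac{\theta(5-2\epsilon)}{6}$ serving as the HLS scaling condition. Your explicit identification $\theta=\frac{12}{q(5-2\epsilon)}$ and the verification that $\gamma\in(0,\frac56)$ makes the exponent bookkeeping slightly more transparent than the paper's version, but the argument is the same.
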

Using the Stein-Thomas argument  (see \cite{ginibre1992}) we have.
\begin{prop}\label{propstrichartzlineal}
  Let $\alpha$, $\epsilon$, $\theta$, $p$ and $q$ be as in the
  previous corollary. Then,
\begin{equation}\label{eqstrichartzlineal}
\left\|D_x^{\frac{6}{q\left(5-2\epsilon\right)}\left(\frac{\alpha}{2}-\epsilon\right)}W_\alpha(t)\psi\right\|_{L_T^qL_{xy}^p}\lesssim \left\|\psi\right\|_{L_{xy}^2}.
\end{equation}
\end{prop}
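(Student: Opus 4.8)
The plan is to deduce the homogeneous Strichartz estimate \eqref{eqstrichartzlineal} from the retarded estimate of Corollary \ref{corostrichartznolineal} by means of the standard duality ($TT^*$) argument. Write $a=\frac{6}{q(5-2\epsilon)}(\frac\alpha2-\epsilon)$ for the homogeneous exponent, so that the exponent appearing in Corollary \ref{corostrichartznolineal} is exactly $2a$, and set $\Phi\psi=D_x^a W_\alpha(t)\psi$. The claim \eqref{eqstrichartzlineal} is precisely the boundedness of $\Phi\colon L^2_{xy}\to L^q_TL^p_{xy}$. By duality this is equivalent to the boundedness of the adjoint $\Phi^*\colon L^{q'}_TL^{p'}_{xy}\to L^2_{xy}$, and for the latter it suffices, by the $TT^*$ principle, to bound the composition $\Phi\Phi^*$ from $L^{q'}_TL^{p'}_{xy}$ into $L^q_TL^p_{xy}$.

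First I would compute $\Phi^*$ and $\Phi\Phi^*$ explicitly. Since $\{W_\alpha(t)\}$ is unitary on $L^2$, one has $W_\alpha(t)^*=W_\alpha(-t)$, and $D_x^a$ is self-adjoint for real $a$; hence, after extending $F$ by zero outside $[0,T]$,
\begin{equation*}
\Phi^*F=\int_{\re}W_\alpha(-t')D_x^aF(t')\,dt',\qquad
\Phi\Phi^*F=\int_{\re}D_x^{2a}W_\alpha(t-t')F(t')\,dt'.
\end{equation*}
The operator $\Phi\Phi^*$ carries exactly the exponent $2a$ of Corollary \ref{corostrichartznolineal}, so that corollary yields at once $\|\Phi\Phi^*F\|_{L^q_TL^p_{xy}}\lesssim\|F\|_{L^{q'}_TL^{p'}_{xy}}$.

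Second I would run the abstract $TT^*$ estimate: for $\psi\in L^2$ and $F\in L^{q'}_TL^{p'}_{xy}$,
\begin{equation*}
\left|\int_0^T\langle \Phi\psi,F(t)\rangle\,dt\right|
=\left|\langle\psi,\Phi^*F\rangle\right|
\le\|\psi\|_{L^2}\,\|\Phi^*F\|_{L^2},
\end{equation*}
while $\|\Phi^*F\|_{L^2}^2=\langle\Phi^*F,\Phi^*F\rangle=\langle\Phi\Phi^*F,F\rangle\le\|\Phi\Phi^*F\|_{L^q_TL^p_{xy}}\|F\|_{L^{q'}_TL^{p'}_{xy}}\lesssim\|F\|_{L^{q'}_TL^{p'}_{xy}}^2$ by Hölder in the $(x,y)$ and $t$ variables together with the bound from the previous paragraph. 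Combining the two displays and taking the supremum over $F$ with $\|F\|_{L^{q'}_TL^{p'}_{xy}}\le1$ gives $\|\Phi\psi\|_{L^q_TL^p_{xy}}\lesssim\|\psi\|_{L^2}$, which is \eqref{eqstrichartzlineal}.

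The computations above are essentially formal; the genuine analytic content has already been placed in Corollary \ref{corostrichartznolineal}, so the points requiring care are bookkeeping rather than analysis. Concretely, I would verify that the homogeneous exponent is exactly half the retarded one (so that $\Phi\Phi^*$ reproduces the multiplier $D_x^{2a}$), justify the adjoint computation for the unbounded self-adjoint multiplier $D_x^a$ by working first with Schwartz data and passing to the limit by density, and make sure the truncation to $[0,T]$ is handled by extending $F$ by zero (which merely replaces $\int_{-\infty}^\infty$ by $\int_0^T$ and cannot increase the norms). I expect none of these to be a serious obstacle, so that the hard part of the whole development remains the oscillatory-integral decay of Lemma \ref{intosc} rather than this duality step.
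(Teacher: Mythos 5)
Your argument is correct and coincides with the paper's approach: the paper gives no written proof, simply invoking ``the Stein--Thomas argument'' of \cite{ginibre1992}, which is exactly the $TT^*$ duality scheme you carry out, and your bookkeeping is right --- the homogeneous exponent $\frac{6}{q(5-2\epsilon)}(\frac{\alpha}{2}-\epsilon)$ doubles to the exponent of Corollary \ref{corostrichartznolineal} when forming $\Phi\Phi^*$. You have merely supplied the details the paper leaves implicit.
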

From the above we have the following two very useful corollaries in
the proof of the local well-posedness in this section.
\begin{corol}\label{coronormalt2lxyinf}
For each $T>0$ and $0<\epsilon<1$, we have that
\begin{equation}\label{eqnormalt2lxyinf}
	\left\|W_\alpha(t)\psi\right\|_{L_T^2L_{xy}^\infty}\lesssim T^{\frac{1+2\epsilon}{12}}\left\|D_x^{\frac{1}{2}(\epsilon-\frac{\alpha}{2})}\psi\right\|_{L_{xy}^2} .
\end{equation}
\begin{proof}
From  H\"older's inequality and Proposition
\ref{propstrichartzlineal} we have that
\begin{equation*}
\begin{split}
\left\|W_\alpha(t)\psi\right\|_{L_T^2L_{xy }^ \infty}&\leq T^{ \frac{
    1+ 2\epsilon}{12}}\left \|W_\alpha( t) \psi\right\|_{L_T^{
    \frac{12}{ 5-2 \epsilon}}L_{xy}^\infty}\lesssim T^{ \frac{1+2
    \epsilon}{ 12}}\left\|D_x^{\frac{1}{2}( \epsilon-
    \frac{\alpha}{2})} \psi\right\|_{L_{xy}^2}.  
\end{split}
\end{equation*}
\end{proof}
\end{corol}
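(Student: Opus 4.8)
The plan is to obtain the estimate from the Strichartz bound in Proposition~\ref{propstrichartzlineal} by two elementary manipulations: a Hölder inequality in the time variable that converts the $L^2_T$ norm into the $L^q_T$ norm at the endpoint exponent $q=\frac{12}{5-2\epsilon}$, and a commutation of the unitary group $W_\alpha(t)$ with the Fourier multiplier $D_x$ that transfers the fractional derivative from the solution to the initial data.

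First I would specialize Proposition~\ref{propstrichartzlineal} to the endpoint case $\theta=1$, $p=\infty$. With $\theta=1$ the relation $\frac{2}{q}=\frac{\theta(5-2\epsilon)}{6}$ forces $q=\frac{12}{5-2\epsilon}$, and the derivative order $\frac{6}{q(5-2\epsilon)}\bigl(\frac{\alpha}{2}-\epsilon\bigr)$ simplifies to exactly $\frac12\bigl(\frac{\alpha}{2}-\epsilon\bigr)$. Thus the proposition reads \[\bigl\|D_x^{\frac12(\frac{\alpha}{2}-\epsilon)}W_\alpha(t)\psi\bigr\|_{L^q_TL^\infty_{xy}}\lesssim\|\psi\|_{L^2_{xy}}.\] Since $D_x$ and $W_\alpha(t)$ are both Fourier multipliers in the $x$ variable they commute, so applying this inequality with $\psi$ replaced by $D_x^{\frac12(\epsilon-\frac{\alpha}{2})}\psi$ and pulling the multiplier through the group cancels the derivative on the left and yields \[\|W_\alpha(t)\psi\|_{L^q_TL^\infty_{xy}}\lesssim\bigl\|D_x^{\frac12(\epsilon-\frac{\alpha}{2})}\psi\bigr\|_{L^2_{xy}}.\]

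It then remains to pass from $L^q_T$ to $L^2_T$ on the bounded interval $[0,T]$. Because $q=\frac{12}{5-2\epsilon}\ge 2$ (here $0<\epsilon<1$ gives $5-2\epsilon<6$, so the Hölder exponent is admissible with a nonnegative power of $T$), Hölder's inequality in time gives $\|g\|_{L^2_T}\le T^{\frac12-\frac1q}\|g\|_{L^q_T}$, and a direct computation shows $\frac12-\frac1q=\frac12-\frac{5-2\epsilon}{12}=\frac{1+2\epsilon}{12}$. Combining the two displays produces the asserted inequality. The only points that require care are arithmetic: checking that the two exponent identities collapse as claimed and that $q\ge2$ so that the power of $T$ does not turn negative; there is no new analytic input, since the dispersive decay was already extracted in Lemma~\ref{intosc} and propagated through Corollaries~\ref{corointosc}--\ref{corostrichartznolineal} and Proposition~\ref{propstrichartzlineal}. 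Hence the main obstacle is essentially bookkeeping of the scaling exponents rather than any substantive estimate.
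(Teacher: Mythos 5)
Your argument is correct and is essentially the proof in the paper: the paper likewise applies H\"older's inequality in time to pass from $L^2_T$ to $L^{12/(5-2\epsilon)}_T$ with the factor $T^{\frac{1+2\epsilon}{12}}$, and then invokes Proposition \ref{propstrichartzlineal} at the endpoint $\theta=1$, $p=\infty$, commuting $D_x$ through the group $W_\alpha(t)$. The only difference is the order of the two steps, which is immaterial.
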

Just like in \cite{kenig2004} and \cite{linpilsaut3}, we prove a
refined Strichartz estimate for the solution to non-homogeneous linear problem
\begin{equation}\label{eqlinealf}
\partial_t w=w_{xxx}-\mathscr{H}D_x^{\alpha}w_{yy}+F.
\end{equation}
So, we have the next lemma.
\begin{lema}\label{lemastrichartzref}
  Let $-1\leq\alpha\leq 1, 0<\epsilon<1$ and $T>0$. If $w$ is solution
  to (\ref{eqlinealf}), then there exists $c_\epsilon>0$ such that
\begin{equation}\label{eqstrichartzref}
\left\|\partial_x w\right \|_{ L_T^1 L_{xy}^\infty} \leq c_\epsilon
T^{ \frac{7+2\epsilon}{12}} \left(\sup_{t\in[0,T]}{ \left\|J
      _x^{\frac{ 17}{12}-\frac{\alpha}{ 4}+ \frac{ 2\epsilon}{ 3}}
      w\right \|_{L_{xy}^2}}+ \int_0^T \left\|D_x^{ \frac{5}{12}-
      \frac{ \alpha}{4}+\frac{2\epsilon}{3}} F(t) \right\|_{L_{xy}^2}dt\right).
\end{equation}
\begin{proof}
  We will make use of a Littlewood-Paley type decomposition of $w$ on $x$
  variable. For this, let $\varphi_0,\varphi\in C^\infty_0$ with
  $\supp(\varphi_0)= \left\{\left|\xi\right|<2\right\}$ and 
  $\supp(\varphi)= \left\{\frac{1}{2}<\left|\xi\right|<2\right\}$ such
  that $\varphi_0(\xi)+\sum_{k=1}^\infty{\varphi(2^{-k}\xi)}=1$ for
  all $\xi \in \re$.  Let us define $P_k w$ via Fourier transform
  by $\widehat{P_{0}w}(\xi,\eta)=\varphi_0(\xi)\widehat{w}(\xi,\eta)$
  and
  $\widehat{P_{k}w}(\xi,\eta)=\varphi(2^{-k}\xi)\widehat{w}(\xi,\eta)$
  for $k\geq 1$, in such a way that
\begin{equation*}
w=\sum_{k=0}^\infty{P_k w}.
\end{equation*}
Let us first make an estimate for
$\left\|\partial_x P_0 w \right \|_{ L_T^1L_{xy}^\infty}$. Since $w$
satisfies \eqref{eqlinealf}, then $P_0 w$ satisfies the integral
equation when apply $P_0$ on both sides of the equation. So, from
the Cauchy-Schwarz inequality and \eqref{eqnormalt2lxyinf}, it
follows that
\begin{equation}\label{estp0}
\begin{split}
&\left\|\partial_x P_0 w\right\|_{L_T^1L_{xy}^\infty}\leq\\&
\leq \left\|W_\alpha(t)\partial_x P_0 w(0)\right\|_{L_T^1L_{xy}^\infty} +\int_0^t\left\|W_\alpha(t-t')\partial_xP_0F(t')\right\|_{L_T^1L_{xy}^\infty} dt'\\
&\leq T^\frac{1}{2}\left(\left\|W_\alpha(t)\partial_x P_0 w(0)\right\|_{L_T^2L_{xy}^\infty} +\int_0^t\left\|W_\alpha(t-t')\partial_xP_0F(t')\right\|_{L_T^2L_{xy}^\infty} dt'\right)\\
&\lesssim T^{\frac{1}{2}+\frac{1+2\epsilon}{12}}\left(\left\|D_x^{\frac{1}{2}(\epsilon-\frac{\alpha}{2})}\partial_x P_0 w(0)\right\|_{L_{xy}^2} +\int_0^t\left\|D_x^{\frac{1}{2}(\epsilon-\frac{\alpha}{2})}\partial_xP_0F(t')\right\|_{L_{xy}^2} dt'\right)\\
&\lesssim c_\epsilon T^\frac{7+2\epsilon}{12}\left(\left\|J_x^{\frac{17}{12}-\frac{\alpha}{4}+\frac{2\epsilon}{3}} P_0 w(0)\right\|_{L_{xy}^2} +\int_0^T\left\|D_x^{\frac{5}{12}-\frac{\alpha}{4}+\frac{2\epsilon}{3}}P_0F(t)\right\|_{L_{xy}^2} dt\right).
\end{split}
\end{equation}

Now, let us estimate
$\left \| \partial_x P_k w \right \|_{L_T^1L_{ xy}^\infty }$, when
$k\geq 1$. For this, we will make a suitable partition in time in such
a way that allow us to control the localized frequencies associated to
$x$ variable. So, let
$\mathcal{P}=\left\{a_0, a_1 ,... , a_{2^k}\right\}$ be a partition
from interval $[0,T]$ with $a_j=jT 2^{-k}$, $j=0, 1, \cdots,2^k$.  We
denote by $I_j$ the interval $[a_{j-1},a_j]$.\par
Then, thanks to Young's inequality, Cauchy-Schwarz's inequality and
the inequality
$\left\|(\chi_{\left\{2^{k-1}<\left|\xi\right|<2^k\right\}}\xi)^\vee
\right\|_{L^1(\re)}\lesssim 2^k,$ we have
\begin{equation}\label{pklt1lxyinf}
\begin{split}
\left\|\partial_xP_k w\right\|_{L_T^1L_{xy}^\infty}&\leq
\left\|(\chi_{\left\{2^{k-1}<\left|\xi\right|<2^k\right\}}(\xi)\left|\xi\right|)^\vee
\right\|_{L^1(\re)}\left\|P_k w\right\|_{L_T^1L_{xy}^\infty}\\ 
&\lesssim 2^k\sum_{j=1}^{2^k}\left\|P_k w\right\|_{L_{I_j}^1L_{xy}^\infty}\\
&\lesssim (2^k)\sum_{j=1}^{2^k}(a_j-a_{j-1})^\frac{1}{2}\left\|P_k
  w\right\|_{L_{I_j}^2L_{xy}^\infty}\\ 
&=(2^kT)^\frac{1}{2}\sum_{j=1}^{2^k}\left\|P_k w\right\|_{L_{I_j}^2L_{xy}^\infty}.
\end{split}
\end{equation}
From Duhamel's principle on each interval $[a_{j-1},a_j]$, we have that,
for each $t\in[a_{j-1},a_j]$, 
\begin{equation*}
P_kw(t)= W_\alpha(t-a_j)P_kw(\cdot,a_j)+\int_{a_{j-1}}^tW_\alpha(t-t')P_kF(t')dt'.
\end{equation*}
By \eqref{pklt1lxyinf} and \eqref{eqnormalt2lxyinf}, we get 
 \begin{equation}\label{estp1}
\begin{split}
&\left\|\partial_xP_k w\right\|_{L_T^1L_{xy}^\infty}\lesssim (2^kT)^\frac{1}{2}\sum_{j=1}^{2^k}\left\|P_k w\right\|_{L_{[a_j,b_j]}^2L_{xy}^\infty}\\
&\lesssim (2^kT)^\frac{1}{2}\sum_{j=1}^{2^k}{\left(\left\|W_\alpha(t-a_j)P_k w(\cdot,a_j)\right\|_{L_{I_j}^2L_{xy}^\infty}+\int_{a_{j-1}}^t{\left\|W_\alpha (t-t')P_k F(t')\right\|_{L_{I_j}^2L_{xy}^\infty}}dt'\right)}\\
&\lesssim 2^\frac{k(5+2\epsilon)}{12}T^\frac{7+2\epsilon}{12}\sum_{j=1}^{2^k}{\left(\left\|D_x^{\frac{1}{2}\left(\epsilon-\frac{\alpha}{2}\right)}P_k w(\cdot,a_j)\right\|_{L_{xy}^2}+\int_{I_j}{\left\|D_x^{\frac{1}{2}\left(\epsilon-\frac{\alpha}{2}\right)}P_k F(t')\right\|_{L_{xy}^2}}dt'\right)}\\
&\lesssim
T^\frac{7+2\epsilon}{12}\Bigg\{2^\frac{k(5+2\epsilon)}{12}\sum_{j=1}^{2^k}{\left\|D_x^{\frac{1}{2}\left(\epsilon-\frac{\alpha}{2}\right)}P_k
      w(\cdot,a_j)\right\|_{L_{xy}^2}}+\\
&\phantom{T^\frac{7+2\epsilon}{12}\Bigg\{2^\frac{k(5+2\epsilon)}{12}\sum_{j=1}^{2^k}\left\|D_x^{\frac{1}{2}\left(\epsilon-\frac{\alpha}{2}\right)}P_k
      w\right.} + 2^\frac{k(5+2\epsilon)}{12}\int_{0}^T{\left\|D_x^{\frac{1}{2}\left(\epsilon-\frac{\alpha}{2}\right)}P_k F(t')\right\|_{L_{xy}^2}}dt'\Bigg\}\\
&\lesssim c_\epsilon T^\frac{7+2\epsilon}{12}\left(\sup_{t\in[0,T]}{\left\|D_x^{\frac{17}{12}-\frac{\alpha}{4}+\frac{2\epsilon}{3}}P_k w(t)\right\|_{L_{xy}^2}}+\int_0^T{\left\|D_x^{\frac{5}{12}-\frac{\alpha}{4}+\frac{2\epsilon}{3}}P_k F(t')\right\|_{L_{xy}^2}}dt'\right).
\end{split}
\end{equation}
For the sake of the completeness, let us explain the
inequality
\begin{equation}\label{eq:np1}
2^\frac{k(5+2\epsilon)}{12} \sum_{j=1}^{2^k}{ \left\|D_x^{
      \frac{1}{2}\left(\epsilon- \frac{ 
          \alpha}{2} \right)}P_k w(\cdot,a_j)\right\|_{L_{xy}^2}} \le
  \sup_{t\in[0,T]}{\left\|D_x^{ \frac{17}{12} -\frac{ \alpha}{4}+
        \frac{2 \epsilon}{3}}P_k
      w(t)\right\|_{L_{xy}^2}}, 
\end{equation}
what was used to prove the inequality above. Observe that,
for each integer $j\le 2^k$, we have 
$$\left\|D_x^{ \frac{1}{2}\left(\epsilon- \frac{ \alpha}{2}
    \right)}P_k w(\cdot,a_j)\right\|_{L_{xy}^2}\le
j^{-\frac{k(17+2\epsilon)}{12}} \left\|D_x^{\frac{17}{12} -\frac{
      \alpha}{4}+ \frac{2 \epsilon}{3}} P_k
  w(\cdot,a_j)\right\|_{L_{xy}^2}. $$ Summing on $j$ from $1$ to
$2^k$ and considering 
$$ \sum_{j=1}^{2^k} j^{-\frac{k(17+2\epsilon)}{12}}\sim
2^{-\frac{k(5+2\epsilon)}{12}} $$ it follows the inequality
\eqref{eq:np1}.\par \eqref{estp1} together with \eqref{estp0} show the
present lemma.
\end{proof}
\end{lema}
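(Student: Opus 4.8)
The plan is to follow the refined Strichartz scheme of Kenig and of Linares--Pilod--Saut, combining a dyadic Littlewood--Paley decomposition in the $x$ variable with a time subdivision whose length is dictated by the frequency. First I would write $w=\sum_{k\ge 0}P_kw$, where $P_0$ projects onto $\{|\xi|<2\}$ and $P_k$ onto $\{2^{k-1}<|\xi|<2^k\}$ for $k\ge1$, and bound $\|\partial_x w\|_{L^1_TL^\infty_{xy}}\le\sum_{k\ge0}\|\partial_xP_kw\|_{L^1_TL^\infty_{xy}}$, treating the low and high frequencies separately.

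For the low-frequency block $P_0$ I would apply $P_0$ to the Duhamel representation of \eqref{eqlinealf}, pass from $L^1_T$ to $T^{1/2}L^2_T$ by H\"older in time, and invoke the estimate $\|W_\alpha(t)\psi\|_{L^2_TL^\infty_{xy}}\lesssim T^{(1+2\epsilon)/12}\|D_x^{\frac12(\epsilon-\frac\alpha2)}\psi\|_{L^2}$ of Corollary \ref{coronormalt2lxyinf}. Since the symbol of $P_0$ is supported in $\{|\xi|<2\}$, the operator $D_x^{\frac12(\epsilon-\frac\alpha2)}\partial_x$ it produces is controlled, on those frequencies, by $J_x^{\frac{17}{12}-\frac\alpha4+\frac{2\epsilon}3}$ acting on the data and by $D_x^{\frac5{12}-\frac\alpha4+\frac{2\epsilon}3}$ acting on $F$, so this block is immediately dominated by the right-hand side of \eqref{eqstrichartzref}.

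The core of the argument is the high-frequency blocks $k\ge1$. Here I would partition $[0,T]$ into $2^k$ intervals $I_j=[a_{j-1},a_j]$ of equal length $T2^{-k}$ and, on each $I_j$, write $P_kw(t)=W_\alpha(t-a_j)P_kw(a_j)+\int_{a_{j-1}}^tW_\alpha(t-t')P_kF(t')\,dt'$ by Duhamel. Replacing $\partial_x$ by the frequency $2^k$ through the Bernstein-type bound $\|(\chi_{\{2^{k-1}<|\xi|<2^k\}}\xi)^\vee\|_{L^1}\lesssim2^k$, using H\"older on each $I_j$, and then applying Corollary \ref{coronormalt2lxyinf} on each $I_j$ (whose length $T2^{-k}$ supplies the factor $(T2^{-k})^{(1+2\epsilon)/12}$), I would reduce matters to a sum over $j$ of the $L^2_{xy}$ norms of $D_x^{\frac12(\epsilon-\frac\alpha2)}P_kw(a_j)$ and $D_x^{\frac12(\epsilon-\frac\alpha2)}P_kF$. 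The decisive point is that on the support of $P_k$ one has $D_x^{\frac12(\epsilon-\frac\alpha2)}\approx 2^{-k\frac{17+2\epsilon}{12}}D_x^{\frac{17}{12}-\frac\alpha4+\frac{2\epsilon}3}$; collecting the $2^k$ summands, the Bernstein loss, the gain from the short intervals, and this frequency conversion, all powers of $2^k$ cancel up to a surplus $2^{-k\epsilon/3}$, so each block is bounded by $c_\epsilon T^{(7+2\epsilon)/12}2^{-k\epsilon/3}\bigl(\sup_{t}\|D_x^{\frac{17}{12}-\frac\alpha4+\frac{2\epsilon}3}P_kw\|_{L^2}+\int_0^T\|D_x^{\frac5{12}-\frac\alpha4+\frac{2\epsilon}3}P_kF\|_{L^2}\,dt\bigr)$.

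Finally I would sum over $k$. Since each $P_k$ is bounded on $L^2$ and commutes with $D_x^s$, every $\|D_x^sP_kw\|_{L^2}$ is dominated by $\|J_x^sw\|_{L^2}$, and the surplus $2^{-k\epsilon/3}$ makes the geometric series $\sum_k2^{-k\epsilon/3}$ convergent; the forcing terms are handled identically. Adding the $P_0$ contribution then yields \eqref{eqstrichartzref}. I expect the main obstacle to be precisely this exponent bookkeeping in the high-frequency block: the subdivision scale $T2^{-k}$ must be tuned so that the loss $2^k$ from $\partial_x$, the gain $2^{-k/2}$ from summing the $2^k$ short intervals, and the Strichartz power of $2^k$ combine with the frequency conversion to leave only the small negative power $2^{-k\epsilon/3}$. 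It is this surplus, furnished by $\epsilon>0$, that both guarantees convergence of the dyadic sum and fixes the threshold regularity $\frac{17}{12}-\frac\alpha4$.
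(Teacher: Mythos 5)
Your proposal follows essentially the same route as the paper: Littlewood--Paley decomposition in $x$, a separate low-frequency estimate for $P_0$, and for each $P_k$ a partition of $[0,T]$ into $2^k$ intervals combined with Duhamel, the Bernstein bound $\|(\chi_{\{2^{k-1}<|\xi|<2^k\}}\xi)^\vee\|_{L^1}\lesssim 2^k$, H\"older in time, and Corollary \ref{coronormalt2lxyinf} on each subinterval, followed by the frequency conversion $D_x^{\frac12(\epsilon-\frac\alpha2)}\approx 2^{-k(\frac{17+2\epsilon}{12})}D_x^{\frac{17}{12}-\frac\alpha4+\frac{2\epsilon}3}$ on the support of $P_k$. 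Your bookkeeping, which retains the surplus $2^{-k\epsilon/3}$ to make the sum over $k$ converge, is in fact the cleaner way to close the argument (the paper's displayed prefactor $2^{k(5+2\epsilon)/12}$ and its $j$-power justification of \eqref{eq:np1} obscure exactly this point), so your proof is correct and, if anything, slightly more careful at the final summation step.
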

\subsection{Energy Estimate}
\begin{lema}\label{lemaestenergia}
  Let $-1\leq \alpha\leq 1$, $T>0$ and assume that  $u\in C([0,T ]; H^ \infty(
  \re^2) )$ is a solution to the Cauchy problem
  \eqref{eq:principal}. Then, there exists a positive constant $C$
  such that, for $1\leq s_2\leq s_1$, 
\begin{equation}\label{eqestenergia}
\left\|u\right\|_{L_T^\infty H_{xy}^{s_1,s_2}}\leq \left\|\psi\right\|_{H^{s_1,s_2}_{xy}}e^{{C(\left\| \partial_x u\right\|_{L_T^1L_{xy}^\infty}+\left\| \partial_y u\right\|_{L_T^1L_{xy}^\infty})}}.
\end{equation} 
\begin{proof}
  Let $u$ as in the statement in lemma. Let us estimate first
  $\left\|J_x^{s_1} u\right\|_{L^2_{xy}}$. Operating with $J_x^{s_1}$ and
  then multiplying by $J_x^{s_1} u$ on both sides of the equation
  \eqref{eq:principal}, and integrating with respect to $x,y$, we obtain
\begin{equation}\label{eqestenergia6}
  \begin{split}
\frac{1}{2}\frac{d}{dt}\int_{\re^2}{{\left(J_x^{s_1}u\right)}^2  dx dy
}&=\int_{\re^2}{J_x^{s_1}(u\partial_x u) J_x^{s_1} u\, dx dy }\\ 
	&=\int_{\re^2} \left[J_x^{s_1},u\right]\partial_x u J_x^{s_1}
        udxdy+\int_{\re^2} u J_x^{s_1}\partial_x u J_x^{s_1} u\,dxdy.
      \end{split}
\end{equation}
From inequalities of Cauchy-Schwarz and
Kato-Ponce (Lemma \ref{katoponce}), applied in the $x$  variable, we have
\begin{equation}\label{eqestenergia7}
\begin{split}
	\int_{\re^2} \left[J_x^{s_1},u\right]\partial_x u J_x^{s_1}
        u\,dxdy&\leq \int_{\re_y} \left\|\left[J_x^{s_1},u\right]\partial_x
          u\right\|_{L_{x}^2}\left\| J_x^{s_1} u\right\|_{L_{x}^2}dy\\
        &\lesssim \int_{\re_y} \left\| \partial_x
          u\right\|_{L_{x}^\infty}  \left\| J_x^{s_1} u\right
        \|^2_{L_{ x}^2}dy \\ &\leq  \left\| \partial_x
          u\right\|_{L_{xy}^\infty}  \left\| J_x^{s_1}
          u\right\|^2_{L_{xy}^2}\leq  \left\| \partial_x
          u\right\|_{L_{xy}^\infty}   \left\|  u\right\|^2_{H_{xy}^{s_1,s_2}}.
\end{split}
\end{equation}
On the other hand, 
\begin{equation}\label{eqestenergia9}
 \int_{\re^2} u J_x^{s_1}\partial_x u J_x^{s_1}\, u\,dxdy=-\frac{1}{2}
 \int_{\re^2} \partial_xu \left( J_x^{s_1}u\right)^2 dxdy \lesssim \left\| \partial_x u\right\|_{L_{xy}^\infty}  \left\| J_x^{s_1} u\right\|^2_{L_{xy}^2}.
\end{equation}
By \eqref{eqestenergia6}, \eqref{eqestenergia7} and
\eqref{eqestenergia9}, we have
\begin{equation}\label{eqestenergia10}
	\frac{d}{dt}\left\|J_x^{s_1}u\right\|^2_{L^2_{xy}}\lesssim \left\| \partial_x u\right\|_{L_{xy}^\infty}  \left\| J_x^{s_1} u\right\|^2_{L_{xy}^2}\lesssim \left\| \partial_x u\right\|_{L_{xy}^\infty}  \left\|  u\right\|^2_{H_{xy}^{s_1,s_2}}.
\end{equation}
In a totally analogous way we shall estimate
$\left\| J_y^{s_2} u \right\|_{ L^2_{xy}}$. So,
\begin{equation}\label{eqestenergia11}
	\frac{1}{2}\frac{d}{dt}\left\|J_y^{s_2}u\right\|^2_{L^2_{xy}}=\int_{\re^2}
        \left[J_y^{s_2},u\right]\partial_x u J_y^{s_2} u\,
        dxdy+\int_{\re^2} u J_y^{s_2}\partial_x u J_y^{s_2} u\, dxdy.
\end{equation}
Proceeding as before, we have
\begin{equation}\label{eqestenergia12}
\begin{split}
	&\int_{\re^2} \left[J_y^{s_2},u\right]\partial_x u J_y^{s_2}
        u\,dxdy\leq \int_{\re_x}
        \left\|\left[J_y^{s_2},u\right]\partial_x
          u\right\|_{L_{y}^2}\left\| J_y^{s_2}
          u\right\|_{L_{y}^2}dx\lesssim  \\ &\lesssim \int_{\re_x} \left(\left\| \partial_y u\right\|_{L_{y}^\infty}\left\|J_y^{s_2-1}\partial_x u\right\|_{L^2_y} +\left\| \partial_x u\right\|_{L_{y}^\infty}\left\|J_y^{s_2}\partial_x u\right\|_{L^2_y}\right) \left\|J_y^{s_2}u\right\|_{L^2_y}dx \\
	&\leq \left\| \partial_y
          u\right\|_{L_{xy}^\infty}\left\|J_y^{s_2-1}\partial_x
          u\right\|_{L^2_{xy}}\left\|J_y^{s_2}u\right\|_{L^2_{xy}}+\left\|
          \partial_x
          u\right\|_{L_{xy}^\infty}\left\|J_y^{s_2}u\right\|_{L^2_{xy}}^2 \\
	&\leq \left\| \partial_y
          u\right\|_{L_{xy}^\infty}\left\|J_x^{s_2}
          u\right\|_{L^2_{xy}}^\frac1{s_2}\left\|J_y^{s_2}u\right\|^\frac{2s_2-1}
        {s_2}_{L^2_{xy}}+ \left\| \partial_x u\right\|_{L_{xy}^\infty}\left\|J_y^{s_2}u\right\|_{L^2_{xy}}^2\\
&\lesssim \left(\left\| \partial_x u\right\|_{L_{xy}^\infty}+\left\| \partial_y u\right\|_{L_{xy}^\infty}\right)\left\| u\right\|_{H^{s_1,s_2}_{xy}}^2.
\end{split}
\end{equation}
So, integrating by parts in the second term on the right side of the inequality
\eqref{eqestenergia11} and from last inequality, 
we have
\begin{equation}\label{eqestenergia15}
	\frac{d}{dt}\left\|J_y^{s_2}u\right\|^2_{L^2_{xy}}\lesssim   \left(\left\| \partial_x u\right\|_{L_{xy}^\infty}+\left\| \partial_y u\right\|_{L_{xy}^\infty}\right)\left\| u\right\|_{H^{s_1,s_2}_{xy}}^2.
\end{equation}
Gathering  \eqref{eqestenergia10} and
\eqref{eqestenergia15}, we obtain  
\begin{equation}\label{eqestenergia16}
  \frac{d}{dt}\left\|u\right\|^2_{H^{s_1,s_2}_{xy}}\lesssim   \left(\left\| \partial_x u\right\|_{L_{xy}^\infty}+\left\| \partial_y u\right\|_{L_{xy}^\infty}\right)\left\| u\right\|_{H^{s_1,s_2}_{xy}}^2.
\end{equation}
From Gronwall inequality we get the lemma.
\end{proof}
\end{lema}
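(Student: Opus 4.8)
The plan is to run a direct energy estimate on the anisotropic norm, using that the linear part of \eqref{eq:principal} is skew-adjoint on $L^2$ and hence contributes nothing to the $L^2$-energy, so that the growth of $\|u\|_{H^{s_1,s_2}}$ is driven solely by the nonlinearity $uu_x$. Since $u\in C([0,T];H^\infty)$, every manipulation below (applying $J_x^{s_1}$ or $J_y^{s_2}$, pairing in $L^2$, integrating by parts, differentiating in $t$) is rigorously justified, and I may work with the equivalent splitting $\|u\|_{H^{s_1,s_2}}^2\simeq\|J_x^{s_1}u\|_{L^2}^2+\|J_y^{s_2}u\|_{L^2}^2$. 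The symbol of $\partial_x^3-\mathscr H D_x^\alpha\partial_y^2$ is purely imaginary, so this operator is real and skew-adjoint and commutes with the real Fourier multipliers $J_x^{s_1}$, $J_y^{s_2}$; this is what makes the linear terms disappear.

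First I would treat the $x$-component. Applying $J_x^{s_1}$ to \eqref{eq:principal} and pairing with $J_x^{s_1}u$ in $L^2(\re^2)$, the contributions of $u_{xxx}$ and $\mathscr H D_x^{\alpha}u_{yy}$ vanish by skew-adjointness, leaving $\int J_x^{s_1}(uu_x)\,J_x^{s_1}u$. Writing $J_x^{s_1}(u\,\partial_x u)=[J_x^{s_1},u]\partial_x u+u\,J_x^{s_1}\partial_x u$, I estimate the commutator fiberwise in $y$ by the Kato--Ponce inequality (Lemma \ref{katoponce}) in the $x$-variable, obtaining $\lesssim\|\partial_x u\|_{L^\infty_{xy}}\|J_x^{s_1}u\|_{L^2_{xy}}^2$, while the remaining term is $u\,J_x^{s_1}\partial_x u\,J_x^{s_1}u=\tfrac12 u\,\partial_x\big((J_x^{s_1}u)^2\big)$, which after integration by parts becomes $-\tfrac12\int(\partial_x u)(J_x^{s_1}u)^2\lesssim\|\partial_x u\|_{L^\infty_{xy}}\|u\|_{H^{s_1,s_2}}^2$. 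Thus $\frac{d}{dt}\|J_x^{s_1}u\|_{L^2}^2\lesssim\|\partial_x u\|_{L^\infty_{xy}}\|u\|_{H^{s_1,s_2}}^2$.

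The $y$-component is where I expect the real difficulty. Applying $J_y^{s_2}$ and pairing with $J_y^{s_2}u$, the same splitting produces the commutator $[J_y^{s_2},u]\partial_x u$; the subtlety is that the nonlinearity carries an $x$-derivative while I am commuting a $y$-operator, so Kato--Ponce in $y$ yields the mixed term $\|\partial_y u\|_{L^\infty}\|J_y^{s_2-1}\partial_x u\|_{L^2}$, carrying one $x$-derivative and $s_2-1$ $y$-derivatives, which is not directly controlled by the norm. The key step is the Fourier-side interpolation
\[
\|J_y^{s_2-1}\partial_x u\|_{L^2_{xy}}\le \|D_x^{s_2}u\|_{L^2_{xy}}^{1/s_2}\,\|J_y^{s_2}u\|_{L^2_{xy}}^{(s_2-1)/s_2},
\]
obtained by factoring $\xi^2\langle\eta\rangle^{2(s_2-1)}=(|\xi|^{2s_2})^{1/s_2}(\langle\eta\rangle^{2s_2})^{(s_2-1)/s_2}$ and applying H\"older with exponents $s_2$ and $s_2/(s_2-1)$; this is precisely where the hypothesis $s_2\ge 1$ enters. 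Since $s_2\le s_1$ gives $\|D_x^{s_2}u\|_{L^2}\le\|u\|_{H^{s_1,s_2}}$, Young's inequality absorbs the product into $\|u\|_{H^{s_1,s_2}}^2$, and after an integration by parts in the non-commutator term I reach $\frac{d}{dt}\|J_y^{s_2}u\|_{L^2}^2\lesssim(\|\partial_x u\|_{L^\infty_{xy}}+\|\partial_y u\|_{L^\infty_{xy}})\|u\|_{H^{s_1,s_2}}^2$.

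Finally I would add the two differential inequalities to get $\frac{d}{dt}\|u\|_{H^{s_1,s_2}}^2\lesssim(\|\partial_x u\|_{L^\infty_{xy}}+\|\partial_y u\|_{L^\infty_{xy}})\|u\|_{H^{s_1,s_2}}^2$ and close with Gronwall's inequality: integrating in time and taking square roots gives $\|u(t)\|_{H^{s_1,s_2}}\le\|\psi\|_{H^{s_1,s_2}}\exp\!\big(C\int_0^t(\|\partial_x u\|_{L^\infty_{xy}}+\|\partial_y u\|_{L^\infty_{xy}})\big)$, and taking the supremum over $[0,T]$ yields \eqref{eqestenergia}. The only genuinely delicate point is the mixed-derivative interpolation in the $y$-estimate (and the role of $1\le s_2\le s_1$ in making it usable); the rest is routine commutator and integration-by-parts bookkeeping, made rigorous by the $H^\infty$ regularity of $u$.
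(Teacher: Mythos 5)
Your proposal is correct and follows essentially the same route as the paper: skew-adjointness of the linear part, the commutator splitting of $J_x^{s_1}(uu_x)$ and $J_y^{s_2}(uu_x)$ estimated fiberwise by Kato--Ponce, the Fourier-side H\"older interpolation $\|J_y^{s_2-1}\partial_x u\|_{L^2}\le\|J_x^{s_2}u\|_{L^2}^{1/s_2}\|J_y^{s_2}u\|_{L^2}^{(s_2-1)/s_2}$ for the mixed term (which is exactly the step the paper performs, stated there without the explicit factorization), and Gronwall. You have correctly identified the mixed-derivative interpolation and the role of $1\le s_2\le s_1$ as the only nontrivial point.
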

\subsection{A Strichartz estimate type}
The energy estimate suggests that we have to control the norms
$\left\|\partial_xu\right\|_{L_T^1L_{xy}^\infty}$ and
$\left\|\partial_yu\right\|_{L_T^1L_{xy}^\infty}$ in order to prove
the local well-posedness of the Cauchy problem
\eqref{eq:principal}. The next lemma shows how we can control these
norms.
\begin{lema}\label{lemaestnorstr}
  Let $-1\leq \alpha \leq 1$, $T>0$ and assume that
  $u\in C([0,T];H^\infty(\re^2))$ is a solution to initial value
  problem \eqref{eq:principal} with initial condition $\psi$.  Then,
  for any $s_1>\frac{17}{12}-\frac{\alpha}{4}$ such that
  $$ \begin{cases}
    s_2>1,              & \mbox{if } 0\leq\alpha\leq 1,   \\
    \frac{1}{s_2}-\frac{\alpha}{4s_1}<1, & \mbox{if } -1\leq\alpha\leq
    0
  \end{cases} $$ and $s_1\geq s_2$, there exist constants
  $C_{s_1,s_2}$ and $k_{s_1,s_2}\in(7/12,1)$ such that
\begin{equation}\label{eqestnorstr}
f(T)=\left\|u\right\|_{L_T^1L_{xy}^\infty}+\left\|\partial_xu\right\|_{L_T^1L_{xy}^\infty}+\left\|\partial_yu\right\|_{L_T^1L_{xy}^\infty}
\end{equation}
satisfies
\begin{equation}\label{eqestnorstr2}
f(T)\leq C_{s_1,s_2} T^{k_{s_1,s_2}} (1+f(T)) \left\|u\right\|_{L_T ^\infty H^{s_1,s_2}_{ xy}}.
\end{equation}
\begin{proof}
  First let us examine an estimate for
  $\left\|\partial_x u\right\|_{L^1_TL_{ xy}^ \infty}$. From
  the refined Strichartz estimate \eqref{eqstrichartzref}, 
  we have 
\begin{equation}\label{eqestnorstr3}
\left\|\partial_x u\right\|_{L^1_TL_{xy}^\infty}\leq c_\epsilon T^\frac{7+2\epsilon}{12} \left(\sup_{t\in[0,T]}{\left\|J_x^{\frac{17}{12}-\frac{\alpha}{4}+\frac{2\epsilon}{3}}u\right\|_{L^2_{xy}}}+\int_0^T \left\|D_x^{\frac{5}{12}-\frac{\alpha}{4}+\frac{2\epsilon}{3}}(u\partial_x u)\right\|_{L^2_{xy}}dt\right).
\end{equation}
Let us choose  $0<\epsilon_1<1$ in such a way that $\frac{5}{12}- \frac{
  \alpha}{4}+\frac{ 2\epsilon_1}{3}<1$ and
${\frac{17}{12}-\frac{\alpha}{4}+\frac{2 \epsilon_1}{3}}<s_1$. So,
the first term from the right side of \eqref{eqestnorstr4} we have
\begin{equation}\label{eqestnorstr4}
\sup_{t\in[0,T]}{\left\|J_x^{\frac{ 17}{ 12}- \frac{ \alpha }{ 4}+
      \frac{ 2\epsilon_1}{3}}u \right \|_{L^2_{ xy}}}\leq \sup_{ t\in
  [0,T]}{ \left\|J_x^{s_1} u\right\|_{L^2_{xy}}}\leq \left\| u\right
\|_{ L^\infty_TH^{s_1,s_2}_{xy}}.
\end{equation}
For the second term on the right side of \eqref{eqestnorstr4},
from the Leibniz rule (equation \eqref{eqreglaleibniz}) in $x$,
we have
\begin{equation}\label{eqestnorstr5}
\begin{split}
&\int_0^T
\left\|D_x^{\frac{5}{12}- \frac{ \alpha}{4}+ \frac{ 2\epsilon_1}{ 3}}(
  u\partial_xu)\right\|_{L^2_{xy}}dt\lesssim
 \\ &\lesssim \int_0^T {\left\|{{ \left\|D_x ^{\frac{ 5}{12}- \frac{
               \alpha}{4}+ \frac{2\epsilon_1}{3 }}u\right\|_{
           L^2_{x}}}{ \left\|\partial_x u\right \|_{L_{ x}^\infty}}+{
         \left\| D_x^{ \frac{5}{12}-\frac{\alpha}{4}+\frac{2 \epsilon_1}{3}}\partial_xu\right\|_{L^2_{x}}}{\left\|u\right\|_{L_{x}^\infty}}}\right\|_{L^2_y}} dt\\
&\lesssim\int_0^T {\left\|{{\left\|J_x^{s_1}u\right\|_{L^2_{x}}}{\left\|\partial_x u\right\|_{L_{x}^\infty}}+{\left\|J_x^{s_1}u\right\|_{L^2_{x}}}{\left\|u\right\|_{L_{x}^\infty}}}\right\|_{L^2_y}} dt\\
&\lesssim \int_0^T \left(\left\|u\right\|_{L^\infty_{xy}}+\left\|\partial_xu\right\|_{L^\infty_{xy}}\right) \left\|J_x^{s_1}u\right\|_{L^2_{xy}}dt\\
&\lesssim \left\|u\right\|_{L_T^\infty H^{s_1,s_2}_{xy}}f(T)
\end{split}
\end{equation}
Then, from \eqref{eqestnorstr5}, \eqref{eqestnorstr4} and
\eqref{eqestnorstr3}
\begin{equation}\label{eqestnorstr6}
\left\|\partial_x u\right\|_{L^1_TL_{xy}^\infty}\leq
C_1(s_1)T^\frac{7+ 2\epsilon_1}{12} (1+ f(T))\left \|u \right\|_{L_T^
  \infty H^{ s_1,s_2}_{xy}}.
\end{equation}
Let us estimate now $\left\|u\right\|_{L_T^1L_{xy}^\infty}$. From 
Duhamel's principle, Cauchy-Schwarz inequality in $t$ and Corollary
\ref{coronormalt2lxyinf}, we have that
\begin{equation}\label{eqestnorstr7}
\begin{split}
\left\|u\right\|_{L_T^1L_{xy}^\infty}\leq \left\|W_\alpha (t)\psi\right\|_{L_T^1L_{xy}^\infty}+\int_0^T\left\|W_\alpha(t-t')(u\partial_xu)(t')\right\|_{L_T^1L_{xy}^\infty}dt'\\
\leq T^\frac{1}{2} \left(\left\|W_\alpha
    (t)\psi\right\|_{L_T^2L_{xy}^\infty}+\int_0^T
  \left\|W_\alpha(t-t')(u\partial_xu)(t')\right\|_{L_T^2L_{xy}^\infty}dt'\right)\\
\leq C_2 T^\frac{7+2\epsilon_2 }{12}
\left(\left\|D_x^{\frac{1}{2}(\epsilon_2-\frac{\alpha}{2})}\psi\right\|_{L_{xy}^2}+\int_0^T
  \left\|D_x^{\frac{1}{2}(\epsilon_2-\frac{\alpha}{2})}(u \partial_x
    u)( t')\right\|_{L_{xy}^2}dt'\right), 
\end{split}
\end{equation}
for some $\epsilon_2=\epsilon_2(s_1)$ such that
$0<\frac{1}{ 2}( \epsilon_2 -\frac{\alpha}{2})$ and
$\frac{ 1}{ 2}( \epsilon_2 -\frac { \alpha}{ 2})+1<s_1$.  The first
term in the last line of equation above,  thanks to the
choice of $\epsilon_2$, is less than $\left\| J_x^{s_1} \psi\right
\|_{ L^2_{xy}}$. For the term within the integral, from the 
Leibniz rule in the $x$ variable, we have
\begin{equation}\label{eqestnorstr9}
\begin{split}
\left\|D_x^{{\frac{1}{2}(\epsilon_2-\frac{\alpha}{2})}}(u\partial_xu)\right\|_{L_{xy}^2}
&\leq
\left\|D_x^{\frac{1}{2}(\epsilon_2- \frac{ \alpha}{ 2})} u
\right\|_{L_{ xy}^2}\left\|\partial_xu\right \|_{L_{ xy}^ \infty}
+\left\| D_x^{\frac{1}{ 2}( \epsilon_2- \frac{ \alpha}{2})} \partial_x
  u\right\|_{L_{xy}^2}\left\|u\right\|_{L_{xy}^\infty}\\
&\leq \left\|J_x^{s_1}u\right\|_{L_{ xy}^ 2} \left\| \partial_x u
\right \|_{L_{ xy}^\infty}+ \left\|J_x^ {s_1}u \right\|_{L_{ xy}^2}
\left\| u\right\|_{L_{xy}^\infty}.
\end{split}
\end{equation}
So that
\begin{equation}\label{eqestnorstr10}
\begin{split}
\left\|u\right\|_{L_T^1L_{xy}^\infty}
&\leq C_2 T^{\frac{7+2\epsilon_2}{12}} \left( \left\|J_x^{s_1}\psi\right\|_{L^2_{xy}} +\int_0^T \left( \left\|u\right\|_{L_{xy}^\infty}+\left\|\partial_xu\right\|_{L_{xy}^\infty} \right) \left\|J_x^{s_1}u\right\|_{L_{xy}^2}dt'\right)\\
&\leq C_2 T^{\frac{7+2\epsilon_2}{12}} \left( \left\|J_x^{s_1}\psi\right\|_{L^2_{xy}} +\int_0^T \left( \left\|u\right\|_{L_{xy}^\infty}+\left\|\partial_xu\right\|_{L_{xy}^\infty} \right) \left\|J_x^{s_1}u\right\|_{L_{xy}^2}dt'\right)\\
&\leq C_2 T^{\frac{7+2\epsilon_2}{12}} (1+f(T))\left\|u\right\|_{L_T^\infty H^{s_1,s_2}_{xy}}
\end{split}
\end{equation}
where $C_2=C_2(s_1)$.
Lastly, let us estimate $\left\|\partial_y
  u\right\|_{L_T^1L_{xy}^\infty}$. In the same way as we estimate the
other two terms, we have  that
\begin{equation}\label{eqestnorstr11}
\begin{split}
\left\|\partial_y u\right\|_{L_T^1L_{xy}^\infty}
\leq C_3 T^{\frac{ 7 + 2\epsilon_3} {12}} \left(\left\|D_x^{\frac{1}{2}(\epsilon_3-\frac{\alpha}{2})}\partial_y\psi\right\|_{L_{xy}^2}+\int_0^T\left\|D_x^{\frac{1}{2}(\epsilon_3-\frac{\alpha}{2})}\partial_y (u\partial_xu)\right\|_{L_{xy}^2}dt\right),
\end{split}
\end{equation}
where we will choose $\epsilon_3$ later. For the term within
the integral, from the commutator estimate (Lemma
\ref{lemaconmkatoponce2}) in the $x$ variable, we obtain
\begin{equation}\label{eqestnorstr12}
\begin{split}
\left\|D_x^{\frac{1}{2}(\epsilon_3-\frac{\alpha}{2})+1} (u\partial_yu)\right\|_{L_{xy}^2}\leq \left\|\left[D_x^{\frac{1}{2}(\epsilon_3-\frac{\alpha}{2})+1},u\right]\partial_y u\right\|_{L_{xy}^2}+\left\|uD_x^{\frac{1}{2}(\epsilon_3-\frac{\alpha}{2})}\partial_y u\right\|_{L_{xy}^2}\\
\leq \left(\left\|u\right\|_{L_{xy}^\infty}+\left\|\partial_xu\right\|_{L_{xy}^\infty}\right)\left\|D_x^{\frac{1}{2}(\epsilon_3-\frac{\alpha}{2})} \partial_y u\right\|_{L_{xy}^2}+\left\|\partial_yu\right\|_{L_{xy}^\infty}\left\|D_x^{\frac{1}{2}(\epsilon_3-\frac{\alpha}{2})+1} u\right\|_{L_{xy}^2}.
\end{split}
\end{equation}
Let us, then, examine 
$\left\|D_x^{\frac{1}{ 2}( \epsilon_3- \frac{ \alpha}{ 2}) +1}
  u\right\|_{L_{xy}^2}$ and
$\left\|D_x^{\frac{1}{2}( \epsilon_3- \frac{ \alpha}{ 2})} \partial_y
  u\right\|_{L_{xy}^2}$.  We will do this for two cases in $\alpha$.
The first case is when $-1\leq\alpha\leq 0$. In this case
$\frac{1}{2}(\epsilon_3-\frac{\alpha}{2})>0$ for
$0<\epsilon_3<1$. Choose $\epsilon_3$ in such a way that
$\frac{1}{2}(\epsilon_3-\frac{\alpha}{2})+1<s_1$ and
$\frac{\epsilon_3}{2s_1}-\frac{\alpha}{4s_1}+\frac{1}{s_2}<1$, or
equivalently
$\frac{1 }{2 }( \epsilon_3- \frac{ \alpha}{ 2}) \frac{ s_2}{ s_2
  -1}<s_1$. So,
\begin{equation}\label{eqestnorstr13}
	\left\|D_x^{\frac{1}{2}(\epsilon_3-\frac{\alpha}{2})+1}
          u\right\|_{L_{xy}^2}\leq \|J_x^{s_1}u\|_{L^2_{x,y}}\leq
        \|u\|_{H^{x_1. x_2}_{x,y}}  
      \end{equation}
      and
    \begin{equation}\label{eqestnorstr14}
  \begin{split}
	\left\|D_x^{\frac{1}{2}(\epsilon_3-\frac{\alpha}{2})}
 \partial_y u\right\|_{L_{xy}^2} &\leq \left\|J_x^{\frac{1}{2}(\epsilon_3-\frac{\alpha}{2})}J_yu\right\|_{L_{xy}^2}
\leq \left\|J_x^{\frac{1}{2}(\epsilon_3-\frac{\alpha}{2})\frac
    {s_2 }{s_2- 1}}u\right\|_{L_{xy}^2}^\frac{s_2-1}{ s_2} \left\|
  J_y^{ s_2}u\right\|_{L_{xy}^2}^\frac1{s_2}  \\ &\le
\left\|J_x^{s_1}u\right\|_{L_{xy}^2}^\frac{s_2-1}{ s_2} \left\| J_y^{
    s_2}u\right\|_{L_{xy}^2}^\frac1{ s_2}\le  \|u\|_{H_{x,y}^{s_1,s_2}}
\end{split}
\end{equation}
The second case is when $0\leq\alpha\leq 1$. Choose, then,
$\epsilon_3>\frac{\alpha}{2 }$ such that also
$\frac{1}{2}(\epsilon_3-\frac{\alpha}{2})+1<s_1$ and
$\frac{\epsilon_3}{2s_1}-\frac{\alpha}{4s_1}+\frac{1}{s_2}<1$. So we are
in the same situation of the first case, what leads us to the same 
two inequalities \eqref{eqestnorstr13} and
\eqref{eqestnorstr14}. This inequalities together to
\eqref{eqestnorstr12} and \eqref{eqestnorstr11} allow us show that 
\begin{equation}\label{eqestnorstr15}
\begin{split}
\left\|\partial_y u\right\|_{L_T^1L_{xy}^\infty}
\leq C_3 T^\frac{7+2\epsilon_3}{12} \left( \left \|\psi \right \|_{
    H_{xy}^{s_1,  s_2}}+(1+ f(T))\left\|u\right\|_{L_{T}^{\infty}
    H_{xy}^{s_1,s_2}}\right). 
\end{split}
\end{equation}
The theorem follows from (\ref{eqestnorstr6}), (\ref{eqestnorstr10}) and
(\ref{eqestnorstr15}), taking $C_{s_1,s_2}=C_1+C_2+C_3$ and
$k_{s_1,s_2}=\max_{i= 1,2,3}{\frac{7+2\epsilon_i}{12}}$.
\end{proof}
\end{lema}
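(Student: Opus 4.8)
The plan is to estimate the three summands of $f(T)$ one at a time. In each case the strategy is the same: write $u$ through Duhamel's formula with forcing $u\partial_x u$, apply a Strichartz-type smoothing estimate (Corollary \ref{coronormalt2lxyinf} or the refined estimate \eqref{eqstrichartzref}), and then redistribute derivatives with the fractional Leibniz and commutator rules so that every factor is controlled either by $\|u\|_{L_T^\infty H^{s_1,s_2}_{xy}}$ or by an $L^\infty$ norm that is absorbed into $f(T)$ itself.

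I would begin with $\|\partial_x u\|_{L_T^1L_{xy}^\infty}$, feeding $F=u\partial_x u$ into the refined Strichartz estimate of Lemma \ref{lemastrichartzref}. Because $s_1>\frac{17}{12}-\frac{\alpha}{4}$, a small $\epsilon$ can be chosen with $\frac{17}{12}-\frac{\alpha}{4}+\frac{2\epsilon}{3}<s_1$, so the homogeneous term is $\le\|u\|_{L_T^\infty H^{s_1,s_2}_{xy}}$. For the forcing I would apply the fractional Leibniz rule (Lemma \ref{reglaleibniz}) in $x$ to $D_x^{\frac{5}{12}-\frac{\alpha}{4}+\frac{2\epsilon}{3}}(u\partial_x u)$; since both this exponent and that exponent $+1$ stay below $s_1$, the product splits into $\|J_x^{s_1}u\|_{L_x^2}(\|\partial_x u\|_{L_x^\infty}+\|u\|_{L_x^\infty})$, and taking $L_y^2$, then integrating in time and using H\"older, yields the factor $f(T)\|u\|_{L_T^\infty H^{s_1,s_2}_{xy}}$ with gain $T^{(7+2\epsilon)/12}$.

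Next, for $\|u\|_{L_T^1L_{xy}^\infty}$ I would use Duhamel, pass from $L_T^1$ to $L_T^2$ at the cost of $T^{1/2}$, and apply Corollary \ref{coronormalt2lxyinf} to both the free evolution of $\psi$ and the convolution with $u\partial_x u$. Choosing $\epsilon$ with $0<\frac12(\epsilon-\frac{\alpha}{2})$ and $\frac12(\epsilon-\frac{\alpha}{2})+1<s_1$ makes the linear piece $\lesssim T^{(1+2\epsilon)/12}\|J_x^{s_1}\psi\|_{L^2_{xy}}$, while the nonlinear piece is handled by the same Leibniz computation as above; combining the powers of $T$ again produces $T^{(7+2\epsilon)/12}$.

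The delicate term, and what I expect to be the main obstacle, is $\|\partial_y u\|_{L_T^1L_{xy}^\infty}$, since a pure transversal derivative is not directly smoothed by the $x$-dispersion. The key algebraic observation I would exploit is $\partial_y(u\partial_x u)=\partial_x(u\partial_y u)$, which trades the transversal derivative for a tangential one; after applying Corollary \ref{coronormalt2lxyinf} the forcing appears as $\|D_x^{\sigma+1}(u\partial_y u)\|_{L^2_{xy}}$ with $\sigma=\frac12(\epsilon-\frac{\alpha}{2})$. I would then invoke the commutator estimate (Lemma \ref{lemaconmkatoponce2}) in $x$, together with the Leibniz rule, to distribute the $\sigma+1$ derivatives so that the top-order $x$-derivative never lands alone on $\partial_y u$; this reduces the forcing to the quantities $\|D_x^{\sigma+1}u\|_{L^2_{xy}}$ and $\|D_x^{\sigma}\partial_y u\|_{L^2_{xy}}$ multiplied by $\|u\|_{L^\infty_{xy}}$, $\|\partial_x u\|_{L^\infty_{xy}}$, $\|\partial_y u\|_{L^\infty_{xy}}$, the latter absorbed into $f(T)$. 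The first quantity satisfies $\|D_x^{\sigma+1}u\|_{L^2_{xy}}\le\|u\|_{H^{s_1,s_2}_{xy}}$ once $\sigma+1<s_1$, while the second requires the anisotropic interpolation
$$\|D_x^{\sigma}\partial_y u\|_{L^2_{xy}}\le\|J_x^{\sigma\frac{s_2}{s_2-1}}u\|_{L^2_{xy}}^{\frac{s_2-1}{s_2}}\|J_y^{s_2}u\|_{L^2_{xy}}^{\frac1{s_2}}\le\|u\|_{H^{s_1,s_2}_{xy}},$$
valid provided $\sigma\frac{s_2}{s_2-1}\le s_1$, i.e. $\frac{\epsilon}{2s_1}-\frac{\alpha}{4s_1}+\frac1{s_2}<1$. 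This last constraint is exactly what forces the case split in $\alpha$: for $-1\le\alpha\le0$ one may send $\epsilon\to0^+$, so the hypothesis $\frac1{s_2}-\frac{\alpha}{4s_1}<1$ already leaves room; for $0\le\alpha\le1$ positivity of $\sigma$ demands $\epsilon>\frac{\alpha}{2}$, and it is the hypothesis $s_2>1$ that then permits the interpolation. Collecting the three bounds with $C_{s_1,s_2}=C_1+C_2+C_3$ and $k_{s_1,s_2}=\max_i\frac{7+2\epsilon_i}{12}\in(7/12,1)$ gives \eqref{eqestnorstr2}.
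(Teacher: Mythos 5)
Your proposal is correct and follows essentially the same route as the paper: refined Strichartz for $\partial_x u$, Duhamel plus Corollary \ref{coronormalt2lxyinf} for $u$, and for $\partial_y u$ the identity $\partial_y(u\partial_x u)=\partial_x(u\partial_y u)$ followed by the commutator estimate and the anisotropic interpolation $\|D_x^{\sigma}\partial_y u\|_{L^2_{xy}}\le\|J_x^{\sigma s_2/(s_2-1)}u\|_{L^2_{xy}}^{(s_2-1)/s_2}\|J_y^{s_2}u\|_{L^2_{xy}}^{1/s_2}$, with the identical case split on the sign of $\alpha$ and the same assembly of constants. Your write-up in fact makes explicit the step $\partial_y(u\partial_x u)=\partial_x(u\partial_y u)$ that the paper passes over silently between its equations \eqref{eqestnorstr11} and \eqref{eqestnorstr12}.
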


\subsection{Local well-posedness} 
We already have all the necessary ingredients to prove the following
theorem. 
\begin{teo}\label{bplxs}
Let $-1<\alpha<1$ and $0<\epsilon<1$ be such that
\begin{equation*}
	s_1> \frac{17+ 4\epsilon -3\alpha } {12} ,\quad s_2\leq s_1
        \text{ and} \quad \begin{cases}
          s_2>1, & \text{ if } \alpha>0,\\
          \frac{5+ 4\epsilon -3\alpha}{12  s_1}+\frac{1}{s_2}<1, &
          \text{ if } \alpha\le 0. 
        \end{cases}
      \end{equation*}
Then, for any $\psi\in H^{s_1,s_2}(\re^2)$, there exist a
time $T=T(\left\|\psi\right\|_{H^{s_1,s_2}})$ and a unique solution
$u\in C([0,T]:H^{s_1,s_2})$ to the Cauchy problem
\eqref{eq:principal} such that $u, \partial_x u, \partial_y u \in L_T^1
L_{xy}^\infty$. Furthermore, if $0<T'<T$, there exists a neighborhood
$\mathcal{V}$ of $\psi$ in $H^{s_1,s_2}(\re^2)$ such that $\psi\mapsto u(t)$ is
continuous
\end{teo}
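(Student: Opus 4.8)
The plan is to close an a priori estimate for smooth solutions by combining the energy estimate of Lemma \ref{lemaestenergia} with the Strichartz-type estimate of Lemma \ref{lemaestnorstr}, and then to obtain the low-regularity solution as a limit of the high-regularity solutions furnished by Theorem \ref{principal}. First I would fix a smooth solution $u\in C([0,T];H^\infty)$ issuing from $\psi$, set $N(T)=\left\|u\right\|_{L_T^\infty H^{s_1,s_2}_{xy}}$ and let $f(T)$ be as in \eqref{eqestnorstr}. Lemma \ref{lemaestnorstr} gives $f(T)\le C_{s_1,s_2}T^{k_{s_1,s_2}}(1+f(T))N(T)$, while Lemma \ref{lemaestenergia} gives $N(T)\le\left\|\psi\right\|_{H^{s_1,s_2}}e^{Cf(T)}$. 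A continuity (bootstrap) argument then closes the loop: as long as $N(T)\le 2\left\|\psi\right\|_{H^{s_1,s_2}}$, the first inequality forces $f(T)$ to be small once $T=T(\left\|\psi\right\|_{H^{s_1,s_2}})$ is small, and the second inequality returns $N(T)\le 2\left\|\psi\right\|_{H^{s_1,s_2}}$. This produces an existence time depending only on $\left\|\psi\right\|_{H^{s_1,s_2}}$, together with uniform control of $u,\partial_x u,\partial_y u$ in $L_T^1L_{xy}^\infty$.

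To build the solution at low regularity I would regularize the data: take $\psi_n\in H^\infty$ with $\psi_n\to\psi$ in $H^{s_1,s_2}$ and let $u_n$ be the smooth solution given by Theorem \ref{principal} and its persistence. Since $\left\|\psi_n\right\|_{H^{s_1,s_2}}$ is uniformly bounded, the previous step yields a common existence time $T$ and uniform bounds on $N_n(T)$ and $f_n(T)$. The family $\{u_n\}$ is Cauchy in $C([0,T];L^2)$: in the $L^2$ energy identity for $u_n-u_m$ the antisymmetric dispersive part drops out, and the nonlinear contributions are bounded, after one integration by parts, by $(\left\|\partial_x u_n\right\|_{L_{xy}^\infty}+\left\|\partial_x u_m\right\|_{L_{xy}^\infty})\left\|u_n-u_m\right\|_{L^2}^2$, so Gronwall gives $\left\|u_n-u_m\right\|_{L_T^\infty L^2}\lesssim\left\|\psi_n-\psi_m\right\|_{L^2}\to 0$, the exponential factor being controlled by $f_n(T)+f_m(T)$. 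Interpolating the uniform $H^{s_1,s_2}$ bound against the $L^2$ convergence gives $u_n\to u$ in $C([0,T];H^{s_1',s_2'})$ for all $s_1'<s_1$, $s_2'<s_2$; since $s_1,s_2>1$ the Sobolev embedding (Lemma \ref{tfabi}) makes the product $u\partial_x u$ meaningful, so I can pass to the limit in the Duhamel formula and identify $u$ as a solution with $u\in L_T^\infty H^{s_1,s_2}$ and $u,\partial_x u,\partial_y u\in L_T^1L_{xy}^\infty$. Uniqueness follows from the very same $L^2$ difference estimate applied to two solutions with the same datum.

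The hard part will be upgrading weak-$*$ convergence to strong continuity $u\in C([0,T];H^{s_1,s_2})$ and establishing continuous dependence at the top regularity, and for this I would invoke the Bona--Smith method. Choosing the mollification so that simultaneously $\left\|\psi-\psi_n\right\|_{H^{s_1,s_2}}\to 0$ and $\left\|\psi_n\right\|_{H^{s_1+\delta,s_2+\delta}}\lesssim n^{\delta}\left\|\psi\right\|_{H^{s_1,s_2}}$ for $\delta>0$, I would estimate $\left\|u-u_n\right\|_{L_T^\infty H^{s_1,s_2}}$ by running the energy argument of Lemma \ref{lemaestenergia} on the difference at the full anisotropic regularity. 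The commutator terms are treated exactly as in \eqref{eqestenergia7} and \eqref{eqestenergia12} through the Kato--Ponce estimate (Lemma \ref{katoponce}) and absorbed by the uniform bound on $f(T)$, while the gain of $\delta$ derivatives on $\psi_n$ offsets the loss created by differencing at the critical level. This shows $\left\|u-u_n\right\|_{L_T^\infty H^{s_1,s_2}}\to 0$, hence strong time-continuity of $u$, and the same computation applied to two nearby data yields the continuity of the flow on a neighborhood $\mathcal V$ of $\psi$. The genuine obstacle is balancing this derivative loss against the convergence rate of the regularization: the fractional transversal dispersion and the anisotropy of $H^{s_1,s_2}$ make the relevant commutator bounds more delicate than in the isotropic KP setting, and it is precisely at this balancing step that the restriction $s_1>\frac{17+4\epsilon-3\alpha}{12}$ together with the case split on $\operatorname{sgn}\alpha$ is used.
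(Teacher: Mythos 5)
Your proposal is correct and follows essentially the same route as the paper: the bootstrap closing Lemma \ref{lemaestenergia} against Lemma \ref{lemaestnorstr} is exactly the paper's Lemma \ref{lemaext}, the $L^2$-contraction of regularized solutions plus interpolation and passage to the limit in Duhamel is the paper's Lemma \ref{lem:L2lip} and the surrounding construction, and the Bona--Smith approximation for strong continuity and continuous dependence is Proposition \ref{prop10} and Corollary \ref{prop11a}. The only cosmetic difference is that the paper first establishes strong time-continuity of $u$ in $H^{s_1,s_2}$ by a separate limsup/liminf norm argument before invoking Bona--Smith for the continuity of the data-to-solution map, whereas you obtain both from the Bona--Smith step at once.
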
 
From the local well-posedness of \eqref{eq:principal} in $H^{\sigma}$
for $\sigma>2$ ($\sigma$ fix), for $\psi \in H^\infty(\re^2) $, there
exists a unique solution $u\in C([0,T^*]:H^{\sigma}(\re^2))$,
where $T^*$ is the maximal existence time of the solution, such
that if $T^*<\infty$, then
\begin{equation}\label{abu}
	\lim_{t\nearrow T^*} \left\|u(t)\right\|_{H^{\sigma}}=\infty .
\end{equation}
Thanks to the above we have the following a priori estimate  that
will be useful in the proof of the existence of the solution to
problem \eqref{eq:principal}.
\begin{lema}\label{lemaext}
  Let $2<\sigma$, $-1\leq \alpha\leq 1$ and $s_1$, $s_2$ be as in Theorem
  \eqref{bplxs} such that $s_i\le \sigma$, $i=1, 2$. Let, also,
  $\psi\in H^{\sigma}$ and $u\in C([0,T^*);H^{\sigma}(\re^2))$ such that
  $u(0)=\psi$, where $T^*$ is the maximal time of existence of
  $u$. Then, there exist $K_0=K_0(s_1,s_2)>0$ and $L_{s_1,s_2}>0$ such that
  $T^*>T$, where
  $(L_{s_1,s_2} \left\|\psi\right\|_{H^{s_1,s_2}}+1)^\frac{12}{7}T=1$,
  and
\begin{equation}\label{ext}
\begin{split}
\left\|u\right\|_{L_T^\infty H^{s_1,s_2}} &\leq 2 \left\|\psi\right\|_{H^{s_1,s_2}}\\
f(T)= \left\|u\right\|_{L_T^1 L_{xy}^\infty} + \left\| \partial_x
  u\right \|&_{L_T^1 L_{xy}^\infty} + \left\| \partial_y u\right\|_{
  L_T^1 L_{xy}^ \infty}\leq K_0
\end{split}
\end{equation}
\begin{proof}
For $s_1$ and $s_2$ as in the statement of lemma, let 
\begin{equation}\label{ext1}
	T_0=\sup_{\tilde T\in (0,T^*)} \left\{ \tilde T\, |\, \left\| u \right\|_{
            L_{\tilde T}^ \infty H^{s_1,s_2}}\leq
          2\left\|\psi\right\|_{H^{s_1,s_2}}  \right\}
\end{equation}
From the local well-posedness, this set is not empty. Let $C_{s_1,s_2}$ be
as in Lemma \ref{lemaestnorstr}, $C$ be as in Lemma
\ref{lemaestenergia}, $L_{s_1, s_2}=2(2C-1)C_{s_1, s_2} $
and $$T=\frac 1{ (L_{s_1, s_2}\|\psi\|_{H^{s_1,s_2}}
  +1)^\frac{12}7}. $$ Let us see that $T\le T_0$. Suppose not. Thanks
to Lemma \ref{lemaestnorstr}
\begin{equation}\label{ext2}
	f(T_0)\leq C_{s_1,s_2} T_0^{k_{s_1,s_2}} (1+f(T_0))
        \left\|u\right\|_{L_{T_0}^\infty H^{s_1,s_2}_{xy}}\leq
        C_{s_1,s_2} T_0^{\frac{7}{12}} (1+ f(T_0))\left\|u\right\|_{L_{T_0
          }^\infty H^{ s_1,s_2}_{xy}} 
\end{equation}
Since $\left\|u\right\|_{L_{T_0} ^\infty H^{s_1,s_2}}\leq
2\left\|\psi\right\|_{H^{s_1,s_2}}$ and $T_0<T$ ,
\begin{equation*}
	f(T_0)\leq 2 C_{s_1,s_2}\frac{(1+ f(T_0 ))}{ L_s \left \|\psi
          \right \|_{H^{s_1,s_2}}+1}\left\|\psi\right\|_{H^{s_1,s_2}}
\end{equation*}
or equivalently
\begin{equation*}
	(4CC_{s_1,s_2}\left\|\psi\right\|_{H^{s_1,s_2}}+1)f(T_0)\leq 2 C_{s_1,s_2} \left\|\psi\right\|_{H^{s_1,s_2}}
\end{equation*}
Therefore,
\begin{equation*}
	f(T_0)\leq \frac 1{2C}
\end{equation*}
By Lemma \ref{lemaestenergia} we would have
$$ \|u(T_0)\|_{H^{s_1,s_2}}\le e^{\frac12} \|\psi\|_{H^{s_1,s_2}} <2
\|\psi\|_{H^{s_1,s_2}}. $$ From the continuity of $u$, it follows that there
exists $\tilde T>T_0$ such that
$$ \|u\|_{L^\infty_{\tilde T}H^{s_1,s_2}}\le 2
\|\psi\|_{H^{s_1,s_2}},$$ which contradicts the choice of $T_0$. Then,
$T\le T_0$. In particular, 
$$ \|u\|_{L^\infty_{T}H^{s_1,s_2}}\le 2
\|\psi\|_{H^{s_1,s_2}},$$
and, repeating the above reasoning, from inequality \eqref{ext2},
taking $T$ instead of $T_0$ we have that \begin{equation*}
	f(T)\leq \frac 1{2C}.
\end{equation*}
This proves the lemma.
\end{proof}
\end{lema}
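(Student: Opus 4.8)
The plan is to run a continuity (bootstrap) argument on the $H^{s_1,s_2}$-norm and close it by feeding the Strichartz-type bound of Lemma \ref{lemaestnorstr} into the energy estimate of Lemma \ref{lemaestenergia}. First I would set
$$T_0 = \sup\Big\{\tilde T\in(0,T^*) : \|u\|_{L^\infty_{\tilde T}H^{s_1,s_2}}\le 2\|\psi\|_{H^{s_1,s_2}}\Big\}.$$
Since $u\in C([0,T^*);H^\sigma)$ and $s_1,s_2\le\sigma$ give the continuous embedding $H^\sigma=H^{\sigma,\sigma}\hookrightarrow H^{s_1,s_2}$, the map $t\mapsto\|u(t)\|_{H^{s_1,s_2}}$ is continuous and equals $\|\psi\|_{H^{s_1,s_2}}$ at $t=0$, so $T_0>0$ and the defining set is nonempty. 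I would then fix $C$ from Lemma \ref{lemaestenergia}, and $C_{s_1,s_2}$, $k_{s_1,s_2}\in(7/12,1)$ from Lemma \ref{lemaestnorstr}; choose $L_{s_1,s_2}$ large (as explained below); set $T=(L_{s_1,s_2}\|\psi\|_{H^{s_1,s_2}}+1)^{-12/7}$, noting $T\le 1$ since the base is $\ge 1$; and put $K_0=1/(2C)$.

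The core claim is $T\le T_0$, which I would establish by contradiction. Suppose $T_0<T$. On $[0,T_0]$ Lemma \ref{lemaestnorstr} gives $f(T_0)\le C_{s_1,s_2}T_0^{k_{s_1,s_2}}(1+f(T_0))\|u\|_{L^\infty_{T_0}H^{s_1,s_2}}$. Because $T_0<T\le 1$ and $k_{s_1,s_2}>7/12$, the exponent may be lowered to $7/12$; using $\|u\|_{L^\infty_{T_0}H^{s_1,s_2}}\le 2\|\psi\|_{H^{s_1,s_2}}$ together with $T_0^{7/12}<T^{7/12}=(L_{s_1,s_2}\|\psi\|_{H^{s_1,s_2}}+1)^{-1}$ turns this into
$$f(T_0)\le \frac{2C_{s_1,s_2}\|\psi\|_{H^{s_1,s_2}}}{L_{s_1,s_2}\|\psi\|_{H^{s_1,s_2}}+1}\,(1+f(T_0)).$$
Solving this linear inequality for $f(T_0)$ — which is legitimate once $L_{s_1,s_2}$ is taken large enough that the coefficient multiplying $f(T_0)$ on the right is strictly below $1$ and the remainder lands below $1/(2C)$ — yields $f(T_0)\le 1/(2C)=K_0$.

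With $f(T_0)\le 1/(2C)$ in hand, the energy estimate of Lemma \ref{lemaestenergia} improves the a priori bound: $\|u\|_{L^\infty_{T_0}H^{s_1,s_2}}\le \|\psi\|_{H^{s_1,s_2}}\,e^{Cf(T_0)}\le e^{1/2}\|\psi\|_{H^{s_1,s_2}}<2\|\psi\|_{H^{s_1,s_2}}$. The strict gap $e^{1/2}<2$ is decisive: by continuity of $t\mapsto\|u(t)\|_{H^{s_1,s_2}}$ it persists slightly past $T_0$, producing some $\tilde T>T_0$ (and $\tilde T<T^*$) still meeting the threshold, contradicting the maximality of $T_0$. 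Hence $T\le T_0$, so $\|u\|_{L^\infty_T H^{s_1,s_2}}\le 2\|\psi\|_{H^{s_1,s_2}}$, and re-running the displayed estimate with $T$ in place of $T_0$ gives $f(T)\le K_0$; these are exactly the bounds in \eqref{ext}. To upgrade $T\le T_0\le T^*$ to the strict $T^*>T$, I would feed $f(T)\le K_0$ into the energy estimate at top regularity, i.e. Lemma \ref{lemaestenergia} with $s_1=s_2=\sigma$, obtaining $\|u\|_{L^\infty_T H^\sigma}\le \|\psi\|_{H^\sigma}e^{CK_0}<\infty$; were $T^*\le T$, this would contradict the blow-up alternative \eqref{abu}, forcing $T^*>T$.

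The step I expect to be the main obstacle is the self-improving bookkeeping: calibrating $L_{s_1,s_2}$ (hence $T$) so that the inverse power $T^{7/12}$ dominates the factor $2C_{s_1,s_2}\|\psi\|_{H^{s_1,s_2}}$ and, after absorbing the $(1+f(T_0))$ term, leaves $f(T_0)\le 1/(2C)$, and then verifying that $e^{1/2}<2$ leaves genuine room for the continuity extension. The auxiliary reductions — $T\le 1$ so that $T_0^{k_{s_1,s_2}}\le T_0^{7/12}$, and the availability of Lemmas \ref{lemaestenergia} and \ref{lemaestnorstr} because the solution is smooth on $[0,T^*)$ — are routine by comparison.
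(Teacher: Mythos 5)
Your proof is correct and follows essentially the same route as the paper: the same bootstrap quantity $T_0$, the same contradiction closing Lemma \ref{lemaestnorstr} (with $T\le 1$ to lower the exponent to $7/12$) against the energy estimate of Lemma \ref{lemaestenergia} via the gap $e^{1/2}<2$, and the same re-run at $T$ to get $f(T)\le 1/(2C)$. Your closing step, invoking the blow-up alternative \eqref{abu} at regularity $\sigma$ to upgrade $T\le T_0\le T^*$ to the strict inequality $T^*>T$, is a small but welcome addition that the paper leaves implicit.
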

\begin{corol}
  Let $\psi$ and $T$ be as in the last lemma. If $\psi\in H^\infty$,
  then the solution $u$ to the Cauchy problem
  \eqref{eq:principal}, with $u(0)= \psi$, belongs to the set
  $C([0,T]; H^\infty)$.
\end{corol}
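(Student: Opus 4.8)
The key point is that the existence time $T$ furnished by Lemma~\ref{lemaext} depends only on the low-regularity norm $\|\psi\|_{H^{s_1,s_2}}$ and not on any higher Sobolev index, through the relation $(L_{s_1,s_2}\|\psi\|_{H^{s_1,s_2}}+1)^{12/7}T=1$. The plan is to exploit this uniformity, together with the blow-up alternative \eqref{abu}, to propagate smoothness on the whole interval $[0,T]$, rather than on the (possibly shorter) interval coming from Theorem~\ref{principal}, whose length depends on the high norm $\|\psi\|_{H^{\sigma}}$.

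First I would keep $s_1,s_2$ fixed as in Lemma~\ref{lemaext}, so that $T$ is determined once and for all by $\|\psi\|_{H^{s_1,s_2}}$. It then suffices to prove $u\in C([0,T];H^{\sigma})$ for every $\sigma>\max\{2,s_1,s_2\}$, since $H^{\sigma'}\hookrightarrow H^{\sigma}$ for $\sigma<\sigma'$ and $H^\infty=\bigcap_{\sigma}H^{\sigma}$. Because $\psi\in H^\infty\subset H^{\sigma}$ for each such $\sigma$, Theorem~\ref{principal} yields a unique maximal solution $u_\sigma\in C([0,T^*_\sigma);H^{\sigma})$, whose maximal time $T^*_\sigma$ obeys the blow-up alternative \eqref{abu}.

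Next I would apply Lemma~\ref{lemaext} to $u_\sigma$. Since $s_1,s_2\le\sigma$, its hypotheses are satisfied, and the conclusion gives $T^*_\sigma>T$ together with $\|u_\sigma\|_{L_T^\infty H^{s_1,s_2}}\le 2\|\psi\|_{H^{s_1,s_2}}$ and the bound \eqref{ext} on $f(T)$. The crucial consequence is that $T^*_\sigma>T$ for \emph{every} $\sigma$ with the \emph{same} $T$; hence each $u_\sigma$ is defined and continuous with values in $H^{\sigma}$ on the common interval $[0,T]$. Uniqueness in lower regularity then forces the $u_\sigma$ to coincide: if $\sigma'<\sigma$, then $u_\sigma$ also solves \eqref{eq:principal} in $H^{\sigma'}$, so $u_\sigma=u_{\sigma'}$ on $[0,T]$ by the uniqueness part of Theorem~\ref{principal}. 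Writing $u$ for this common function, we obtain $u\in C([0,T];H^{\sigma})$ for all $\sigma>\max\{2,s_1,s_2\}$, and therefore $u\in C([0,T];H^\infty)$.

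The only delicate point is precisely the uniformity of $T$ in $\sigma$, which is exactly what the a priori estimate of Lemma~\ref{lemaext} was designed to provide; once that is in hand, the remainder is a routine combination of the local theory of Theorem~\ref{principal}, the uniqueness of solutions, and the intersection characterization of $H^\infty$. I do not expect any genuinely new estimate to be needed here, only the careful bookkeeping ensuring that the constants $L_{s_1,s_2}$, $K_0$ and the resulting $T$ are kept independent of the auxiliary index $\sigma$.
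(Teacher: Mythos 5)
Your proposal is correct and follows essentially the same route as the paper: the paper's own (much terser) proof likewise invokes Lemma \ref{lemaext} for each admissible $\sigma$ and uses the fact that the time $T$ there depends only on $\|\psi\|_{H^{s_1,s_2}}$, not on $\sigma$, to conclude $u\in C([0,T];H^\sigma)$ for every $\sigma$ and hence $u\in C([0,T];H^\infty)$. Your additional bookkeeping with the blow-up alternative and the identification of the solutions $u_\sigma$ via uniqueness simply makes explicit what the paper leaves implicit.
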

\begin{proof}
  Let $T$ be as in the lemma above. From that lemma, for any 
  number $\sigma$ that satisfies the condition established there, we
  have 
  $u\in C([0,T]; H^{\sigma})$. From here it follows the corollary.
\end{proof}
\begin{corol}\label{eq:bp5}
  Let $R>0$ and $\psi\in H^\infty$ be such that $\|\psi\|_{H^{ s_1, s_2}}\le
  R$. Then, there exists $T_0$ that depends on $R$ and $M$, constant that
  depends only on $s_1, s_2$, such that $u\in C([0,T_0]; H^\infty)$
  and $$ f(T_0)= \left\|u\right\|_{L_{T_0}^1 L_{xy}^\infty} + \left\| \partial_x
  u\right \|_{L_{T_0}^1 L_{xy}^\infty} + \left\| \partial_y u\right\|_{
  L_{T_0}^1 L_{xy}^ \infty}\leq M.$$ 
\end{corol}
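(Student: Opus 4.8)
The plan is to read this corollary off directly from Lemma \ref{lemaext}, the whole point being that the existence time produced there depends monotonically on the $H^{s_1,s_2}$-norm of the datum, so it can be lowered to a quantity controlled by the single bound $R$. First I would note that $\psi\in H^\infty\subseteq H^\sigma$ for every $\sigma>2$, so the preceding corollary already supplies a solution $u\in C([0,T];H^\infty)$, where $T$ is the time given by Lemma \ref{lemaext}, i.e. the solution of $(L_{s_1,s_2}\|\psi\|_{H^{s_1,s_2}}+1)^{12/7}\,T=1$, namely
$$ T=\frac{1}{\bigl(L_{s_1,s_2}\|\psi\|_{H^{s_1,s_2}}+1\bigr)^{12/7}}. $$
The same lemma furnishes the bound $f(T)\le K_0$ with $K_0=K_0(s_1,s_2)$.

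Next I would exploit that the map $r\mapsto\bigl(L_{s_1,s_2}\,r+1\bigr)^{-12/7}$ is strictly decreasing. Setting
$$ T_0=\frac{1}{\bigl(L_{s_1,s_2}R+1\bigr)^{12/7}}, $$
I obtain a time depending only on $R$ and on $s_1,s_2$ (through $L_{s_1,s_2}$). Since $\|\psi\|_{H^{s_1,s_2}}\le R$, it follows that $T\ge T_0$, whence $[0,T_0]\subseteq[0,T]$ and $u\in C([0,T_0];H^\infty)$.

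To conclude I would use that each of the three terms defining $f$ is an $L^1$-in-time norm over $[0,\cdot]$, so $T\mapsto f(T)$ is non-decreasing; therefore $f(T_0)\le f(T)\le K_0$, and taking $M=K_0$ finishes the argument. I do not anticipate any genuine difficulty: all the analytic content is already contained in Lemma \ref{lemaext}, and the only new observation is the elementary monotonicity remark that lets the existence time be replaced by the smaller $T_0$, which is determined solely by the a priori bound $R$.
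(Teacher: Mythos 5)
Your argument is correct and follows essentially the same route as the paper: both set $T_0=1/(L_{s_1,s_2}R+1)^{12/7}$, use $\|\psi\|_{H^{s_1,s_2}}\le R$ to get $T_0\le T$ with $T$ the time of Lemma \ref{lemaext}, and take $M$ to be the uniform constant from that lemma. The only cosmetic difference is that you deduce $f(T_0)\le K_0$ from the monotonicity of $T\mapsto f(T)$, while the paper re-runs the lemma's estimate at time $T_0$; both yield the same conclusion.
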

\begin{proof}
  If we set $T_0= 1/ {(L_{s_1, s_2}R+1 )^\frac{12}7 }$, we have
  that $T_0\le T$, $T$ as in the last lemma. From the proof of
  that lemma it follows that  $f(T_0)\le 1/2C$, that does not depend on initial
  data, only on $s_1$ and $s_2$. Making $M=1/2C$ shows the corollary. 
\end{proof}
\subsubsection{Proof of Theorem \ref{bplxs}}
\begin{lema}\label{lem:L2lip}
  Suppose that $\psi$ and $\phi\in H^\infty$ and that $u$ and
  $v\in C([0,T]; H^\infty)$ are the solutions to the problem
  \eqref{eq:principal} with initial conditions $\psi$ and $\phi$,
  respectively. Then, $$\|u- v\|_{L^2}(T_0(R)) \le \|\psi
  - \phi \|_{L^2} e^{CM},$$ where $T_0(R)$ and $M$ are as in
  Corollary \ref{eq:bp5}, $C$ is a constant that depends only on $s_1$
  and $s_2$, and $R$ is the maximum between the norm of $\phi$ and
  $\psi$ in the space $H^{ s_1, s_2}$.  
\end{lema}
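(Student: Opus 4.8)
The plan is to derive a first-order differential inequality for $\|u-v\|_{L^2}^2$ and close it with Gronwall's inequality, feeding in the $L^1_TL^\infty_{xy}$ control on the first $x$-derivatives supplied by Corollary \ref{eq:bp5}. First I would set $w=u-v$ and subtract the two copies of \eqref{eq:principal} satisfied by $u$ and $v$ to obtain
$$\partial_t w = \partial_x^3 w - \mathscr{H}D_x^\alpha \partial_y^2 w + (uu_x - vv_x).$$
I would rewrite the nonlinearity in the convenient form $uu_x - vv_x = uw_x + wv_x$. Since $u,v\in C([0,T];H^\infty)$, both $w$ and the forcing term $uu_x-vv_x$ are continuous with values in $L^2$, so Lemma \ref{justpin} applies (the antisymmetric dispersive part contributes nothing), giving the energy identity
$$\frac12\frac{d}{dt}\|w\|_{L^2}^2 = \int_{\re^2} w\,(uw_x + wv_x)\,dx\,dy.$$

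Next I would integrate the first term by parts in $x$. Because $\int_{\re^2} u\,w\,w_x = \tfrac12\int_{\re^2} u\,\partial_x(w^2) = -\tfrac12\int_{\re^2} u_x\,w^2$, the right-hand side equals $-\tfrac12\int u_x w^2 + \int v_x w^2$, whose absolute value is bounded by $\left(\tfrac12\|\partial_x u\|_{L^\infty_{xy}} + \|\partial_x v\|_{L^\infty_{xy}}\right)\|w\|_{L^2}^2$. This yields the differential inequality
$$\frac{d}{dt}\|w\|_{L^2}^2 \le C\left(\|\partial_x u\|_{L^\infty_{xy}} + \|\partial_x v\|_{L^\infty_{xy}}\right)\|w\|_{L^2}^2,$$
with $C$ absolute. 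I emphasize that only $\partial_x u$ and $\partial_x v$ appear here, which is exactly the quantity that Corollary \ref{eq:bp5} controls.

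Finally, Gronwall's inequality gives, at $t=T_0(R)$,
$$\|w(T_0)\|_{L^2}^2 \le \|w(0)\|_{L^2}^2 \exp\!\left(C\int_0^{T_0}\!\left(\|\partial_x u\|_{L^\infty_{xy}} + \|\partial_x v\|_{L^\infty_{xy}}\right)dt\right).$$
Since $R=\max\{\|\psi\|_{H^{s_1,s_2}},\|\phi\|_{H^{s_1,s_2}}\}$, Corollary \ref{eq:bp5} applies to both $u$ and $v$ on $[0,T_0(R)]$ and bounds each of $\|\partial_x u\|_{L^1_{T_0}L^\infty_{xy}}$ and $\|\partial_x v\|_{L^1_{T_0}L^\infty_{xy}}$ by $M$, so the exponent is at most $2CM$. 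Taking square roots and absorbing the factor of $2$ into $C$ produces $\|u-v\|_{L^2}(T_0) \le \|\psi-\phi\|_{L^2}\,e^{CM}$, as claimed.

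The integration by parts and the Gronwall step are entirely routine; the only point that warrants attention is the rigorous justification of the energy identity for the difference $w$, which is precisely the role of Lemma \ref{justpin}. Because both solutions are assumed to live in $C([0,T];H^\infty)$, the hypotheses of that lemma (continuity in $L^2$ of both $w$ and the forcing term) are immediate, so no genuine obstacle arises beyond the bookkeeping of constants.
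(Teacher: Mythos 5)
Your proof is correct and follows essentially the same route as the paper: an $L^2$ energy estimate for the difference $u-v$, in which the antisymmetric dispersive part drops out, integration by parts reduces the nonlinear contribution to $\bigl(\|u_x\|_{L^\infty}+\|v_x\|_{L^\infty}\bigr)\|u-v\|_{L^2}^2$, and Gronwall together with the uniform bound $f(T_0)\le M$ from Corollary \ref{eq:bp5} closes the argument. The only cosmetic difference is that you split the nonlinearity asymmetrically as $uw_x+wv_x$ while the paper uses the symmetric form $\frac12(u+v)\partial_x(u-v)+\frac12(u-v)\partial_x(u+v)$; this only affects the harmless numerical constant.
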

\begin{proof}
  The proof is analogous to obtaining the energy estimate. Let us
  see. Let $u$ and $v$ be as in statement of lemma.
  Then, 
  $$ \partial_t (u - v) = \partial_x^3 (u-v) - \mathcal H D^\alpha
  \partial_y^2 (u-v) + \frac12 (u+ v)\partial_x (u -v) + \frac12 (u-
  v)\partial_x (u +v).$$ Multiplying by $u-v$ on both sides the
  equation and integrating by parts we have that
  $$ \frac12\frac d{dt} \|u -v \|_2^2 \le \frac14 (\|u_x\|_{L^\infty }
  +\|v_x\|_{L^\infty }) \|u -v\|_2^2 .$$ From the Gronwall Lemma and
  Corollary \ref{eq:bp5} it follows the lemma.  
\end{proof}
Now, let $\psi\in H^{s_1, s_2}$ and suppose that $\psi_n$ is a
sequence of functions in $H^\infty$ that converges to $\psi$ in
$H^{s_1, s_2}$ . Taking $R=\sup_n \|\psi_n \|$, from lemma above, the
solutions to \eqref{eq:principal}
$u_n\in C([0, T_0];H^{\infty}(\re^2))$, with initial condition
$\psi_n$, converge uniformly to a function $u$ in
$C([0, T_0];L^2(\re^2))$. Moreover, from Corollary \ref{eq:bp5}, the
functions $u_n$ are uniformly bounded in $H^{s_1, s_2}$. Therefore,
from Banach-Alaoglu theorem, $u_n(t)$ have a subsequence that
converges weakly in $H^{s_1, s_2}$. From the uniform convergence of
$u_n(t)$ to $u(t)$ in $L^2$, it follows that  $u(t) \in H^{s_1, s_2}$, for
each $t\in [0,T_0]$. From the continuity of $u$ from $[0, T_0]$ to $L^2$
follows the weak continuity of $u$ from $[0, T_0]$ to
$H^{s_1, s_2}$. In particular, from uniform boundedness of the
sequence $u_n(t)$ and Lemma \ref{interdsenlplq}, it follows that $u_n$
converges strong and uniformly in $H^{s'_1,s'_2}$ to $u$, for
any pair of non negative real numbers  $s'_1, s'_2$
strictly less than $s_1, s_2$, respectively.  Since each one of the
functions $u_n$ satisfies the integral equation associated to
\eqref{eq:principal},
\begin{equation} \label{eq:int} w = W_\alpha(t)w(0) + \frac12 \int_0^t W_\alpha(t-t') \partial_x
(w^2(t'))\, dt', 
\end{equation}
in $H^{s'_1-1,s'_2-1}$, it follows that function $u$ satisfies this
same integral equation in $H^{s'_1-1,s'_2-1}$, $1< s'_2<s_2$. From
Lemma \ref{lemastrichartzref},  considering $17/ 12 - \alpha /4<
s_1'<s_1$, follows that $\partial_x u \in {L^1_T L^\infty _{x,
    y}}$. In the same way, thanks to the inequalities
\eqref{eqestnorstr7} and \eqref{eqestnorstr11}, we obtain that
$u$ and $\partial_y u \in {L^1_T L^\infty _{x, y}}$. So,
\begin{lema}
  Let $\psi\in H^{s_1, s_2}(\re ^2)$. Then, there exists $T>0$ and a 
  unique $u\in C([0, T];L^2(\re^2)) \cap C_w([0, T];H^ {s_1, s_2}(\re^2))\cap
  C^1([0, T];H^{-3} \cap (X^3)*)$ solution to problem
  \eqref{eq:principal}. Furthermore, $u$,
  $\partial_x u$, $\partial_y u \in {L^1_T L^\infty _{x, y}}$ and the
  map $\psi \mapsto u $ is Lipschitz continuous from
  $L^2(\re^2)$ to $C([0, T];L^2(\re^2)) $.
\end{lema}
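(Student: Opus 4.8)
The existence and higher regularity of the limit $u$ have essentially been assembled in the discussion preceding the statement, so the plan is to organize those ingredients and then to settle uniqueness and the Lipschitz dependence. First I would take a sequence $\psi_n\in H^\infty$ with $\psi_n\to\psi$ in $H^{s_1,s_2}$ and set $R=\sup_n\|\psi_n\|_{H^{s_1,s_2}}$. By Corollary~\ref{eq:bp5} all the corresponding smooth solutions $u_n$ are defined on a common interval $[0,T_0]$ with $T_0=T_0(R)$ (this is the $T$ of the statement), they satisfy the uniform bound $\|u_n\|_{L_{T_0}^\infty H^{s_1,s_2}}\le 2R$, and they obey $f(T_0)\le M$ with $M$ independent of $n$. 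Lemma~\ref{lem:L2lip} then shows that $\{u_n\}$ is Cauchy in $C([0,T_0];L^2)$, so it converges to some $u\in C([0,T_0];L^2)$. The uniform $H^{s_1,s_2}$ bound together with the Banach--Alaoglu theorem gives, for each $t$, a weak limit in $H^{s_1,s_2}$ which must coincide with $u(t)$; hence $u(t)\in H^{s_1,s_2}$ with $\|u(t)\|_{H^{s_1,s_2}}\le 2R$, and the weak continuity $u\in C_w([0,T_0];H^{s_1,s_2})$ follows from the strong $L^2$ continuity.

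Next I would pass to the limit in the Duhamel formula \eqref{eq:int}. Interpolating by Lemma~\ref{interdsenlplq} between the $L^2$ convergence and the uniform $H^{s_1,s_2}$ bound yields strong convergence $u_n\to u$ in $C([0,T_0];H^{s_1',s_2'})$ for all $0\le s_i'<s_i$, which is enough to pass to the limit in the quadratic term, so $u$ satisfies \eqref{eq:int} in $H^{s_1'-1,s_2'-1}$. Invoking the lemma of the preliminaries that identifies the Duhamel formula for $W_\alpha$ with the differential equation in $(X^3)^*$, I obtain that $u$ solves \eqref{eq:principal} and that $u\in C^1([0,T_0];H^{-3}\cap(X^3)^*)$. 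The bounds $u,\partial_x u,\partial_y u\in L_T^1 L_{xy}^\infty$ are inherited from the uniform estimates: applying Lemma~\ref{lemastrichartzref} with $17/12-\alpha/4<s_1'<s_1$ controls $\partial_x u$, and the inequalities \eqref{eqestnorstr7} and \eqref{eqestnorstr11}, now read for the limit $u$ with its uniform $H^{s_1,s_2}$ bound, control $u$ and $\partial_y u$.

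For uniqueness and the Lipschitz map I would reuse the energy-difference computation of Lemma~\ref{lem:L2lip}. If $u$ and $v$ are two solutions in the stated class with the same data, their difference $w=u-v$ satisfies $\partial_t w=\partial_x^3 w-\mathscr{H}D_x^\alpha\partial_y^2 w+\frac12\partial_x((u+v)w)$, and since $\partial_x u,\partial_x v\in L_T^1 L_{xy}^\infty$, Lemma~\ref{justpin} justifies the identity $\frac12\frac{d}{dt}\|w\|_2^2=(w,f)$ for this merely-$L^2$ pair; bounding the right-hand side by $\frac14(\|\partial_x u\|_{L^\infty}+\|\partial_x v\|_{L^\infty})\|w\|_2^2$ and applying Gronwall gives $w\equiv 0$. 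The same inequality, now written between two sequences of smooth solutions and passed to the limit, upgrades the Lipschitz bound of Lemma~\ref{lem:L2lip} to arbitrary $L^2$ data and shows that $\psi\mapsto u$ is Lipschitz from $L^2$ into $C([0,T_0];L^2)$.

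The main obstacle I anticipate is not the existence, which is largely a bookkeeping of the preceding estimates, but justifying the energy identity for solutions that are only strongly continuous into $L^2$ and weakly continuous into $H^{s_1,s_2}$. The integration by parts underlying the uniqueness and Lipschitz estimates cannot be performed naively at this regularity; the device is the regularization $u_\lambda=e^{\lambda(\Delta-D_x^{-1})}u$ of Lemma~\ref{justpin}, which produces smooth approximations satisfying \eqref{eq:1a}, to which the antisymmetry of $\partial_x^3-\mathscr{H}D_x^\alpha\partial_y^2$ applies, after which one lets $\lambda\to 0$. Care is also needed to verify that the weak limit genuinely solves the equation in $(X^3)^*$ and not merely the integral equation, which is precisely what the Duhamel lemma of the preliminaries supplies.
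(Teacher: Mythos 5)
Your proposal is correct and follows essentially the same route as the paper: the existence, weak continuity, and $L^1_TL^\infty_{xy}$ bounds are assembled exactly as in the discussion preceding the lemma (approximation by $H^\infty$ data, Corollary \ref{eq:bp5}, Lemma \ref{lem:L2lip}, Banach--Alaoglu, interpolation, and passage to the limit in the Duhamel formula), while uniqueness and the Lipschitz dependence are obtained from the $L^2$ energy-difference identity justified by the regularization of Lemma \ref{justpin} together with Gronwall. Your closing remark correctly identifies the one genuinely delicate point --- justifying the energy identity at $L^2$ regularity --- and resolves it with the same device the paper uses.
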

\begin{proof}
  It only remains to prove that $u$ is unique and that the map
  $\psi \mapsto u $ is Lipschitz continuous from $L^2(\re^2)$ to
  $C([0, T];L^2(\re^2)) $. Let $u$ and $v$ be as in the statement of
  lemma. Since $u$ and $v$ are solutions to the integral equation,
  from Lemma \ref{lemastrichartzref}, $u_x$ and
  $v_x \in {L^1_T L^\infty _{x, y}}$. Now proceed as in the lemma
  above, but we need Lemma \eqref{justpin}, to obtain that
  $$ \frac12 \frac d {dt} \|u -v\|_{L^2}^2 = \frac 14 (\partial_x u
  +\partial_x v, (u-v)^2)_{L^2 }\le (\|u_x\|_{L_{x,y}^\infty }
  +\|v_x\|_{L_{x,y}^\infty }) \|u -v\|_{L^2}^2 . $$ Since
  $\|u_x\|_{L_{x,y}^\infty } +\|v_x\|_{L_{x,y}^\infty }$ is integrable
  and $\|u -v\|_{L^2}^2$ is continuous, from Gronwall lemma, it follows the
  theorem.
\end{proof}
\begin{lema}
  For $\psi\in H^{s_1, s_2} $ as in the lemma above, the solution $u$ to
  \eqref{eq:principal} described there is strongly continuous in
  $H^{s_1, s_2}$
\end{lema}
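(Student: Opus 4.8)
The plan is to promote the weak continuity established in the preceding lemma to strong continuity via the Hilbert space fact that weak convergence together with convergence of the norms gives strong convergence. Since $H^{s_1,s_2}(\re^2)$ is a Hilbert space and $u\in C_w([0,T];H^{s_1,s_2})$, for every $t_0\in[0,T]$ we may write, using that $u$ is real valued,
\[
\|u(t)-u(t_0)\|_{H^{s_1,s_2}}^2=\|u(t)\|_{H^{s_1,s_2}}^2-2\,(u(t),u(t_0))_{H^{s_1,s_2}}+\|u(t_0)\|_{H^{s_1,s_2}}^2 ,
\]
and weak continuity forces the middle term to converge to $\|u(t_0)\|_{H^{s_1,s_2}}^2$ as $t\to t_0$. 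Thus it suffices to prove that $t\mapsto\|u(t)\|_{H^{s_1,s_2}}$ is continuous on $[0,T]$.

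First I would show that the energy estimate of Lemma \ref{lemaestenergia} holds for the solution $u$ itself. Although that estimate was derived for $C([0,T];H^\infty)$ solutions, all the quantities appearing in the underlying differential inequality $\frac{d}{dt}\|u\|_{H^{s_1,s_2}}^2\lesssim(\|\partial_x u\|_{L_{xy}^\infty}+\|\partial_y u\|_{L_{xy}^\infty})\|u\|_{H^{s_1,s_2}}^2$ are finite here, because $u\in C_w([0,T];H^{s_1,s_2})$ and, by the preceding lemma, $\partial_x u,\partial_y u\in L_T^1L_{xy}^\infty$. The inequality is justified for $u$ by regularizing in the $x,y$ variables with a Friedrichs mollifier, reproducing the integration by parts and the Kato--Ponce commutator bound (Lemma \ref{katoponce}) of the proof of Lemma \ref{lemaestenergia}, and letting the mollification parameter go to zero, Lemma \ref{justpin} giving meaning to the time derivative; this regularization step is the main technical obstacle, since one cannot simply pass to the limit in the smooth approximations $u_n$ of the preceding lemma (at an interior base point only weak convergence of $u_n(t_0)$ to $u(t_0)$ is available). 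Integrating the resulting inequality from a base point $t_0$ yields, for $t\ge t_0$,
\[
\|u(t)\|_{H^{s_1,s_2}}\le\|u(t_0)\|_{H^{s_1,s_2}}\exp\!\left(C\int_{t_0}^{t}\big(\|\partial_x u\|_{L_{xy}^\infty}+\|\partial_y u\|_{L_{xy}^\infty}\big)\,ds\right).
\]
As $t\downarrow t_0$ the exponential tends to $1$ because $\partial_x u,\partial_y u\in L_T^1L_{xy}^\infty$, so $\limsup_{t\downarrow t_0}\|u(t)\|_{H^{s_1,s_2}}\le\|u(t_0)\|_{H^{s_1,s_2}}$; together with the weak lower semicontinuity $\|u(t_0)\|_{H^{s_1,s_2}}\le\liminf_{t\to t_0}\|u(t)\|_{H^{s_1,s_2}}$ this gives right continuity of the norm.

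For left continuity I would use the reversibility of the equation. Since the symbol $\phi(\xi,\eta)=\xi^3+\sgn(\xi)|\xi|^\alpha\eta^2$ is odd in $\xi$, and since $D_x^\alpha$ commutes while $\mathscr{H}$ anticommutes with the reflection $x\mapsto -x$, a direct computation shows that if $u$ solves \eqref{eq:principal} then $v(x,y,t)=u(-x,y,-t)$ solves the same equation, the signs produced by $u_{xxx}$ and $uu_x$ compensating correctly. Fixing $t_0$ and setting $\tilde u(x,y,s)=u(-x,y,t_0-s)$ for $s\in[0,t_0]$, we obtain a solution of \eqref{eq:principal} on $[0,t_0]$ with $\partial_x\tilde u,\partial_y\tilde u\in L^1L_{xy}^\infty$, and its $H^{s_1,s_2}$ norm equals that of $u(t_0-s)$ because the multiplier $\langle\xi\rangle^{2s_1}+\langle\eta\rangle^{2s_2}$ is even in $\xi$. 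Applying the right continuity already proved to $\tilde u$ at $s=0$ gives $\limsup_{t\uparrow t_0}\|u(t)\|_{H^{s_1,s_2}}\le\|u(t_0)\|_{H^{s_1,s_2}}$, which combined with weak lower semicontinuity yields left continuity. Hence $t\mapsto\|u(t)\|_{H^{s_1,s_2}}$ is continuous on $[0,T]$, and by the first paragraph $u\in C([0,T];H^{s_1,s_2})$, as claimed.
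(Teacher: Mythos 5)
Your proposal rests on the same core mechanism as the paper's proof --- weak continuity plus convergence of the $H^{s_1,s_2}$ norm gives strong convergence in a Hilbert space, with the norm bound coming from the energy estimate and left continuity from a time--reflection symmetry (your reflection $v(x,y,t)=u(-x,y,-t)$ is indeed a symmetry of the equation, as is the paper's $u(-t,-x,-y)$) --- but it diverges at exactly the technically delicate point. The paper never justifies the differential inequality $\frac{d}{dt}\|u\|^2_{H^{s_1,s_2}}\lesssim(\|\partial_xu\|_{L^\infty_{xy}}+\|\partial_yu\|_{L^\infty_{xy}})\|u\|^2_{H^{s_1,s_2}}$ for the rough solution $u$ itself: it applies Lemma \ref{lemaestenergia} only to the smooth approximants $u_n$, whose initial data $\psi_n$ converge \emph{strongly} to $\psi$, obtains $\|u_n(T)\|_{H^{s_1,s_2}}\le\|\psi_n\|_{H^{s_1,s_2}}e^{CT^{k}(1+M)R}$ with a uniform exponent, and passes to the weak limit to get $\limsup_{T\to0+}\|u(T)\|\le\|\psi\|$; continuity at an interior time $t^*$ is then obtained for free by uniqueness, since $u(\cdot+t^*)$ is \emph{the} solution with data $u(t^*)\in H^{s_1,s_2}$ and the argument at $t=0$ applies to it. You instead propose to prove the differential inequality directly for $u$ at an arbitrary base point $t_0$ by Friedrichs mollification, and you correctly flag this as the main obstacle --- but you do not carry it out. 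That step is genuinely nontrivial (one must control the commutator of the mollifier with $u\partial_x u$, in the spirit of the Friedrichs commutator lemma, using only $u\in L^\infty_TH^{s_1,s_2}$ and $\nabla u\in L^1_TL^\infty_{xy}$, and interpret $\int u\,J_x^{s_1}\partial_xu\,J_x^{s_1}u$ when $J_x^{s_1}\partial_xu\notin L^2$), so as written your proof has a sketched rather than completed core; it is feasible, but the paper's translation-plus-uniqueness device is precisely what lets one avoid it. If you want to keep your base-point approach without the mollification, note that you cannot simply pass to the limit in the $u_n$ at an interior $t_0$, since only weak convergence of $u_n(t_0)$ is available there --- which is the reason the paper anchors everything at $t=0$.
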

\begin{proof}
  Let $\psi$, $\psi_n$, $u_n$, $n=1, 2, 3, \cdots$, and $u$ be as
  before. Let, also, $M$, $T_0$ be as in Corollary \ref{eq:bp5}
  and $T\le T_0$. It is clear that
  $f_n(T)=\|u_n\|_{L_{x,y}^ \infty}+ \|\partial_x u_n\|_{L_{x,y}^
    \infty}+ \|\partial_y u_n\|_{L_{x,y}^ \infty} $ satisfies
  that $$f_n(T)\le M,$$ for all $n$. From Lemma \ref{lemaestnorstr} and
  Corollary \ref{eq:bp5}, it follows that $$ f_n(T)\le 2C_{s_1, s_2}
  T^{k_{s_1, s_2}}  +(1+ M) R.$$ From Lemma \ref{lemaestenergia},
  $$ \|u_n(T ) \|_{H^{s_1, s_2}} \le\| \psi_n\|_{H^{s_1, s_2}}
  e^{2C_{s_1, s_2} T^{k_{s_1, s_2}} (1+ M) R},$$ for all $n$. From the
  weak convergence of $u_n(T)$ to $u(T)$ in $H^{s_1, s_2}$ and since
  $\psi_n$ converges to $\psi$ in $H^{s_1, s_2}$, it follows that
  $$ \|u(T )\|_{H^{s_1, s_2}}\le \| \psi\|_{H^{s_1, s_2}} e^{2C_{s_1, s_2} T^{k_{s_1,
        s_2}} (1+ M) R}.$$ From this last inequality and the weak continuity
  of $u$ in $H^{s_1, s_2}$, it follows that
  $$\| \psi\|_{H^{s_1, s_2}}\le \liminf_{T\to 0+} \|u(T )\|_{H^{s_1,
      s_2}}\le \limsup_{T\to 0+} \|u(T )\|_{H^{s_1, s_2}} \le \|
  \psi\|_{H^{s_1, s_2}} . $$ Then, $u$ is right strongly continuous
  at $0$ in the space $H^{s_1, s_2}$. Since $u(-t, -x, -y)$ is
  also solution to the equation \eqref{eq:principal}, we have that 
  $u$ is also left strongly continuous at $0$ in the
  space $H^{s_1, s_2}$. Now, for any $t^*\in [0, T]$,
  $u(t+t^*)$ is also solution to the equation \eqref{eq:principal}
  with initial condition $u(t^*)$. Then from the unicity of the solution,
  $u$ also is strongly continuous at $t^*$ in the space
  $H^{s_1, s_2}$. This ends the proof.
\end{proof}
Now examine the continuity of the solutions to \eqref{eq:principal}
with respect to initial data. For this purpose we will use a technique
very useful an recurrently used in the literature related to the
well-posedness of evolution equations. This technique is the
Bona-Smith method of approximation introduced in \cite{bona-smith}. Let
us see.
\begin{lema} \label{prop9} Let $\phi \in H^{s_1, s_2}$, $s_1$ and
  $s_2$ be positive real numbers. For each $\tau>0$ define $\phi^\tau$
  by
   \begin{equation}
     \phi^\tau(x,y ) = \left( \widehat{\phi}(\xi, \eta) \exp 
         \left( -\tau \left(( 1 + \lvert \xi \rvert ^2
             )^{\frac{s_1}{2}} + ( 1 +
         \lvert \eta \rvert ^2 )^{\frac{s_2}{2}}  \right) \right)  \right)
     \text{\begin{huge}$ \check{ \ }$\end{huge}} (x,y).  \label{aprox1} 
\end{equation}
Then \[ \lim_{\tau \to 0+} \|\phi^\tau - \phi\|_{H^{s_1,s_2}}=0 \] and
there exists a constant $C=C(s)$ such that
\begin{align}
  \| \phi^\tau\|_{H^{s_1+1, s_2}}  &\leq C  \left( \frac{1}{\tau s_1}
  \right) ^{\frac{1}{s_1}}  \| \phi \|_{H^{s_1,s_2}}, \label{eq:2.49}\\
  \| \phi^\tau\|_{H^{s_1, s_2+1}}  &\leq C  \left( \frac{1}{\tau s_2}
  \right) ^{\frac{1}{s_2}}  \| \phi \|_{H^{s_1,s_2}} \label{eq:2.50}
\end{align}
and
\begin{equation}\label{eq:2.51}
  \|\phi^\tau - \phi^\theta\|_{L^2} \leq C \lvert \tau - \theta
  \rvert  \|\phi\|_{H^{s_1, s_2}}
\end{equation}
\end{lema}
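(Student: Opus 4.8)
The plan is to pass to the Fourier side and reduce all three assertions to pointwise estimates on the multiplier
\[
  m_\tau(\xi,\eta)=\exp\left(-\tau\left(\langle\xi\rangle^{s_1}+\langle\eta\rangle^{s_2}\right)\right),
\]
where, writing $a=\langle\xi\rangle^{s_1}$ and $b=\langle\eta\rangle^{s_2}$, one has $m_\tau=e^{-\tau(a+b)}$. Since the exponent is nonpositive, $0<m_\tau\le1$ for every $\tau>0$, and by Plancherel's theorem the $H^{s_1,s_2}$ norm is computed with the weight $\langle\xi\rangle^{2s_1}+\langle\eta\rangle^{2s_2}$; these are the only structural facts I need. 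For the convergence I would write
\[
  \|\phi^\tau-\phi\|_{H^{s_1,s_2}}^2=\int_{\re^2}(m_\tau-1)^2\left(\langle\xi\rangle^{2s_1}+\langle\eta\rangle^{2s_2}\right)|\widehat{\phi}|^2\,d\xi\,d\eta.
\]
Because $0\le(m_\tau-1)^2\le1$ and the factor $\left(\langle\xi\rangle^{2s_1}+\langle\eta\rangle^{2s_2}\right)|\widehat{\phi}|^2$ is integrable (this is exactly the hypothesis $\phi\in H^{s_1,s_2}$), while $m_\tau(\xi,\eta)\to1$ pointwise as $\tau\to0+$, the dominated convergence theorem yields the first limit immediately.

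The smoothing estimates \eqref{eq:2.49} and \eqref{eq:2.50} are the technical heart, and I expect them to be the main obstacle because one must extract the sharp $\tau$-dependence rather than a crude bound. For \eqref{eq:2.49} I would split
\[
  \|\phi^\tau\|_{H^{s_1+1,s_2}}^2=\int_{\re^2}m_\tau^2\,\langle\xi\rangle^{2s_1+2}|\widehat{\phi}|^2\,d\xi\,d\eta+\int_{\re^2}m_\tau^2\,\langle\eta\rangle^{2s_2}|\widehat{\phi}|^2\,d\xi\,d\eta.
\]
The second integral is at most $\|\phi\|_{H^{s_1,s_2}}^2$ since $m_\tau\le1$. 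For the first the point is to balance the gained factor $\langle\xi\rangle^2$ against the decay of $m_\tau$: setting $r=\langle\xi\rangle^{s_1}\ge1$, so that $\langle\xi\rangle^2=r^{2/s_1}$, and using $m_\tau\le e^{-\tau a}=e^{-\tau r}$, I would maximize $g(r)=e^{-2\tau r}r^{2/s_1}$ over $r>0$. Its critical point is $r=1/(\tau s_1)$, which gives
\[
  m_\tau^2\,\langle\xi\rangle^2\le e^{-2\tau r}r^{2/s_1}\le e^{-2/s_1}\left(\tfrac{1}{\tau s_1}\right)^{2/s_1}\le\left(\tfrac{1}{\tau s_1}\right)^{2/s_1}.
\]
Since $\langle\xi\rangle^{2s_1+2}=\langle\xi\rangle^2\,\langle\xi\rangle^{2s_1}$, pulling this constant out bounds the first integral by $\left(1/(\tau s_1)\right)^{2/s_1}\|\phi\|_{H^{s_1,s_2}}^2$. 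For $\tau s_1\le1$ the factor $\left(1/(\tau s_1)\right)^{2/s_1}\ge1$ absorbs the $1$ coming from the second integral, and taking square roots yields \eqref{eq:2.49}; inequality \eqref{eq:2.50} follows verbatim after interchanging the roles of $(\xi,s_1)$ and $(\eta,s_2)$.

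Finally, for the Lipschitz bound \eqref{eq:2.51} I would control $m_\tau-m_\theta$ by the mean value theorem in the parameter: for some $\sigma$ between $\tau$ and $\theta$,
\[
  |m_\tau-m_\theta|\le|\tau-\theta|\,(a+b)\,e^{-\sigma(a+b)}\le|\tau-\theta|\,(a+b),
\]
using $e^{-\sigma(a+b)}\le1$. Combining this with the elementary inequality $(a+b)^2\le2(a^2+b^2)=2\left(\langle\xi\rangle^{2s_1}+\langle\eta\rangle^{2s_2}\right)$ and Plancherel's theorem gives
\[
  \|\phi^\tau-\phi^\theta\|_{L^2}^2=\int_{\re^2}(m_\tau-m_\theta)^2|\widehat{\phi}|^2\,d\xi\,d\eta\le2|\tau-\theta|^2\int_{\re^2}\left(\langle\xi\rangle^{2s_1}+\langle\eta\rangle^{2s_2}\right)|\widehat{\phi}|^2\,d\xi\,d\eta,
\]
so that \eqref{eq:2.51} holds with $C=\sqrt2$. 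All three parts thus rest on the single multiplier $m_\tau$, with the only genuinely delicate point being the one-variable maximization that pins the smoothing constant at $\left(1/(\tau s_i)\right)^{1/s_i}$.
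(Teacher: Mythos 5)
Your proof is correct, and in fact the paper offers nothing to compare it against: Lemma \ref{prop9} is stated without proof, as part of the standard Bona--Smith machinery imported from \cite{bona-smith}. Your reduction to the multiplier $m_\tau=e^{-\tau(\langle\xi\rangle^{s_1}+\langle\eta\rangle^{s_2})}$, the dominated-convergence step, the optimization of $e^{-2\tau r}r^{2/s_1}$ at $r=1/(\tau s_1)$, and the mean-value bound for \eqref{eq:2.51} are exactly the standard argument and all check out. The only loose end is that you absorb the $+1$ from the $\langle\eta\rangle^{2s_2}$ term only when $\tau s_1\le 1$; for $\tau s_1>1$ one needs the separate (easy) observation that $m_\tau\le e^{-2\tau}$ makes $\|\phi^\tau\|_{H^{s_1+1,s_2}}$ decay fast enough that the stated bound still holds with a constant depending on $s_1$ --- or simply note that only the regime $\tau\to 0+$ is used in the Bona--Smith argument.
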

\begin{prop} \label{prop10} Let $R>0$, and assume that $\Lambda$ is a set and
  $\psi_{\lambda\in \Lambda}$ is a collection of functions in $H^\infty$
  such that  $ \| \psi_{\lambda}\|_{ H^{s_1, s_2}}\le R$, for all
  $\lambda \in \Lambda $. Also, let $\psi_\lambda^\tau$ be the approximations
  defined from $\psi_\lambda$ as in \eqref{aprox1}, and assume that
  $u_{\lambda}^\tau$ is the solution to \eqref{eq:principal} with
  initial condition $\psi_\lambda^\tau$, for all $\lambda \in
  \Lambda$. Suppose that $0 \leq \theta < \tau$. Then, for $\nu>0$
  $$ 
  \nsc{u_\lambda^\tau - u_\lambda^\theta}{H^{s_1, s_2}} \leq C \left(
    \nsc{\psi_\lambda^\tau - \psi_\lambda^\theta}{H^{s_1, s_2}} + \tau^\nu \right).
$$ 
for all $\lambda\in \Lambda$.
\end{prop}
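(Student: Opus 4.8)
The plan is to run the Bona--Smith scheme: estimate the difference $w=u_\lambda^\tau-u_\lambda^\theta$ directly in $H^{s_1,s_2}$ by an energy argument, exploiting that the more strongly regularized datum $\psi_\lambda^\tau$ produces a solution that is quantitatively smoother than $u_\lambda^\theta$, while $w$ itself is small in every norm below the top one. Throughout I suppress $\lambda$ and set $g=u^\tau+u^\theta$; subtracting the two copies of \eqref{eq:principal} and writing $u^\tau u^\tau_x-u^\theta u^\theta_x=\frac12\partial_x(gw)$, the difference solves the equation, linear in $w$, $w_t=w_{xxx}-\mathscr H D_x^\alpha w_{yy}+\frac12\partial_x(gw)$ with $w(0)=\psi^\tau-\psi^\theta$.

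First I would record the uniform information available on a common interval $[0,T_0]$. Since the multiplier in \eqref{aprox1} is bounded by $1$, both data have $H^{s_1,s_2}$ norm $\le R$, so by Lemma \ref{lemaext} and Corollary \ref{eq:bp5} there is a single $T_0=T_0(R)$ with $\|u^\tau\|_{L_{T_0}^\infty H^{s_1,s_2}},\|u^\theta\|_{L_{T_0}^\infty H^{s_1,s_2}}\le 2R$ and $f_\tau(T_0),f_\theta(T_0)\le M$. Propagating the smoothing bound \eqref{eq:2.49} through the energy inequality \eqref{eqestenergia} (run in the higher space and with the same weight $e^{CM}$, since the exponent only sees $\|\partial_x\cdot\|_{L^1_TL^\infty}+\|\partial_y\cdot\|_{L^1_TL^\infty}\le M$) gives $\|u^\tau\|_{L_{T_0}^\infty H^{s_1+1,s_2}}\lesssim\tau^{-1/s_1}R$, together with the analogous bounds from \eqref{eq:2.50}. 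On the smallness side, Lemma \ref{lem:L2lip} combined with \eqref{eq:2.51} yields $\|w\|_{L_{T_0}^\infty L^2}\lesssim\tau R$, and interpolating this against the uniform $H^{s_1,s_2}$ bound (Lemma \ref{interdsenlplq}) produces $\|w\|_{L_{T_0}^\infty H^{\sigma_1,\sigma_2}}\lesssim\tau^{\kappa}R$ with $\kappa=\kappa(\sigma)>0$ for every $(\sigma_1,\sigma_2)$ strictly below $(s_1,s_2)$.

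The core is the $H^{s_1,s_2}$ energy estimate for $w$, carried out exactly as in Lemma \ref{lemaestenergia}: apply $J_x^{s_1}$ and $J_y^{s_2}$, pair with $J_x^{s_1}w$ and $J_y^{s_2}w$, and use the antisymmetry of the dispersive part so that only $\frac12\partial_x(gw)$ contributes. Expanding with the Kato--Ponce commutators (Lemmas \ref{katoponce}, \ref{lemaconmkatoponce2}) and the Leibniz rule \ref{reglaleibniz}, the terms split into two groups. The first is bounded by $\big(\|u^\tau_x\|_\infty+\|u^\theta_x\|_\infty+\|u^\tau_y\|_\infty+\|u^\theta_y\|_\infty\big)\|w\|^2_{H^{s_1,s_2}}$ with $L^1_T$ coefficients (controlled by $f_\tau+f_\theta\le 2M$) and is absorbed by Gronwall. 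The second group is the finite set of top-order terms in which the extra $x$-derivative coming from the nonlinearity must fall on a high-order factor; here I would integrate by parts so that this derivative lands on the \emph{more regular} solution $u^\tau$, bound the offending factor by $\|u^\tau\|_{H^{s_1+1,s_2}}\lesssim\tau^{-1/s_1}R$, and absorb the loss with the $\tau^{\kappa}$-smallness of an intermediate norm of $w$. Crucially, the low-order factors $\|w\|_\infty,\|w_x\|_\infty,\|w_y\|_\infty$ that appear would \emph{not} be treated by Sobolev embedding (too costly for small $s_1$) but through the refined Strichartz inequality \eqref{eqstrichartzref} applied to $w$, as in Lemma \ref{lemaestnorstr}: this controls them by $T_0^{k}\|w\|_{L_{T_0}^\infty H^{s_1,s_2}}$ plus the contribution of the forcing $\frac12\partial_x(gw)$, feeding back into the energy quantity with a small time factor.

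Assembling the two groups yields a differential inequality $\frac{d}{dt}\|w\|^2_{H^{s_1,s_2}}\le a(t)\|w\|^2_{H^{s_1,s_2}}+\tau^{\nu}b(t)$ with $\int_0^{T_0}a\lesssim M$ and $b$ integrable, so Gronwall gives $\|w\|^2_{L_{T_0}^\infty H^{s_1,s_2}}\le C\big(\|\psi^\tau-\psi^\theta\|^2_{H^{s_1,s_2}}+\tau^{\nu}\big)$, which is the claim. The hard part will be the second group: one must distribute the derivatives so that only the $\tau^{-1/s_1}$ blow-up of $u^\tau$ — never the larger $\theta^{-1/s_1}$ one of $u^\theta$, which would be fatal as $\theta\to0$ — is ever incurred, and then check that the interpolation exponent $\kappa$ strictly beats $1/s_1$ so that a genuinely positive power of $\tau$ survives. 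The anisotropy compounds this: the $J_y^{s_2}$ estimate generates mixed $x$--$y$ derivatives on $g$, which have to be reabsorbed into the $\|J_x^{s_1}\cdot\|,\|J_y^{s_2}\cdot\|$ bounds by the same splitting $\|J_x^{\,\cdot}\,\|^{(s_2-1)/s_2}\|J_y^{s_2}\cdot\|^{1/s_2}$ used in \eqref{eqestnorstr14}, and it is precisely the tightness of this balance in the window $s_1>\frac{17}{12}-\frac{\alpha}{4}$, $s_2>1$ that leaves room only for an arbitrarily small $\nu>0$ rather than a fixed power.
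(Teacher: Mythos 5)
Your overall skeleton is the paper's: subtract the two equations, write the nonlinearity as $\tfrac12\partial_x\big((u^\tau+u^\theta)(u^\tau-u^\theta)\big)$, run the $H^{s_1,s_2}$ energy estimate with the Kato--Ponce commutators as in Lemma \ref{lemaestenergia}, control the resulting $L^\infty$ factors of the difference through the refined Strichartz estimate \eqref{eqstrichartzref} together with the $L^2$-Lipschitz bound of Lemma \ref{lem:L2lip} and interpolation, and close with Gronwall. Where you diverge is at the decisive point, the top-order terms, and there your proposal has a genuine gap. You propose the classical Bona--Smith trade-off: integrate by parts so that the extra derivative lands on $u^\tau$, pay $\|u^\tau\|_{H^{s_1+1,s_2}}\lesssim\tau^{-1/s_1}$ via \eqref{eq:2.49}, and compensate with $\|w\|_{H^{\sigma_1,\sigma_2}}\lesssim\tau^{\kappa}$ for $(\sigma_1,\sigma_2)$ below $(s_1,s_2)$. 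You correctly flag that this requires $\kappa>1/s_1$, but you do not verify it, and in fact it fails in part of the admissible range: the intermediate regularity $s_1'$ at which $w$ must be measured in order to bound $\|w\|_{L^1_TL^\infty_{xy}}$ (whether by the Strichartz machinery, which needs $s_1'>\tfrac{17}{12}-\tfrac{\alpha}{4}$, or by the Sobolev lemma, which needs $s_1'>1$) forces $\kappa\le 1-s_1'/s_1$, and $1-s_1'/s_1>1/s_1$ only when $s_1>s_1'+1$. For $s_1$ between the threshold $\tfrac{17}{12}-\tfrac{\alpha}{4}$ and roughly one unit above it, no positive power of $\tau$ survives, so the argument as proposed does not close.

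The paper avoids this entirely, and never uses \eqref{eq:2.49}--\eqref{eq:2.50} in this proof. In \eqref{eq:A1.0.1}--\eqref{eq:B1.0.2} the commutator estimates are arranged so that at most $s_1$ (resp.\ $s_2$) derivatives ever fall on $u^\tau+u^\theta$, whose norm is uniformly bounded by Lemma \ref{lemaext}; the price is not $\|w\|_{L^\infty}$ against $\|J_x^{s_1+1}u^\tau\|_{L^2}$ but $\|\partial_x w\|_{L^\infty}$ and $\|\partial_y w\|_{L^\infty}$ against bounded quantities. These survive in \eqref{eq:2.55} as $\|\partial_x w\|_{L^1_{T_0}L^\infty_{xy}}+\|\partial_y w\|_{L^1_{T_0}L^\infty_{xy}}$, which are then estimated by applying \eqref{eqstrichartzref} to the Duhamel formula for $w$ and interpolating $\|w\|_{L^\infty_{T_0}H^{s_1',s_2'}}\le\|\psi^\tau-\psi^\theta\|_{L^2}^{\nu}\|w\|_{L^\infty_{T_0}H^{s_1,s_2}}^{1-\nu}\lesssim\tau^{\nu}$ using \eqref{eq:2.51}; this is where the entire $\tau^{\nu}$ gain comes from, with no negative power of $\tau$ ever incurred. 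Your integration by parts onto $u^\tau$ converts the benign factor $\|\partial_x w\|_{L^\infty}$ (already controllable with a $\tau^{\nu}$ gain) into the costly pair $\|w\|_{L^\infty}\cdot\|J_x^{s_1+1}u^\tau\|_{L^2}$; to repair the proof you should undo that step and keep the derivative on $w$, exactly as the paper does.
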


\begin{proof}
  It is evident that $u_\lambda^\tau(t)$ also is defined on
  $[0,T_0]$, for all $n$ and $\tau$. We will proceed as in the
  proof of Lemma \ref{lemaestenergia}. Then,
\begin{equation}\label{eq:A1.0}
  \begin{aligned}
    \frac{1}{2} \frac{d}{dt} \| J_x^{s_1}
    (u_\lambda^\tau& - u_\lambda^\theta) \|_{L^2}^2 = \int_{\re^2} J_x^{s_1}
     (u_\lambda^{\tau } \partial_x u_\lambda^\tau - u_\lambda^{\theta
     } \partial_x u_\lambda^\theta) J_x^{s_1} 
    ( u_\lambda^\tau - u_\lambda^\theta)
    \, dxdy \\ =&\frac12 \int_{\re^2} J^{s_1}_x \partial_x\left( (u_\lambda^{\tau }
      + u_\lambda^\theta) ( u_\lambda^{\tau } - u_\lambda^\theta) \right) J_x^{s_1}
    ( u_\lambda^\tau - u_\lambda^\theta) \, dxdy \\ =& \frac12
    \int_{\re^2} J^{s_1}_x \left ((u_\lambda^{\tau } +
      u_\lambda^\theta)\partial_x ( u_\lambda^{\tau } - u_\lambda^\theta) \right)
    J^{s_1}_x ( u_\lambda^\tau - u_\lambda^\theta) \,
    dxdy + \\ &+\frac12 \int_{\re^2} 
     J_x^{s_1}  \left( \partial_x( u_\lambda^{\tau } + u_\lambda^\theta) ( u_\lambda^{\tau } -
      u_\lambda^\theta) \right) J^{s_1}(
    u_\lambda^\tau - u_\lambda^\theta) \, dxdy.
  \end{aligned}
\end{equation}
We estimate the last two terms that appear in the inequality above. From
Lemma \ref{katoponce}, for the first term, we have
\begin{equation}\label{eq:A1.0.1}
\begin{aligned}
\frac12 \int_{\re^2} 
  J^{s_1}_x\big( (u_\lambda^{\tau } &+ u_\lambda^\theta) \partial_x (
  u_\lambda^{\tau } - u_\lambda^\theta) \big) J^{s_1}_x(
  u_\lambda^\tau - u_\lambda^\theta) \, dxdy \le \\ \le & 
\frac12 \int_{\re^2} 
  \left([J^{s_1}_x , u_\lambda^{\tau } + u_\lambda^\theta] \partial_x (
  u_\lambda^{\tau } - u_\lambda^\theta) \right) J_x^{s_1} ( u_\lambda^\tau - u_\lambda^\theta) \, dxdy + \\ &+ \frac12 \int_{\re^2} 
   (u_\lambda^{\tau } + u_\lambda^\theta)   \partial_x J_x^{s_1}(
   u_\lambda^{\tau } - u_\lambda^\theta) J_x^{s_1}( u_\lambda^\tau - u_\lambda^\theta) \, dxdy \\  \le& C(\|\partial_x
   (u_\lambda^{\tau } + u_\lambda^\theta) \|_{L^\infty} \| J^{s_1-1}_x (u_\lambda^\tau -u_\lambda^\theta )\|_{L^2}\| J^{s_1}_x(u_\lambda^{\tau } - u_\lambda^\theta) \|_{L^2} +\\ &+\| J^{s_1}_x(u_\lambda^{\tau } + u_\lambda^\theta) \|_{L^2}  \|\partial_x
  (u_\lambda^\tau -u_\lambda^\theta)\|_{L^\infty} 
  \| J^{s_1}_x(u_\lambda^\tau- u_\lambda ^\theta)\|_{L^2} ) - \\ &- \frac12 \int_{\re^2} 
   \partial_x(u_\lambda^{\tau } + u_\lambda^\theta) (J^{s_1}_x ( u_\lambda^\tau -
   u_\lambda^\theta))^2 \, dxdy \\ \le & C( \|\partial_x
  (u_\lambda^{\tau } + u_\lambda^\theta) \|_{L^\infty} \|  J_x^{s_1} (u_\lambda^\tau
  -u_\lambda^\theta )\|_{L^2}^2 +\\ &+ \|\partial_x  (u_\lambda^\tau
  -u_\lambda^\theta)\|_{L^\infty} \|J_x^{s_1}(u_\lambda^{\tau } + u_\lambda^\theta)
  \|_{L^2}     \|J_x^{s_1} (u_\lambda^\tau- u_\lambda  ^\theta)\|_{L^2} ) 
\end{aligned}
\end{equation}
With the second term we proceed in the same way to obtain the
following inequality
\begin{multline}\label{eq:A1.0.2}
  \frac12 \int_{\re^2} J^{s_1}_x\big(\partial_x (u_\lambda^{\tau } +
  u_\lambda^\theta) ( u_\lambda^{\tau } - u_\lambda^\theta) \big)
  J^{s_1} \partial_x( u_\lambda^\tau - u_\lambda^\theta) \, dxdy \le
  \\ \le C ( (\|\partial_x u_\lambda^{\tau }\|_{L^\infty} +\|\partial_x
  u_\lambda^\theta \|_{L^\infty})\| J_x^{s_1} (u_\lambda^\tau
  -u_\lambda^\theta )\|_{L^2}^2 +\\ + \|J_x^{s_1}(u_\lambda^{\tau } +
  u_\lambda^\theta) \|_{L^2} \|\partial_x(u_\lambda^\tau- u_\lambda
  ^\theta)\|_{L^\infty} \|J_x^{s_1} (u_\lambda^\tau- u_\lambda
  ^\theta)\|_{L^2} )
\end{multline}
So, $\| \psi_\lambda ^\tau\|_{H^{s_1, s_2}} \le R$, for all
$\lambda$, from \eqref{eq:A1.0}, \eqref{eq:A1.0.1}, \eqref{eq:A1.0.2}
and Lemma \ref{lemaext} we have

\begin{multline}\label{eqA.1.0.C}
    \frac{1}{2} \frac{d}{dt} \| J_x^{s_1}
    (u_\lambda^\tau - u_\lambda^\theta) \|_{L^2}^2  \le C
 \big ( (\|\partial_x u_\lambda^{\tau }\|_{L^\infty} +\|\partial_x u_\lambda^\theta
  \|_{L^\infty}) \|  J_x^{s_1} (u_\lambda^\tau
  -u_\lambda^\theta )\|_{L^2}^2 + \\ +  \|\partial_x(u_\lambda^\tau- u_\lambda
  ^\theta)\|_{L^\infty} \big)
\end{multline}
 On the other hand,
\begin{equation}\label{eqA.2.0}
  \begin{aligned}
    \frac{1}{2} \frac{d}{dt} \| J_y^{s_2}
    (u_\lambda^\tau& - u_\lambda^\theta) \|_{L^2}^2 = \int_{\re^2} J_y^{s_2}
     (u_\lambda^{\tau } \partial_x
    u_\lambda^\tau - u_\lambda^{\theta } \partial_x u_\lambda^\theta) J_y^{s_2}
    ( u_\lambda^\tau - u_\lambda^\theta)
    \, dxdy \\ =&\frac12 \int_{\re^2} J^{s_2}_y \partial_x\left( (u_\lambda^{\tau }
      + u_\lambda^\theta) ( u_\lambda^{\tau } - u_\lambda^\theta) \right) J_y^{s_2}
    \partial_( u_\lambda^\tau - u_\lambda^\theta) \, dxdy \\ =& \frac12
    \int_{\re^2} J^{s_2}_y \left ((u_\lambda^{\tau } +
      u_\lambda^\tau)\partial_x ( u_\lambda^{\theta } - u_\lambda^\theta) \right)
    J^{s_2}_y ( u_\lambda^\tau - u_\lambda^\theta) \,
    dxdy + \\ &+\frac12 \int_{\re^2}  \left(
     J_y^{s_2} \partial_x( u_\lambda^{\tau } + u_\lambda^\tau) ( u_\lambda^{\theta } -
      u_\lambda^\theta) \right) J^{s_2}(
    u_\lambda^\tau - u_\lambda^\theta) \, dxdy.
  \end{aligned}
\end{equation}
Proceeding in the same way that allows us to obtain \eqref{eq:A1.0.1} and
\eqref{eq:A1.0.2}, it follows that
\begin{equation}\label{eq:B1.0.1}
  \begin{aligned}
  \frac12 \int_{\re^2} &J^{s_2}_y\big( (u_\lambda^{\tau } + u_\lambda^\theta)
  \partial_x ( u_\lambda^{\tau } - u_\lambda^\theta) \big) J^{s_2} \partial_x(
  u_\lambda^\tau - u_\lambda^\theta) \, dxdy \le\\ \le& C \big( ( \|\partial_y u_{
    n}^{\tau }\|_{L^\infty}+ \|\partial_y u_{
    n}^{\theta }\|_{L^\infty})\times \\ &\times (\| J_y^{s_2- 1}\partial_x (u_\lambda^\tau -u_\lambda^\theta
  )\|_{L^2} \| J_y^{s_2} (u_\lambda^\tau -u_\lambda^\theta )\|_{L^2} 
 + \| J_y^{s_2} (u_\lambda^\tau -u_\lambda^\theta )\|_{L^2}^2) +\\ &+ \|J_y^{s_2}(u_{
    n}^{\tau } + u_\lambda^\theta) \|_{L^2} \|\partial_x(u_\lambda^\tau- u_\lambda
  ^\theta)\|_{L^\infty} \| J_y^{s_2} (u_\lambda^\tau -u_\lambda^\theta
  )\|_{L^2} \big)  \\ \le& C\big( ( 
  \|\partial_y u_\lambda^{\tau }\|_{L^\infty} +\|\partial_y u_\lambda^\theta
  \|_{L^\infty}) (\| J_x^{s_1} (u_\lambda^\tau -u_\lambda^\theta )\|_{L^2}^2 +\|
  J_y^{s_2} (u_\lambda^\tau -u_\lambda^\theta )\|_{L^2}^2) +\\ &+  \|\partial_x(u_\lambda^\tau- u_\lambda
  ^\theta)\|_{L^\infty} \big)
\end{aligned}
\end{equation}
and
\begin{equation}\label{eq:B1.0.2}
  \begin{aligned}
  \frac12 \int_{\re^2} 
  J^{s_2}_y\big( & \partial_x (u_\lambda^{\tau } + u_\lambda^\theta)(
  u_\lambda^{\tau } - u_\lambda^\theta) \big) J^{s_2}  \partial_x( u_\lambda^\tau
  - u_\lambda^\theta) \, dxdy \le \\ \le&  C\big(
  ( \|\partial_x u_\lambda^{\tau }\|_{L^\infty} +\|\partial_x u_\lambda^\theta
  \|_{L^\infty} )\|  J_y^{s_2} (u_\lambda^\tau
  -u_\lambda^\theta )\|_{L^2}^2 +\\ &+ \|J_y^{s_2-1} \partial_x ( u_{
    n}^{\tau } + u_\lambda^\theta)  
  \|_{L^2}     \|J_y^{s_2} (u_\lambda^\tau- u_\lambda
  ^\theta)\|_{L^2} \|\partial_y (u_\lambda^\tau- u_\lambda
  ^\theta)\|_{L^\infty } )\\ \le& C
  ( \|\partial_x u_\lambda^{\tau }\|_{L^\infty} +\|\partial_x u_\lambda^\theta
  \|_{L^\infty} )( \| J_x^{s_1} (u_\lambda^\tau -u_\lambda^\theta )\|_{L^2}^2 +\|  J_y^{s_2} (u_\lambda^\tau
  -u_\lambda^\theta )\|_{L^2}^2) +\\ &+ \|\partial_y (u_\lambda^\tau- u_\lambda
  ^\theta)\|_{L^\infty })
\end{aligned}
\end{equation}
Therefore, from \eqref{eqA.2.0}, \eqref{eq:B1.0.1} and
\eqref{eq:B1.0.2}, we have

\begin{multline}\label{eqB.1.0.C}
    \frac{1}{2} \frac{d}{dt} \| J_y^{s_2}
    (u_\lambda^\tau - u_\lambda^\theta) \|_{L^2}^2 \le \\ \le C
  ( \|\partial_x u_\lambda^{\tau }\|_{L^\infty} +\|\partial_x u_\lambda^\theta
  \|_{L^\infty} + \|\partial_y u_\lambda^{\tau }\|_{L^\infty} +\|\partial_y u_\lambda^\theta
  \|_{L^\infty})\times \\ \times( \| J_x^{s_1} (u_\lambda^\tau -u_\lambda^\theta )\|_{L^2}^2 +\|  J_y^{s_2} (u_\lambda^\tau
  -u_\lambda^\theta )\|_{L^2}^2) + \\ + \|\partial_x (u_\lambda^\tau- u_\lambda
  ^\theta)\|_{L^\infty }+ \|\partial_y (u_\lambda^\tau- u_\lambda
  ^\theta)\|_{L^\infty })
\end{multline}
\par Gathering \eqref{eqA.1.0.C} and \eqref{eqB.1.0.C}, we obtain
\begin{multline}\label{eqC.1.0.C}
    \frac{1}{2} \frac{d}{dt} \| u_\lambda^\tau - u_\lambda^\theta
    \|_{H^{s_1, s_2}}^2 \le \\ \le C \big(
  ( \|\partial_x u_\lambda^{\tau }\|_{L^\infty} +\|\partial_x u_\lambda^\theta
  \|_{L^\infty} + \|\partial_y u_\lambda^{\tau }\|_{L^\infty} +\|\partial_y u_\lambda^\theta
  \|_{L^\infty}) \| u_\lambda^\tau -u_\lambda^\theta
  \|_{H^{s_1, s_2}}^2 +\\  + \|\partial_x (u_\lambda^\tau- u_\lambda
  ^\theta)\|_{L^\infty }+ \|\partial_y (u_\lambda^\tau- u_\lambda
  ^\theta)\|_{L^\infty }\big)
\end{multline}
Integrating, it follows that
\begin{multline} \label{eq:2.55}
     \| u_\lambda^\tau - u_\lambda^\theta \|_{H^{s_1, s_2}}^2   \le \|
     \psi_\lambda^\tau - \psi_\lambda^\theta\|_{H^{s_1, s_2}}^2 +\\ + C
 \int_0^t  (\|\partial_x u_\lambda^{\tau }\|_{L^\infty}
 +\|\partial_x u_\lambda^\theta 
  \|_{L^\infty}) \|  u_\lambda^\tau
  -u_\lambda^\theta \|_{H^{s_1, s_2}}^2 \, dt' +\\ +   \|\partial_x(u_\lambda^\tau- u_\lambda
  ^\theta)\|_{L_{T_0}^1 L_{x,y}^\infty} +\|\partial_x(u_\lambda^\tau- u_\lambda
  ^\theta)\|_{L_{T_0}^1 L_{x,y}^\infty}
\end{multline}
Let us estimate the last two  terms of the last inequality. Observe that
$$ u_\lambda^\tau- u_\lambda ^\theta = W_\alpha(t) (\psi_\lambda^\tau-
\psi_\lambda ^\theta) + \int_0^t W_\alpha(t-t') \partial_x
((u_\lambda^\tau+ u_\lambda ^\theta) (u_\lambda^\tau- u_\lambda
^\theta))(t') \, dt'. $$ Therefore, proceeding as in the
proof of Lemma \ref{lemaestnorstr}, from refined the Strichartz
estimate \eqref{eqstrichartzref} and Lemma \ref{lem:L2lip}, we have
  $$
  \begin{aligned}
  \|\partial_x( u_\lambda^\tau- u_\lambda
  ^\theta)\|_{L_{T_0}^1 L_{x,y}^\infty} \le& C (\|u_\lambda^\tau-
  u_\lambda ^\theta \|_{L_{T_0}^\infty H_{x,y}^{s_1', s_2'}} +
  \|(u_\lambda^\tau +
  u_\lambda ^\theta )(u_\lambda^\tau-
  u_\lambda ^\theta) \|_{L_{T_0}^\infty H_{x,y}^{s_1', s_2'}})\\
  \le& C (\|u_\lambda^\tau-
  u_\lambda ^\theta \|_{L_{T_0}^\infty H_{x,y}^{s_1', s_2'}} + \|u_\lambda^\tau+
  u_\lambda ^\theta \|_{L_{T_0}^\infty H_{x,y}^{s_1', s_2'}}\| u_\lambda^\tau-
  u_\lambda ^\theta) \|_{L_{T_0}^\infty H_{x,y}^{s_1', s_2'}})\\ \le&
  C\|u_\lambda^\tau- 
  u_\lambda ^\theta \|_{L_{T_0}^\infty H_{x,y}^{s_1', s_2'}} \\ \le &
  C \|\psi_\lambda^\tau- \psi_\lambda ^\theta\|_{L^2}^{\nu}  \|u_\lambda^\tau-
  u_\lambda ^\theta \|_{L_{T_0}^\infty H_{x,y}^{s_1, s_2}}^{1-\nu} \\ \le &
  C \tau^\nu,  
\end{aligned}
$$
where $s'_1$, $s'_2$ and $\nu$ are such that $ 1< s'_i =\nu s_i < s_i$,
$i=1,2$. In the same way, again, if we proceed as in the proof
of Lemma \ref{lemaestnorstr}, more precisely, in the way that we got 
\eqref{eqestnorstr15}, we also have
$$ \|\partial_y( u_\lambda^\tau- u_\lambda ^\theta)\|_{L_{T_0}^1
  L_{x,y}^\infty} \le C\tau^\nu $$ From these last two inequalities
with \eqref{eq:2.55} and the Gronwall Lemma follows the proposition.
\end{proof}

\begin{corol} \label{prop11a} Let $\psi_\lambda^\tau$ and
  $u_{\lambda}^\tau$ be as in the result above. Then,
  $ \left\lbrace u_{\lambda}^\tau \right\rbrace _{\tau > 0}$ converges
  uniformly in $t$ to $u_{\lambda}$, when $\tau \to 0+$. In other
  words,
  \[ \lim_{\tau \to 0+} \sup_{t\in [0,T]}
    \ns{u_{\lambda}^\tau(t)-u_{\lambda}(t)}{H^{s_1, s_2}}=0. \]
\end{corol}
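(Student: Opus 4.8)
The plan is to show that the net $\{u_\lambda^\tau\}_{\tau>0}$ is Cauchy in the Banach space $C([0,T];H^{s_1,s_2})$ as $\tau\to 0+$, and then to identify its limit with $u_\lambda$ by means of the $L^2$ continuity already in hand. First I would recall from Lemma \ref{prop9} that $\psi_\lambda^\tau\to\psi_\lambda$ in $H^{s_1,s_2}$, so that $\{\psi_\lambda^\tau\}_\tau$ is a Cauchy family in $H^{s_1,s_2}$; in particular $\|\psi_\lambda^\tau-\psi_\lambda^\theta\|_{H^{s_1,s_2}}\to 0$ as $\tau,\theta\to 0+$. Inserting this into the key estimate of Proposition \ref{prop10}, for $0\le\theta<\tau$ one has
$$\sup_{t\in[0,T]}\|u_\lambda^\tau(t)-u_\lambda^\theta(t)\|_{H^{s_1,s_2}}^2\le C\left(\|\psi_\lambda^\tau-\psi_\lambda^\theta\|_{H^{s_1,s_2}}^2+\tau^\nu\right),$$
and since both terms on the right tend to $0$, so does the left-hand side. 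Hence $\{u_\lambda^\tau\}$ is Cauchy in $C([0,T];H^{s_1,s_2})$ and converges there to some limit $\tilde u\in C([0,T];H^{s_1,s_2})$.

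The second step is to identify $\tilde u=u_\lambda$. Observing that the Fourier multiplier defining the approximation in \eqref{aprox1} has modulus at most one, we get $\|\psi_\lambda^\tau\|_{H^{s_1,s_2}}\le\|\psi_\lambda\|_{H^{s_1,s_2}}\le R$, so Lemma \ref{lem:L2lip} applies with the constants uniform over the ball of radius $R$ and yields
$$\sup_{t\in[0,T]}\|u_\lambda^\tau(t)-u_\lambda(t)\|_{L^2}\le e^{CM}\,\|\psi_\lambda^\tau-\psi_\lambda\|_{L^2}.$$
Since convergence in $H^{s_1,s_2}$ implies convergence in $L^2$, the right-hand side tends to $0$, so $u_\lambda^\tau\to u_\lambda$ in $C([0,T];L^2)$. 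Because $H^{s_1,s_2}\hookrightarrow L^2$ continuously, the $H^{s_1,s_2}$-limit $\tilde u$ and the $L^2$-limit $u_\lambda$ must coincide by uniqueness of limits, giving $\tilde u=u_\lambda$ and therefore $\displaystyle\lim_{\tau\to 0+}\sup_{t\in[0,T]}\|u_\lambda^\tau(t)-u_\lambda(t)\|_{H^{s_1,s_2}}=0$.

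The only point requiring a little care is that the index $\tau$ runs over a continuum rather than a sequence, so I would phrase the Cauchy condition as a statement about the net (for every $\varepsilon>0$ there is $\delta>0$ with $0<\theta<\tau<\delta$ forcing the $H^{s_1,s_2}$-difference below $\varepsilon$), or equivalently extract convergence along an arbitrary sequence $\tau_n\to 0+$ and note via the same bound that the limit is independent of the chosen sequence. I do not expect any genuine obstacle here: all the analytic difficulty—absorbing the quasilinear commutator contributions in the energy identity—has already been discharged in the proof of Proposition \ref{prop10}, and the present statement is a soft consequence of that estimate together with the approximation property of Lemma \ref{prop9} and the $L^2$-Lipschitz bound of Lemma \ref{lem:L2lip}.
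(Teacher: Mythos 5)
Your argument is correct, but it takes a detour that the paper does not. The paper's entire proof of this corollary is the single line ``Take $\theta=0$ in Proposition \ref{prop10}'': since the proposition is stated for $0\le\theta<\tau$ and the mollifier in \eqref{aprox1} reduces to the identity at $\tau=0$, one has $\psi_\lambda^0=\psi_\lambda$ and $u_\lambda^0=u_\lambda$, so the estimate $\ns{u_\lambda^\tau-u_\lambda}{H^{s_1,s_2}}^2\le C(\ns{\psi_\lambda^\tau-\psi_\lambda}{H^{s_1,s_2}}^2+\tau^\nu)$ is immediate and Lemma \ref{prop9} finishes the job. You instead restrict to $\theta>0$, prove the family is Cauchy in $C([0,T];H^{s_1,s_2})$, and then identify the limit with $u_\lambda$ through the $L^2$-Lipschitz bound of Lemma \ref{lem:L2lip} and uniqueness of limits in $L^2$. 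Both routes are sound and rest on the same key estimate; yours costs two extra ingredients (completeness of $C([0,T];H^{s_1,s_2})$ and Lemma \ref{lem:L2lip}) but buys a small amount of robustness, in that you never need to verify that the endpoint case $\theta=0$ of Proposition \ref{prop10} is legitimate --- which it is here, since $\psi_\lambda\in H^\infty$ so $u_\lambda$ is itself one of the smooth solutions to which the proposition's energy argument applies. Your remark about the index running over a continuum is handled correctly; the paper sidesteps it entirely by never forming a Cauchy net.
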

\begin{proof}
  Take $\theta = 0$ in Proposition \ref{prop10}.
\end{proof}
\begin{teo}
  In $H^{s_1, s_2}$ the map $\psi \mapsto u$, where $u$ is solution to
  \eqref{eq:principal} with initial condition $\psi$, is continuous. More
  precisely, if $(\psi_n)_{n\in \na}$ is a sequence such that
  $\psi_n \to \psi$ in $H^{s_1, s_2}$ and if
  $u_{n} \in C([0,T_0];H^{s_1, s_2})$ are the corresponding solutions
  to \eqref{eq:principal} with initial condition $\psi_n$, then
\[ \lim_{n \to \infty}\sup_{t\in [0,T_0]} \ns{u_{n}(t)- u(t)}{s}=0 \]
\end{teo}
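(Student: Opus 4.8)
The plan is to establish continuous dependence by the Bona--Smith approximation method, exactly in the spirit of Proposition \ref{prop10} and Corollary \ref{prop11a}. First I would pass to a uniform setting: since $\psi_n\to\psi$ in $H^{s_1,s_2}$, the set $\{\psi_n\}_n\cup\{\psi\}$ is bounded, say by $R$, and precompact (a convergent sequence together with its limit). By Corollary \ref{eq:bp5} all the solutions are defined on a common interval $[0,T_0]$ with $\ns{u_n}{L^\infty_{T_0}H^{s_1,s_2}}\le 2R$ and $f_n(T_0)\le M$, uniformly in $n$. For $\tau>0$ let $\psi_n^\tau,\psi^\tau$ be the regularizations of Lemma \ref{prop9} and let $u_n^\tau,u^\tau$ be the corresponding smooth solutions. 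The argument rests on the triangle inequality
\[
\sup_{t}\ns{u_n(t)-u(t)}{H^{s_1,s_2}}\le \sup_{t}\ns{u_n(t)-u_n^\tau(t)}{H^{s_1,s_2}}+\sup_{t}\ns{u_n^\tau(t)-u^\tau(t)}{H^{s_1,s_2}}+\sup_{t}\ns{u^\tau(t)-u(t)}{H^{s_1,s_2}},
\]
where $\tau$ is first chosen small to control the two outer terms uniformly in $n$, and then, with $\tau$ fixed, $n$ is sent to infinity to kill the middle term.

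For the outer terms I would invoke Proposition \ref{prop10}. Although it is stated for base data in $H^\infty$, its proof only compares the two smooth solutions with data $\varphi^\tau,\varphi^\theta\in H^\infty$ (which holds for any $\theta,\tau>0$), so for every $\varphi\in H^{s_1,s_2}$ with $\ns{\varphi}{H^{s_1,s_2}}\le R$ the bound $\nsc{u_\varphi^\tau-u_\varphi^\theta}{H^{s_1,s_2}}\le C(\nsc{\varphi^\tau-\varphi^\theta}{H^{s_1,s_2}}+\tau^\nu)$ holds with $C=C(R)$ independent of $\varphi$. Hence $\{u_\varphi^\tau\}_{\tau>0}$ is Cauchy in $C([0,T_0];H^{s_1,s_2})$, and its limit must be $u_\varphi$ because $\ns{u_\varphi^\tau-u_\varphi}{L^2}\le C\ns{\varphi^\tau-\varphi}{L^2}\to0$ by the $L^2$--Lipschitz bound; letting $\theta\to0$ gives $\sup_t\nsc{u_\varphi^\tau-u_\varphi}{H^{s_1,s_2}}\le C(\nsc{\varphi^\tau-\varphi}{H^{s_1,s_2}}+\tau^\nu)$. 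Since the regularizing operator has operator norm $\le1$ on $H^{s_1,s_2}$, the estimate $\ns{\psi_n^\tau-\psi_n}{H^{s_1,s_2}}\le 2\ns{\psi_n-\psi}{H^{s_1,s_2}}+\ns{\psi^\tau-\psi}{H^{s_1,s_2}}$ for large $n$, together with Lemma \ref{prop9} for the finitely many remaining indices, yields $\lim_{\tau\to0}\sup_n\ns{\psi_n^\tau-\psi_n}{H^{s_1,s_2}}=0$. Therefore both outer terms are below $\varepsilon/3$ for \emph{all} $n$ once $\tau$ is small enough.

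For the middle term I fix such a $\tau$ and let $n\to\infty$. With $\tau$ fixed the weight $(\langle\xi\rangle^{2\sigma_1}+\langle\eta\rangle^{2\sigma_2})e^{-2\tau(\cdots)}$ is bounded, so $\psi_n^\tau\to\psi^\tau$ in every Sobolev space; in particular $\ns{\psi_n^\tau-\psi^\tau}{L^2}\to0$. Writing $w=u_n^\tau-u^\tau$, which solves the forced linear equation with $F=\tfrac12\partial_x((u_n^\tau+u^\tau)w)$, I would repeat the energy computation of Lemma \ref{lemaestenergia} and Proposition \ref{prop10} to obtain $\frac{d}{dt}\nsc{w}{H^{s_1,s_2}}\lesssim g(t)\nsc{w}{H^{s_1,s_2}}+\ns{\partial_x w}{L^\infty_{xy}}+\ns{\partial_y w}{L^\infty_{xy}}$, where $g$ collects the four terms $\ns{\partial_x u_n^\tau}{L^\infty_{xy}},\ns{\partial_x u^\tau}{L^\infty_{xy}},\ns{\partial_y u_n^\tau}{L^\infty_{xy}},\ns{\partial_y u^\tau}{L^\infty_{xy}}$ and satisfies $\int_0^{T_0}g\le 4M$. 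The crucial step is the control of the last two terms: by the refined Strichartz estimate of Lemma \ref{lemastrichartzref} (used as in Lemma \ref{lemaestnorstr}) they are dominated by $C\ns{w}{L^\infty_{T_0}H^{s_1',s_2'}}$ for $\tfrac{17}{12}-\tfrac\alpha4<s_1'=\nu s_1<s_1$, and interpolation (Lemma \ref{interdsenlplq}) with the uniform bound $\ns{w}{H^{s_1,s_2}}\le 4R$ and the $L^2$--Lipschitz bound (Lemma \ref{lem:L2lip}) give $\ns{w}{H^{s_1',s_2'}}\lesssim\ns{w}{L^2}^{1-\nu}\ns{w}{H^{s_1,s_2}}^{\nu}\lesssim\ns{\psi_n^\tau-\psi^\tau}{L^2}^{1-\nu}$. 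Gronwall's inequality then yields $\sup_t\nsc{w(t)}{H^{s_1,s_2}}\lesssim\nsc{\psi_n^\tau-\psi^\tau}{H^{s_1,s_2}}+\ns{\psi_n^\tau-\psi^\tau}{L^2}^{1-\nu}\to0$ as $n\to\infty$.

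Combining the three estimates, given $\varepsilon>0$ I choose $\tau$ making the outer terms $<\varepsilon/3$ uniformly in $n$, and then $N$ with the middle term $<\varepsilon/3$ for $n\ge N$, so that $\sup_t\ns{u_n(t)-u(t)}{H^{s_1,s_2}}<\varepsilon$. The main obstacle is not any single inequality but the interplay of the two limits: everything depends on the constants ($C(R)$, $M$, and the Gronwall factor $e^{4CM}$) and on the common existence time $T_0$ being independent of both $n$ and $\tau$, so that the outer terms can be driven below $\varepsilon/3$ uniformly in $n$ \emph{before} $n$ is sent to infinity. Conceptually, the reason the smoothing apparatus is indispensable — rather than a direct energy estimate for $u_n-u$ — is that the commutator and integration-by-parts manipulations are only legitimate for the smooth solutions $u_n^\tau,u^\tau\in C([0,T_0];H^\infty)$; the middle term is precisely where those computations are carried out, while the outer terms merely quantify the regularization error.
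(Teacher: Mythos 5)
Your proposal is correct and follows essentially the same route as the paper: the Bona--Smith regularization of Lemma \ref{prop9}, the three-term triangle inequality with the outer terms controlled uniformly in $n$ via Proposition \ref{prop10}/Corollary \ref{prop11a} and the middle term sent to zero for fixed $\tau$ by the smooth-solution energy/Strichartz comparison. The only differences are cosmetic: the paper routes the outer-term estimates through an auxiliary mollification $e^{\frac1m\bigtriangleup}$ and a weak-convergence duality pairing, and quotes a $\tau^{-1/s}$-Lipschitz bound for the middle term, whereas you justify Proposition \ref{prop10} for non-smooth base data directly (via the $\theta,\tau>0$ Cauchy property and the $L^2$-Lipschitz identification of the limit) and obtain the middle term's convergence by the same Gronwall--Strichartz--interpolation computation; both executions are sound.
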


\begin{proof}
  Let $\psi\in H^{s_1, s_2}$ and $(\psi_n)$ be a sequence in
  $H^{s_1, s_2}$ that converges strongly to $\psi$ in this
  space. Let, also,
  $R=\max (\sup_n \| \psi_n\|_{H^{s_1, s_2}} ,\| \psi\|_{H^{s_1,
      s_2}})$.  Now, let us take
  $\psi_{m,n}= e^{\frac 1m \bigtriangleup } \psi_n$ and
  $\psi_{m}= e^{\frac 1m \bigtriangleup } \psi$.  Let $u_{n}$, $u$,
  $u_{m,n}$, $u_{m}$, $u^\tau_{n}$, $u^\tau$, $u^\tau_{m,n}$,
  $u^\tau_{m}$ be the corresponding solutions to \eqref{eq:principal}
  in $[0, T_0]$ with conditions  $\psi_n$, $\psi$,
  $\psi_{m,n}$, $\psi_{m}$, $\psi^\tau_{n}$, $\psi^\tau$,
  $\psi^\tau_{m,n}$, $\psi^\tau_{m}$, respectively. Also,
  observe that $u_{n,m}$, $u_m$, $u^\tau_{m,n}$ and $u^\tau_{m}$
  converge uniformly to $u_n$, $u$, $u^\tau_{n}$ and $u^\tau$ in the
  weak sense, as $m\to \infty$.  Therefore,
\begin{align*}
  \langle u_{n}-u, \varphi\rangle_{H^{s_1, s_2}} = & \lim_{m\to \infty} \langle
  u_{m,n}-u_{m,n}^\tau, \varphi\rangle_{H^{s_1, s_2}} +
  \langle u_{m,n}^\tau-u_{m}^\tau, \varphi\rangle_{H^{s_1, s_2}}+\\ & +
  \langle u_{m}^\tau-u_m, \varphi\rangle_{H^{s_1, s_2}}  \\ = \lim_{m
    \to \infty} &\left[ \langle u_{m,n}-u_{m,n}^\tau, \varphi\rangle_{H^{s_1, s_2}} +
    \langle u_{m,n}^\tau-u_{m}, \varphi\rangle_{H^{s_1, s_2}} \right]+\\ & +
  \langle u_{n}^\tau-u^\tau, \varphi\rangle_{H^{s_1, s_2}}.
\end{align*}
On the other hand, Corollary \ref{prop11a} implies that, given $\epsilon
>0$, there exists $\tau_0$ such that for $0<\tau\le \tau_0$
\[ \lvert \langle u_{m,n}-u_{m,n}^\tau, \varphi\rangle_{H^{s_1, s_2}} +
\langle u_{m,n}^\tau-u_{m}, \varphi\rangle_{H^{s_1, s_2}} \rvert \leq
\epsilon \ns{\varphi}{H^{s_1, s_2}}, \] for all $m > 0$.  Then,
\begin{eqnarray*}
  \lvert \langle u_{n}-u, \varphi\rangle_{H^{s_1, s_2}} \rvert
  \leq \epsilon \ns{\varphi}{H^{s_1, s_2}} +
  \ns{u_{n}^\tau-u^\tau}{H^{s_1, s_2}} \ns{\varphi}{H^{s_1, s_2}} ,
\end{eqnarray*}
for all $\varphi \in H^{s_1, s_2}$.
Therefore, 
\begin{equation}\label{desetau} 
\ns{u_{n}-u}{H^{s_1, s_2}} \leq \epsilon +
\ns{u_{n}^\tau-u^\tau}{H^{s_1, s_2}}. 
\end{equation}
\par
Arguments similar to those used in Proposition \ref{prop10}
allow us show that, for $\tau$ small enough,
$$\ns{u_{n}^\tau-u^\tau}{H^{s_1, s_2}} \leq
C\ns{\psi_{n}^\tau-\psi^\tau}{H^{s_1, s_2}} \tau^{-\frac{1}{s}} \leq
\ns{\psi_{n}- \psi }{H^{s_1, s_2}} \tau^{-\frac{1}{s}}.$$ Then,
fixing $\tau$ small enough, we can conclude from
\eqref{desetau} that
\[ \ns{u_{n}-u}{H^{s_1, s_2}} \leq 2\epsilon , \]
for $n$ large enough. 
\end{proof}
Let us see now that the problem is locally well-posed in the spaces
$X^{s_1, s_2}$, $\widehat X^{s_1, s_2}$ and $Y^{s_1, s_2}$. Since the
solution to \eqref{eq:principal} satisfy the integral equation
$$ u= W_\alpha(t)\psi + \int_0^t W_\alpha(t-t') (u \partial_x u)
(t') \, dt', $$ we have that $\partial_x^{-1} u$ and
$\partial_x^{-1}\partial_y u$ satisfy the equations  
$$ \partial_x^{-1} u= W_\alpha(t)\partial_x^{-1} \psi + \int_0^t
W_\alpha(t-t')\left( \frac {u^2} 2 \right)
(t') \, dt'$$
and
$$ \partial_x^{-1}\partial_y  u= W_\alpha(t) \partial_x^{-1}\partial_y
\psi + \int_0^t W_\alpha(t-t') (u \partial_y u) (t') \, dt', $$
respectively. From here and Theorem \ref{bplxs}, for $s_1$ and $s_2$
as in that theorem, it follows that \eqref{eq:principal} is locally
well-posed in the spaces $\widehat X^{s_1, s_2}$, $ X^{s_1, s_2}$
and $\widehat Y^{s_1, s_2}$. The case $Y^{s_1, s_2}$ requires some
additional effort.  For $\psi \in Y^\infty$, the solution $u$ to
\eqref{eq:principal}, with initial condition $\psi$, belongs to
$C([0, T_0]; Y^\infty)$. We can proceed as in the proof Lemma
\ref{lemaestenergia}, to show that
\begin{equation*}
   \frac12\frac d{dt} \|\partial_x^{-1}\partial_yu\|^2 \le C(\|u_x\|+ \|u_y\|)
  \|u\|_{Y^{s_1, s_2}}^2 .
\end{equation*}
This inequality with \eqref{eqestenergia16} allow us prove that
\begin{equation}
  \label{estenergia17}
  \frac12\frac d{dt} \|u\|_{Y^{s_1,s_2}}^2 \le C(\|u_x\|_\infty+ \|u_y\|_\infty)
  \|u\|_{Y^{s_1, s_2}}^2 .
\end{equation}
Thanks to the Gronwall lemma we have the following generalization of
Lemma  \ref{lemaestenergia},
$$
\|u\|_{Y^s}\le \|\psi\|_{Y^{s_1, s_2}} e^{C(\|u_x\|_{L^1_t L_{x,y}^\infty}+
  \|u_y\|_{L^1_t L_{x,y}^ \infty})}.
$$
Now, if $\psi$ is arbitrary, in the same way that we prove for the
space $H^{s_1,s_2}$, the solution $u\in C([0,T];Y^{s_1, s_2})$. To see
that the map $\psi\mapsto u$ is continuous from ${Y^{s_1, s_2}}$ to
$C([0,T];Y^{s_1, s_2})$ we repeat the same argument of Bona-Smith that
we use before. So, summarizing, we paraphrase Theorem \eqref{teo:2.7}
for the current situation.
\begin{teo}
  Let $s_1, s_2$ and $\alpha$ be as in Theorem \ref{bplxs}. Let,
  also, $Z$ any of the spaces $X^{s_1, s_2}$, $\widehat X^{s_1, s_2}$,
  $Y^{s_1, s_2}$ and $\widehat Y^{s_1, s_2}$. Then, if $\psi\in Z$ and
  $u\in C([0,T]; H^{s_1, s_2})$ is solution to \eqref{eq:principal} with
  $u(0)=\psi$, then $u\in C([0,T]; Z)$. Moreover,
  $\psi \mapsto u$ is continuous from $Z$ to $C([0,T]; Z)$
\end{teo}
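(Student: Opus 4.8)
The plan is to bootstrap from the local well-posedness already established in $H^{s_1, s_2}$ (Theorem \ref{bplxs}), exploiting that, by definition, membership of a function in each of the spaces $X^{s_1, s_2}$, $\widehat X^{s_1, s_2}$, $Y^{s_1, s_2}$ and $\widehat Y^{s_1, s_2}$ amounts to requiring that an auxiliary quantity ($\partial_x^{-1} u$ or $\partial_x^{-1}\partial_y u$) also lie in $H^{s_1, s_2}$ or in $L^2$. Since for $\psi \in Z$ we in particular have $\psi \in H^{s_1, s_2}$, Theorem \ref{bplxs} already furnishes the solution $u \in C([0,T]; H^{s_1, s_2})$ together with the control of $\|u_x\|_{L_T^1 L_{xy}^\infty}$ and $\|u_y\|_{L_T^1 L_{xy}^\infty}$; what remains is to show that the relevant auxiliary quantity stays in the required space on $[0,T]$ and depends continuously on the data.

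First I would write the Duhamel integral equation satisfied by $u$, namely $u(t) = W_\alpha(t)\psi + \int_0^t W_\alpha(t-t')(u\partial_x u)(t')\, dt'$, and apply $\partial_x^{-1}$ and $\partial_x^{-1}\partial_y$ to it. Because $u\partial_x u = \tfrac12 \partial_x(u^2)$, the operator $\partial_x^{-1}$ cancels the leading derivative, producing closed integral equations for $\partial_x^{-1}u$ and $\partial_x^{-1}\partial_y u$ in terms of $u^2$ and $u\partial_y u$, both of which are controlled in $H^{s_1, s_2}$ by standard product estimates (in the spirit of Corollary \ref{coro-katopo}) together with the bounds already available for $u$. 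Estimating these through the unitarity of $W_\alpha$, in $H^{s_1, s_2}$ for $X^{s_1, s_2}$ and in $L^2$ for $\widehat X^{s_1, s_2}$ and $\widehat Y^{s_1, s_2}$, yields persistence of regularity, and the Lipschitz dependence follows by subtracting the integral equations for two distinct data.

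The main obstacle is the space $Y^{s_1, s_2}$, where the auxiliary quantity $w = \partial_x^{-1}\partial_y u$ must lie in $H^{s_1, s_2}$ rather than merely in $L^2$, so the crude integral-equation estimate no longer suffices and an energy argument is required. For $\psi \in Y^\infty$ I would mimic the proof of Lemma \ref{lemaestenergia}: apply $J_x^{s_1}$ and $J_y^{s_2}$ to the equation satisfied by $w$, pair against $J_x^{s_1}w$ and $J_y^{s_2}w$, and invoke the commutator (Kato--Ponce) estimates to obtain
\begin{equation*}
\frac12 \frac{d}{dt}\|\partial_x^{-1}\partial_y u\|^2 \le C\bigl(\|u_x\|_{L_{xy}^\infty} + \|u_y\|_{L_{xy}^\infty}\bigr)\|u\|_{Y^{s_1, s_2}}^2.
\end{equation*}
Combining this with the energy inequality \eqref{eqestenergia16} and applying Gronwall's lemma gives the $Y^{s_1, s_2}$ analogue of Lemma \ref{lemaestenergia}, namely $\|u\|_{Y^{s_1, s_2}} \le \|\psi\|_{Y^{s_1, s_2}}\, e^{C(\|u_x\|_{L_T^1 L_{xy}^\infty} + \|u_y\|_{L_T^1 L_{xy}^\infty})}$, where the two $L_T^1 L_{xy}^\infty$ norms are already finite by Theorem \ref{bplxs}.

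Finally, to upgrade persistence into genuine well-posedness one needs continuous dependence of $\psi \mapsto u$ in the $Y^{s_1, s_2}$ topology, and here the weak-convergence plus a-priori-bound scheme used for $H^{s_1, s_2}$ does not close directly; I would instead run the Bona--Smith approximation argument exactly as in Proposition \ref{prop10} and the subsequent theorem, approximating $\psi$ by the smoothed data $\psi^\tau$, comparing $u^\tau$ and $u^\theta$ via the differentiated energy identity above, and then letting $\tau \to 0+$. The smoothing estimates of Lemma \ref{prop9} absorb the high-frequency excess at the cost of a factor $\tau^\nu$, which closes the continuity argument; the remaining routine step is simply to transcribe the $H^{s_1, s_2}$ continuity proof verbatim with $Y^{s_1, s_2}$ in place of $H^{s_1, s_2}$, thereby yielding the stated theorem for all four spaces.
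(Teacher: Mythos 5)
Your proposal is correct and follows essentially the same route as the paper: applying $\partial_x^{-1}$ and $\partial_x^{-1}\partial_y$ to the Duhamel formula to handle $X^{s_1,s_2}$, $\widehat X^{s_1,s_2}$ and $\widehat Y^{s_1,s_2}$, then treating $Y^{s_1,s_2}$ separately via an energy estimate for $\partial_x^{-1}\partial_y u$ combined with \eqref{eqestenergia16} and Gronwall, with continuous dependence obtained by repeating the Bona--Smith argument. The paper's own proof is, if anything, terser on the first three spaces, so your additional detail there is welcome rather than divergent.
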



\section{Remarks on ill-posedness of the equation
  \eqref{eq:principal}}
In this section we prove that the flow associated to the equation
\eqref{eq:principal} is not of class $C^2$ for $-1\le \alpha <0$. In
particular, we have that we cannot apply Picard iterative process to
solve the Duhamel equation associated to this equation.\par
For this we will use the ideas given in \cite{MST} to prove that
the flow associated with the KP-I equation is not of class $C^2$.
\subsection{The flow associated to the problem \eqref{eq:principal} is  not $C^2$}
\begin{teo}
  Let $(s_1,s_2)\in \re^2$ and $-1 \le \alpha<0$. Then, there does not
  exist $T>0$ such that \eqref{eq:principal} has a unique solution
  $u$ for all $\phi \in H^{s_1.s_2}$ and that the flow
  $S_t:\phi \mapsto u$ is not of class $C^2$ at $0$ from $H^{s_1.s_2}$ to
  $H^{s_1.s_2}$
\end{teo}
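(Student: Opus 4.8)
The plan is to argue by contradiction, following the scheme of Molinet, Saut and Tzvetkov in \cite{MST} for KP-I: if the flow $S_t:\phi\mapsto u$ were of class $C^2$ at the origin, then the second Fréchet derivative $D^2S_t(0)$ would be a bounded symmetric bilinear map on $H^{s_1,s_2}$, and this forces a bilinear estimate that I will show must fail when $-1\le\alpha<0$. Concretely, for $\phi\in H^{s_1,s_2}$ and $\gamma$ small I expand the solution with data $\gamma\phi$ through the Duhamel formula $u=W_\alpha(t)(\gamma\phi)+\int_0^t W_\alpha(t-t')(uu_x)(t')\,dt'$. The first variation is $W_\alpha(t)\phi$ and the second-order coefficient is
\[
  B(\phi,\phi)(t)=\tfrac12\int_0^t W_\alpha(t-t')\,\partial_x\bigl((W_\alpha(t')\phi)^2\bigr)\,dt' .
\]
Since $\tfrac12 D^2S_t(0)[\phi,\phi]=B(\phi,\phi)$, the $C^2$ hypothesis yields $T>0$ and $C>0$ with $\sup_{t\in[0,T]}\|B(\phi,\phi)(t)\|_{H^{s_1,s_2}}\le C\|\phi\|_{H^{s_1,s_2}}^2$ for all $\phi$, and the entire task reduces to producing a sequence $(\phi_N)$ violating this inequality.

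Next I would pass to the Fourier side. With $p(\xi,\eta)=\xi^{3}+\sgn(\xi)|\xi|^{\alpha}\eta^{2}$ denoting the symbol of the group \eqref{grupo}, carrying out the $t'$-integration gives
\[
  \widehat{B(\phi,\phi)}(\xi,\eta,t)=\tfrac{i\xi}{2}\,e^{-itp(\xi,\eta)}\!\int \frac{e^{it\chi}-1}{i\chi}\,\widehat\phi(\xi_1,\eta_1)\,\widehat\phi(\xi-\xi_1,\eta-\eta_1)\,d\xi_1\,d\eta_1,
\]
where the resonance function is $\chi=p(\xi,\eta)-p(\xi_1,\eta_1)-p(\xi-\xi_1,\eta-\eta_1)$. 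I would record the two elementary facts used throughout: the multiplier $\frac{e^{it\chi}-1}{i\chi}$ has size $\sim t$ on the near-resonant set $\{|\chi|\lesssim t^{-1}\}$ and decays off it, and $\int_{\mathbb R}\bigl|\frac{e^{it\chi}-1}{i\chi}\bigr|^{2}d\chi\sim t$. The decisive structural point for $\alpha<0$ is the singularity of $|\xi|^{\alpha}$ at $\xi=0$: the transverse dispersion $|\xi|^{\alpha}\eta^{2}$ blows up at small $x$-frequency, so a near-resonant triad can transfer a very large transverse ($\eta$) frequency through an arbitrarily small $x$-frequency, which is exactly the KP-I degeneracy and is absent for $\alpha\ge0$.

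Then I would select the test data as in \cite{MST}, taking $\widehat{\phi_N}=\mathbf 1_{D_1}+\mathbf 1_{D_2}$, a sum of indicators of two small frequency rectangles, and tuning their centres and side-lengths along a sequence $N\to\infty$. The rectangles are placed so that: (a) the cross interaction $D_1\ast D_2$ lands in the near-resonant region $\{|\chi|\lesssim t^{-1}\}$ throughout the boxes, so the multiplier contributes its full size; (b) the $\partial_x$ factor $i\xi$ is evaluated at a large output $x$-frequency; and (c) the anisotropic weight $\langle\xi\rangle^{2s_1}+\langle\eta\rangle^{2s_2}$ of the output, together with the resonant measure supplied by the identity $\int|\frac{e^{it\chi}-1}{i\chi}|^{2}d\chi\sim t$, is not swallowed by the widths of $D_1,D_2$. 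Bounding $\|B(\phi_N,\phi_N)\|_{H^{s_1,s_2}}$ below by the cross term and dividing by $\|\phi_N\|_{H^{s_1,s_2}}^{2}$, I would arrange that the quotient diverges with $N$, contradicting the bilinear bound and hence the $C^2$ assumption.

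The hard part will be the simultaneous book-keeping in (a)--(c): one must compute the gradient and Hessian of $\chi$ at the chosen configuration in order to read off the largest admissible widths of $D_1$ and $D_2$ in the $\xi$- and $\eta$-directions that keep the triad resonant, and then verify that these widths still leave the output norm, the $i\xi$ gain, and the resonant measure large enough to beat $\|\phi_N\|_{H^{s_1,s_2}}^{2}$. The temptation is to force $|\chi|$ tiny, which makes the boxes so thin that the ratio collapses; avoiding this loss is precisely where the $L^2$ identity for the multiplier and the singular factor $|\xi|^{\alpha}$ at $\xi=0$ must be used together. It is here, and only here, that the hypothesis $-1\le\alpha<0$ enters decisively, since repeating the same phase-space analysis for $\alpha\ge0$ produces a bounded quotient and no obstruction.
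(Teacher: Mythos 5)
Your strategy is exactly the one the paper follows (the Molinet--Saut--Tzvetkov scheme): reduce $C^2$ regularity of the flow at $0$ to the quadratic estimate $\sup_{t\le T}\|\int_0^t W_\alpha(t-t')\,W_\alpha(t')\phi\, W_\alpha(t')\phi_x\,dt'\|_{H^{s_1,s_2}}\le C\|\phi\|_{H^{s_1,s_2}}^2$, pass to the Fourier side with the resonance function $\chi=p(\xi,\eta)-p(\xi_1,\eta_1)-p(\xi-\xi_1,\eta-\eta_1)$, take data supported on two frequency boxes, and defeat the estimate with the cross interaction. However, as written the proposal stops exactly where the proof begins: everything in your items (a)--(c) is announced rather than carried out, and you yourself flag the ``simultaneous book-keeping'' as the hard part. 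What is missing is precisely the content of the paper's key lemma, namely (i) the explicit placement of the high box on the resonance curve, $D_2=[N,N+\gamma]\times[\sqrt{-3/\alpha}\,N^{(3-\alpha)/2},\sqrt{-3/\alpha}\,N^{(3-\alpha)/2}+\gamma^2]$ with $D_1$ a low box of size $\gamma\times\gamma^2$, chosen so that the first-order variation of the dispersive part in $\xi$ cancels the KdV term $3\xi\xi_1(\xi-\xi_1)$; (ii) the quantitative bound $|\chi|\lesssim\gamma^2N$ on the cross-interaction set (obtained in the paper by solving $\chi=0$ for $\eta$ and applying the mean value theorem); and (iii) the normalization $\widehat\phi=\gamma^{-3/2}\mathbbm 1_{D_1}+\gamma^{-3/2}N^{-s_1-\frac{3-\alpha}{2}s_2}\mathbbm 1_{D_2}$ making $\|\phi\|_{H^{s_1,s_2}}\sim1$, together with the scaling $\gamma^2N=N^{-\varepsilon}$, which turns the multiplier into $|t|+O(N^{-\varepsilon})$ pointwise on the support and yields $\|f_3(t)\|_{H^{s_1,s_2}}\gtrsim\gamma^{3/2}N=N^{(1-3\varepsilon)/4}\to\infty$. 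Note that the paper never needs your $L^2_\chi$ identity $\int|\frac{e^{it\chi}-1}{i\chi}|^2d\chi\sim t$; since $|\chi|$ is uniformly small on the whole cross-interaction support, the pointwise size of the multiplier suffices.

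Two further points of substance. First, your heuristic for where $-1\le\alpha<0$ enters (``a large transverse frequency transferred through an arbitrarily small $x$-frequency'') is not the mechanism actually exploited: in the construction the low box $D_1$ carries \emph{small} $\eta$, and the hypothesis $\alpha<0$ enters because the resonance curve $\eta=\sqrt{-3/\alpha}\,\xi^{(3-\alpha)/2}$ is real only for $\alpha<0$; for $\alpha\ge0$ the three terms of the transverse part of $\chi$ cannot cancel the positive term $3\xi\xi_1(\xi-\xi_1)$ in this high--low configuration. Second, your unnormalized choice $\widehat{\phi_N}=\mathbbm 1_{D_1}+\mathbbm 1_{D_2}$ must be corrected by the weights above (or you must divide by $\|\phi_N\|^2$ and track both box norms), since the two boxes contribute very differently to $\|\phi_N\|_{H^{s_1,s_2}}$. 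Until the resonance lemma and the final lower bound are actually computed, the argument is a correct plan but not a proof.
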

\begin{proof}
 Let's see first what we should show. For this we consider
  $u(\lambda,t)= S_t (\lambda\phi)$, $S_t$ the flow associated to
  the problem \eqref{eq:principal}. So this solution satisfies
  the Duhamel equation associated to the equation
  \eqref{eq:principal}, i. e.,
   \begin {equation} \label{eq:4}
    u(\lambda, t)= \lambda W_\alpha(t)\phi -\int_0^t W_\alpha(t-t') u(t') u_x(t')\,
    dt'.
  \end{equation} 
  If the flow is twice differentiable around $ 0 $ in
  $H^{s_1.s_2}$ then, thanks to the chain rule,
  $$ \partial_\lambda u(0,t)= W_\alpha(t)\phi,$$ and $$ \partial_\lambda^2 u(0,t) = 
  -2\int_0^t W_\alpha(t-t') W_\alpha(t')\phi W_\alpha(t')\phi_x\,
  dt'. $$ Which would imply that the map
  $$\phi \mapsto \int_0^t W_\alpha(t-t') W_\alpha(t')\phi W_\alpha(t')\phi_x \, dt' $$
 would be a quadratic form coming from a continuous symmetric
 bilinear transformation in $ H ^ {s_1.s_2} $, and that, in
 particular, for some fixed $ C $ 
  $$ \left\| \int_0^t W_\alpha(t-t') W_\alpha(t')\phi W_\alpha(t')\phi_x\,
    dt' \right\|_{H^{s_1.s_2}} \le C\|\phi\|_{H^{s_1.s_2}} ^2 $$ for
  all $\phi \in H^{s_1.s_2}$. So let's show that this is not the
  case. To do this, suppose that this inequality is valid and consider
  the function $\phi$ defined via the Fourier transform by
  $$ \widehat \phi = \gamma^{-3/2} \mathbbm 1_{D_1} +
  \gamma^{-3/2}N^{-s_1 - \frac {3 -\alpha} 2s_2} \mathbbm 1_{D_2} , $$
  where $\gamma$ and $N$ are positive numbers such that $\gamma \ll 1$
  and $N\gg 1$, $D_1$ and $D_2$ are the sets
  $$ D_1=\left[ \frac\gamma 2, \gamma \right]\times \left[ -\frac
    {\gamma^2} 6, \frac {\gamma^2} 6 \right] \ \text{and} \ D_2=\left[ N,
    N+\gamma \right]\times \left[ \sqrt{-\frac 3 {\alpha}} N^{\frac {3 -\alpha} 2},
    \sqrt{-\frac 3 {\alpha}} N^{\frac {3 -\alpha} 2}+\gamma^2 \right]$$ and $\mathbbm 1_{D_i}$
  are the characteristic functions of the sets $D_i$,
  $i=1,2$. $\|\phi\|_{H^{s_1. s_2}}\sim 1$  for whatever values that
  we take for $\gamma$ and $N$. Let us see that for a convenient
  choice in parameters of $\gamma$ and $N$, \begin{equation}\label{eq:5} \left\| \int_0^t
    W_\alpha(t-t') W_\alpha(t')\phi W_\alpha(t')\phi_x\, dt' \right\|_{H^{s_1.s_2}}
\end{equation}
can be as large as we want. Calculating its Fourier transform, it
follows that  \begin{equation}\label{eq:6} \int_0^t W_\alpha(t-t') W_\alpha(t')\phi
  W_\alpha(t')\phi_x\, dt'
  \end{equation}
  is $f_1+f_2+f_3$ where $$
    \widehat f_1(t, \xi,\eta)= \frac{ i\xi e^{it( \xi^3 + \sgn(\xi)
         |\xi|^{\alpha} \eta^2)}} {2\gamma^3} \int_{\substack{ (\xi_1, \eta_1) \in
         D_1\\ (\xi-\xi_1, \eta- \eta_1) \in D_1}} \frac
       {e^{-it\chi(\xi, \xi_1, \eta, \eta_1)} -1} {\chi(\xi, \xi_1,
         \eta, \eta_1) } \, d\xi_1d\eta_1,$$  $$
    \widehat f_2(t, \xi,\eta)= \frac{ i\xi e^{it( \xi^3 + \sgn(\xi)
         |\xi|^{\alpha} \eta^2)}} {2\gamma^3 N^{ 2(s_1 + \frac{3
           -\alpha} 2 s_2)}} \int_{\substack{ (\xi_1, \eta_1) \in
         D_2\\ (\xi-\xi_1, \eta- \eta_1) \in D_2}} \frac
       {e^{-it\chi(\xi, \xi_1, \eta, \eta_1)} -1} {\chi(\xi, \xi_1,
         \eta, \eta_1) } \, d\xi_1d\eta_1$$  and $$
       \begin{aligned}
    \widehat f_3(t, \xi,\eta)=& \frac{ i\xi e^{it( \xi^3 + \sgn(\xi)
         |\xi|^{\alpha} \eta^2)}} {2\gamma^3 N^{ s_1 + \frac{3
           -\alpha} 2 s_2}} \int_{\substack{ (\xi_1, \eta_1) \in
         D_2\\ (\xi-\xi_1, \eta- \eta_1) \in D_1}} \frac
       {e^{-it\chi(\xi, \xi_1, \eta, \eta_1)} -1} {\chi(\xi, \xi_1,
         \eta, \eta_1) } \, d\xi_1d\eta_1 +\\ &+ \frac{ i\xi e^{it(
           \xi^3 + \sgn(\xi) 
         |\xi|^{\alpha} \eta^2)}} {2\gamma^3 N^{ s_1 + \frac{3
           -\alpha} 2 s_2}} \int_{\substack{ (\xi_1, \eta_1) \in
         D_1\\ (\xi-\xi_1, \eta- \eta_1) \in D_2}} \frac
       {e^{-it\chi(\xi, \xi_1, \eta, \eta_1)} -1} {\chi(\xi, \xi_1,
         \eta, \eta_1) } \, d\xi_1d\eta_1 ,
     \end{aligned}
     $$   where $\chi$ is the resonant function  \begin{equation}
\begin{split}
    \chi&= \chi(\xi, \xi_1, \eta, \eta_1)=\vartheta(\xi,\eta)-\vartheta(\xi_1,\eta_1)-\vartheta(\xi-\xi_1,\eta-\eta_1)\\
    &=3\xi\xi_1(\xi-\xi_1)+\sgn(\xi) \frac {\eta^{2}}
    {|\xi|^{\theta}}-\sgn(\xi_1) \frac {\eta_1^{2}}
    {|\xi_1|^{\theta}}-\sgn(\xi-\xi_1) \frac {(\eta-\eta_1)^{2}}
    {|\xi-\xi_1|^{\theta}}, 
\end{split}
\end{equation}
where
\begin{equation}
    \vartheta(\xi,\eta)=\xi^3+\sgn(\xi) \frac {\eta^{2}} {|\xi|^{\theta}},
\end{equation}
is the phase function and $\theta=-\alpha$, which is between $0$ and $1$.
Since in our case $\xi, \xi_1$ and $\xi-\xi_1$  are  positive,
we have
 \begin{equation}
    \chi=3\xi\xi_1(\xi-\xi_1)+ \frac {\eta^{2}}{\xi^{\theta}}
    - \frac {\eta_1^{2}}{ \xi_1^{\theta}}- \frac {(\eta-\eta_1)^{2}}
    {(\xi-\xi_1)^{\theta}} 
  \end{equation}
  Observe that to estimate \eqref{eq:5} it is enough estimate
  $\|f_3(t)\|_{H^{s_1, s_2}}$, in fact, $$ \left\| \int_0^t W_\alpha(t-t')
    W_\alpha(t')\phi W_\alpha(t')\phi_x\, dt' \right\|_{H^{s_1.s_2}}\ge
  \|f_3(t)\|_{H^{s_1, s_2}}  .$$  To continue we need the following lemma.
  \begin{lema}
    Suppose that
    $$(\xi_1, \eta_1)\in D_1\qquad \text{and} \qquad (\xi -\xi_1, \eta-
    \eta_1)\in D_2 $$ or
    $$(\xi_1, \eta_1)\in D_2\qquad \text{and} \qquad (\xi -\xi_1, \eta-
    \eta_1)\in D_1. $$ Then,
    $$|\chi(\xi, \xi_1, \eta, \eta_1)| \lesssim \gamma^2 N.$$
  \end{lema}
  \begin{proof}
First let us calculate the $\eta$ such that $\chi(\xi, \xi_1, \eta,
\eta_1)=0$. So, we have
 \begin{equation}
   [\xi^\theta-(\xi-\xi_1)^{\theta}]\xi_1^\theta\eta^2-2\xi^\theta\xi_1^\theta\eta_1\eta+[\xi_1^\theta+(\xi-\xi_1)^{\theta}]\xi^\theta\eta_1^2-3\xi^{1+\theta}\xi_1^{1+\theta}(\xi-\xi_1)^{1+\theta}=0
 \end{equation}
 Then, we get 
 \begin{equation}
         \eta
        =\frac{\xi^{\theta}\eta_1}{[\xi^{\theta}-(\xi-\xi_1)^{\theta}]}\pm
        \sqrt{\frac{3\xi^{1+\theta}\xi_1(\xi-
            \xi_1)^{1+\theta}}{[\xi^\theta-(\xi-\xi_1)^{\theta}]}+
          \frac{\xi^{\theta}(\xi -\xi_1)^{\theta}[\xi_1^\theta+
            (\xi-\xi_1)^{\theta}-\xi^\theta]\eta_1^2}{[\xi^\theta-(\xi-\xi_1
             )^{\theta}]^2\xi_1^{\theta}}}
 \end{equation}
Let $\eta*$ be the smallest zero between the two calculated above. So,
 \begin{equation}
  \eta^*-\eta_1=\frac{(\xi-\xi_1)^{\theta}\eta_1}{[\xi^{\theta}-(\xi-\xi_1
    )^{\theta}]}-
  \sqrt{\frac{3\xi^{1+\theta}\xi_1(\xi-\xi_1)^{1+\theta} }{
      [\xi^\theta-(\xi- \xi_1)^{\theta}]}+\frac{\xi^{\theta}(\xi-
      \xi_1 )^{\theta}[\xi_1^\theta+(\xi-\xi_1)^{\theta}-\xi^\theta]
      \eta_1^2 }{[\xi^\theta-(\xi-\xi_1)^{\theta}]^2\xi_1^{\theta}}}.
  \end{equation}
  Let 
 \begin{equation*} 
R  = \frac{(\xi-\xi_1)^{\theta}\eta_1}{[\xi^{\theta}-(\xi-\xi_1
    )^{\theta}]}+
  \sqrt{\frac{3\xi^{1+\theta}\xi_1(\xi-\xi_1)^{1+\theta} }{
      [\xi^\theta-(\xi- \xi_1)^{\theta}]}+\frac{\xi^{\theta}(\xi-
      \xi_1 )^{\theta}[\xi_1^\theta+(\xi-\xi_1)^{\theta}-\xi^\theta]
      \eta_1^2 }{[\xi^\theta-(\xi-\xi_1)^{\theta}]^2\xi_1^{\theta}}}.
\end{equation*}
Then
\begin{equation}
  |\eta^*-\eta_1|=\dfrac{ \frac{(\xi^{\theta}-\xi_1^\theta)(\xi-
      \xi_1 )^{\theta}}{
      (\xi^\theta-(\xi-\xi_1)^{\theta})\xi_1^{\theta}} 
      {\left|\eta_1^2 - 3\xi^{1+\theta}\xi_1^{1+\theta} g(\xi, \xi_1)
        \right|}} R,
\end{equation}
where $$g(\xi,\xi_1)= \begin{cases} \dfrac {\xi -\xi_1} {\xi^\theta
    -\xi_1^\theta} & \text{ if } \xi\ne \xi_1 \\[5mm] \frac1\theta
  \xi_1^{1-\theta} & \text{ in another case.}
\end{cases} 
$$
Now, since \begin{equation} \label{eq:1} \frac 1\theta \xi_1^{1-\theta} \le g(\xi,\xi_1)\le
\frac 1\theta \xi^{1-\theta}
\end{equation}
and $$ R\ge \frac{(\xi-\xi_1)^{ \theta}
  \eta_1}{ (\xi^{ \theta}-(\xi-\xi_1 )^{\theta})},
$$
we have
$$
\begin{aligned}
  |\eta^*-\eta_1|&= \frac{\xi^{\theta}-\xi_1^\theta}{\xi_1^{\theta}} 
      \frac {\left|\eta_1^2 - 3\xi^{1+\theta}\xi_1^{1+\theta} g(\xi, \xi_1)
        \right|} {\eta_1} \\ &\le
      3\frac{\xi^{\theta}-\xi_1^\theta}{\xi_1^{\theta}} \left|\eta_1 -
        \sqrt {3}\xi^{\frac{1+\theta} 2}\xi_1^{\frac {1+\theta} 2}
         \sqrt{g(\xi, \xi_1)} 
        \right|  \\ &\le
      3\theta \frac{\xi-\xi_1}{\xi_1} \left|\eta_1 -
        \sqrt{\frac  {3}\theta} \xi_1^{\frac{3+\theta} 2}- \xi_1^{\frac
          {1+\theta} 2} 
         \left( \xi^{\frac {1+\theta} 2}\sqrt{g(\xi, \xi_1)} -\frac 1
         {\sqrt{\theta} }\xi_1 \right) \right|  ,
    \end{aligned}
    $$
Remember that $\eta_1$ take values in $\left[\sqrt{\frac 3\theta
}N^{ \frac{ 3+\theta} 2}, \sqrt{\frac 3 \theta}N^{\frac{ 3+\theta} 2}+
\gamma^2\right]$ and that $\xi_1 $ in $[N,
N+\gamma]$. Whence, thanks to the Taylor formula with remainder
applied to the function $x\mapsto x^{\frac{ 3+\theta}2}$, $$\sqrt{3}\xi_1^{\frac{ 3+\theta} 2} \in
\left[\sqrt{3}N^{\frac{ 3+\theta} 2}, \sqrt{3}N^{\frac{ 3+\theta}2} +
\sqrt{3} \frac{ 3+\theta}2 N^{\frac{ 1+\theta}2} \gamma +
\sqrt{3}\frac{ (3+\theta)(1+\theta)}8  \gamma^2 \right] $$ and, moreover $$ 
\left| \eta_1- \sqrt{3} \xi_1^{\frac{ 3+\theta}2 } \right|\le
2\sqrt{3} N^{\frac{1+\theta}2} \gamma +\sqrt{3} \gamma^2. $$ On the
other hand from \eqref{eq:1} $$\xi^{\frac {1+\theta} 2}\sqrt {g(\xi, \xi_1)}
-\frac1 {\sqrt{\theta}} \xi_1\le \frac1 {\sqrt{\theta}}(\xi-\xi_1)\le
\frac1 {\sqrt{\theta}} \gamma.$$ Therefore,
$$ |\eta^* -\eta_1|\le 3\theta \frac\gamma N \left(2\sqrt{\frac 3\theta} N^
  {\frac{1+ \theta}2}
\gamma +\sqrt{\frac 3 \theta} \gamma^2+
\sqrt{\frac 3 \theta}(N^{\frac{1+\theta}2}+\gamma)\gamma \right)\le
18\sqrt{\theta} \frac{ \gamma^2} {N^{\frac{1 -\theta} 2}}.   $$
By the mean value theorem, there exists $\bar \eta\in [\eta, \eta^*]$ such
that $$
\begin{aligned}
  \chi(\xi,\xi_1,\eta,\eta_1)& = \chi(\xi,\xi_1,\eta^*,\eta_1)
+(\eta- \eta^*) \partial_\eta \chi(\xi,\xi_1,\bar\eta,\eta_1) \\ &=
-(\eta- \eta^*)\frac {2(\bar \eta(\xi^\theta - (\xi- \xi_1)^\theta) -\eta_1
 \xi^\theta)} {\xi^\theta (\xi- \xi_1)^\theta} \\ &=
-(\eta- \eta^*)\frac {2((\bar \eta- \eta_1)\xi^\theta -(\xi-
  \xi_1)^\theta \bar \eta) 
} {\xi^\theta (\xi- \xi_1)^\theta} \\ & =
-(\eta- \eta^*)2\left( \frac {\bar \eta- \eta_1} {(\xi- \xi_1)^\theta}
  - \frac  {\bar \eta} {\xi^\theta}\right).
\end{aligned}
$$
So,
$$
\begin{aligned}
|\chi(\xi,\xi_1,\eta,\eta_1)| &\lesssim \frac{ \gamma^2}
{N^{\frac{1 -\theta} 2}} \left( \frac{ \gamma^2} {N^{\frac{1 -\theta}
      2} \gamma^\theta} + \frac{ N^{ \frac {3+\theta} 2}} {N^\theta}
\right )\\ &\lesssim \gamma^2 N
\end{aligned}
$$
The lemma follows immediately observing that
$$ \chi(\xi,\xi_1,\eta,\eta_1)=\chi(\xi,\xi-\xi_1,\eta,\eta-\eta_1)
.$$
\end{proof}
Let us finish the proof of the theorem. Let us choose $\gamma$ and $N$
in such a way that $\gamma^2N=N^{-\varepsilon}$ for $\varepsilon\ll
1$. Thanks to the previous lemma we have that
$$ \left| \frac {e^{it\xi} -1} {\xi} \right | = |t|+
O(N^{-\epsilon})$$ for $(\xi_1, \eta_1) \in D_1$ and
$(\xi- \xi_1,\eta- \eta_1) \in D_2$ or $(\xi_1, \eta_1) \in D_2$ and
$(\xi- \xi_1,\eta- \eta_1) \in D_1$. So
$$ \|f_3(t,\cdot, \cdot)\|_{H^s} \gtrsim \frac{NN^{\frac {3
      +\theta}2s} \gamma^3 \gamma^\frac32} { N^{\frac {3 +\theta} 2 s}
  \gamma^3}=\gamma^{\frac32}N= N^{(1-3\varepsilon)/4}.$$ This leads to a
contradiction since
$$ 1\sim \|\phi\|_{H^s}^2 \gtrsim \|f_3(t,\cdot, \cdot)\|_{H^s}.$$
\end{proof}
An immediate consequence of the previous theorem is the following
theorem.
\begin{teo}
  For $(s_1,s_2)\in \re^2$, $-1\le \alpha <0$ and a positive real
  number $T$, there does not exists a space $X_T$ continuously embedded in
  $C([-T, T]; H^{s_1,s_2})$ such that, for a fix constant
  $C$, \begin{equation} \label{eq:2} \|W_\alpha(\cdot) \phi\|_{X_T}
    \le C\|\phi\|_{H^{s_1,s_2}},
  \end{equation}
  for all $\phi\in H^{s_1,s_2}$, and
  \begin{equation} \label{eq:3}
    \left\| \int_0^t W_\alpha(t-t') (u(t')u_x(t') ) \, dt' \right\|_{X_T}
    \le C\|u\|_{X_T}^2,
  \end{equation}
  for all $u\in X_T$.
\end{teo}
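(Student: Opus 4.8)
The plan is to argue by contradiction and reduce the statement to the estimate already shown to fail in the preceding theorem; the present result is a soft, functional-analytic consequence of that counterexample, so no new hard analysis is required. Suppose, then, that for the given $(s_1,s_2)$, $\alpha$ and $T$ there is a Banach space $X_T$, continuously embedded in $C([-T,T];H^{s_1,s_2})$, satisfying both \eqref{eq:2} and \eqref{eq:3} with a fixed constant $C$. Denote by $C_0$ the norm of the embedding, so that $\sup_{t\in[-T,T]}\|v(t)\|_{H^{s_1,s_2}}\le C_0\|v\|_{X_T}$ for every $v\in X_T$.

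First I would specialize \eqref{eq:3} to the linear evolution of an arbitrary datum. Given $\phi\in H^{s_1,s_2}$, set $u=W_\alpha(\cdot)\phi$. By \eqref{eq:2}, $u\in X_T$ with $\|u\|_{X_T}\le C\|\phi\|_{H^{s_1,s_2}}$. Since $W_\alpha(t')$ commutes with $\partial_x$, the Duhamel nonlinearity evaluated at $u$ is exactly $u(t')u_x(t')=W_\alpha(t')\phi\,W_\alpha(t')\phi_x$, i.e. the integrand appearing in the preceding theorem. Applying \eqref{eq:3} and then \eqref{eq:2} yields
\begin{equation*}
\left\|\int_0^t W_\alpha(t-t')\,W_\alpha(t')\phi\,W_\alpha(t')\phi_x\,dt'\right\|_{X_T}\le C\|u\|_{X_T}^2\le C^3\|\phi\|_{H^{s_1,s_2}}^2.
\end{equation*}
Feeding this into the embedding bound gives, for every $t\in[-T,T]$,
\begin{equation*}
\left\|\int_0^t W_\alpha(t-t')\,W_\alpha(t')\phi\,W_\alpha(t')\phi_x\,dt'\right\|_{H^{s_1,s_2}}\le C_0C^3\|\phi\|_{H^{s_1,s_2}}^2.
\end{equation*}

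This last inequality is precisely the bilinear $H^{s_1,s_2}$-estimate that the preceding theorem rules out: taking the family $\phi=\phi_{\gamma,N}$ with $\widehat\phi$ supported on $D_1\cup D_2$ and the parameter relation $\gamma^2N=N^{-\varepsilon}$, one has $\|\phi\|_{H^{s_1,s_2}}\sim 1$ while the left-hand side is bounded below, through the $f_3$ contribution, by a quantity of order $N^{(1-3\varepsilon)/4}$, which tends to $\infty$ as $N\to\infty$ for fixed $t\ne 0$. Choosing such a $t\in(0,T]$ and letting $N\to\infty$ contradicts the uniform bound just derived, and the theorem follows.

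The one point that deserves care, rather than a genuine obstacle, is the bookkeeping of the time variable: \eqref{eq:3} controls the full $X_T$-norm of the map $t\mapsto\int_0^t W_\alpha(t-t')(uu_x)(t')\,dt'$, and it is the continuity of the embedding $X_T\hookrightarrow C([-T,T];H^{s_1,s_2})$ that converts this into the pointwise-in-$t$ $H^{s_1,s_2}$ bound needed to invoke the counterexample. Since that counterexample already produces a lower bound for each fixed $t\ne 0$, the deduction is legitimate; all the genuine difficulty lives in the resonant-function analysis of the preceding theorem, and here one merely has to chain the two postulated estimates together.
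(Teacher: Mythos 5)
Your proposal is correct and follows essentially the same route as the paper: substitute $u=W_\alpha(\cdot)\phi$ into \eqref{eq:3}, chain it with \eqref{eq:2} and the embedding $X_T\hookrightarrow C([-T,T];H^{s_1,s_2})$ to obtain the uniform bilinear bound $\|\int_0^t W_\alpha(t-t')\,W_\alpha(t')\phi\,W_\alpha(t')\phi_x\,dt'\|_{H^{s_1,s_2}}\lesssim\|\phi\|_{H^{s_1,s_2}}^2$, and then invoke the counterexample of the preceding theorem. Your version is in fact slightly more careful than the paper's about tracking the constants and the pointwise-in-$t$ reduction, but the argument is the same.
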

Note that the estimates given in the theorem statement are necessary
to prove the contraction properties of the operator 
$\Phi$ defined by $$ \Phi(u)= W_\alpha(t) \phi + \int_0^t W_\alpha(t-t')
(u(t')u_x(t') ) \, dt'.$$
\begin{proof}
Suppose that we have \eqref{eq:2} and \eqref{eq:3} for all $\phi\in
H^{s_1,s_2}$ and for all $u\in X_T$. Let $\phi\in
H^{s_1,s_2}$ and set $u(t)= W_\alpha(t)\phi$. Then, 
\begin{equation*} 
    \left\| \int_0^t W_\alpha(t-t') (W_\alpha(t')\phi  W_\alpha(t')\phi_x  ) \,
      dt' \right\|_{X_T}  \le C\|W_\alpha(t)\phi\|_{X_T}^2\le \|\phi\|_{
      H^{s_1,s_2}}^2.
  \end{equation*}
  Since $X_T$ is continuously embedded in $C([-T, T];
  H^{s_1,s_2})$
  $$   \left\| \int_0^t W_\alpha(t-t') (W_\alpha(t')\phi  W_\alpha(t')\phi_x  ) \,
      dt' \right\|_{H^{s_1,s_2}}  \le C\|\phi\|_{H^{s_1,s_2}}^2,
    $$
    which is contradictory with the previous theorem. This ends the proof.
  \end{proof}
  Another immediate corollary is the following theorem.
  \begin{teo}
    The flow associated to the problem \eqref{eq:principal}, for
    $-1\le \alpha<0$, whose well-posedness was proved in the previous
    section, is not of class $C^2$.
  \end{teo}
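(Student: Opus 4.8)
The plan is to obtain this statement as a direct specialization of the first theorem of this section. Recall that the local well-posedness established in the previous section holds in the spaces $H^{s_1,s_2}(\re^2)$ for those pairs $(s_1,s_2)$ satisfying the hypotheses of Theorem~\ref{bplxs}; in particular, every admissible such pair lies in $\re^2$. On the other hand, the first theorem of the present section shows, for \emph{every} $(s_1,s_2)\in\re^2$ and every $-1\le\alpha<0$, that the second-order term in the Duhamel expansion of the flow cannot be dominated by $\|\phi\|_{H^{s_1,s_2}}^2$. Thus the entire analytic burden is already discharged, and it remains only to match the parameter ranges.

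First I would recall the elementary fact that a map of class $C^2$ on a neighborhood of a point is, in particular, twice Fréchet differentiable \emph{at} that point. Hence, to rule out $C^2$ regularity of the well-posed flow $S_t$ it suffices to rule out twice differentiability at a single point, and the origin $\phi=0$ is the convenient choice. By the chain rule, twice differentiability of $\lambda\mapsto u(\lambda,t)=S_t(\lambda\phi)$ at $\lambda=0$ would force
\[
\partial_\lambda^2 u(0,t)=-2\int_0^t W_\alpha(t-t')\,W_\alpha(t')\phi\,W_\alpha(t')\phi_x\,dt'
\]
to come from a bounded symmetric bilinear form on $H^{s_1,s_2}$, and in particular to obey the quadratic bound $\|\partial_\lambda^2 u(0,t)\|_{H^{s_1,s_2}}\lesssim\|\phi\|_{H^{s_1,s_2}}^2$.

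Then I would simply invoke the first theorem of this section for the specific pair $(s_1,s_2)$ furnished by Theorem~\ref{bplxs}. That theorem's proof exhibits, through the resonant datum $\widehat\phi=\gamma^{-3/2}\mathbbm 1_{D_1}+\gamma^{-3/2}N^{-s_1-\frac{3-\alpha}{2}s_2}\mathbbm 1_{D_2}$ with the calibration $\gamma^2N=N^{-\varepsilon}$, a lower bound $\|f_3(t)\|_{H^{s_1,s_2}}\gtrsim N^{(1-3\varepsilon)/4}\to\infty$ while $\|\phi\|_{H^{s_1,s_2}}\sim 1$, which contradicts the quadratic bound above. Consequently $S_t$ fails to be twice differentiable at $0$, and therefore is not of class $C^2$. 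I expect no genuine obstacle beyond this bookkeeping: the real difficulty---constructing the data, splitting the Duhamel integral as $f_1+f_2+f_3$, bounding the resonance function by $|\chi|\lesssim\gamma^2N$ on the relevant frequency region, and driving $\|f_3\|$ to infinity---was already carried out in the first theorem, and the only point that needs a line of verification is that the admissible region of Theorem~\ref{bplxs} is nonempty and contained in $\re^2$, to which the blow-up construction applies verbatim.
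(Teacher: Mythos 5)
Your proposal is correct and follows exactly the paper's route: the paper states this result as an immediate corollary of the first theorem of the section, which already rules out $C^2$ regularity of the flow at the origin for \emph{every} $(s_1,s_2)\in\re^2$ and $-1\le\alpha<0$, hence in particular for the pairs admissible in Theorem~\ref{bplxs}. Your added remarks on Fr\'echet differentiability and the quadratic bound merely recapitulate the content of that earlier proof, so no new argument is needed.
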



\bibliographystyle{acm}
\bibliography{../../Fabian/mybiblio}
\end{document}